\title[Negative curvature in graphical small cancellation groups]{Negative curvature in graphical small cancellation groups}
 \subjclass[2010]{Primary: 20F06; Secondary: 20F65, 20F67.}
\keywords{Graphical small cancellation, Morse quasi-geodesic,
  contracting projection, HEC property}
\author[Arzhantseva]{Goulnara N. Arzhantseva}
 \email{goulnara.arzhantseva@univie.ac.at}
\address{Fakult\"at f\"ur Mathematik, Universit\"at Wien,
  Oskar-Morgenstern-Platz 1, 1090 Wien, \"{O}sterreich}
\author[Cashen]{Christopher H. Cashen}
 \email{christopher.cashen@univie.ac.at}
\address{Fakult\"at f\"ur Mathematik, Universit\"at Wien,
  Oskar-Morgenstern-Platz 1, 1090 Wien, \"{O}sterreich}
\author[Gruber]{Dominik Gruber}
 \email{dominik.gruber@math.ethz.ch}
\address{Department of Mathematics, ETH Z\"urich, R\"amistrasse 101, 8092 Z\"urich, Switzerland}
\author[Hume]{David Hume}
 \email{david.hume@maths.ox.ac.uk}
\address{Mathematical Institute, University of Oxford, Woodstock Road, OX2 6GG Oxford, UK}
\thanks{This work is supported by the ERC grant of Goulnara
  Arzhantseva ``ANALYTIC" no. 259527. The second author is supported by the
Austrian Science Fund (FWF):M1717-N25. The third author is supported
by the Swiss National Science Foundation Professorship FN
PP00P2-144681/1 of Laura Ciobanu. The fourth author is supported
by the ERC grant no. 278469 of Pierre--Emmanuel Caprace and the grant ANR-14-CE25-0004 ``GAMME'' of Damien Gaboriau and Romain Tessera.}
\newcommand{\N}	{\mathbb N}
\newcommand{\R}	{\mathbb R}
\newcommand{\Cay}	{\operatorname{Cay}}
\newcommand{\diam}	{\operatorname{diam}} %for diameter
\newcommand{\fgen}[1]{\left\langle #1 \right\rangle}
\newcommand{\fpres}[2]{\left\langle #1 \left| #2 \right.\right\rangle}
\newcommand{\from}{\colon\thinspace}
\newcommand{\bdry}{\partial}
\renewcommand{\setminus}{\smallsetminus}
\newlength{\mytriwidth}
\newlength{\myquadwidth}
\newlength{\myquadheight}
\newcommand{\rels}{\mathcal{R}}
\newcommand{\gens}{\mathcal{S}}
\newcommand{\asympleq}{\preceq}
\newcommand{\onto}{\twoheadrightarrow}
\renewcommand\thesubfigure{\Alph{subfigure}}
\newcommand\ifrac[2]{\sfrac{#1}{#2}} % inline fraction
\newtheorem{thm}{Theorem}[section] 
\newtheorem{theorem}{Theorem}[section] 
\newtheorem{prop}{Proposition}[section] 
\newtheorem{proposition}{Proposition}[section] 
\newtheorem{lem}{Lemma} [section] 
\newtheorem{lemma}{Lemma}[section] 
\newtheorem{cor}{Corollary}[section] 
\newtheorem{corollary}{Corollary}[section] 
\newtheorem*{thm*} {Theorem} 
\newtheorem*{prop*}{Proposition}
\newtheorem*{lem*} {Lemma} 
\newtheorem*{cor*} {Corollary}
\theoremstyle{definition}
\newtheorem{defi}{Definition}[section] 
\newtheorem{definition}{Definition}[section] 
\newtheorem{example}{Example}[section] 
\newtheorem{question}{Question}[section] 
\newtheorem{remark}{Remark}[section] 
\newtheorem*{defi*}{Definition}
\newtheorem*{example*}{Example}
\newtheorem*{remark*}{Remark}
\newtheorem*{problem*}{Problem}
\newtheorem*{convention*}{Convention}
\newtheorem*{defi1*}{Definition I of the WPD element}
\newtheorem*{defi2*}{Definition II of the WPD element}
\let\c@question=\c@thm 
\let\c@theorem=\c@thm 
\let\c@proposition=\c@thm 
\let\c@corollary=\c@thm 
\let\c@definition=\c@thm 
\let\c@lemma=\c@thm 
\let\c@lem=\c@thm 
\let\c@prop=\c@thm 
\let\c@cor=\c@thm 
\let\c@defi=\c@thm 
\let\c@example=\c@thm 
\let\c@remark=\c@thm 
\def\makeautorefname#1#2{\expandafter\def\csname#1autorefname\endcsname{#2}}
\let\fullref\autoref
\begin{document}
\begin{abstract}
We use the interplay between combinatorial and coarse geometric
versions of negative curvature to investigate the geometry of infinitely presented
graphical $Gr'(\ifrac{1}{6})$ small cancellation groups.
In particular, we 
characterize their `contracting geodesics', which should be thought of
as the geodesics
that behave hyperbolically.

 We show that every degree of contraction can be achieved by a geodesic in a finitely
generated group. We construct the first example of a finitely generated group $G$
containing an element $g$ that is strongly
contracting with respect to one finite generating set of $G$ and not
strongly contracting with respect to another. In the case of classical $C'(\ifrac{1}{6})$ small cancellation groups we give
complete characterizations of geodesics that are Morse and
that are strongly contracting. 

We show that many graphical $Gr'(\ifrac{1}{6})$ small cancellation groups contain strongly
contracting elements and, in particular, are growth tight. We construct uncountably many quasi-isometry classes of finitely
generated, torsion-free groups in which every maximal cyclic subgroup
is hyperbolically embedded. These are the first examples of this kind
that are not subgroups of hyperbolic groups. 

In the course of our analysis we show that if the defining graph of a graphical $Gr'(\ifrac{1}{6})$ small cancellation group has finite components, then the elements of the group have translation lengths
that are rational and bounded away from zero.
 
%%% Local Variables: 
%%% mode: latex
%%% TeX-master: "main"
%%% End: 

\end{abstract}

\maketitle
%---------------------------------------
% End of frontmatter
%---------------------------------------

%---------------------------------------
% Start of main body of article
%---------------------------------------
\renewcommand{\thesubfigure}{(\arabic{subfigure})}

\section{Introduction}\label{sec:introduction}
Graphical small cancellation theory was introduced by Gromov as a powerful tool for constructing finitely generated groups with desired geometric and analytic properties \cite{Gro03}. Its key feature is that it produces infinite groups with prescribed subgraphs in their Cayley graphs. The group properties are thus derived from the combinatorial or asymptotic properties of the embedded subgraphs. Over the last two decades, graphical small cancellation theory has become an increasingly prominent and versatile tool of geometric group theory with a wide range of striking examples and applications. 

In this paper, we provide a thorough investigation of the hyperbolic-like geometry of graphical small cancellation constructions. Our theorems show that the constructed groups behave strongly like groups hyperbolic relative to their defining graphs. This contrasts the fact that in general they need not be Gromov hyperbolic or even relatively hyperbolic.
With this geometric analogy in mind, we produce a variety of concrete examples and determine the spectrum of negative curvature possible in the realm of finitely generated groups.

Graphical small cancellation theory was first used by Gromov \cite{Gro03} in the description of the groups now known as `Gromov monsters', which are finitely generated groups that contain sequences of expander graphs in their Cayley graphs. These monster groups do not coarsely embed into a Hilbert space, whence they are not coarsely amenable (i.e.\ do not have Yu's property A), and they give counterexamples to the Baum-Connes conjecture with coefficients \cite{HigLafSka02}. Graphical small cancellation theory is currently the only means of proving the existence of finitely generated groups with any of these three properties.

Since Gromov's initial impetus, the theory has gained in significance and variety of applications.
Indeed, the construction is very versatile: every countable group embeds into a 2-generated graphical small cancellation group~\cite[Example 1.13]{GruPhD}, and
graphical small cancellation theory has been
used, for instance, to give many new groups without the unique product property \cite{ArzSte14,Ste15,GruMarSte15}, the first examples of non-coarsely amenable groups with the Haagerup property \cite{ArzOsa14,Osa}, and new hyperbolic groups with Kazhdan's Property (T)~\cite{Gro03,Sil03,OllWis07}, 
as well as
to 
build a continuum of Gromov monsters~\cite{Hume14}, the first examples of finitely generated groups that do not coarsely embed into Hilbert space and yet do not contain a weakly embedded expander~\cite{ArzTes16},  and to analyze the Wirtinger presentations of prime alternating link groups~\cite{CuSh12}. 
Moreover, since the class of graphical small cancellation groups contains all classical small cancellation groups, it contains, for example, groups having no finite quotients \cite{Pri89}, groups with prescribed asymptotic cones \cite{ThoVel00,DruSap05}, and groups with exceptional divergence functions \cite{GruSis14}.

Our paper has two purposes.
The first purpose is to study geometric aspects of graphical small cancellation groups. An important property we focus on is the existence of subspaces that `behave like' subspaces of a negatively curved space. We quantify this phenomenon by considering \emph{contraction properties} of a subspace. Intuitively, this measures the asymptotic growth of closest point projections of metric balls to the subspace.
In a recent work \cite{ArzCasGrub}, we defined a general quantitative spectrum of contraction in arbitrary geodesic metric spaces and used it to produce new results on the interplay between contraction, divergence, and the property of being Morse. Our definitions generalize prior notions of contraction that have been instrumental in the study of numerous examples of finitely generated groups of current interest, such as mapping class groups \cite{MasMin99,Beh06,DucRaf09}, outer automorphism groups of free groups \cite{Min96,Alg11}, and, more generally, acylindrically hyperbolic groups \cite{DGO11}. In the present paper, we completely determine the contraction properties of geodesics in graphical small cancellation groups through their defining graphs. En route, we describe geodesic polygons and translation lengths in these groups. 

The second purpose of this paper is to detect the range of possible contracting behaviors in finitely generated groups. To this end, we use graphical small cancellation theory to show that every degree of contraction can be achieved by a geodesic in a suitable group. Moreover, we give the first examples of strongly contracting geodesics that are not preserved under quasi-isometries of groups. These results further establish the graphical small cancellation technique as a fundamental source of novel examples of finitely generated groups.

Our main technical result is a \emph{local-to-global} theorem for the contraction properties of geodesics in graphical small cancellation groups. It states that the contraction function of a geodesic is measured by its intersections with the defining graph. This confirms the analogy with relatively hyperbolic spaces and their peripheral subspaces.
Beyond the applications alluded to above, the theorem also enables us to prove the general result that many infinitely presented graphical small cancellation groups contain strongly contracting elements and, in particular, are growth tight, and to provide a characterization of Morse geodesics in classical $C'(\ifrac{1}{6})$ small cancellation groups. Furthermore, using the fact that strongly contracting elements give rise to hyperbolically embedded virtually cyclic subgroups, we produce the first examples of torsion-free groups in which every element is contained in a maximal virtually cyclic hyperbolically embedded subgroup but that are not subgroups of hyperbolic groups.

The proof of our local-to-global theorem rests on a meticulous analysis of the geometry of the Cayley graphs of graphical $Gr'(\ifrac{1}{6})$ small cancellation groups. 
In particular, we provide a complete classification of the geodesic quadrangles in the Cayley graphs of these groups, which is of independent interest and 
is new even for classical $C'(\ifrac{1}{6})$--groups.

The general tools that we establish have additional applications.
For instance, we show that in many infinitely presented graphical small cancellation groups, the translation lengths of infinite order elements are rational and bounded away from zero.
These tools will undoubtedly be useful towards further applications of this very interesting class of groups.

In the remainder of this introduction we explain the key concepts and main results of this paper in more detail, and give a brief overview of the proof of our local-to-global theorem.

\subsection*{Acknowledgements} 
A part of this work was developed during the program \textit{Measured group theory} held at the Erwin Schr\"{o}dinger Institute for Mathematics and Physics in Vienna in 2016. We thank its organizers and the institute for its hospitality.
We also thank the Isaac Newton Institute for Mathematical Sciences for support and hospitality during the program
\emph{Non-positive curvature: group actions and cohomology}. This work is partially supported by EPSRC Grant Number EP/K032208/1.

Finally, we thank the referees for their careful reading of the paper and their helpful comments.

\subsection{Contracting subspaces} In the following we shall assume that $X$ is a geodesic metric space such that for every closed $Y\subset X$ and $x\in X$, the set $\pi(x):=\{y\in Y\mid d(x,y)=d(x,Y)\}$ is non-empty. This is true for a proper space $X$, but also for a connected graph (i.e. a connected 1-dimensional CW-complex) $X$.
We call $\pi$ \emph{closest point projection to $Y$}. 
We do \emph{not} assume the sets $\pi(x)$ have uniformly bounded diameter.

\begin{definition}[Contracting]\label{def:contractingprojection}
Let $Y$ be a closed subspace of $X$, and denote by $\pi$ the closest point projection to $Y$. Let $\rho_1$ and $\rho_2$ be non-decreasing, eventually non-negative functions, with $\rho_1(r)\leqslant r$ and $\rho_1$ unbounded. We say that $Y$ is \emph{$(\rho_1,\rho_2)$--contracting} if the following conditions are satisfied for all $x,\,x'\in X$:
\begin{itemize}
\item $d(x,x')\leqslant\rho_1(d(x,Y))\implies\diam\pi(x)\cup\pi(x')\leqslant\rho_2(d(x,Y))$
\item $\lim_{r\to\infty}\frac{\rho_2(r)}{\rho_1(r)}=0$
\end{itemize}
If $\rho_1(r)=r$, then we say $Y$ is \emph{sublinearly contracting}, if $\rho_1(r)=r$ and $\rho_2(r)=C$ for some constant $C$ we say it is \emph{strongly contracting}, and if $\rho_1(r)=r/2$ and $\rho_2(r)=C$ for some constant $C$ we say it is \emph{semi-strongly contracting}
\end{definition}

We say a function $f$ is \emph{sublinear} if it is non-decreasing, eventually non-negative, and $\lim_{r\to\infty}\frac{f(r)}{r}=0$.

The most basic example of a strongly contracting subspace is  a geodesic in a tree or, more generally,  a geodesic in a $\delta$--hyperbolic space. The opposite extreme occurs in a Euclidean space where there are no contracting geodesics (for any choice of $\rho_1$ and $\rho_2$). 
The contrast between hyperbolic and Euclidean type behavior is evident in the following  well-known examples of contraction:
\begin{itemize}
\item A geodesic in a CAT(0) space is strongly contracting if and only
  if it is Morse \cite{BesFuj09, Sul14}.
\item A geodesic in a relatively hyperbolic space is strongly
  contracting if for every $C\geqslant 0$ there exists a $B\geqslant 0$ such that the
  geodesic spends at most time $B$ in the $C$--neighborhood of a
  peripheral subset \cite{Sis13projection}.
\end{itemize}

The common idea in these situations is that the given space has certain regions that are
not hyperbolic, but geodesics that avoid these non-hyperbolic regions
behave very much like geodesics in a hyperbolic space. Similar phenomena occur for pseudo-Anosov axes in the Teichm\"uller space of a hyperbolic surface and iwip axes in the Outer Space of the outer
automorphism group of a free group. Such axes avoid the `thin parts' of their respective spaces and therefore are strongly contracting \cite{Min96,Alg11}.

A version of semi-strong contraction, where the projection is not necessarily closest point projection,  occurs for pseudo-Anosov axes in the mapping class group of a hyperbolic surface \cite{MasMin99,Beh06,DucRaf09}.

\subsection{Local-to-global theorem} Given a directed graph $\Gamma$ whose edges are labelled by the elements of a set $\gens$, the group defined by $\Gamma$, denoted $G(\Gamma)$ is given by the presentation $\langle \gens \mid \text{labels of embedded cycles in }\Gamma\rangle$. The graphical $Gr'(\ifrac{1}{6})$ small cancellation condition, see \fullref{sec:smallcancellation}, is a combinatorial requirement on the labelling of $\Gamma$, whose key consequence is that the connected components of $\Gamma$ isometrically embed into $\Cay(G(\Gamma),\gens)$. In the case that $\Gamma$ is a disjoint union of cycle graphs labelled by a set of words $\rels$, the graphical $Gr'(\ifrac{1}{6})$--condition for $\Gamma$ corresponds to the classical $C'(\ifrac{1}{6})$--condition for $\rels$.

We show that, similar to the situations described above, geodesics in Cayley graphs of graphical $Gr'(\ifrac{1}{6})$ small cancellation groups behave like hyperbolic geodesics as long as they avoid the embedded components of the defining graph. In fact, a geodesic is as hyperbolic as its intersections with the
embedded components of $\Gamma$: 

\begin{thm*}[\fullref{theorem:strongcontraction}]
Let $\Gamma$ be a $Gr'(\ifrac{1}{6})$--labelled graph.
There exist $\rho_1'$ and $\rho_2'$ such that a geodesic $\alpha$ in
$X:=\Cay(G(\Gamma),\gens)$ is $(\rho_1',\rho_2')$--contracting if and
only if there exist $\rho_1$ and $\rho_2$ such that for every
embedded component $\Gamma_0$ of $\Gamma$ in $X$ such that $\Gamma_0\cap\alpha\neq\emptyset$, we have that $\Gamma_0\cap\alpha$ is
$(\rho_1,\rho_2)$--contracting as a subspace of $\Gamma_0$.

Moreover, $\rho_1'$ and $\rho_2'$ can be bounded in terms of $\rho_1$
and $\rho_2$, and when $\rho_1(r)\geqslant \ifrac{r}{2}$ we can take
$\rho_1'=\rho_1$ and $\rho_2'\asymp\rho_2$.
\end{thm*}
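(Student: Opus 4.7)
The plan is to reduce contraction of $\alpha$ in $X$ to contraction of each $\alpha\cap\Gamma_0$ in $\Gamma_0$ by a careful analysis of geodesic quadrangles in $X$. Under the $Gr'(\ifrac{1}{6})$ hypothesis each embedded component $\Gamma_0$ plays the role of a peripheral subspace: $\Gamma_0$ is isometrically embedded in $X$, while the complement of the $\Gamma_0$'s is effectively tree-like. The classification of geodesic quadrangles announced earlier in the paper decomposes any quadrangle $Q=[x,y,y',x']$, with $y\in\pi(x)$ and $y'\in\pi(x')$, into a bounded collection of contact regions contained in single components of $\Gamma$ connected by pieces of bounded length. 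Both directions of the biconditional are proved through this decomposition.

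For the forward implication, assume $\alpha$ is $(\rho_1',\rho_2')$-contracting in $X$ and fix an embedded component $\Gamma_0$ intersecting $\alpha$. For $x,x'\in\Gamma_0$ the isometric embedding gives $d_{\Gamma_0}(x,x')=d_X(x,x')$, and the quadrangle analysis shows that closest points in $\alpha\cap\Gamma_0$ to $x$ (measured in $\Gamma_0$) are obtained from closest points in $\alpha$ to $x$ (measured in $X$) by a bounded modification involving pieces entering $\Gamma_0$. The ambient diameter bound on $\pi(x)\cup\pi(x')$ then transfers, up to a bounded correction, to the desired diameter bound on projections to $\alpha\cap\Gamma_0$ inside $\Gamma_0$, producing some $(\rho_1,\rho_2)$-contraction of $\alpha\cap\Gamma_0$ in $\Gamma_0$.

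For the converse, assume every $\alpha\cap\Gamma_0$ is $(\rho_1,\rho_2)$-contracting in $\Gamma_0$ and suppose $x,x'\in X$ satisfy $d(x,x')\leq\rho_1'(d(x,\alpha))$. Form the quadrangle $Q=[x,y,y',x']$ and apply the classification. If the projection diameter is large, the hyperbolic-type geometry in the complement of the $\Gamma_0$'s produces only a uniform bound, so the jump must be witnessed by a subsegment of $[y,y']$ sitting deep inside some embedded component $\Gamma_0$. The isometric embedding lets us project $x,x'$ to $\Gamma_0$, placing them at distance commensurate with $d(x,\alpha)$ from $\alpha\cap\Gamma_0$ and at mutual $\Gamma_0$-distance compatible with $\rho_1$. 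The component-wise hypothesis bounds the projection diameter inside $\Gamma_0$ by $\rho_2(d(x,\alpha))$, which together with the uniform contribution from the tree-like complement gives the required $\rho_2'$, explicitly in terms of $\rho_1,\rho_2$. In the semi-strong regime $\rho_1(r)\geq \ifrac{r}{2}$, no piece-length rescaling of the first coordinate is needed, yielding $\rho_1'=\rho_1$ and $\rho_2'\asymp\rho_2$.

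The main obstacle is the converse direction, specifically certifying that any large projection jump in $X$ is faithfully witnessed by the contraction behavior of a single dominant component $\Gamma_0$. This requires (i) ruling out projection growth from the interaction of multiple contact regions, handled by the hyperbolic-type bound and a pigeonhole choice of the dominant component, and (ii) keeping additive errors under control so that $\rho_2'/\rho_1'$ remains sublinear. Both are consequences of the quadrangle classification and the small cancellation bound on piece-length; the semi-strong refinement is then a matter of tracking constants through the same argument.
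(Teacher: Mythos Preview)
Your proposal captures the right high-level intuition---embedded components act like peripheral subspaces and the quadrangle classification is the main tool---but both directions have genuine gaps compared with what the paper actually does.

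For the global-to-local direction, the paper proves something cleaner than your ``bounded modification involving pieces'': it shows (via a bigon argument, attaching a face along the path in $\Gamma_0$) that closest point projection of $\Gamma_0$ to $\alpha$ in $X$ \emph{coincides exactly} with closest point projection to $\Gamma_0\cap\alpha$ inside $\Gamma_0$. Once this is established the forward direction is immediate with no additive correction; your version both weakens the conclusion and leaves the ``bounded modification'' unjustified.

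For the converse, the step ``project $x,x'$ to $\Gamma_0$'' is where the argument breaks. The points $x,x'$ need not lie in any embedded component, and there is no reason their images under some projection into $\Gamma_0$ would sit at distance comparable to $d(x,\alpha)$ from $\alpha\cap\Gamma_0$, nor that their mutual $\Gamma_0$--distance would be controlled by $\rho_1$. The paper does \emph{not} reduce to a single dominant component. Instead, in the thin case of the quadrangle dichotomy the top geodesic $\gamma$ from $x$ to $y$ is decomposed into at most eight subpaths $p_0,\dots,p_{k+1}$ with $k\leqslant 6$, each lying in a single embedded component or being a single piece; the projection diameter is bounded by $\sum_i\diam\pi(p_i)$, and each summand is controlled by a separate lemma showing that a component or piece \emph{disjoint from} $\alpha$ has projection bounded by a sublinear function of its distance to $\alpha$. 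That lemma is where the local contraction hypothesis is actually used, and its proof requires several preliminary results bounding the length of simple cycles of the form (pieces)(projection geodesic)(subpath of $\alpha$). Your pigeonhole-to-a-dominant-component shortcut bypasses this machinery and, as written, does not go through. You also do not treat the two ``short'' cases of the dichotomy (a face with $e(\Pi)=i(\Pi)=2$ separating the quadrangle, or a zipper with two teeth on $\alpha$), which the paper handles separately by showing that $\gamma$ must then enter a uniformly bounded neighborhood of $\alpha$.
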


Here $\asymp$ denotes a standard notion of asymptotic equivalence, see \fullref{sec:preliminaries}.
Our theorem gives the following explicit application to classical $C'(\ifrac{1}{6})$--groups. We denote by $|\cdot|$ the number of edges of a path graph or cycle graph.

\begin{thm*}[\fullref{corollary:classicalmorse}]
Let $\Gamma$ be a $Gr'(\ifrac{1}{6})$--labelled graph whose components are
cycle graphs.
Let $\alpha$ be a geodesic in  $X:=\Cay(G(\Gamma),\gens)$.
Define $\rho(r):=\max_{|\Gamma_i|\leqslant
  r}|\Gamma_i\cap\alpha|$, where the $\Gamma_i$ range over embedded
components of $\Gamma$ in $X$.
Then $\alpha$ is sublinearly contracting if and only if $\rho$ is sublinear, in which case $\alpha$ is $(r,\rho)$--contracting. 
In particular, $\alpha$ is strongly contracting if and only if $\rho$ is bounded.
\end{thm*}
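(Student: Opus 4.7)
The plan is to deduce this corollary directly from the local-to-global \fullref{theorem:strongcontraction}: the contraction of $\alpha$ in $X$ is controlled by the contraction of each intersection $Y_i := \Gamma_i \cap \alpha$ viewed as a subspace of the cycle $\Gamma_i$, so the whole task reduces to a concrete computation inside a finite cycle graph.

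First I would verify that $Y_i$ is an arc. Since $\Gamma_i$ is a component of a $Gr'(\ifrac{1}{6})$-labelled graph, it embeds isometrically in $X$; since $\alpha$ is a geodesic, $Y_i$ is itself a geodesic in $\Gamma_i$, hence an arc of some length $\ell_i := |\Gamma_i \cap \alpha|$ with $\ell_i \leqslant n_i/2$, where $n_i := |\Gamma_i|$. Parametrising the cycle as $\mathbb{Z}/n_i\mathbb{Z}$ with $Y_i = \{0,1,\dots,\ell_i\}$, I would then describe the closest point projection $\pi_i\from\Gamma_i\to Y_i$ explicitly: for $x$ not lying at the ``antipode'' $m:=(n_i+\ell_i)/2$ of $Y_i$, the set $\pi_i(x)$ is a single endpoint of $Y_i$ (the endpoint reached by the shorter of the two arcs from $x$ into $Y_i$), while at the antipode itself $\pi_i(x)=\{0,\ell_i\}$, a set of diameter $\ell_i$.

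The heart of the matter is then a single inequality. For $x,x'\in\Gamma_i$ with $d(x,x')\leqslant d(x,Y_i)=:s$, the projections $\pi_i(x)$ and $\pi_i(x')$ can lie at opposite endpoints of $Y_i$ only if $x$ and $x'$ lie on opposite sides of $m$; writing $x$ at position $\ell_i+s$ and tracking how far the ball of radius $s$ around $x$ can reach past $m$ shows this forces $s > (n_i-\ell_i)/4$, and hence $n_i < 4s + \ell_i \leqslant 8s$. Consequently
\[
\diam\bigl(\pi_i(x)\cup\pi_i(x')\bigr) \;\leqslant\; \max\{\ell_j : n_j < 8s\} \;=\; \rho(8s),
\]
so each $Y_i$ is $(r,\rho(8\cdot))$-contracting as a subspace of $\Gamma_i$, uniformly in $i$.

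With $\rho_1(r)=r\geqslant r/2$, the moreover clause of \fullref{theorem:strongcontraction} upgrades this uniform local estimate to the global statement that $\alpha$ is $(r,\rho_2')$-contracting in $X$ with $\rho_2'\asymp\rho(8\cdot)\asymp\rho$, which is the $(r,\rho)$-contracting conclusion. Both equivalences then follow: if $\rho$ is sublinear then so is $\rho_2'$, giving sublinear contraction, and if $\rho$ is bounded then $\rho_2'$ is bounded, giving strong contraction. For the converse implications I would apply the forward direction of \fullref{theorem:strongcontraction}: any sublinear (respectively constant) pair $(\rho_1',\rho_2')$ for $\alpha$ yields functions $\rho_1,\rho_2$ witnessing uniform contraction of each $Y_i$ in $\Gamma_i$, and evaluating this at $x$ near the antipode of $Y_i$ (where $s\approx (n_i-\ell_i)/2$ and the projection diameter equals $\ell_i$) forces $\ell_i \leqslant \rho_2(\rho_1^{-1}(1)\vee s)$, which bounds $\ell_i$ sublinearly (respectively absolutely) in $n_i$, i.e.\ makes $\rho$ sublinear (respectively bounded). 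The one technical subtlety, and the only real obstacle, is cleanly organising the endpoint-projection computation on a cycle so that the same function $\rho(8r)$ works simultaneously for every component $\Gamma_i$; beyond that the argument is a direct bookkeeping exercise matching $\tilde\rho(r):=\max_{n_i<8r}\ell_i$ with $\rho$ under the asymptotic equivalence $\asymp$ provided by the main theorem.
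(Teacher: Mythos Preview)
Your proposal is correct and takes essentially the same approach as the paper: the corollary is stated there without proof as an immediate consequence of \fullref{theorem:strongcontraction}, and the cycle-graph projection computation you supply parallels the one the paper carries out in the proof of \fullref{thm:weakimpliesstrong}. Two small cleanups: the moreover clause only yields $(r,\rho_2')$ with $\rho_2'\asymp\rho$, so the ``$(r,\rho)$--contracting'' conclusion should be read up to $\asymp$; and in the converse you can simply take $x'=x$ at the antipode to get $\ell_i\leqslant\rho_2'\bigl((n_i-\ell_i)/2\bigr)$ directly, avoiding the $\rho_1^{-1}$ detour.
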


\subsection{Morse geodesics} 
A classically more well-studied notion of what it means to behave like a subspace of a hyperbolic space is the property of being \emph{Morse}.

\begin{definition}[Morse]\label{def:Morse}
  A subspace $Y$ of a geodesic metric space $X$ is $\mu$--\emph{Morse}  if every $(L,A)$--quasi-geodesic in $X$ with endpoints on
  $Y$ is contained in the $\mu(L,A)$--neighborhood of $Y$.
A subspace is \emph{Morse} if there exists some $\mu$ such that it is $\mu$--Morse.
\end{definition}

The property of being Morse is invariant under quasi-isometries, and the fact that quasi-geodesics in a Gromov hyperbolic space are Morse is known as the `Morse Lemma'. 
These two results are main ingredients in the proof that hyperbolicity is preserved by quasi-isometries. 
Morse geodesics are of further interest due to their close connection with the geometry of asymptotic cones and relations with other important geometric concepts such as divergence, see for example \cite{DruMozSap10,BehDru14,ArzCasGrub} and references therein.
In \cite{ArzCasGrub}, we prove that being contracting is, in fact, equivalent to being Morse.

\begin{theorem}[{\cite[Theorem~1.4]{ArzCasGrub}}]\label{morseequalssublinearlycontracting}
  If $Y$ is a subspace of a geodesic metric space such that the empty set is not in the image of closest point projection to $Y$, then $Y$ is Morse if and only if $Y$ is $(\rho_1,\rho_2)$-contracting for some $\rho_1$ and $\rho_2$.
\end{theorem}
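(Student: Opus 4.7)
My plan is to prove the two implications separately, with \emph{Morse implies contracting} being the more delicate direction. For \emph{contracting implies Morse}, suppose $Y$ is $(\rho_1,\rho_2)$-contracting and let $\gamma$ be an $(L,A)$-quasi-geodesic with endpoints $z_0,z_1\in Y$. I argue by contradiction: if $\gamma$ contains a point with $d(\cdot,Y)=R$ very large, partition the portion of $\gamma$ on which $d(\cdot,Y)\ge R/2$ into consecutive subarcs of length $\rho_1(R/2)$. On each subarc the contracting hypothesis bounds the displacement of the projection $\pi$ by $\rho_2(R/2)$, so the total $\pi$-displacement along this portion is of order at most $\ell(\gamma)\,\rho_2(R/2)/\rho_1(R/2)$. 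Since $z_0,z_1$ are their own projections, this quantity must dominate $d(z_0,z_1)$; combined with $\ell(\gamma)\le L\,d(z_0,z_1)+A$ and the lower bound $\ell(\gamma)\ge 2R$, the hypothesis $\rho_2/\rho_1\to 0$ forces $R$ to be bounded in terms of $L,A,\rho_1,\rho_2$, which defines a Morse gauge.

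For \emph{Morse implies contracting}, the core observation is: if $d(x,Y)=r$ and $y_1,y_2\in\pi(x)$, then the broken path $\alpha:=[y_1,x]\cdot[x,y_2]$ is a $\bigl(1,\,2r-d(y_1,y_2)\bigr)$-quasi-geodesic of length $2r$. Two facts drive the verification: (i) every $p\in[y_1,x]$ has $y_1$ as a closest point in $Y$, because $y_1\in\pi(x)$ yields $d(p,y^*)\ge d(x,y^*)-d(x,p)\ge r-d(x,p)=d(p,y_1)$ for every $y^*\in Y$, with the symmetric statement on $[x,y_2]$; (ii) for $p\in[y_1,x]$ and $q\in[x,y_2]$, the triangle inequality $d(p,q)\ge d(y_1,y_2)-d(p,y_1)-d(q,y_2)$ rearranges to $d(p,q)\ge d(p,x)+d(q,x)-(2r-d(y_1,y_2))$, i.e.\ the metric distance falls short of the path distance by at most the fixed constant $2r-d(y_1,y_2)$. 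Applying the Morse condition to $\alpha$ yields $r\le\mu\bigl(1,\,2r-d(y_1,y_2)\bigr)$, which rearranges to an upper bound on $d(y_1,y_2)$ in terms of $r$ and $\mu$. The analogous analysis of the three-segment path $[y,x]\cdot[x,x']\cdot[x',y']$, with $y\in\pi(x)$ and $y'\in\pi(x')$, bounds $\diam\bigl(\pi(x)\cup\pi(x')\bigr)$ in terms of $r$, $d(x,x')$, and $\mu$.

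The main obstacle is then to extract from these estimates a pair $(\rho_1,\rho_2)$ satisfying the ratio condition $\rho_2/\rho_1\to 0$ of \autoref{def:contractingprojection}. The raw bound has the shape $\diam\bigl(\pi(x)\cup\pi(x')\bigr)\le 2r+2d(x,x')-\mu^{-1}(r)$, where $\mu^{-1}$ inverts $A\mapsto\mu(1,A)$, and sublinearity of the right-hand side in $r$ is not automatic from the bare existence of a Morse gauge. To force it one must also invoke Morse for quasi-geodesics with constants $(L,0)$, using that the same broken path is globally an $(L,0)$-quasi-geodesic for a suitable $L$ depending on $r$ and $d(y_1,y_2)$, thereby bringing in the joint growth behavior of $\mu(L,A)$. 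One then selects $\rho_1$ as an unbounded function of $r$ growing slowly enough, calibrated against this joint growth, so that the corresponding $\rho_2$ is of strictly smaller order. Establishing that such a calibration exists for an arbitrary Morse gauge is the delicate step.
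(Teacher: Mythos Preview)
This theorem is not proved in the present paper; it is quoted from the companion paper \cite{ArzCasGrub} (here as Theorem~1.1, and again in a more detailed form as Theorem~4.2). So there is no ``paper's own proof'' to compare against, and your proposal must be evaluated on its own merits.

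Your sketch of \emph{contracting $\Rightarrow$ Morse} is the standard argument and is essentially correct, though the bookkeeping is loose: you switch between ``the portion of $\gamma$ on which $d(\cdot,Y)\geqslant R/2$'' and the full $\gamma$ when you invoke $d(z_0,z_1)$, and you never explain why the projection displacement along the high excursion must be large. The clean version is to take a single maximal excursion above height $R/2$, note that its endpoints $a,b$ satisfy $d(a,Y)=d(b,Y)=R/2$, and compare the lower bound $d(\pi(a),\pi(b))\geqslant d(a,b)-R$ (coming from the quasi-geodesic inequality) with the upper bound obtained by chopping the excursion into pieces of length $\rho_1(R/2)$.

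The direction \emph{Morse $\Rightarrow$ contracting} is where the real content lies, and here your proposal has a genuine gap that you yourself flag. The inequality $r\leqslant\mu\bigl(1,\,2r-d(y_1,y_2)\bigr)$ only yields $d(y_1,y_2)\leqslant 2r-\mu^{-1}(r)$, and nothing about an arbitrary Morse gauge forces $2r-\mu^{-1}(r)$ to be $o(r)$. Your proposed remedy, to observe that the broken path $[y_1,x]\cdot[x,y_2]$ is also an $(L,0)$--quasi-geodesic for some $L$ depending only on $r$ and $d(y_1,y_2)$, does not hold: the multiplicative constant $L$ for which this path is an $(L,0)$--quasi-geodesic depends on how quickly the two geodesics $[y_1,x]$ and $[y_2,x]$ come together near $x$, which is a feature of the ambient geometry and is not controlled by $r$ and $d(y_1,y_2)$ alone. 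Indeed, in a tree the two geodesics may coincide on a long terminal segment, in which case the concatenation is not an $(L,0)$--quasi-geodesic for any $L$; and in a hyperbolic plane the required $L$ is exponential in $r$. So the mechanism you describe for ``bringing in the joint growth of $\mu(L,A)$'' is not available in the form stated. The argument in \cite{ArzCasGrub} proceeds differently, passing through an intermediate geometric characterisation rather than trying to read off $(\rho_1,\rho_2)$ directly from a single family of broken geodesics; completing your approach would require a genuinely new idea to extract sublinearity from the Morse gauge.
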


Thus, in the case of classical $C'(\ifrac{1}{6})$--groups, \fullref{theorem:strongcontraction} and \fullref{morseequalssublinearlycontracting} enable us to provide a complete characterization of Morse geodesics in the Cayley graph.

\begin{thm*}[\fullref{corollary:classicalmorse}]
Let $\Gamma$ be a $Gr'(\ifrac{1}{6})$--labelled graph whose components are cycle graphs.
Let $\alpha$ be a geodesic in $\Cay(G(\Gamma),\gens)$.
Define $\rho(r):=\max_{|\Gamma_i|\leqslant r}|\Gamma_i\cap\alpha|$, where the $\Gamma_i$ range over embedded components of $\Gamma$.
Then $\alpha$ is Morse if and only if $\rho$ is sublinear.
\end{thm*}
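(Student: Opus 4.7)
The plan is to combine the first assertion of this corollary, already established, with the Morse-equals-contracting equivalence of \fullref{morseequalssublinearlycontracting} and the local-to-global \fullref{theorem:strongcontraction}. One implication is essentially immediate: if $\rho$ is sublinear, the first assertion of the corollary tells us that $\alpha$ is sublinearly contracting, and \fullref{morseequalssublinearlycontracting} then upgrades this to say $\alpha$ is Morse.

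For the converse, I would argue as follows. Suppose $\alpha$ is Morse. By \fullref{morseequalssublinearlycontracting}, $\alpha$ is $(\rho_1', \rho_2')$--contracting for some pair of gauge functions. Then by \fullref{theorem:strongcontraction}, there exist $\rho_1, \rho_2$ satisfying the requirements of \fullref{def:contractingprojection} such that for every embedded cycle $\Gamma_i$ meeting $\alpha$, the intersection $P_i := \Gamma_i \cap \alpha$ is $(\rho_1, \rho_2)$--contracting as a subspace of $\Gamma_i$. The task is then to deduce that $\rho$ is sublinear.

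The key step is a short projection calculation inside a single cycle. Writing $n_i := |\Gamma_i|$ and $k_i := |P_i|$, we have $k_i \leqslant n_i/2$ because $\alpha$ is a geodesic and $\Gamma_i$ embeds isometrically by the $Gr'(\ifrac{1}{6})$--condition. I would select two points $x, x'$ within bounded distance of each other close to the midpoint of the complement arc $\Gamma_i \setminus P_i$, arranged so that $\pi(x)$ and $\pi(x')$ land on opposite endpoints of $P_i$; then $\diam(\pi(x) \cup \pi(x')) = \min(k_i, n_i - k_i) = k_i$ while $d(x, P_i) \asymp (n_i - k_i)/2 \geqslant n_i/4$. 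Because $\rho_1$ is unbounded, once $n_i$ exceeds a threshold depending only on $\rho_1$ the hypothesis $d(x, x') \leqslant \rho_1(d(x, P_i))$ is satisfied, and the contraction condition forces $k_i \leqslant \rho_2(n_i/2)$. Since $\rho_1(r) \leqslant r$ and $\rho_2 = o(\rho_1)$, the function $\rho_2$ is sublinear; absorbing the bounded contribution from the finitely many small cycles below the threshold yields $\rho(r) \leqslant \rho_2(r/2) + O(1)$, which is sublinear, as required.

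The main obstacle is the cycle projection argument itself, together with its prerequisite that $\alpha \cap \Gamma_i$ is a single contiguous subpath of the cycle $\Gamma_i$. This structural fact is a standard consequence of the graphical $Gr'(\ifrac{1}{6})$ theory and must be in hand from the earlier analysis before the projection computation above goes through cleanly.
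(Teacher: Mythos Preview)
Your argument is correct. The forward direction is exactly right, and your cycle-projection computation for the converse is valid: the connectivity of $\alpha\cap\Gamma_i$ follows from convexity of embedded components, and the two-point argument near the midpoint of the complementary arc, together with unboundedness of $\rho_1$ and sublinearity of $\rho_2$, gives $k_i\leqslant\rho_2(n_i/2)$ for all but finitely many cycles.

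That said, the converse is more laborious than necessary. You already grant the first assertion of the corollary as a biconditional: $\alpha$ is sublinearly contracting if and only if $\rho$ is sublinear. Since \fullref{morseequalssublinearlycontracting} (together with the equivalence of general contraction and sublinear contraction in \cite{ArzCasGrub}) gives Morse $\Leftrightarrow$ sublinearly contracting, both directions of the present statement follow at once by composing these two equivalences. There is no need to pass back through \fullref{theorem:strongcontraction} and redo the cycle computation; that work is already encapsulated in the first assertion. The paper treats the corollary exactly this way---it is stated without proof immediately after \fullref{corollary:locallysublinearimpliesmorse}, as a direct consequence.
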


\subsection{Range of contracting behaviors} 
As mentioned, in a CAT(0)-space, a geodesic is Morse if and only if it is strongly contracting.
Thus,  \fullref{morseequalssublinearlycontracting} says that in a CAT(0) space a geodesic is either strongly contracting or not contracting at all. 
We show that in finitely generated groups, the spectrum of contraction is, in fact, much richer: \emph{every} degree of contraction can be attained.

\begin{thm*}[\fullref{thm:allcontractionrates}]
  Let $\rho$ be a sublinear function. There exists a group $G$ with finite generating set $\gens$ and a sublinear function $\rho'\asymp\rho$ such that there exists an $(r,\rho')$--contracting geodesic $\alpha$ in $\Cay(G,\gens)$, and $\rho'$ is optimal, in the sense that if $\alpha$ is  $(r,\rho'')$--contracting for some other function $\rho''$ then $\limsup_{r\to\infty}\frac{\rho''(2r)}{\rho(r)}\geqslant 1$.

Furthermore $\alpha$ can be chosen to be within finite Hausdorff distance of a cyclic subgroup of $G$.
\end{thm*}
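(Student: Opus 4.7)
The plan is to realize the prescribed contraction rate along the axis of a generator in a carefully chosen graphical $Gr'(\ifrac{1}{6})$ group whose defining graph is a disjoint union of cycle graphs. After rescaling $\rho$ by a constant (changing $\rho$ only up to $\asymp$) we may assume $\rho(r)<r/12$ for every $r\geqslant 1$. The group will be $G:=G(\Gamma)$ over $\gens=\{a,b\}$, where $\Gamma:=\bigsqcup_{n\in I}\Gamma_n$ for an unbounded set $I\subseteq\N$ of sufficient density, and $\Gamma_n$ is a cycle of length $n$ labelled by the relator $r_n:=a^{\lfloor\rho(n)\rfloor}w_n$. The word $w_n$ has length $n-\lfloor\rho(n)\rfloor$, starts and ends with $b^{\pm 1}$, and contains no $a^{\pm 1}$--run of length $\geqslant 2$; the $w_n$ are chosen by a standard Thue-type or greedy combinatorial construction so that all pieces in the resulting collection have length less than $\min(n,m)/6$. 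The $a$--prefixes contribute pieces of length at most $\min(\lfloor\rho(m)\rfloor,\lfloor\rho(n)\rfloor)<\min(m,n)/12$, well within the $C'(\ifrac{1}{6})$ budget, and the residual pieces from the $w_n$--portions (including those crossing the prefix/$w_n$ boundary in cyclic conjugates) are controlled by the combinatorial choice. The required geodesic $\alpha$ is the bi-infinite geodesic in $X:=\Cay(G,\gens)$ labelled $\cdots aaa\cdots$ through the identity, and lies within Hausdorff distance $\ifrac{1}{2}$ of $\fgen{a}$.

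\emph{Upper bound.} Since no $r_n$ is a power of $a$, the element $a$ has infinite order and $\alpha$ is a bi-infinite geodesic. For any embedded translate $g\Gamma_n$, every edge lying on $\alpha$ must be labelled $a^{\pm 1}$, and by the bound on $a^{\pm 1}$--runs in $w_n$ only the image of the $a$--prefix can contribute a non-trivial arc; isolated $a^{\pm 1}$--edges of $w_n$ landing on $\alpha$ can be ruled out by the genericity of the combinatorial construction. Thus $|g\Gamma_n\cap\alpha|\leqslant\lfloor\rho(n)\rfloor$, with equality attained for $g\in\fgen{a}$, so that $\max_{|\Gamma_n|\leqslant r}|\Gamma_n\cap\alpha|\asymp\rho(r)$. \fullref{corollary:classicalmorse} then yields that $\alpha$ is $(r,\rho')$--contracting for some sublinear $\rho'\asymp\rho$.

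\emph{Optimality and main obstacle.} For each $n\in I$, let $x$ be a vertex in the middle of the detour arc of $\Gamma_n$ (the non-$\alpha$ side) and $x'\in\alpha$ an endpoint of $\Gamma_n\cap\alpha$. The isometric embedding of $\Gamma_n$ in $X$, a consequence of $Gr'(\ifrac{1}{6})$, yields $d(x,x')\leqslant(n-\lfloor\rho(n)\rfloor)/2$ and $d(x,\alpha)\geqslant(n-\lfloor\rho(n)\rfloor)/2$, while closest-point projection forces $\diam(\pi(x)\cup\pi(x'))\geqslant\lfloor\rho(n)\rfloor-O(1)$. If $\alpha$ is $(r,\rho'')$--contracting, this forces $\rho''\bigl((n-\lfloor\rho(n)\rfloor)/2\bigr)\geqslant\lfloor\rho(n)\rfloor-O(1)$ for every $n\in I$, and since $n-\lfloor\rho(n)\rfloor\sim n$ we obtain $\limsup_{r\to\infty}\rho''(2r)/\rho(r)\geqslant 1$. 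The principal obstacle is the combinatorial construction of the relators: one must arrange long $a$--prefixes inside a $C'(\ifrac{1}{6})$ family of words of every prescribed length while also ensuring that no prefix of $w_n$ accidentally represents a power of $a$ (which could enlarge $\Gamma_n\cap\alpha$ beyond the designed $a$--prefix). Once this combinatorial input is in place, the rest of the proof is a routine application of the local-to-global theorem.
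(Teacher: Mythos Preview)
Your approach is essentially the paper's: build a classical $C'(\ifrac{1}{6})$ presentation whose relators have a long $a$--prefix of length about $\rho(n)$, take $\alpha$ to be the $a$--axis, and invoke the local-to-global theorem together with \fullref{corollary:classicalmorse}. Two points are worth noting.

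First, the paper sidesteps entirely what you call ``the principal obstacle'' by using three generators $\gens=\{a,b,c\}$ and taking the $w_i$ to be a $C'(\ifrac{1}{12})$--collection of words in $\langle b,c\rangle$, with $|w_i|=4i$. Since the $w_i$ contain no occurrence of $a$ whatsoever, the only $a$--labelled subarc of the cycle $\Gamma_i$ is the designed prefix, so $|\Gamma_i\cap\alpha|=\rho(i)$ on the nose, and the piece analysis is a one--line computation (a piece is at most an $a$--power plus a piece of the $\{w_i\}$). Your two--generator version with isolated $a$'s in $w_n$ can be made to work, but the extra combinatorics you flag is genuinely avoidable rather than intrinsic. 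Relatedly, your worry that ``no prefix of $w_n$ accidentally represents a power of $a$'' is misplaced: because the embedding of $\Gamma_n$ in $X$ is label--preserving, any edge of $g\Gamma_n$ lying on $\alpha$ is literally labelled $a$ in $\Gamma_n$, so $g\Gamma_n\cap\alpha$ is automatically an $a$--labelled subarc of the cycle; group relations in $G$ play no role here.

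Second, for optimality you do not need the auxiliary point $x'$. The antipodal point $x$ on the detour arc already has $\diam\pi(x)$ equal to the length of the $a$--arc, so $(r,\rho'')$--contracting gives the inequality directly. The paper arranges $|w_i|=4i$ precisely so that $d(x_i,\alpha)=2i$ and $\diam\pi(x_i)=\rho(i)$, yielding $\rho(i)\leqslant\rho''(2i)$ without any asymptotic bookkeeping; your parametrization by $|\Gamma_n|=n$ gives $d(x,\alpha)\sim n/2$ and hence the same conclusion after tracking that $n-\lfloor\rho(n)\rfloor\sim n$, which is fine but less clean.
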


Our current examples are not finitely presentable.
The contraction spectrum for finitely presented groups remains largely
unexplored.
Indeed, if one restricts to geodesics within finite Hausdorff distance
of a cyclic subgroup, finitely presented groups can only display countably many degrees of contraction.
\begin{question}
  For which functions $\rho$ do there exist finitely presented groups
  $G$ containing a geodesic in some Cayley graph that is $(r,\rho)$--contracting?
\end{question}

\subsection{Non-stability of strong contraction} While the property of being Morse is stable under quasi-isometries, it has remained unknown whether the property of being strongly contracting is. We provide a negative answer by providing the first examples of spaces $X$ and $\tilde X$ and geodesics $\gamma$ and $\tilde \gamma$ such that there exists a quasi-isometry $X\to \tilde X$ mapping $\gamma$ to $\tilde \gamma$ and such that $\gamma$ is not strongly contracting, but $\tilde\gamma$ is strongly contracting.

\begin{thm*}[\fullref{thm:nonstability}]
There exists a group $G$ with finite generating sets $\gens\subset \tilde\gens$ and an infinite geodesic $\gamma$ in $X:=\Cay(G,\gens)$ labelled by the powers of a generator such that $\gamma$ is not strongly contracting, but its image $\tilde\gamma$ in $\tilde X:=\Cay(G,\tilde\gens)$ obtained from the inclusion $\gens\subset\tilde\gens$ is an infinite strongly contracting geodesic.
\end{thm*}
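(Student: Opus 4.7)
The strategy is to use the local-to-global theorem \fullref{theorem:strongcontraction}, or its sharper corollary \fullref{corollary:classicalmorse} for the classical case, to reduce the assertion to a combinatorial design problem: I want to construct a single group $G$ admitting two graphical $Gr'(\ifrac{1}{6})$ presentations, one over $\gens$ in which the defining graph has embedded components with $a$-subpaths of unbounded length, and one over $\tilde\gens$ in which every embedded component has $a$-subpaths of uniformly bounded length. Given both presentations, the local-to-global theorem delivers the conclusion at once, provided $\gamma$ and $\tilde\gamma$ are both geodesics.

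First I would fix $\tilde\gens = \gens \cup \{c\}$, where $\gens = \{a, s_1, \ldots, s_k\}$ uses a sufficiently large finite alphabet, and design a $Gr'(\ifrac{1}{6})$-labelled graph $\tilde\Gamma$ over $\tilde\gens$ whose components $\tilde\Gamma_n$ have arbitrarily long length but whose maximal consecutive $a$-runs are bounded by a fixed constant $L$. Among the components I would include a single short cycle labelled $cv^{-1}$ for a chosen $\gens$-word $v$ that begins and ends with the letter $a$ and that is not equal in $G := G(\tilde\Gamma)$ to any power of $a$; this guarantees that $\gens$ also generates $G$. Then \fullref{corollary:classicalmorse} applied to $\tilde\Gamma$ shows $\tilde\gamma$ is strongly contracting in $\tilde X$, once it is known to be a geodesic.

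Next, I would exhibit an alternative $Gr'(\ifrac{1}{6})$-labelled graph $\Gamma$ over $\gens$ with $G(\Gamma) = G$ and with components having unbounded $a$-runs. The natural candidate is the graph obtained from $\tilde\Gamma$ by replacing every $c$-edge by a path labelled $v$: since $v$ starts and ends with $a$, each such substitution concatenates the terminal $a$'s of $v$ with the flanking short $a$-runs of $\tilde\Gamma_n$, producing $a$-subpaths of length tending to infinity with $n$. I would verify that this rewriting is still $Gr'(\ifrac{1}{6})$ by drawing the $s_i$ from a large enough alphabet and choosing the non-$a$ parts of the relators generically, so that pieces of the rewritten relators remain short relative to the relator length. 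Then \fullref{corollary:classicalmorse} applied to $\Gamma$ shows $\gamma$ is not strongly contracting in $X$.

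The main obstacle is verifying that $\tilde\gamma$ remains a geodesic in $\tilde X$: this amounts to excluding any word over $\tilde\gens$ of length less than $k$ representing $a^k$ in $G$, in particular ruling out shortcuts through the new $c$-edges. I would establish this by a Greendlinger-type analysis of minimal van Kampen diagrams over $\tilde\Gamma$, exploiting the $Gr'(\ifrac{1}{6})$ condition together with the hypothesis that $v$ is not equivalent to any power of $a$, so that no boundary word of such a diagram can cancel down to a short $a$-power. The corresponding geodesicity of $\gamma$ in $X$ follows from the same argument applied to diagrams over $\Gamma$. This combinatorial balancing act — making both presentations satisfy $Gr'(\ifrac{1}{6})$ while engineering the opposite intersection behaviour with $\gamma$ — is the delicate core of the construction.
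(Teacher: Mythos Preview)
Your approach has a genuine gap at the core of the construction. You propose to pass from $\tilde\Gamma$ to $\Gamma$ by replacing every $c$-edge with a path labelled by the \emph{fixed} word $v$, and you claim this produces ``$a$-subpaths of length tending to infinity with $n$.'' But substituting a single letter by a fixed word is a bounded local operation. If the maximal $a$-run in any $\tilde\Gamma_n$ has length at most $L$, and $v$ has initial and terminal $a$-runs of lengths $p$ and $q$ respectively, then after substitution every maximal $a$-run in $\Gamma_n$ has length at most $p+q+L$ (even a block $c^m$ rewrites to $v^m$, whose $a$-runs have length at most $p+q$). So $\Gamma$ again has uniformly bounded $a$-runs, and by \fullref{corollary:classicalmorse} the $a$-geodesic would be strongly contracting in $X$ too---precisely what you do not want. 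With only finitely many new generators, each standing for a fixed $\gens$-word, no such rewriting can convert uniformly bounded $a$-runs into unbounded ones.

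The paper circumvents this obstruction by abandoning the classical (cycle-graph) setting for the $\gens$-presentation. Over $\gens$ it builds a single \emph{infinite connected} $Gr'(\ifrac{1}{6})$-labelled graph, a ladder obtained by gluing quadrilaterals $\Gamma_i$ along long arcs. The $y$-geodesic $\alpha$ in this component meets each rung in a single edge, so long $y$-runs are not the issue; instead, strong contraction fails because a second geodesic ray $\beta$ leaves every neighbourhood of $\alpha$ while its closest-point projection to $\alpha$ remains unbounded, violating the Bounded Geodesic Image Property (\fullref{corollary:bgi}). The move to $\tilde\gens$ then goes in the \emph{opposite} direction to yours: the new generators $x_1,x_2$ \emph{abbreviate} long $\gens$-words $w_1,w_2$, shrinking the gluing arcs so drastically that the ladder can be broken into disjoint cycles $\tilde\Gamma_i$, each containing exactly one $y$-edge. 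The asymmetry between the two Cayley graphs is thus achieved not by lengthening $a$-runs under rewriting, but by exploiting a genuinely graphical local obstruction to strong contraction that evaporates once the defining graph decomposes into cycles.
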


Indeed, in many familiar settings, such as hyperbolic groups, CAT(0) groups, or toral relatively hyperbolic groups, such examples could not be obtained, since in those contexts, strong contraction is equivalent to the Morse property. 

\subsection{Strongly contracting elements and growth tightness} Another of our main results is the existence of strongly contracting elements in many graphical small cancellation groups:

\begin{thm*}[\fullref{thm:existenceofstronglycontractingelement}]
Let $\Gamma$ be a $Gr'(\ifrac{1}{6})$--labelled graph whose components
are finite, labelled by a finite set $\gens$. Assume that $G(\Gamma)$ is infinite. Then there exists an infinite order element $g\in G(\Gamma)$ such that $\fgen{g}$ is strongly contracting in  $\Cay(G(\Gamma),\gens)$.
\end{thm*}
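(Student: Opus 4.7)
The plan is to apply the local-to-global theorem (\fullref{theorem:strongcontraction}): it suffices to exhibit an infinite-order $g\in G(\Gamma)$ together with a bi-infinite geodesic $\alpha$ in $X:=\Cay(G(\Gamma),\gens)$ at bounded Hausdorff distance from $\langle g\rangle\cdot 1$, such that for some constant $M$ we have $\diam(\alpha\cap\Gamma_0)\leqslant M$ for every embedded component $\Gamma_0$ of $\Gamma$. Indeed, under such a uniform bound, each intersection $\alpha\cap\Gamma_0$ is $(r,M)$--contracting inside $\Gamma_0$, so \fullref{theorem:strongcontraction} (with $\rho_1(r)=r$ and $\rho_2=M$) yields that $\alpha$ is strongly contracting in $X$; the statement then transfers to $\langle g\rangle$, which has finite Hausdorff distance from $\alpha$.

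The key step is to choose $g$ as a cyclically reduced word $w$ over $\gens$ satisfying: (i) $w$ labels a geodesic of $X$, and (ii) the label $w^2$ does not appear as the label of any path in any component of $\Gamma$. Given such a $w$, the bi-infinite path $\alpha$ through the identity labelled by $w^\infty$ is a geodesic (a standard small cancellation argument: any shortcut would force the label of a subpath of $\alpha$ to lie in a component, contradicting (ii) whenever the shortcut covers two periods of $w$), and $\alpha$ contains the orbit $\langle w\rangle\cdot 1$. Moreover, for any embedded component $\Gamma_0$, if $\diam(\alpha\cap\Gamma_0)\geqslant 2|w|$ then a subpath of $\alpha$ of length at least $2|w|$ lies in $\Gamma_0$, whose label therefore contains $w^2$, contradicting (ii). Hence $M=2|w|$ works uniformly, and the conclusion follows from the previous paragraph.

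To produce such a $w$, I would combine the finiteness of $\gens$ and of each component of $\Gamma$ with the $Gr'(\ifrac{1}{6})$ hypothesis. The point is that if $w^2$ is the label of a path in a component $\Gamma_0$, then $w$ occurs as a label of two distinct paths in $\Gamma_0$, hence is a \emph{piece} of $\Gamma_0$; the small cancellation condition then forces $|w|<\ifrac{|\Gamma_0|}{6}$. This prevents $w^2$ from being the label of a path in any \emph{small} component once $|w|$ exceeds a threshold. The remaining issue is to also avoid $w^2$ appearing in large components; here one exploits the fact that the finite alphabet $\gens$ together with the locally finite structure of $X$ near the identity and the translation length result from the abstract (infinite-order elements have positive translation length bounded below) let us select $w$ to be of a specific form — for example, a cyclically reduced geodesic word corresponding to a short loop-free segment of $X$ — whose iterates cannot be absorbed into a piece of arbitrary length.

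The main obstacle is exactly this construction of $w$: in the infinitely presented setting the labelled graph $\Gamma$ can contain components of unbounded size, so one cannot simply take $|w|$ larger than every component. Overcoming this requires leveraging the detailed combinatorial machinery developed earlier in the paper (the classification of geodesic polygons and the control of pieces under $Gr'(\ifrac{1}{6})$), to pin down a word $w$ whose square is forbidden to embed as a label in \emph{any} component simultaneously. Once $w$ is in hand, the rest of the argument is the short reduction above through the local-to-global theorem.
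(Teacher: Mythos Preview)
Your reduction is sound and matches the paper: bounding $\diam(\alpha\cap\Gamma_0)$ uniformly over embedded components and then invoking the local-to-global theorem is exactly the content of \fullref{lem:hyperbolic_iff_bounded_intersection} together with \fullref{corollary:hyperbolicimpliesstrongcontraction}. The passage from $\alpha$ to $\langle g\rangle$ via bounded Hausdorff distance is also correct (this is \fullref{lem:strcontbddHdist}).

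The genuine gap is the construction of $w$, which you yourself flag as the main obstacle. Your piece argument is not correct as stated: two occurrences of $w$ along a path labelled $w^2$ in a single component do \emph{not} automatically make $w$ a piece, since the definition requires the two label-preserving maps to be unrelated by a label-preserving automorphism of $\Gamma$, and the $Gr'(\ifrac{1}{6})$ hypothesis (unlike $C'(\ifrac{1}{6})$) allows such automorphisms. Even granting the piece conclusion, it only bounds $|w|$ against the girth of that particular component, which, as you note, says nothing about components of large girth. The subsequent appeal to translation lengths and ``loop-free segments'' does not supply a mechanism for excluding $w^2$ from arbitrarily large components, and the diagram classification of \fullref{sec:polygons} is not designed to do this either. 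Your sketch that (ii) forces $w^\infty$ to be geodesic is also too brief; this is a separate nontrivial fact (handled in the paper via \fullref{lem:periodicgeodesic}, which in turn relies on \cite{Gru15SQ}).

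The paper sidesteps the explicit construction entirely. It splits into two cases: if $\Gamma$ has only finitely many non-isomorphic components with nontrivial fundamental group, then $G(\Gamma)$ is hyperbolic and the result is immediate. Otherwise, \fullref{prop:existenceofhyperbolicelement} (from \cite{GruSis14}) supplies an element $g$ acting hyperbolically on the coned-off space $Y$. \fullref{lem:periodicgeodesic} places $\langle g\rangle$ near a periodic bi-infinite geodesic $\gamma$ in $X$, and \fullref{lem:hyperbolic_iff_bounded_intersection} shows that hyperbolicity on $Y$ is \emph{equivalent} to the uniform bound on $\diam_X(\gamma\cap\Gamma_0)$ you are seeking. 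So the existence of a suitable element is imported from \cite{GruSis14}; the paper's contribution here is precisely the bridge from hyperbolicity on $Y$ to strong contraction in $X$ via \fullref{theorem:strongcontraction}.
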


The element $g$ is, in fact, the WPD element for the action on the hyperbolic coned-off space in Gruber and Sisto's proof of acylindrical hyperbolicity of these groups \cite{GruSis14}. \fullref{thm:existenceofstronglycontractingelement} has the following consequence (which does not follow from acylindrical hyperbolicity):

Arzhantseva, Cashen, and Tao \cite{ArzCasTao15} have shown that the action of a finitely generated group $G$ on a Cayley graph $X$ is \emph{growth tight} if the action has a strongly contracting element, that is, an element $g$ such that $\langle g\rangle$ is strongly contracting in $X$.  Growth tightness means that the exponential growth rate of an orbit of $G$ in $X$ is strictly greater than the growth rate of an orbit of $G/N$ in $N\backslash X$, for every infinite normal subgroup $N$. \fullref{thm:existenceofstronglycontractingelement} therefore implies:

\begin{thm*}[\fullref{thm:growthtight}]
Let $\Gamma$ be a $Gr'(\ifrac{1}{6})$--labelled graph whose components are finite, labelled by a finite set $\gens$. 
Then the action of $G(\Gamma)$ on $\Cay(G(\Gamma),\gens)$ is growth tight.
\end{thm*}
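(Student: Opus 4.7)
The plan is to derive this theorem as an immediate corollary of the existence of strongly contracting elements (\fullref{thm:existenceofstronglycontractingelement}) combined with the growth tightness criterion of Arzhantseva, Cashen, and Tao \cite{ArzCasTao15}, both of which have been set up in the excerpt for precisely this purpose. The proof is essentially a two-line deduction, so most of the work has already been done elsewhere; here I only need to verify that the hypotheses line up.

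First, I would dispatch the trivial case in which $G(\Gamma)$ is finite, where growth tightness is vacuous because the exponential growth rate of any orbit is zero. So assume $G(\Gamma)$ is infinite. The hypotheses of \fullref{thm:existenceofstronglycontractingelement} (graph $\Gamma$ is $Gr'(\ifrac{1}{6})$--labelled, components are finite, $\gens$ is finite, $G(\Gamma)$ is infinite) match exactly, so I obtain an infinite order element $g \in G(\Gamma)$ with $\fgen{g}$ strongly contracting in $X := \Cay(G(\Gamma), \gens)$. This is, by definition, a strongly contracting element for the action of $G(\Gamma)$ on its Cayley graph $X$.

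Next, since $\gens$ is finite, $G(\Gamma)$ is a finitely generated group acting on a Cayley graph and possessing a strongly contracting element. I would then apply the criterion of \cite{ArzCasTao15}, which asserts that any such action is growth tight, to conclude that the exponential growth rate of $G(\Gamma)$ on $X$ strictly exceeds that of $G(\Gamma)/N$ on $N \backslash X$ for every infinite normal subgroup $N \trianglelefteq G(\Gamma)$. This yields the statement.

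There is no genuine obstacle in this argument: the hard work is concentrated in \fullref{thm:existenceofstronglycontractingelement}, whose proof identifies a WPD element from the Gruber–Sisto action \cite{GruSis14} and upgrades its weak proper discontinuity on the coned-off space to strong contraction in the Cayley graph. Once that theorem is available, the present statement reduces to quoting \cite{ArzCasTao15}; the only delicate point worth flagging is the finite/infinite dichotomy for $G(\Gamma)$, and it is handled trivially.
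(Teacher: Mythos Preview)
Your proposal is correct and matches the paper's approach exactly: the paper simply states that \fullref{thm:growthtight} is a corollary of \fullref{thm:existenceofstronglycontractingelement} via the growth tightness criterion of \cite{ArzCasTao15}, without writing out any further argument. Your explicit handling of the finite case is a harmless addition the paper does not bother to mention.
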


This has raised our interest in the following question, first asked by Grigorchuk and de la Harpe for hyperbolic fundamental groups of closed orientable surfaces \cite{GriHar97}:

\begin{question} Let $\Gamma$ be a $Gr'(\ifrac{1}{6})$--labelled graph whose components are finite, labelled by a finite set $\gens$. Does $G(\Gamma)$ attain its infimal growth rate with respect to the generating set $\gens$?
\end{question}

Together with \fullref{thm:growthtight}, a positive answer, even for the subclass of classical $C'(\ifrac{1}{6})$--groups, would establish small cancellation theory as an abundant source of \emph{Hopfian} groups.

\subsection{Contraction and hyperbolically embedded subgroups} An important application of the notion of strong contraction is the fact that an infinite order element whose orbit in the Cayley graph is strongly contracting is contained in a virtually cyclic hyperbolically embedded subgroup \cite{DGO11}, see \fullref{defi:hyperbolicallyembedded}. In particular, our proof of \fullref{thm:existenceofstronglycontractingelement} gives a new argument that the WPD elements of \cite{GruSis14} produce hyperbolically embedded subgroups.
Furthermore, our methods detect strongly contracting elements that need not be hyperbolic elements for the action on the coned-off space of \cite{GruSis14}, see \fullref{rem:unboundedproj} and \fullref{newstrongcontractingelements}.
Admitting a proper infinite hyperbolically embedded subgroup is equivalent to being acylindrially hyperbolic \cite{Osi13}, which implies a number of strong group theoretic  properties.

Not all hyperbolically embedded virtually cyclic subgroups are strongly contracting; see \cite{ArzCasTao15} for an example. However, every hyperbolically embedded subgroup of a finitely generated group is Morse \cite{SistoMorse}. In light of \fullref{morseequalssublinearlycontracting}, a natural question is whether there exists some critical rate of contraction that guarantees a subgroup is hyperbolically embedded. That is, does there exist an unbounded sublinear function $\rho_2$ such that every element $g$ with a $(r,\rho_2)$--contracting orbit in some Cayley graph has a hyperbolically embedded virtually cyclic elementary closure? The elementary closure of $g$ is the subgroup generated by all virtually cyclic subgroups containing $g$. We prove no such $\rho_2$ exists.

\begin{thm*}[\fullref{theorem:unbddrhocontnotHE}]
Let $\rho_2$ be an unbounded sublinear function. 
There exists a $Gr'(\ifrac{1}{6})$--labelled graph $\Gamma$ with set of labels $\gens:=\{a,b\}$ whose components are all cycles such that $G(\Gamma)$ has the following properties: Any virtually cyclic subgroup $E$ of $G(\Gamma)$ containing $\langle a\rangle$ is $(r,\rho'_2)$--contracting in the Cayley graph $\Cay(G(\Gamma),\gens)$ for some $\rho'_2\asymp \rho_2$, but $E$ is not hyperbolically embedded in $G(\Gamma)$.
\end{thm*}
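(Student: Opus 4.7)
The plan is to build $\Gamma$ as a disjoint union of labelled cycles with labels $r_n := a^{k_n} c_n a^{-k_n} c_n^{-1}$ for suitably chosen integers $k_n$ and words $c_n$ in $\gens=\{a,b\}$. Two features drive the construction: the identified $a^{k_n}$--segment of $r_n$ lies along the axis of $\langle a\rangle$ and controls the contraction rate via \fullref{corollary:classicalmorse}, while the commutator structure of $r_n$ forces $c_n$ to commute with $a^{k_n}$, producing infinitely many conjugates of $\langle a\rangle$ with infinite overlap and thereby destroying almost malnormality of every virtually cyclic $E\supseteq\langle a\rangle$.

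First I would calibrate the parameters. Since $\rho_2$ is sublinear and unbounded, I pick $k_n\to\infty$ and words $c_n$ with $|c_n|\to\infty$ so that $|r_n|=2k_n+2|c_n|\to\infty$ and $k_n\asymp\rho_2(|r_n|)$, while also $|c_n|>6k_n$; sublinearity of $\rho_2$ leaves ample slack. The inequality $|c_n|>6k_n$ is what is needed so that the common $a^{\min(k_n,k_m)}$--piece between distinct $r_n$ and $r_m$ has length less than $\tfrac16\min(|r_n|,|r_m|)$. Each $c_n$ I would choose as a word beginning and ending in $b^{\pm 1}$ carrying a unique ``tag'' pattern (for instance indexed runs of $b$'s separated by single $a$'s) so that any common subword between distinct $c_n^{\pm 1}$, and between $c_n^{\pm 1}$ and $a$--segments, is short. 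A routine piece-counting argument then verifies the classical $C'(\ifrac{1}{6})$ condition for $\Gamma$. Because each $r_n$ is cyclically reduced and not a proper power, a standard Greendlinger argument gives that $a$ has infinite order with $|a^n|_{\gens}=|n|$, so $\langle a\rangle$ supports a bi-infinite geodesic $\alpha$ in $X:=\Cay(G(\Gamma),\gens)$. For each embedded component $\Gamma_n$ of $\Gamma$ in $X$, at most one of its two $a^{\pm k_n}$--segments can lie along $\alpha$ (the other lies on a parallel axis $g\alpha$ with $g\neq 1$), hence $|\Gamma_n\cap\alpha|\leqslant k_n$ with equality realised by suitable translates. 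Thus $\rho(r):=\max_{|\Gamma_i|\leqslant r}|\Gamma_i\cap\alpha|\asymp\rho_2(r)$, and \fullref{corollary:classicalmorse} provides $\rho_2'\asymp\rho_2$ such that $\alpha$, and in turn every virtually cyclic $E\supseteq\langle a\rangle$ (at finite Hausdorff distance from $\alpha$), is $(r,\rho_2')$--contracting.

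To preclude hyperbolic embedding, I exploit the commutator relation $c_n a^{k_n} c_n^{-1}=a^{k_n}$, which yields $\langle a^{k_n}\rangle\subseteq\langle a\rangle\cap c_n\langle a\rangle c_n^{-1}$ for every $n$. I arrange the $c_n$ to represent pairwise distinct $\langle a\rangle$--cosets in $G(\Gamma)$, a separation property following from the $C'(\ifrac{1}{6})$ normal form together with the distinctness of the tags. Any virtually cyclic $E\supseteq\langle a\rangle$ is a finite-index extension of $\langle a\rangle$ and therefore contains only finitely many $\langle a\rangle$--cosets, so $c_n\notin E$ for all but finitely many $n$, while $E\cap c_nEc_n^{-1}\supseteq\langle a^{k_n}\rangle$ is infinite. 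This violates the almost malnormality that any hyperbolically embedded subgroup must enjoy (Dahmani--Guirardel--Osin), so no such $E$ is hyperbolically embedded in $G(\Gamma)$. The main technical obstacle is the simultaneous combinatorial design of the $c_n$: I must enforce $C'(\ifrac{1}{6})$ across the whole infinite family while keeping the $c_n$ in distinct $\langle a\rangle$--cosets and keeping $|c_n|$ tied to the prescribed rate $k_n\asymp\rho_2(|r_n|)$, and it is precisely the sublinearity of $\rho_2$ that provides the slack making these demands compatible.
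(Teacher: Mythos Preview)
Your almost-malnormality route differs from the paper's, but as written it cannot work: the commutator relators $r_n = a^{k_n} c_n a^{-k_n} c_n^{-1}$ never satisfy the small-cancellation condition. Within the single cycle $\Gamma_n$, the entire word $c_n$ is a piece --- the labelled path $c_n$ maps onto the $c_n$--arc directly and onto the $c_n^{-1}$--arc traversed in reverse, and since $r_n$ is not a proper power and (for $k_n>0$) not a cyclic conjugate of its own inverse, no label-preserving automorphism of the cycle identifies these two maps. The same reasoning makes $a^{k_n}$ a piece. Hence $Gr'(\ifrac{1}{6})$ forces both $|c_n|<\tfrac{1}{6}|r_n|$ and $k_n<\tfrac{1}{6}|r_n|$; with $|r_n|=2k_n+2|c_n|$ these become $|c_n|<\ifrac{k_n}{2}$ and $k_n<\ifrac{|c_n|}{2}$, which is impossible. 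No choice of tags inside $c_n$ helps, since the offending piece is the occurrence of $c_n$ with both orientations in a single relator, not an overlap between different relators.

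The paper's construction is quite different. It uses proper-power relators $R_r := (a^r b a^r b^{-1})^{N_r}$: the repeated block $a^rba^rb^{-1}$ does not register as a piece because the cycle carries a label-preserving rotation by $2r+2$, so $Gr'(\ifrac{1}{6})$ holds with pieces of length at most $2r+1$. The price is torsion: $a^rba^rb^{-1}$ is a nontrivial element of order dividing $N_r$ lying in the normal closure of $a^r$. The failure of hyperbolic embedding is then derived not from almost-malnormality but from the Dahmani--Guirardel--Osin theorem that for a hyperbolically embedded $E$ there exists $r$ such that the normal closure of $a^r$ is free; the torsion element contradicts this. The contraction estimate proceeds, as you outline, via \fullref{corollary:classicalmorse} once one checks $|\alpha\cap\gamma_r|\leqslant r$ and calibrates $N_r$ against $\rho_2$.
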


\subsection{Hyperbolically embedded cycles} In a subgroup of a hyperbolic group, every infinite order element is contained in a maximal virtually cyclic, hyperbolically embedded subgroup, whence we define the following:
\begin{definition}[HEC property]\label{defi:hec}
A group has the \emph{hyperbolically embedded cycles property (HEC property)} if the elementary closure $E(g)$ of every infinite order element $g$ is virtually cyclic and hyperbolically embedded. 
\end{definition}
It is natural to ask whether this property characterizes subgroups of hyperbolic groups.
While torsion presents an obvious complication, see \fullref{sec:hypembcycle}, we also present a negative answer to our question in the torsion-free case.

\begin{thm*}[\fullref{thm:allmaxsubgrpsHE}] There exist $2^{\aleph_0}$
  pairwise non-quasi-isometric finitely generated torsion-free groups
  in which every non-trivial cyclic subgroup is strongly contracting and which, therefore, have the HEC property.
\end{thm*}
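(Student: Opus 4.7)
The plan is to realise each of the $2^{\aleph_0}$ groups as a classical $C'(\ifrac{1}{6})$--group, the special case of $Gr'(\ifrac{1}{6})$ in which the defining graph is a disjoint union of cycles labelled by cyclically reduced words. For each subset $S\subset \N$ in a suitable uncountable family I would construct, on the fixed two-letter alphabet $\gens:=\{a,b\}$, a set $\calR_S=\{r_n\}_{n\in S}$ of cyclically reduced words that are not proper powers, with lengths growing very rapidly along $S$, such that the associated cycle-labelled graph $\Gamma_S$ satisfies $C'(\ifrac{1}{6})$. Writing $G_S:=G(\Gamma_S)$ and $X_S:=\Cay(G_S,\gens)$, these are infinitely presented two-generated groups.

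The crux is to force every non-trivial cyclic subgroup of $G_S$ to be strongly contracting in $X_S$. A bi-infinite geodesic $\alpha$ at finite Hausdorff distance from $\fgen{g}$ is labelled by $u^{\infty}$ for $u$ a geodesic word representing a power of $g$. By \fullref{corollary:classicalmorse}, $\fgen{g}$ is strongly contracting if and only if there is a uniform bound on $|\Gamma_i\cap\alpha|$ as $\Gamma_i$ ranges over embedded relator cycles in $X_S$. I would secure this bound by imposing, in addition to $C'(\ifrac{1}{6})$, a quantitative aperiodicity condition on each $r_n$: no subword of any cyclic conjugate of $r_n^{\pm 1}$ of length exceeding some absolute threshold $C_0$ is a subword of $u^{\infty}$ for any word $u$ of length below a controlled, relator-dependent bound. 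Aperiodic words arising from substitution systems (Thue--Morse, Sturmian) provide candidates, and one iteratively discards bad subwords while enforcing $C'(\ifrac{1}{6})$ against the previously constructed relators to obtain $\calR_S$. Since the period $|u|$ of the quasi-axis of $g$ is fixed once $g$ is fixed, this aperiodicity bound yields a constant $C_g$ with $|\Gamma_i\cap\alpha|\leqslant C_g$ for all $i$.

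Torsion-freeness of $G_S$ is a standard consequence of $C'(\ifrac{1}{6})$ together with the non-proper-power hypothesis on the relators. In a torsion-free group every virtually cyclic subgroup is cyclic, so the elementary closure $E(g)$ of any infinite order $g$ is a cyclic supergroup of $\fgen{g}$ of finite index, and in particular lies within finite Hausdorff distance of $\fgen{g}$ in $X_S$. Strong contraction therefore passes from $\fgen{g}$ to $E(g)$, and the Dahmani--Guirardel--Osin criterion invoked in the discussion preceding \fullref{theorem:unbddrhocontnotHE} shows that a cyclic subgroup with strongly contracting orbit is hyperbolically embedded. Hence $E(g)$ is virtually cyclic and hyperbolically embedded for every infinite order $g$, which is the HEC property.

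To produce $2^{\aleph_0}$ quasi-isometry classes I would parametrise the relator lengths by $S$, for example encoding $S$ into the gaps between successive $|r_n|$, and distinguish the resulting groups by an asymptotic invariant sensitive to this parameter, such as a divergence spectrum or an asymptotic cone invariant in the spirit of \cite{DruSap05, GruSis14}. The hardest step will clearly be the construction in the second paragraph: producing defining relators that are simultaneously $C'(\ifrac{1}{6})$, non-proper-powers, and aperiodic enough to uniformly bound the intersection of every periodic geodesic with every embedded relator cycle, while retaining sufficient flexibility to encode a continuum of non-quasi-isometric examples.
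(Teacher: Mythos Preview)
Your outline parallels the paper's argument closely: classical $C'(\ifrac{1}{6})$ presentations on cycles, an aperiodicity condition to bound the intersection of any periodic geodesic with any embedded relator, torsion-freeness from $C'(\ifrac{1}{6})$, passage from strong contraction to hyperbolically embedded via \cite[Theorem~H]{BesBroFuj15}, and an asymptotic-cone argument (the paper uses Thomas--Velickovic \cite{ThoVel00}) to separate $2^{\aleph_0}$ quasi-isometry classes.

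Where the paper is substantially cleaner is exactly your ``hardest step''. Instead of a relator-dependent aperiodicity threshold, the paper imposes one global condition: the labelling is \emph{non-repetitive}, meaning no embedded path in $\Gamma$ reads a square $ww$. Since a periodic geodesic labelled $\cdots vv\cdots$ contains $vv$, its intersection with any embedded component is automatically shorter than $2|v|$, and \fullref{corollary:classicalmorse} finishes at once. The catch is that there are no square-free words over two letters, so your insistence on $\gens=\{a,b\}$ is a self-imposed obstacle. The paper sidesteps this with a \emph{push-out labelling}: take any $C'(\ifrac{1}{6})$ labelling $L_1$ over $\gens_1$ and any non-repetitive labelling $L_2$ over $\gens_2$ (cycles have Thue number at most $4$, via the first-difference of the Thue--Morse sequence), and label each edge by the pair $(L_1(e),L_2(e))\in\gens_1\times\gens_2$. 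The product inherits both properties, the construction is explicit, and no iterative pruning is needed. One further point you slide over: the fact that $\fgen{g}$ lies at finite Hausdorff distance from a \emph{periodic} bi-infinite geodesic is not automatic; the paper establishes this separately (\fullref{lem:periodicgeodesic}) via the bigon classification and a reduction to a hyperbolic quotient.
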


These are the first examples of groups of this kind that do not arise as
subgroups of hyperbolic groups. Our examples include
exotic specimens such as Gromov monsters.

\subsection{Translation lengths}
Let $|\cdot|$ be the word length in $G(\Gamma)$ with respect to $\gens$.
The \emph{translation length} of an element $g\in G(\Gamma)$ is:
\[\tau(g):=\lim_{n\to\infty}\frac{|g^n|}{n}\]

Conner \cite{Con97} calls a group whose non-torsion elements have 
translation length bounded away from zero \emph{translation discrete}. 
Hyperbolic groups \cite{Swe95}, CAT(0) groups \cite{Con00}, and 
finitely presented groups satisfying various classical small cancellation conditions
\cite{Kap97} are translation discrete.

We show that many (possibly infinitely presented) graphical small cancellation groups are also translation discrete:
\begin{thm*}[{\fullref{thm:translationlengths}}]
  Let $\Gamma$ be a $Gr'(\ifrac{1}{6})$--labelled graph whose components
are finite, labelled by a finite set $\gens$. 
Then every infinite order element of $G(\Gamma)$ has rational translation length, and translation lengths are bounded away from zero.
\end{thm*}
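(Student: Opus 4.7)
The plan is to prove the two assertions via different arguments: the lower bound via the acylindrical hyperbolicity of $G(\Gamma)$, and the rationality via periodicity of a geodesic axis for some power of the given element.

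For the lower bound, I would use that $G(\Gamma)$ acts acylindrically on the Gromov hyperbolic space $Y$ obtained from $X := \Cay(G(\Gamma),\gens)$ by coning off the embedded components of $\Gamma$ (Gruber--Sisto \cite{GruSis14}). Any infinite order element $g$ must be loxodromic on $Y$: if $g$ were elliptic, its orbit of $1$ would be bounded in $Y$, hence contained in a finite union of apexes and their (finite) incident components, so by the pigeonhole principle $g$ would have finite order, a contradiction. Osin's theorem on acylindrical actions then gives a uniform $\varepsilon > 0$ with $\tau_Y(g) \geqslant \varepsilon$ for every loxodromic $g$, and since $d_Y \leqslant d_X$, we obtain $\tau(g) \geqslant \tau_Y(g) \geqslant \varepsilon$.

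For rationality, I would pass to a cyclically shortened conjugate $g_0$ of $g$, so that $|g_0|$ is minimal in the conjugacy class; since $\tau(g) = \tau(g_0)$, it suffices to treat $g_0$. The orbit $\{g_0^n\}_{n \in \Z}$ is a bi-infinite quasi-geodesic in $X$, and by the local-to-global theorem \fullref{theorem:strongcontraction} together with the finiteness of components of $\Gamma$, it fellow travels a genuine bi-infinite geodesic $\gamma$ on which $\langle g_0 \rangle$ acts coboundedly. A combinatorial argument on minimal van Kampen diagrams between consecutive translates of a geodesic word for $g_0$ would then show that some positive integer power $g_0^N$ translates $\gamma$ by exactly $|g_0^N|$; this yields $\tau(g_0^N) = |g_0^N| \in \N$ and hence $\tau(g) = \tau(g_0) = |g_0^N|/N \in \mathbb{Q}$.

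The main obstacle is producing the period $N$ with the exact axis property. A priori, the orbit of $g_0$ could weave in and out of embedded components of $\Gamma$ in a non-periodic manner, accessing components of ever-increasing size along the orbit. The finiteness of each component and the $Gr'(\ifrac{1}{6})$ small cancellation condition must be combined to pigeonhole the local combinatorial type of the geodesic $\gamma$ near consecutive translates of $1$, forcing eventual periodicity of the axis; only after this periodicity is established does the integer value of $\tau(g_0^N)$ follow. I expect the cyclic reducedness of $g_0$ to be essential here, ruling out the configurations in which consecutive segments of $\gamma$ could be shortened by cancellations coming from relator components.
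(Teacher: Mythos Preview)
Your lower-bound argument has a genuine gap. You assert that every infinite order element of $G(\Gamma)$ is loxodromic on the coned-off space $Y$, arguing that an elliptic element would have its orbit trapped in finitely many components and hence have finite order by pigeonhole. This inference fails: a $Y$--bounded orbit can meet infinitely many embedded components (there are infinitely many $G(\Gamma)$--translates of each), and passing through a component costs only bounded $Y$--distance regardless of its $X$--diameter. In fact the paper exhibits exactly this phenomenon: in \fullref{rem:unboundedproj} the element $a$ has infinite order and $\langle a\rangle$ is strongly contracting in $X$, yet its orbit has diameter at most $2$ in $Y$, so $a$ is elliptic on $Y$ (see the remark following \fullref{corollary:hyperbolicimpliesstrongcontraction}). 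Thus Osin's uniform lower bound for loxodromics cannot be invoked for all infinite order elements. A secondary issue is that \cite{GruSis14} establishes only a WPD element for the action on $Y$, not acylindricity of the full action, so your appeal to acylindricity would also need justification.

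The paper's route to the lower bound is entirely different: it uses the diagrammatic analysis of \cite{Gru15SQ} to show directly that $\tau_G(x)\geqslant \ifrac{1}{3}$, by controlling the shape of the combinatorial geodesic bigons $B_n$ between $g_n$ and $w^n$ case by case.

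Your rationality argument is closer in spirit to the paper's, which deduces rationality from \fullref{lem:periodicgeodesic} (existence of a periodic geodesic at bounded Hausdorff distance from any infinite cyclic subgroup). However, your proposed justification via \fullref{theorem:strongcontraction} is not apt: that theorem concerns contraction, not the existence of a cobounded geodesic axis. The paper instead proves \fullref{lem:periodicgeodesic} by reducing to a hyperbolic quotient $H=G(\Gamma^{<6C_0})$ (again via \cite{Gru15SQ}) on which the restriction $\langle h\rangle\to\langle g\rangle$ is isometric, and then invoking Swenson's theorem in $H$. The ``pigeonhole on local combinatorial types'' you anticipate is effectively what the reduction to the finite subgraph $\Gamma^{<6C_0}$ accomplishes, but making this precise requires the specific structural results of \cite{Gru15SQ} rather than the contraction machinery of \fullref{sec:gsccontraction}.
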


%%% Local Variables:
%%% mode: latex
%%% TeX-master: "main"
%%% End:

\subsection{The idea of the proof of the local-to-global theorem}

In a tree, geodesic quadrangles are degenerate, as seen in
\fullref{fig:treequadrangle}.
\begin{figure}[h]
  \centering
\hfill
\begin{subfigure}[h]{.49\textwidth}
\centering
\labellist
\tiny
\endlabellist
\includegraphics[scale=.35]{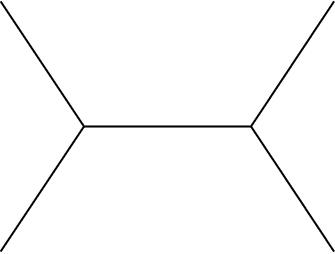}  
\end{subfigure}
\hfill
\begin{subfigure}[h]{.49\textwidth}
\centering
\labellist
\tiny
\endlabellist
\includegraphics[scale=.35]{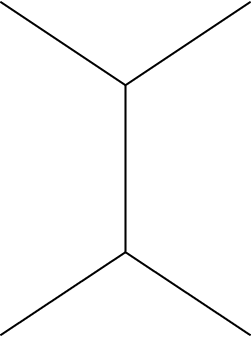}  
\end{subfigure}
\hfill
  \caption{Tree quadrangles}\label{fig:treequadrangle}
\end{figure}
In a hyperbolic space, geodesic quadrangles can be approximated by
geodesic quadrangles in a tree. 
If the base is a fixed geodesic $\alpha$, the top is some given
geodesic $\gamma$, and
the sides are given by closest point projection from the endpoints of
the top to the bottom, then the resulting geodesic quadrangle is
either `short' or `thin', as in
\fullref{fig:hyperbolicquadrangle}.
\begin{figure}[h]
  \centering
  \hfill
\begin{subfigure}[h]{.49\textwidth}
\centering
\labellist
\tiny
\pinlabel $\alpha$ [t] at 20 1
\pinlabel $\gamma$ [bl] at 11 90
\endlabellist
\includegraphics[scale=.35]{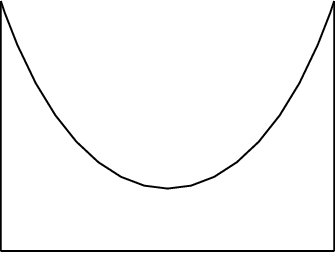}  
\end{subfigure}
\hfill
\begin{subfigure}[h]{.49\textwidth}
\centering
\labellist
\tiny
\pinlabel $\alpha$ [t] at 20 1
\pinlabel $\gamma$ [bl] at 31 139
\endlabellist
\includegraphics[scale=.35]{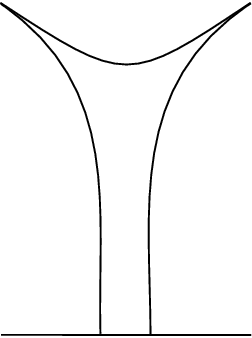}  
\end{subfigure}
\hfill
  \caption{Hyperbolic quadrangles from closest point projection}
  \label{fig:hyperbolicquadrangle}
\end{figure}

In \fullref{prop:combinatorialngons}, we show a combinatorial version
of this dichotomy through an analysis of van Kampen diagrams in graphical
$Gr'(\ifrac{1}{6})$ small cancellation groups. 
Specifically, if $X:=\Cay(G(\Gamma),\gens)$ and $\alpha\subset X$ is a $(\rho_1,\rho_2)$--contracting
geodesic, $\gamma\subset X$ is another geodesic, and
each endpoint of $\gamma$ is connected via a geodesic to a closest
point of $\alpha$, then the boundary word of the resulting geodesic
quadrangle admits a van Kampen diagram that is either `short' or `thin'
in terms of number of faces, as depicted in \fullref{fig:introquaddichotomy}.

\begin{figure}[h]
\captionsetup[subfigure]{labelformat=empty}
  \centering
\hfill
  \begin{subfigure}[h]{.33\textwidth}
\centering      
\labellist
\tiny
\pinlabel $\gamma$ [bl] at 28 94
\pinlabel $\alpha$ [t] at 20 1
\endlabellist
  \includegraphics[height=1.3\myquadheight]{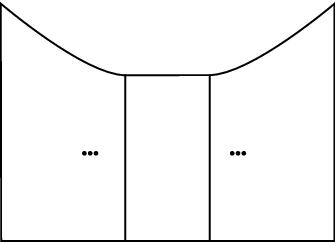}
\caption{Short}
%\label{fig:tricquad}
  \end{subfigure}\hfill
\begin{subfigure}[h]{.33\textwidth}
\centering      
\labellist
\tiny
\pinlabel $\gamma$ [bl] at 28 94
\pinlabel $\alpha$ [t] at 20 1
\endlabellist
  \includegraphics[height=1.3\myquadheight]{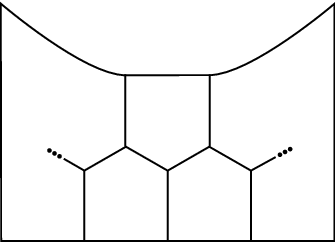}
\caption{Short}
%\label{fig:triczip}
  \end{subfigure}\hfill
\begin{subfigure}[h]{.33\textwidth}
\centering      
\labellist
\tiny
\pinlabel $\Pi_1$ at 20 67
\pinlabel $\Pi_2$ at 54 67
\pinlabel $\Pi_{k-1}$ at 109 67
\pinlabel $\Pi_k$ at 145 67
\pinlabel $\gamma$ [bl] at 28 94
\pinlabel $\alpha$ [t] at 20 1
\endlabellist
  \includegraphics[height=1.3\myquadheight]{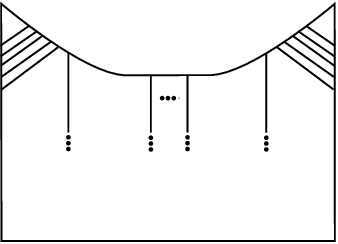}
\caption{Thin ($k\leqslant 6)$}
  \end{subfigure}\hfill
  \caption{Combinatorially short and thin quadrangles}\label{fig:introquaddichotomy}
\end{figure}

The main ingredient in establishing this dichotomy is a classification
of `special combinatorial geodesic quadrangles', see
  \fullref{thm:classificationofspecialquads}, that extends Strebel's
  classification of geodesic bigons and triangles in small
  cancellation groups, see \fullref{thm:strebel}.
This classification is of independent interest, and is novel even
within the class of classical small cancellation groups.

These combinatorial versions of short and thin quadrangles do
not immediately imply their metric counterparts, because the faces in
the van Kampen diagrams may have boundary words that are arbitrarily
long relators. 
In \fullref{sec:gsccontraction} we use the fact that $\alpha$ is
$(\rho_1,\rho_2)$--contracting to show that even if the faces have
long boundaries, their projections to $\alpha$ are small with respect
to their distance from $\alpha$.

The essential trick that is used repeatedly is to play off the small cancellation condition against the contraction condition. 
Specifically, if $\Pi$ is a face of the van Kampen diagram with few
sides, one of which sits on $\alpha$, we use the small cancellation
condition to show that $|\alpha\cap\bdry\Pi|$ is bounded below by a
linear function of $|\bdry\Pi|$. Then we use the contraction condition
to say that $|\alpha\cap\bdry\Pi|$ is bounded above by a sublinear
function of $|\bdry\Pi|$.
Thus, we have a sublinear function of $|\bdry\Pi|$ that gives an upper
bound to a linear function of $|\bdry\Pi|$.
This is only possible
if $|\bdry\Pi|$ is smaller than some bound depending on the two functions.

%%% Local Variables:
%%% mode: latex
%%% TeX-master: "main"
%%% End:

\section{Preliminaries}\label{sec:preliminaries}
We set notation. Let $\gens$ be a set.
\begin{itemize}
\item $\fgen{\gens}$ is the group generated by $\gens$. If $\gens$ is
  a subset of a group $G$ then $\fgen{\gens}$ is the subgroup of $G$
  generated by $\gens$. If $\gens$ is a set of formal symbols then
  $\fgen{\gens}$ is the free group freely generated by $\gens$.
\item If $\gens$ is a set of formal symbols then 
  $\gens^-:=\{s^{-1}\mid s\in\gens\}$ is the set of formal inverses,
  and $\gens^{\pm}:=\gens\cup\gens^-$. 
\item $\gens^*$ is the free monoid over $\gens$.
\item $2^\gens$ is the 
set of subsets of $\gens$.
\item $\gens^\N$ is the set of infinite 
sequences with terms in $\gens$.
\end{itemize}

We write $f\asympleq g$ if there exist $C_1>0$, $C_2>0$, $C_3\geqslant 0$, and $C_4\geqslant 0$ such that $f(x)\leqslant C_1g(C_2x+C_3)+C_4$ for all $x$.
If $f\asympleq g$ and $g\asympleq f$ then we write $f\asymp g$.

Note that if $f\asympleq g$ and $g$ is bounded then $f$ is bounded, and if $f$ is non-decreasing and eventually non-negative and $f\asympleq
g$ for a function $g$ such that $\lim_{r\to\infty}\frac{g(r)}{r}=0$, then $f$ is sublinear.

The \emph{girth} of a graph is the length of its shortest non-trivial cycle.

\subsection{Graphical small cancellation}\label{sec:smallcancellation}
Graphical small cancellation theory is a generalization of classical small
cancellation theory.
The main application is an embedding of a desired sequence of graphs into the Cayley graph of a group.
It was introduced by Gromov \cite{Gro03}, and was later clarified and expanded by Ollivier \cite{Oll06},
Arzhantseva and Delzant \cite{ArzDel08}, and, in a systematic way, by Gruber \cite{Gru15}.
\subsubsection{Basic facts}
Let $\Gamma$ be a directed graph with edges labelled by a set
$\gens$.
We allow paths to traverse edges against their given
direction, with the convention that the label of an oppositely
traversed edge is the formal inverse of the given label.
Thus, given a finite path in $\Gamma$ we can \emph{read} a word in
$(\gens^\pm)^*$ by concatenating the labels of the edges along the path. 

We require that the labelling is \emph{reduced}, in the sense that no vertex
has two incident outgoing edges with the same label, and no vertex has
two incident incoming edges with the same label. 
This implies that the word read on an immersed path is freely reduced and, hence, an element of $\fgen{\gens}$. Also, the word read
on an immersed cycle is cyclically reduced.

Let $\rels$ be the set of words in $\left< \gens\right>$ read on embedded cycles in $\Gamma$.
Note that this definition implies that elements of $\rels$ are
cyclically reduced and that $\rels$ is closed
under inversion and cyclic permutation of its elements. 

\begin{definition}[Group defined by a labelled graph]
  The group $G$ defined by a reduced $\gens$--labelled graph $\Gamma$ is the
  group $G(\Gamma):=\langle \gens\mid\rels\rangle$.
\end{definition}
The notion of a group defined by a labelled graph first appeared in Rips and Segev's construction of torsion-free groups without the unique-product property \cite{RipSeg87}.

\begin{definition}[Piece]
  A \emph{piece} is a labelled path graph $p$ that admits two distinct
  label-preserving maps
$\phi_1,\,\phi_2\from p\to\Gamma$ such that there is no
label-preserving automorphism
$\psi$ of $\Gamma$ with $\phi_2=\psi\circ\phi_1$.
\end{definition}

\begin{definition}[$Gr'(\lambda)$ and $C'(\lambda)$ conditions]
Let $\Gamma$ be a reduced labelled graph, and let $\lambda>0$.

  $\Gamma$ is \emph{$Gr'(\lambda)$--labelled} if
  whenever $p$ is a piece contained in a simple cycle $c$ of $\Gamma$ then
  $|p|<\lambda|c|$.

$\Gamma$ is \emph{$C'(\lambda)$--labelled} if it is
$Gr'(\lambda)$--labelled and, in addition, every label-preserving automorphism of $\Gamma$ restricts to the identity on every connected component with non-trivial fundamental group. 

A presentation $\langle\gens\mid\rels\rangle$ satisfies the \emph{classical $C'(\lambda)$-condition} if  the disjoint union of cycle graphs labelled by the elements of $\rels$ is a $Gr'(\lambda)$-labelled graph.
\end{definition}

Actually, every group
is defined by a $Gr'(\ifrac{1}{6})$--labelled graph: simply take $\Gamma$ to be its
Cayley graph with respect to any generating set of the group \cite[Example~2.2]{Gru15}.
Therefore, general statements about groups defined by $Gr'(\ifrac{1}{6})$--labelled graphs either require that some
additional condition be imposed on $\Gamma$ or are tautologically true when $\Gamma=\Cay(G(\Gamma),\gens)$.

A subspace $Y$ of a geodesic metric space is \emph{convex} if every geodesic segment
between points of $Y$ is contained in $Y$.

\begin{lemma}[{\cite[Lemma~2.15]{GruSis14}}]
  Let $\Gamma$ be a $Gr'(\ifrac{1}{6})$--labelled graph.
Let $\Gamma_i$ be a component of $\Gamma$.
For any choice of a vertex $x\in X:=\Cay(G(\Gamma),\gens)$ and any vertex $y\in\Gamma_i$ there is a unique label-preserving map
$\Gamma_i\to X$ that takes $y$ to $x$, and this map is an isometric embedding with convex image.
\end{lemma}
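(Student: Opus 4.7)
The plan is to construct the map combinatorially, then use a van Kampen diagram argument to verify its geometric properties.

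\emph{Construction and uniqueness.} For every vertex $z\in\Gamma_i$, choose a path $q$ in $\Gamma_i$ from $y$ to $z$, read its label $w_q\in(\gens^\pm)^*$, and declare the image of $z$ to be $x\cdot w_q\in X$, where $x$ is regarded as an element of $G(\Gamma)$. Well-definedness reduces to showing $w_q=w_{q'}$ in $G(\Gamma)$ for any two such paths, which holds because $q(q')^{-1}$ is a closed loop in $\Gamma_i$ that decomposes as a concatenation of embedded cycles, and the labels of embedded cycles of $\Gamma$ are by definition the elements of $\rels$. Uniqueness is immediate since the labelling is reduced: the image of each neighbour of a vertex is forced by label matching, and this propagates over the connected graph $\Gamma_i$.

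\emph{Isometric embedding.} Let $u,v\in\Gamma_i$ with images $\bar u,\bar v$. The inequality $d_X(\bar u,\bar v)\leqslant d_{\Gamma_i}(u,v)$ follows by lifting any $\Gamma_i$--geodesic. For the reverse, assume a $\Gamma_i$--geodesic $p$ and an $X$--geodesic $\gamma$ between these vertices satisfy $|\gamma|<|p|$; then the concatenation of the image of $p$ with $\gamma^{-1}$ is a closed word representing the identity in $G(\Gamma)$ and so bounds a reduced van Kampen diagram $D$ over $\langle\gens\mid\rels\rangle$. By Greendlinger's lemma for $Gr'(\ifrac{1}{6})$--labelled graphs, some face $\Pi$ of $D$ meets $\bdry D$ in an arc $a$ with $|a|>|\bdry\Pi|/2$; the boundary word of $\Pi$ is read along an embedded cycle $c$ in some component $\Gamma_j$ of $\Gamma$. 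Split $a$ at any corner of $\bdry D$ it crosses; at least one resulting sub-arc still has length $>|\bdry\Pi|/2$ and lies on a single side, either $\gamma$ or the image of $p$. If it lies on $\gamma$, replacing it in $\gamma$ by the strictly shorter complementary arc of $c$ shortens $\gamma$, contradicting geodesy. If it lies on $p$ and $\Gamma_j\neq\Gamma_i$, the sub-arc admits two distinct label-preserving embeddings into $\Gamma$ (into $\Gamma_i$ and into $\Gamma_j$), exhibiting it as a piece of length $>|c|/2$ in a simple cycle and violating $Gr'(\ifrac{1}{6})$. If it lies on $p$ and $\Gamma_j=\Gamma_i$, then either $c$ and $p$ share an arc of length $>|c|/2$ in $\Gamma_i$, so the complementary shorter arc of $c$ gives a shorter $u$-$v$ path in $\Gamma_i$ and contradicts geodesy of $p$, or the sub-arc has two distinct realizations in $\Gamma_i$ and is again a violating piece.

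\emph{Convexity.} Any $X$--geodesic $\gamma'$ between two points of the image has length equal to the corresponding $\Gamma_i$--distance by the previous step. Re-running the bigon argument with $\gamma'$ and the $\Gamma_i$--geodesic of equal length again forces every Greendlinger face to yield a contradiction unless the diagram is empty, whence $\gamma'$ coincides edge-by-edge with the image of that $\Gamma_i$--geodesic and so lies in the image of $\Gamma_i$. The principal obstacle throughout is the case analysis for the Greendlinger face: one must simultaneously exclude the cross-component scenario via the $Gr'(\ifrac{1}{6})$ piece bound and the same-component scenario via in-$\Gamma_i$ geodesy of $p$, and handle carefully the cases where the Greendlinger arc $a$ crosses a corner of $\bdry D$.
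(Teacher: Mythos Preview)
The paper does not supply its own proof of this lemma; it is quoted verbatim from \cite[Lemma~2.15]{GruSis14} and used as a black box. So there is no in-paper argument to compare against, and I will just assess your attempt on its own merits.

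Your argument contains a genuine error. In the isometric-embedding step you write: ``Split $a$ at any corner of $\bdry D$ it crosses; at least one resulting sub-arc still has length $>|\bdry\Pi|/2$.'' This is false. Greendlinger's lemma only gives $|a|>|\bdry\Pi|/2$; a single corner can split such an arc into two pieces each of length at most $|\bdry\Pi|/2$ (take $|a|\approx 0.55\,|\bdry\Pi|$ split near its midpoint). Once the sub-arc on $\gamma$ has length only $>|\bdry\Pi|/4$, replacing it by the complementary arc of $c$ need not shorten $\gamma$, and the sub-arc on $p$ being a piece of length $>|\bdry\Pi|/4$ does not contradict $Gr'(\ifrac16)$. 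The convexity paragraph inherits the same gap.

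The standard repair, which you can see echoed in this paper's proof of \fullref{lem:intersection_projection}, is to change what you minimise over. Do not fix $p$ to be a $\Gamma_i$--geodesic; instead minimise the number of edges of the diagram over \emph{all} paths $p$ in $\Gamma_i$ from $u$ to $v$, all $X$--geodesics $\gamma$, and all diagrams as in \fullref{lem:graphical_diagrams}. Minimality then forces every arc of $D$ lying on the $p$-side to be a piece: if such an arc were not a piece, the adjacent face lifts to $\Gamma_i$ compatibly with $p$ and can be excised by rerouting $p$ inside $\Gamma_i$, reducing the face count. With this in hand, a corner face has its $p$-portion of length $<|\bdry\Pi|/6$ and at most three interior arcs totalling $<|\bdry\Pi|/2$, so its $\gamma$-portion has length $>|\bdry\Pi|/3$; but now the \emph{complement} of the $\gamma$-portion in $\bdry\Pi$ consists entirely of pieces (the $p$-portion and the interior arcs), hence has length $<4\cdot|\bdry\Pi|/6<|\bdry\Pi|$, and in fact one can attach a dummy face along $p$ as in the proof of \fullref{lem:intersection_projection} to reduce to Strebel's shape~$\mathrm{I}_1$ and conclude $D$ has no faces. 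This simultaneously yields the isometric embedding (the surviving $p$ has $|p|=|\gamma|$) and convexity ($\gamma$ reads the same word as $p$, hence lies in the image).
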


\begin{definition}[Embedded component]
  An \emph{embedded component} $\Gamma_0$ of $\Gamma$ refers to the
  image of an isometric embedding of some $\Gamma_i$ into $X:=\Cay(G(\Gamma),\gens)$ via a
  label-preserving map. Equivalently, it is a $G(\Gamma)$--translate in $X$ of
  the image of $\Gamma_i$ under the unique label-preserving map determined by an arbitrary choice of basepoints in
  $X$ and $\Gamma_i$.
\end{definition}

We consider a graph $\Gamma$ as a sequence $(\Gamma_i)_i$
of its connected components.

\subsection{Contraction terminology}
Recall \fullref{def:contractingprojection}. In \cite{ArzCasGrub} we considered, more generally, almost closest point projections $x\mapsto \{y\in
Y\mid d(x,y)\leqslant d(x,Y)+\epsilon\}$ to ensure the empty set is
not in the image of the projection. 
That is unnecessary in this paper as we are in the case that $Y$ is a
subgraph of a graph $X$, which guarantees
$\emptyset\notin\mathrm{Im}\thinspace\pi$. 
Here, and from now on, $\pi\from X\to 2^Y$ denotes closest point projection to $Y$.

We say a geodesic $\alpha$ in $\Cay(G(\Gamma),\gens)$ is \emph{locally $(\rho_1,\rho_2)$--contracting} if, for each embedded component $\Gamma_0$ of $\Gamma$ such that $\Gamma_0\cap\alpha$ is non-empty, closest point projection in $\Gamma_0$ of $\Gamma_0$ to $\Gamma_0\cap\alpha$ is $(\rho_1,\rho_2)$--contracting. 

We say a geodesic is \emph{uniformly locally contracting} if there exist $\rho_1$ and $\rho_2$ such that it is locally
$(\rho_1,\,\rho_2)$--contracting. We add `uniform' here to stress that the intersection of the geodesic with each embedded component of $\Gamma$ is contracting with respect
to the same contraction functions. Similarly, a geodesic is \emph{uniformly locally sublinearly contracting} if it is locally $(r,\rho_2)$--contracting, and is \emph{uniformly locally strongly contracting} is it locally $(r,\rho_2)$--contracting for $\rho_2$ bounded.

%%% Local Variables: 
%%% mode: latex
%%% TeX-master: "main"
%%% End: 

\section{Classification of quadrangles}\label{sec:polygons}
In this section, we establish geometric results that will let us prove our theorems about contraction in graphical small cancellation groups. In particular, we provide a complete classification of the geodesic quadrangles in the Cayley graph of a group defined by a $Gr'(\ifrac{1}{6})$--labelled graph. The main technical result to be used in our subsequent investigation will be recorded in \fullref{prop:combinatorialngons}.

\subsection{Combinatorial geodesic polygons} One of the main tools of small cancellation theory are so-called `van Kampen diagrams'.
\begin{definition}[Diagram]
  A (disc) \emph{diagram} is a finite, simply-connected,
  2--dimensional CW complex with an embedding into the plane,
  considered up to orientation-preserving homeomorphisms of the plane.
It is \emph{$\gens$--labelled} if its directed edges are labelled by
  elements in $\gens$.
It is a diagram \emph{over $\rels$} if it is $\gens$--labelled and the word read on the boundary of each 2--cell belongs to $\rels$. A diagram is \emph{simple} if it is homeomorphic to a disc.
\end{definition}

If $D$ is a diagram over $\rels$, $b$ is a basepoint in $D$, and
$g$ is an element of $G=\langle\gens\mid\rels\rangle$, then there exists a unique 
label-preserving map from the 1--skeleton of $D$ into $\Cay(G,\gens)$ taking $b$ to $g$. 
In general, this map need not be an immersion.

An \emph{arc} in a diagram $D$ is a maximal path of length at least 1 all of whose
interior vertices have valence 2 in $D$.
An \emph{interior arc} is an arc whose interior is contained in the
interior of $D$. An \emph{exterior arc} is an arc contained in the
boundary of $D$. 
A \emph{face} is the image of a closed 2-cell of $D$.
If $\Pi$ is a face, its \emph{interior degree} $i(\Pi)$ is the number
of interior arcs in its boundary. 
Likewise, its \emph{exterior degree} $e(\Pi)$ is the number of
exterior arcs. An \emph{interior face} is one with exterior degree 0; an \emph{exterior face} is one with positive exterior degree.

If $D$ is a finite, simply connected, planar, 2--dimensional CW-complex
whose boundary is written as a concatenation of immersed subpaths
$\gamma_1,\dots,\gamma_k$, which we call \emph{sides} of $D$, then
there is a
unique, up to orientation-preserving homeomorphism of $\mathbb{R}^2$, embedding 
$\phi\from D\to \mathbb{R}^2$ such that the concatenation of the $\phi(\gamma_i)$
is the positively oriented boundary $\bdry\phi(D)$.
This claim follows easily from the Schoenflies Theorem.
Thus, $(D,(\gamma_i)_i)$ uniquely determines a (not necessarily
simple) diagram $\phi(D)$.
We call $\phi$ the \emph{canonical embedding} of $(D,(\gamma_i)_i)$.
Having said this once, we omit $\phi$ from the notation and conflate
$D$ and the $\gamma_i$ with their $\phi$--images. 

\begin{defi}[$(3,7)$--diagram]
  A \emph{$(3,7)$--diagram} is a diagram such that every interior vertex
  has valence at least three and every interior face has interior
  degree at least seven.
\end{defi}

\begin{defi}[{Combinatorial geodesic polygon \cite[Definition~2.11]{GruSis14}}]\label{def:combinatorialgeodesicpolygon}
  A \emph{combinatorial geodesic $n$--gon} $(D,(\gamma_i)_i)$ is a
$(3,7)$--diagram $D$ whose boundary is a concatenation of immersed subpaths
$\gamma_0,\dots,\gamma_{n-1}$ such that each boundary face whose
  exterior part is a single arc that is contained in one of the sides
  $\gamma_i$ has interior degree at least 4.
A valence 2 vertex that belongs to more than one side is called a
\emph{distinguished vertex}. 
A face whose exterior part contains an arc not contained in one of the
sides is a \emph{distinguished face}. 
\end{defi}

The ordering of the sides of a combinatorial geodesic $n$--gon is considered up to cyclic permutation, with subscripts
modulo $n$. 
We also refer to `the combinatorial geodesic $n$--gon
$D$' when the sides are clear from context or irrelevant.
We can also say `combinatorial geodesic polygon' when the number of
sides is irrelevant.
Following common usage, 2--gons, 3--gons, and 4--gons will
respectively be denominated \emph{bigons}, \emph{triangles}, and \emph{quadrangles}.

If $D$ is a simple combinatorial geodesic $n$--gon then every distinguished face contains a distinguished vertex,
so there are at most $n$ distinguished faces.

We record the following crucial fact about diagrams over graphical small cancellation presentations. In the following, $\rels$ is the set of labels of simple cycles on a $Gr'(\ifrac{1}{6})$--graph $\Gamma$ labelled by the set $\gens$, and $X:=\Cay(G(\Gamma),\gens)$.
\begin{lemma}[{\cite[Lemma~2.13]{Gru15}}]\label{lem:graphical_diagrams}
Let $\Gamma$ be a $Gr'(\ifrac{1}{6})$--labelled graph, and let $w\in\langle\gens\rangle$ represent the identity in $G(\Gamma)$. Then, there exists an $\gens$-labelled diagram over $\rels$ with boundary word $w$ in which every interior arc is a piece.
\end{lemma}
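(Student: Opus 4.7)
My plan is to invoke the classical van Kampen lemma and then refine the resulting diagram via a minimality argument. Since $w$ represents the identity in $G(\Gamma) = \fpres{\gens}{\rels}$, there exists some $\gens$--labelled disc diagram over $\rels$ with boundary word $w$. Among all such diagrams, I pick $D$ minimizing the pair (number of $2$--cells, number of $1$--cells) lexicographically. The claim is that any such minimal $D$ automatically has every interior arc equal to a piece.

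Arguing by contradiction, suppose some interior arc $a$ of $D$ is not a piece, and let $\Pi_1, \Pi_2$ be the two faces meeting along $a$. For each $i \in \{1,2\}$, because the boundary label of $\Pi_i$ lies in $\rels$ and hence is read along an embedded cycle of $\Gamma$, I fix a label-preserving combinatorial map $\Phi_i \from \bdry \Pi_i \to \Gamma$ whose image is a simple cycle $c_i$. Restricting to $a$ yields two label-preserving maps $\phi_i := \Phi_i|_a \from a \to \Gamma$. Since $a$ is not a piece, the definition supplies a label-preserving automorphism $\psi \in \Aut(\Gamma)$ with $\phi_2 = \psi \circ \phi_1$. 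Replacing $\Phi_1$ by $\psi \circ \Phi_1$, so that $c_1$ is replaced by $\psi(c_1)$ (still a simple cycle of $\Gamma$ whose label lies in $\rels$), I may assume $\phi_1 = \phi_2$; that is, both face boundaries lift to $\Gamma$ in a way sending $a$ to a common subpath $\phi(a) \subseteq \Gamma$.

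The next step is a surgery on $\Pi_1 \cup \Pi_2$. Writing $\bdry \Pi_1 = a \cdot p_1$ and $\bdry \Pi_2 = a^{-1} \cdot p_2$ with $w_i$ the label of $p_i$, the combined boundary of $\Pi_1 \cup \Pi_2$ reads $w_1 w_2$, which under the lifts is read along the closed path in a single component $\Gamma_0$ of $\Gamma$ obtained by concatenating $c_1 \setminus \phi(a)$ with $c_2 \setminus \phi(a)$ traversed oppositely. Because $\Gamma_0$ embeds isometrically in $\Cay(G(\Gamma), \gens)$, this closed path represents the identity in $G(\Gamma)$ and admits a filling over $\rels$. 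In the simpler case $c_1 = c_2$, the word $w_1 w_2$ is freely trivial and the region $\Pi_1 \cup \Pi_2$ collapses, lowering the face count by two. In the case $c_1 \neq c_2$, I plan to use the reduced labeling of $\Gamma$ together with the $Gr'(\ifrac{1}{6})$--condition to extract a single embedded cycle of $\Gamma$ from $c_1 \cup c_2$ whose label fills the symmetric-difference loop with one face. Either case produces a diagram of strictly lower complexity with the same boundary word $w$, contradicting minimality.

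The principal obstacle is exactly the surgery in the case $c_1 \neq c_2$. The union $c_1 \cup c_2$ is a subgraph of $\Gamma_0$ containing two distinct simple cycles sharing the subpath $\phi(a)$; any further component of $c_1 \cap c_2$ is itself a shared subpath of two distinct simple cycles and is hence a genuine piece whose length is controlled by $Gr'(\ifrac{1}{6})$. I will need to combine this length control with the hypothesis that $a$ is not a piece to show that the symmetric-difference loop $(c_1 \setminus \phi(a)) \cup (c_2 \setminus \phi(a))$ contains a simple cycle of $\Gamma$ that suffices to fill the combined boundary with strictly fewer than two faces. If a direct face-count reduction fails, the lexicographic minimality on edges provides a fallback, since the surgery must at least reduce the total number of $1$--cells in the diagram.
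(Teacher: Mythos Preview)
The paper does not prove this lemma; it is quoted verbatim from \cite[Lemma~2.13]{Gru15}. So there is no in-paper argument to compare against, and your proposal must be judged on its own.

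Your overall strategy---take a minimal diagram, find an interior arc that is not a piece, align the two face-lifts via the automorphism $\psi$, and perform surgery---is the standard one, and your treatment of the case $c_1=c_2$ is correct: the two complementary arcs are then the same arc of $c_1$ traversed in opposite directions, so $w_1w_2$ is freely trivial and the pair of faces cancels.

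The gap is in the case $c_1\neq c_2$. You assert that the $Gr'(\ifrac{1}{6})$--condition lets you ``extract a single embedded cycle of $\Gamma$ from $c_1\cup c_2$ whose label fills the symmetric-difference loop with one face.'' This is false in general: the closed path $\Phi_1(p_1)\cdot\Phi_2(p_2)$ in $\Gamma_0$ need not be simple, and the small cancellation hypothesis is a red herring here---the lemma holds for arbitrary graphical presentations and its proof should not invoke $Gr'(\ifrac{1}{6})$ at all. Your fallback (``the surgery must at least reduce the total number of $1$--cells'') is asserted but not argued: you have not explained how to refill the merged region with faces from $\rels$ without introducing new edges.

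The fix is to make the refilling explicit and track edge count. Remove a single edge $e\subset a$, merging $\Pi_1,\Pi_2$ into one region $\Pi$ whose boundary carries a well-defined label-preserving map to $\Gamma_0$ (gluing $\Phi_1$ and $\Phi_2$ along $e$). This already drops the edge count by one. If this map is not an immersion, fold the offending pair of consecutive boundary edges of $\Pi$ inside $D$; each fold removes another edge. Once $\bdry\Pi\to\Gamma_0$ is an immersion, either it is empty (done) or it is a cyclically reduced closed path in $\Gamma_0$; if it is not already simple, pinch $\Pi$ at two boundary vertices with the same image in $\Gamma_0$ (an identification through the interior of $\Pi$, which is planar and adds no edges), and recurse on the two shorter sub-faces. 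At the end every face boundary maps to a simple cycle of $\Gamma$, hence has label in $\rels$, and the total edge count has strictly decreased. With this refilling step in place, minimality on edge count alone suffices; the lexicographic pair is unnecessary.
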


The sides of a combinatorial geodesic polygon are \emph{not} assumed
to be geodesic.
The definition and choice of terminology are motivated by the
following proposition. 
An $n$--gon $P$ in $X$ is a closed edge path that
decomposes into immersed simplicial subpaths
  $\gamma'_0,\dots,\gamma'_{n-1}$, which are called \emph{sides of $P$}.

\begin{proposition}\label{prop:sctocombinatorial}
If $P$ is an $n$--gon in $X$ with sides $\gamma'_0,\dots,\gamma'_{n-1}$
that are
geodesics then there is an
$\gens$--labelled diagram $D$ over $\rels$ with sides $\gamma_0,\dots,\gamma_{n-1}$
such that for each $0\leqslant i< n$ the word of $\left<\gens\right>$ read on $\gamma_i$ is the same as
the word read on $\gamma'_i$.
Furthermore, we can choose $D$ in such a way that
after forgetting
interior vertices of valence 2, we obtain
a combinatorial geodesic $n$--gon $(D,(\gamma_i)_i)$ 
  
\end{proposition}
Here, forgetting one interior vertex of valence 2 means replacing its two incident edges by a single one. Note that, when performing this operation, we consider $D$ merely as (unlabelled) diagram, i.e.\ we ignore the orientations and labels of edges.
Forgetting interior vertices of valence 2 means iterating this operation, such that we end up with a diagram without interior vertices of valence 2.

\begin{proof}
  The existence of an $\gens$--labelled diagram over $\rels$ whose boundary label matches the label of
  $P$ is the well-known van Kampen Lemma. 
\fullref{lem:graphical_diagrams} guarantees that the diagram can be chosen such that all interior arcs are pieces.
The small cancellation condition then implies
interior faces have interior degree at least 7. 
If there is a face $\Pi$ with $e(\Pi)=1$ whose exterior part is
contained in a single $\gamma_i$, then the exterior part is a
geodesic. 
Thus, the length of the interior part is at least half of
the length of $\bdry\Pi$. 
Since interior arcs are pieces, the small cancellation condition
implies there must be at
least four of them to account for half the length of $\bdry\Pi$.
Now if we forget interior vertices of valence 2, we have the
desired combinatorial geodesic polygon.
\end{proof}

\begin{remark}\label{remark:combinatorialvssc}
  The word $w\in \left<\gens\right>$ read on a cycle in $X$
   represents the trivial element in $G(\Gamma)$.
The combinatorial geodesic $n$--gon of
\fullref{prop:sctocombinatorial} is a special type of van Kampen
diagram witnessing the triviality of the word $w$ labelling an
$n$--gon in $X$ whose sides are geodesics.
In the remainder of \fullref{sec:polygons} we make combinatorial
arguments about arbitrary $(3,7)$--diagrams, not necessarily
$\gens$--labelled diagrams
over $\rels$. 
In \fullref{sec:gsccontraction} we use \fullref{prop:sctocombinatorial} to
apply results of this section to graphical small cancellation groups.
\end{remark}

We record an equivalent formulation of the Euler characteristic
formula for certain diagrams. Recall that for a face $\Pi$ of a
diagram, $e(\Pi)$ is the exterior
degree of  $\Pi$, which is the number of exterior
arcs in its boundary. Similarly, $i(\Pi)$ is the interior degree of $\Pi$.
\begin{lemma}[Strebel's curvature formula, {\cite[p.253]{Str90}}]\label{lem:curvature}
 Let $D$ be a simple diagram without vertices of degree 2. Then:
\begin{align*}
6&=2\sum_v(3-d(v)) \\
 &+\sum_{e(\Pi)=0}(6-i(\Pi))+\sum_{e(\Pi)=1}(4-i(\Pi))+\sum_{e(\Pi)\geqslant 2}(6-2e(\Pi)-i(\Pi)). 
\end{align*}
Here $d(v)$ denotes the degree of a vertex $v$.
 \end{lemma}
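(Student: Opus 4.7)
The plan is to verify the identity by a direct manipulation that reduces everything to Euler's formula for a disc, the handshake lemma, and double-counting of edge–face incidences.

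First, I would observe that since $D$ has no vertices of degree 2, every arc in $D$ has length $1$, so arcs are simply edges. Consequently, $i(\Pi)$ and $e(\Pi)$ count the number of interior and exterior edges on $\bdry\Pi$, respectively. Next, I would unify the three sums over faces on the right-hand side into a single sum. The cases $e(\Pi)=0$ and $e(\Pi)=1$ both agree with the formula $6-2e(\Pi)-i(\Pi)$, since $6 - 2\cdot 0 - i(\Pi) = 6 - i(\Pi)$ and $6 - 2\cdot 1 - i(\Pi) = 4 - i(\Pi)$. Hence the right-hand side can be rewritten as
\[
2\sum_v\bigl(3-d(v)\bigr) + \sum_{\Pi}\bigl(6-2e(\Pi)-i(\Pi)\bigr).
\]

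Let $V$, $E$, and $F$ denote the numbers of vertices, edges, and $2$-cells of $D$, and let $E_{\mathrm{int}}$, $E_{\mathrm{ext}}$ denote the numbers of interior and exterior edges, so $E = E_{\mathrm{int}} + E_{\mathrm{ext}}$. The handshake lemma gives $\sum_v d(v) = 2E$, hence $2\sum_v(3-d(v)) = 6V-4E$. For the face sum, each interior edge lies on the boundary of exactly two $2$-cells and each exterior edge on exactly one, so
\[
\sum_\Pi i(\Pi) = 2E_{\mathrm{int}}, \qquad \sum_\Pi e(\Pi) = E_{\mathrm{ext}},
\]
whence $\sum_\Pi\bigl(6-2e(\Pi)-i(\Pi)\bigr) = 6F - 2E_{\mathrm{ext}} - 2E_{\mathrm{int}} = 6F - 2E$. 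Combining the two pieces, the right-hand side becomes $6V - 4E + 6F - 2E = 6(V-E+F)$.

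Finally, since $D$ is simple and hence homeomorphic to a closed disc, Euler's formula gives $V-E+F = 1$, so the right-hand side equals $6$, as claimed. There is no real obstacle here: the proof is a one-line bookkeeping argument once the three face-sums are recognized as a single telescoped expression and the arc/edge distinction is handled by the hypothesis that no vertex has degree $2$.
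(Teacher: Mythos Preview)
Your proof is correct and is precisely the intended argument: the paper does not supply its own proof but simply records the lemma as ``an equivalent formulation of the Euler characteristic formula'' with a citation to Strebel, and your derivation via $V-E+F=1$, the handshake lemma, and double-counting of interior/exterior edge--face incidences is exactly that reformulation. One small remark: in the double-counting step you should note that if a face boundary happens to traverse an interior edge twice (which the hypotheses do not a priori exclude), that edge is counted twice in $i(\Pi)$, so the identity $\sum_\Pi i(\Pi)=2E_{\mathrm{int}}$ still holds; this is implicit in reading $i(\Pi)$ as the number of interior arcs along the boundary \emph{path} of $\Pi$.
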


It readily follows from \fullref{lem:curvature} that any $(3,7)$--diagram with more than one face has at least 2 faces with exterior degree 1 and interior degree at most 3. (This is usually known as Greendlinger's lemma.) Therefore:

\begin{lemma}
The sides of a combinatorial geodesic polygon are embedded, and every combinatorial geodesic polygon has at least two sides.
\end{lemma}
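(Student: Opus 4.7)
The plan is to deduce both halves of the lemma from a single impossibility statement: \emph{no combinatorial geodesic $1$--gon with at least one face can exist}. The core tool is the Greendlinger-type consequence of Strebel's curvature formula noted immediately above the lemma statement.

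I will first establish the key impossibility. Let $(D',\gamma_0)$ be a combinatorial geodesic $1$--gon and assume $D'$ contains at least one face. If $D'$ has more than one face, then the Greendlinger consequence yields at least two faces $\Pi$ with $e(\Pi)=1$ and $i(\Pi)\leqslant 3$; since there is only one side, the single exterior arc of each such $\Pi$ lies in $\gamma_0$, so the combinatorial geodesic polygon axiom demands $i(\Pi)\geqslant 4$, contradicting $i(\Pi)\leqslant 3$. If $D'$ has exactly one face $\Pi$, it fills the disc, so $i(\Pi)=0$, while its sole exterior arc lies in $\gamma_0$; the axiom again demands $i(\Pi)\geqslant 4$, contradiction. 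Hence such a $D'$ is face-free, i.e.\ a tree, which is the degenerate case we exclude.

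Next I will reduce both claims of the lemma to this impossibility. For the \emph{at least two sides} assertion: if $n=0$ the boundary of $D$ is empty, making $D$ degenerate; if $n=1$ then $(D,\gamma_0)$ is itself a combinatorial geodesic $1$--gon, again degenerate. For the \emph{embeddedness} assertion, suppose a side $\gamma_i$ revisits some vertex, and pick a simple sub-loop $\ell\subset\gamma_i$. Since $D$ is simply connected and planar and $\ell\subset\bdry D$, the loop $\ell$ bounds a non-trivial subdiagram $D'\subset D$. I will verify that $(D',\ell)$ is a combinatorial geodesic $1$--gon: the $(3,7)$--condition passes to $D'$ because interior vertices and interior faces of $D'$ remain interior in $D$, and the lower bound $i(\Pi)\geqslant 4$ for boundary faces with single exterior arc in $\ell$ passes to $D'$ because the classification of arcs of any face $\Pi\subset D'$ into interior and exterior is unchanged in going from $D$ to $D'$. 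Since $\ell$ encloses positive area, $D'$ contains a face, and the impossibility above gives a contradiction.

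The main technical obstacle is the last verification: one must show that no arc of $\Pi\subset D'$ simultaneously lies on $\bdry D$ and is interior to $D'$. This relies on the observation that $\ell\subset\bdry D$ cuts $D$ into a region whose other boundary pieces lie outside $\ell$, so every arc of $\Pi$ on $\bdry D$ in fact lies on $\ell$, and thus the interior/exterior dichotomy for arcs of $\Pi$ — and in particular the interior degree $i(\Pi)$ — is the same whether computed in $D$ or in $D'$.
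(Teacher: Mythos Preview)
Your proposal is correct and follows exactly the approach the paper intends---the paper's entire proof is the sentence immediately preceding the lemma, which invokes Greendlinger's lemma, and you have correctly fleshed out the reduction to the impossibility of a combinatorial geodesic $1$--gon.

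One small refinement is worth making. Your verification that the interior/exterior dichotomy for arcs of $\Pi\subset D'$ is unchanged between $D$ and $D'$ is correct at the level of \emph{edges} (so in particular $i_D(\Pi)=i_{D'}(\Pi)$, which is what you actually need), but the exterior \emph{arc} count can differ: at the self-intersection vertex $v$ where $\ell$ is based, $v$ has valence $\geqslant 4$ in $D$ but valence $2$ in $D'$, so a face of $D'$ whose exterior segment passes through $v$ may have $e_D(\Pi)\geqslant 2$ while $e_{D'}(\Pi)=1$, and then the combinatorial geodesic polygon axiom for $D$ does not directly apply. This is easily patched: Greendlinger supplies at least two faces in $D'$ with $e_{D'}=1$ and $i_{D'}\leqslant 3$, and if one chooses $\ell$ so that $D'$ is a leaf of the block decomposition of $D$ (attached to the rest of $D$ at a single cut vertex), then $v$ is the only vertex on $\ell$ whose valence changes, and at most one of the two Greendlinger faces contains it.
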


The same argument gives the following well-known fact, which greatly simplifies many considerations:

\begin{lemma}
Let $D$ be a $(3,7)$--diagram. Then any face is simply connected.
\end{lemma}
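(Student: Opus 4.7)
Proof proposal:

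The plan is to run the same curvature-formula argument as in the paragraph preceding the lemma (the Greendlinger-style statement that a $(3,7)$--diagram with more than one face has at least two faces of exterior degree $1$ and interior degree at most $3$), applied to a subdiagram built around a ``hole'' of $\Pi$.

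Suppose for contradiction that a face $\Pi$ of the $(3,7)$--diagram $D$ is not simply connected. Using the planar embedding of $D$, the complement $S^2\setminus\Pi$ possesses a bounded connected component $H$. Because $D$ is itself simply connected, $S^2\setminus D$ is a single unbounded region, so the bounded set $H$, which is disjoint from $\Pi$, must satisfy $H\subseteq\operatorname{int}(D)$. Setting $D':=\overline{H}$, I would then check that $D'$ is a simply connected subdiagram of $D$ whose boundary lies in $\partial\Pi$, and that $D'$ inherits the $(3,7)$ property: its interior vertices are interior in $D$, and every interior face $\Pi'$ of $D'$ lies in $H\subseteq\operatorname{int}(D)$ and so has all its arcs interior in $D$ (whether interior in $D'$, or lying on $\partial D'\subseteq\partial\Pi$ and hence shared with $\Pi$). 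In particular $i_{D'}(\Pi')=i_D(\Pi')\geqslant 7$ for any face that is interior in $D'$.

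Applying the Greendlinger-style corollary to $D'$ would then yield a face $\Pi'$ of $D'$ with $e_{D'}(\Pi')=1$ and $i_{D'}(\Pi')\leqslant 3$. Since $\Pi'$ is interior in $D$ we have $i_D(\Pi')\geqslant 7$, while its single exterior arc in $D'$ lies on $\partial D'\subseteq\partial\Pi$ and is thus interior in $D$, forcing $i_D(\Pi')=i_{D'}(\Pi')+e_{D'}(\Pi')\leqslant 3+1=4$; this contradicts $i_D(\Pi')\geqslant 7$. In the degenerate case where $D'$ consists of a single face, the vertices on $\partial D'$ are interior in $D$ yet have only two incident edges in $D'$, and no further edges are available in the planar embedding (any such edge would have to cross the open $2$-cells of $\Pi$ or $\Pi'$), contradicting valence $\geqslant 3$.

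The main obstacle is the planar topology: one must check that $\overline{H}$ is homeomorphic to a closed disc (so that Strebel's formula applies), and one must carefully translate between arc counts in $D'$ and in $D$, especially where vertices of $D'$ have valence $2$ in $D'$ but higher valence in $D$. Both issues are handled by the standard ``forget valence-$2$ vertices'' reduction, after which $D'$ becomes a genuine simple $(3,7)$--diagram and the preceding corollary applies verbatim.
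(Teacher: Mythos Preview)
Your overall strategy --- pass to the subdiagram $D'=\overline{H}$ sitting in a hole of $\Pi$, verify $D'$ is a $(3,7)$--diagram, and invoke the Greendlinger-type consequence of Strebel's formula --- is exactly what the paper means by ``the same argument''. The set-up and the reduction to $D'$ are fine.

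The gap is in the arc-count comparison. You assert $i_D(\Pi')=i_{D'}(\Pi')+e_{D'}(\Pi')$, but this equality can fail. A vertex $v$ on $\partial D'=\partial H$ may have valence $2$ in $D'$ yet valence $\geqslant 3$ in $D$: the extra edge is a ``spur'' lying on $\partial\Pi$ (it is traversed twice by the attaching map of $\Pi$, with the open cell $e$ on both sides), so it does \emph{not} cross any open $2$--cell. Such hidden vertices subdivide the single exterior arc of $\Pi'$ when viewed in $D$, so one only gets $i_D(\Pi')\geqslant i_{D'}(\Pi')+e_{D'}(\Pi')$, which is the wrong direction for your contradiction. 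The same oversight breaks your degenerate-case argument: the claim that ``any such edge would have to cross the open $2$--cells'' is false --- the spur edge $a$ in a slit-annulus face is a concrete counterexample. Forgetting valence-$2$ vertices in $D'$ does not help: it regularises $D'$, but the forgotten vertices are still present in $D$ and still subdivide arcs there.

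To repair this you must actually bound the number of such spurs. The point is that $e$ is a single open $2$--cell, so the portion of $\partial e$ connecting $\partial H$ to the other boundary components of $\Pi$ cannot meet $\partial H$ in many points without disconnecting $e$; an Euler-characteristic count on $\partial e$ (or on $D^{(1)}$ restricted to the closed region between $\partial H$ and the rest of $\partial e$) shows there is essentially one spur, and then the $(3,7)$ bound $i_D(\Pi')\geqslant 7$ does give the contradiction. But this extra step is genuinely needed and is missing from your write-up.
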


We also state an immediate consequence of \cite[Lemma 4.14]{Gru15SQ}:

\begin{lemma}\label{corollary:faceseqsegment}
  If $\Pi$ is a face of a combinatorial geodesic polygon $D$ and 
  $\alpha$ is a side of $D$ then $\Pi\cap\alpha$ is empty or connected.  
  If $\Pi_1,\dots,\Pi_k$ is a sequence of faces of a combinatorial
  geodesic polygon $D$ such that $\Pi_i\cap\Pi_{i+1}\neq\emptyset$ for
  all $1\leqslant i<k$ and $\alpha$ is a side of $D$ such that
  $\Pi_i\cap\alpha\neq\emptyset$ for all $i$ then $\cup_{1\leqslant
    i\leqslant k}\Pi_i\cap\alpha$ is connected. 
\end{lemma}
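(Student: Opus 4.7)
The plan is to prove both claims by contradiction, reducing each to a geometric setup that the cited \cite[Lemma 4.14]{Gru15SQ} (or equivalently a direct curvature computation via \fullref{lem:curvature}) rules out.

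For the first statement, I would suppose $\Pi \cap \alpha$ has two distinct connected components $A_1, A_2$. Since $\Pi$ is simply connected (by the lemma just cited) and $\alpha$ is an embedded side of $D$, I can select a subarc $\beta \subset \bdry\Pi$ joining a point of $A_1$ to a point of $A_2$ whose interior is disjoint from $\alpha$, and a subarc $\beta' \subset \alpha$ joining the same endpoints whose interior is disjoint from $\Pi$. The loop $\beta \cup \beta'$ is embedded in the planar diagram $D$ and bounds a subdiagram $D'$ of $D$ not containing $\Pi$. Because $A_1 \ne A_2$, the subdiagram $D'$ must contain at least one face.

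Next I would analyze $D'$. It is itself a $(3,7)$-diagram, since every interior vertex of $D'$ is either an interior vertex of $D$ (still of valence $\geq 3$) or an interior vertex of $D$ lying on $\bdry\Pi \cup \alpha$, and every interior face of $D'$ is an interior face of $D$ (hence $i(\Pi')\geq 7$). Regarding $D'$ as a bigon with one side $\beta' \subset \alpha$ and one side $\beta \subset \bdry\Pi$, I can apply \fullref{lem:curvature}. On the $\beta'$ side, every boundary face whose exterior part is a single arc inherits $i \geq 4$ from the combinatorial-geodesic-polygon property of $D$; the crucial input from \cite[Lemma 4.14]{Gru15SQ} is the analogous control over faces whose exterior meets $\beta$, using that $\beta$ is part of the boundary of a single face $\Pi$ of a $(3,7)$-diagram. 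Combining these bounds, every summand on the right-hand side of Strebel's formula is non-positive, so the total cannot equal $6$ — a contradiction.

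For the second statement, I would run the same argument with the role of $\Pi$ played by the union $U := \bigcup_i \Pi_i$. The hypothesis $\Pi_i \cap \Pi_{i+1} \ne \emptyset$, together with the fact that each $\Pi_i$ is simply connected and each $\Pi_i \cap \alpha$ is connected (by the first statement), guarantees that $U$ is simply connected and that $\bdry U \setminus \alpha$ is a connected arc. Hence a hypothetical disconnection of $U \cap \alpha$ again yields a subdiagram $D'$ cut out between $\alpha$ and $\bdry U$, to which the same curvature argument applies.

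The main obstacle is the fine control of boundary faces of $D'$ whose exterior arc lies on $\bdry\Pi$ (respectively $\bdry U$) rather than on $\alpha$: these do not inherit the geodesic-side condition $i \geq 4$ directly, and it is precisely here that the cited lemma from \cite{Gru15SQ} is invoked to close the curvature estimate. Once that input is in place, both statements follow as immediate consequences.
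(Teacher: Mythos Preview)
The paper does not give its own proof of this lemma: it is stated as an immediate consequence of \cite[Lemma~4.14]{Gru15SQ}, with no further argument. Your proposal sketches a curvature-based contradiction but, as you explicitly acknowledge, defers the crucial step---the interior-degree control for faces of $D'$ whose exterior lies on $\beta$---to that very same cited lemma. In that sense your approach and the paper's coincide: both rest entirely on the external reference. The scaffolding you add (the construction of the subdiagram $D'$ and the appeal to \fullref{lem:curvature}) is a plausible outline of what that lemma's proof presumably contains, but it is not an independent argument.

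One genuine gap in your scaffolding concerns the second statement. You assert that $U=\bigcup_i \Pi_i$ is simply connected and that $\bdry U \setminus \alpha$ is a single connected arc, citing only that each $\Pi_i$ is simply connected, each $\Pi_i\cap\alpha$ is connected, and consecutive faces meet. Neither conclusion follows from these hypotheses: a chain of faces in a planar diagram can close up or branch, so $U$ may fail to be simply connected and $\bdry U\setminus\alpha$ may have several components. A cleaner route is to reduce to the case $k=2$ by induction. Once you know that $(\Pi_i\cup\Pi_{i+1})\cap\alpha$ is connected for each $i$, the intervals $\Pi_i\cap\alpha$ form a chain of pairwise-overlapping subarcs of $\alpha$, and their union is connected. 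The $k=2$ case is then handled by exactly your argument for the first statement, applied with $\Pi_1\cup\Pi_2$ in place of a single face; here $\Pi_1\cup\Pi_2$ is at least connected, and the path $\beta$ can be taken through a point of $\Pi_1\cap\Pi_2$, so the subdiagram $D'$ is well defined without needing simple connectivity of the full union $U$.
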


\begin{defi}[Degenerate]
  A combinatorial geodesic $n$--gon $(D,(\gamma_i)_i)$ is \emph{degenerate} if there
  exists an $i$ such that $D$,
  $\gamma_0,\dots,\gamma_i\gamma_{i+1},\dots,\gamma_{n-1}$ is a
  combinatorial geodesic $(n-1)$--gon.
In this case the terminal vertex of $\gamma_i$ is called a
\emph{degenerate vertex}.
\end{defi}
It will be useful to minimize the number of sides of a
diagram $D$ by replacing a degenerate combinatorial geodesic $n$--gon
$(D,(\gamma_i)_i)$ with a non-degenerate combinatorial geodesic $k$--gon
$(D,(\gamma'_i)_i)$ for some $k<n$.

\subsection{Reducibility}
In this section we define operations for combining and reducing
combinatorial geodesic $n$--gons.
We stress that the setting is only combinatorial --- these operations need
not preserve the property of being a diagram over $\rels$.

First note that if $(D,(\gamma_i)_i)$ is a combinatorial geodesic
$n$--gon, and if $D'$ is obtained from $D$ by subdividing an edge,
then $(D',(\gamma_i)_i)$ is still a combinatorial geodesic $n$--gon. 
The new vertex produced by subdivision is a non-distinguished vertex of
valence 2.
Conversely, if $v$ is a non-distinguished vertex of valence 2 then we
can `forget' it by replacing the two incident edges with a single
edge.

If $D$ is simple and non-degenerate then by forgetting all non-distinguished vertices of
valence 2 we can arrange that the distinguished vertices are exactly
the vertices of valence 2 and all other vertices have valence at least 3.

\begin{defi}[Reducible]
  A combinatorial geodesic $l$--gon $P$ is \emph{reducible} if it
  admits a vertex, edge, or face reduction, as defined below, see \fullref{fig:combinationreduction}. It is \emph{irreducible} otherwise.
\end{defi}

\begin{figure}[h]
  \centering
  \labellist
  \tiny
\pinlabel {vertex red.} [b] at 210 210
\pinlabel {vertex comb.} [t] at 208 180
\pinlabel {edge red.} [bl] at 217 125
\pinlabel {edge comb.} [tr] at 203 112
\pinlabel {face red.} [l] at 106 117 
\pinlabel {face comb.} [r] at 86 117
\pinlabel {edge collapse} [l] at 338 117
\pinlabel {edge blow-up} [r] at 318 117
\pinlabel {add edge} [t] at 208 30
\pinlabel {forget edge} [b] at 208 50
  \endlabellist
\includegraphics[height=3\myquadheight]{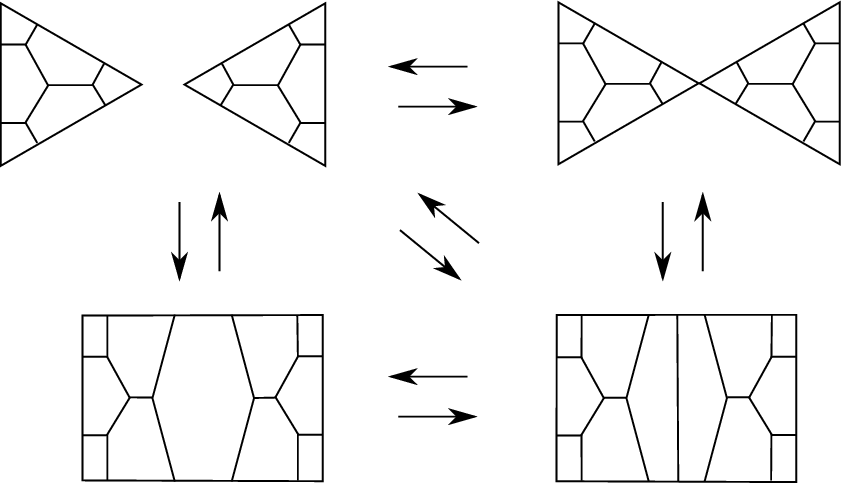}
  \caption{Combination and reduction}
  \label{fig:combinationreduction}
\end{figure}

In all of the following cases, let $(D,(\gamma_i)_i)$ be a combinatorial geodesic
$n$--gon, and let $(D',(\gamma'_i)_i)$ be a combinatorial geodesic $n'$--gon.

\subsubsection{Vertex reduction}
Suppose $v\in D$ is a separating vertex that is in the boundary
of exactly two faces and that these two faces are the only maximal
cells containing $v$. 
Suppose that there are exactly two
sides, $\gamma_i$ and $\gamma_j$, containing $v$, which is necessarily
true if $(D,(\gamma_i)_i)$ is non-degenerate.
Let $\gamma^{-v}_i$ denote the initial path of $\gamma_i$ ending at
$v$, and let $\gamma^{v+}_i$ denote the terminal path of $\gamma_i$
beginning at $v$, and similarly for $\gamma_j$.
Define the \emph{vertex reduction of $(D,(\gamma_i)_i)$ at $v$} to be the
two combinatorial geodesic polygons whose underlying CW-complexes are each $D$ minus
one of the complementary components of $D\setminus v$, respectively, and whose sides
are, respectively,
$\gamma^{v+}_i,\gamma_{i+1},\dots,\gamma_{j-1},\gamma^{-v}_j$ and
$\gamma^{v+}_j,\gamma_{j+1},\dots\gamma_{i-1},\gamma^{-v}_i$. 

Note that each of the resulting combinatorial polygons contains a
distinguished vertex corresponding to $v$.

The inverse operation of vertex reduction we denominate a \emph{vertex combination}.
Suppose that $v\in D$ is a distinguished vertex that is contained in
the boundary of a single face and is not contained in any other
maximal cell. Then there is some $i$ such that  $v=\gamma_i\cap\gamma_{i+1}$.
Make corresponding assumptions for $v'=\gamma'_{i'}\cap\gamma'_{i'+1}\subset
D'$.
The \emph{vertex combination of $(D,(\gamma_i)_i)$ and $(D',(\gamma_i')_i)$
  at $v$ and $v'$}  is the combinatorial geodesic $(n+n'-2)$--gon
whose underlying CW-complex is the wedge sum of $D$ and $D'$ at $v$ and $v'$, and whose
sides
are: \[\gamma_0,\dots,\gamma_{i-1},\gamma_i\gamma'_{i'+1},\gamma'_{i'+2},\dots,\gamma'_{i'-1},\gamma'_{i'}\gamma_{i+1},\gamma_{i+1},\dots,\gamma_{n-1}\]
This is the unique way to define the sides so that the canonical
embeddings of $(D,(\gamma_i)_i)$ and $(D',(\gamma'_i)_i)$ factor through
inclusion into the wedge sum and the canonical embedding of the
resulting combinatorial geodesic $(n+n'-2)$--gon.

\subsubsection{Edge reduction}
Suppose $e\subset D$ is an interior edge of $D$ such that the boundary of $e$ is contained in the
boundary of $D$.
Suppose further that the only maximal cells that intersect $e$ are the two
faces that intersect its interior. 
The hypotheses imply that $e$ separates $D$ into two components, and the $(3,7)$--condition implies that $e$ has two distinct boundary
vertices. 
Suppose that each of these boundary vertices belongs to exactly one
side, which is necessarily true if $(D,(\gamma_i)_i)$ is
non-degenerate. 
Define the \emph{edge reduction of $(D,(\gamma_i)_i)$ at $e$} to be the
two combinatorial geodesic polygons obtained by collapsing $e$ to a
vertex and then performing vertex reduction at the resulting vertex.

The inverse operation to edge reduction we denominate
\emph{edge combination}.
Suppose each of $v\in D$ and $v'\in D'$ is a distinguished vertex that
is contained in a single face and in no other maximal cell.
First perform a vertex combination at
$v$ and $v'$ and then blow up the wedge point to an interior
edge, while keeping the same sides.
As before, we require that $v$ and $v'$ each belong to a single face
and no other maximal cell, which implies that the resulting
combinatorial geodesic polygon is uniquely determined.

\subsubsection{Face reduction}
Suppose $\Pi\subset D$ is a face with $e(\Pi)\geqslant 2$. 
Suppose that there are boundary edges $e$ and $e'$ of $\Pi$ that are
boundary edges of $D$ such that removing the union of the interiors of $\Pi$,
$e$, and $e'$ separates $D$ into two components, $D_1$ and $D_2$.
Suppose that $e$ and $e'$ each intersect only one side of $D$, which is
necessarily true if $(D,(\gamma_i)_i)$ is non-degenerate.
Finally, suppose that $D_1$ and $D_2$ each contain a distinguished vertex.
Define the \emph{face reduction of $(D,(\gamma_i)_i)$ at $(\Pi,e,e')$},
or at $\Pi$, when $e$ and $e'$ are clear, to be the two combinatorial
geodesic polygons obtained by subdividing $e$ and $e'$, subdividing
$\Pi$ by adding a new edge connecting the subdivision points of $e$
and $e'$, and then performing an edge reduction on this new edge.

The inverse operation to face reduction, which we denominate
\emph{face combination}, is to first perform edge combination, which
results in a new
interior edge in the boundary of exactly two faces, and then forget
this new edge, replacing the two incident faces by a single face, and replacing the four resulting edges by two.

\subsection{Combinatorial geodesic bigons and triangles} 

We state Strebel's classification of combinatorial geodesic
bigons and triangles. 
Let us stress again that we are working in the combinatorial setting,
cf \fullref{remark:combinatorialvssc}. 
Strebel's original statement includes that the diagram $D$ comes from
a small cancellation presentation, but what is actually used in the
proof are the properties of the diagram that we have encapsulated in
the definition of `combinatorial geodesic polygon',
\fullref{def:combinatorialgeodesicpolygon}.
This observation was first made in \cite[Section~2.5 and Remark~4.7]{GruSis14} and \cite[Lemma~4.7]{Gru15SQ}.
\begin{thm}[Strebel's classification\footnote{Strebel also considers a
    second definition of combinatorial geodesic polygon that yields one
    additional shape $\mathrm{III}_2$. This is not relevant for us,
    but we retain the subscript for shape $\mathrm{III}_1$ for
    consistency with Strebel's notation.}, {\cite[Theorem 43]{Str90}}]\label{thm:strebel} Let $D$ be a simple diagram that is not a single face.
\begin{itemize}
 \item If $D$ is a combinatorial geodesic bigon, then $D$ has shape $\mathrm{I_1}$ in \fullref{fig:strebel}.
 \item If $D$ is a combinatorial geodesic triangle, then $D$ has one
   of the shapes $\mathrm{I}_2$, $\mathrm{I}_3$, $\mathrm{II}$,
   $\mathrm{III}_1$, $\mathrm{IV}$, or $\mathrm{V}$ in \fullref{fig:strebel}.
\end{itemize}
\end{thm}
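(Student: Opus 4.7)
The plan is to follow Strebel's original argument, but verify that every step depends only on the axioms of a \emph{combinatorial geodesic polygon} in \fullref{def:combinatorialgeodesicpolygon} rather than on the presence of an underlying small cancellation presentation. The engine is the curvature formula \fullref{lem:curvature}. To apply it, I would first reduce to the case in which $D$ has no non-distinguished vertex of valence $2$, by forgetting all such vertices (an operation that preserves the polygon structure and all invariants entering the formula). Since $D$ is assumed simple and is not a single face, after this reduction every boundary face has a well-defined interior degree, every interior face has $i(\Pi)\geqslant 7$, and every interior vertex has $d(v)\geqslant 3$.

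Next I would extract the sign information from the combinatorial geodesic polygon conditions. Interior vertices contribute a non-positive term $2(3-d(v))\leqslant 0$; the only vertices contributing a positive value $2(3-2)=2$ are the distinguished vertices, of which there are exactly $n$ (two for a bigon, three for a triangle). For faces, interior faces give $6-i(\Pi)\leqslant -1$, and faces with $e(\Pi)=1$ whose exterior arc lies in a single side satisfy $i(\Pi)\geqslant 4$, hence contribute $\leqslant 0$. The only faces that can contribute positively are:
\begin{itemize}
\item distinguished faces with $e(\Pi)=1$ (interior degree $i(\Pi)\in\{1,2,3\}$, contribution $\leqslant 3$), and
\item faces with $e(\Pi)\geqslant 2$ (contribution $6-2e(\Pi)-i(\Pi)$, which is $\leqslant 2$, and can equal $2$ only when $e(\Pi)=2$ and $i(\Pi)=0$).
\end{itemize}

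Plugging into \fullref{lem:curvature} gives the inequality
\[
6 \;\leqslant\; 2n \;+\; \Sigma_{\mathrm{pos}},
\]
where $\Sigma_{\mathrm{pos}}$ is the sum of positive face contributions, bounded above by $3$ times the number of distinguished boundary faces plus $2$ times the number of faces with $e(\Pi)\geqslant 2$. For $n=2$ this forces $\Sigma_{\mathrm{pos}}\geqslant 2$, and for $n=3$ it forces $\Sigma_{\mathrm{pos}}\geqslant 0$. The main task is the ensuing finite case analysis: enumerate which distributions of face types and interior degrees achieve the required positive curvature, and argue that each such possibility forces $D$ to match one of the listed shapes. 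For bigons the budget is so tight that only a single chain of faces of type $e(\Pi)=2$, $i(\Pi)=0$ survives, which is precisely shape $\mathrm{I}_1$. For triangles, the slightly larger budget allows either a linear chain (yielding shapes $\mathrm{I}_2$ or $\mathrm{I}_3$ depending on how the distinguished vertices sit on the chain), or configurations with one or more faces of high exterior degree located at the corners (shapes $\mathrm{II}$, $\mathrm{III}_1$, $\mathrm{IV}$, $\mathrm{V}$).

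The main obstacle is precisely this enumeration: one must systematically rule out configurations that satisfy the curvature equality on paper but do not arise as simple planar diagrams, using adjacency constraints (faces with $e(\Pi)\geqslant 2$ must be separated by non-negatively-curved faces, and the distinguished vertices must be distributed on the boundary in a way compatible with the sides). My strategy for this step is to argue locally: whenever the inequality is saturated, I identify the unique placement of a positively-curved face relative to the distinguished vertices (using \fullref{corollary:faceseqsegment} to control how faces meet each side) and then iterate, peeling off the corner faces to reduce to a diagram with strictly fewer faces, to which I apply induction. This propagation argument is exactly Strebel's, and its validity in the present combinatorial setting is guaranteed by the axioms in \fullref{def:combinatorialgeodesicpolygon}, which is why the conclusion survives verbatim.
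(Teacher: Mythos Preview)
The paper does not supply its own proof of this theorem: it is quoted from Strebel \cite[Theorem~43]{Str90}, with the remark (credited to \cite{GruSis14,Gru15SQ}) that Strebel's argument uses only the combinatorial axioms recorded in \fullref{def:combinatorialgeodesicpolygon}. Your plan to retrace Strebel's curvature argument and check that each step depends only on those axioms is therefore exactly what the paper asserts can be done, and is the right approach.

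There is, however, a genuine bookkeeping error in your sketch. \fullref{lem:curvature} is stated only for diagrams with \emph{no} vertices of valence~$2$; if you retain the $n$ distinguished valence-$2$ vertices and plug them into the vertex sum, the identity fails. For instance, in the two-face bigon your accounting yields $2\cdot 2+(4-1)+(4-1)=10$, not $6$. The correct reduction, exactly as in the proof of the special curvature formula later in the section, is to forget \emph{all} valence-$2$ vertices; since face degrees count arcs rather than edges, this does not change the face terms, and the vertex sum is then non-positive. The inequality becomes $6\leqslant \Sigma_{\text{faces}}^{+}$ with no $2n$ term. The positivity budget now lives entirely in the faces: each distinguished face (one whose exterior arc passes through a former distinguished vertex) contributes at most $3$, and there are at most $n$ of them. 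For a bigon this forces both end faces to have $e=1$, $i=1$ (each contributing exactly~$3$), every intermediate face to have $e=2$, $i=2$ (contributing~$0$), no interior faces, and every remaining vertex to have valence exactly~$3$; this is shape $\mathrm{I}_1$. For triangles the same scheme leaves a little more room and the enumeration branches into the six listed shapes. Once you correct the vertex accounting, your outline matches Strebel's proof.
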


\begin{figure}[h]
\captionsetup[subfigure]{labelformat=empty}
  \centering\hfill%
  \begin{subfigure}[h]{.33\textwidth}
\centering
\includegraphics[width=\mytriwidth]{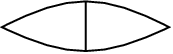}
    \caption{$\mathrm{I}_1$}
%\label{fig:strebelI1}
  \end{subfigure}\hfill%
\begin{subfigure}[h]{.33\textwidth}\centering
\includegraphics[width=\mytriwidth]{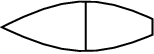}
    \caption{$\mathrm{I}_2$}
%\label{fig:strebelI2}
  \end{subfigure}\hfill%
\begin{subfigure}[h]{.33\textwidth}\centering
\includegraphics[width=\mytriwidth]{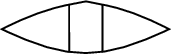}
    \caption{$\mathrm{I}_3$}
%\label{fig:strebelI3}
  \end{subfigure}\hfill

\begin{subfigure}[h]{.25\textwidth}\centering
\includegraphics[width=\mytriwidth]{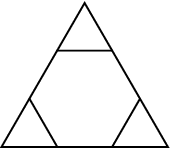}
    \caption{$\mathrm{II}$}
%\label{fig:strebelII}
  \end{subfigure}\hfill%
\begin{subfigure}[h]{.25\textwidth}\centering
\includegraphics[width=\mytriwidth]{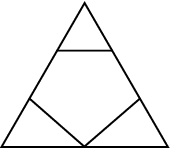}
    \caption{$\mathrm{III}_1$}
%\label{fig:strebelIII1}
  \end{subfigure}\hfill%
\begin{subfigure}[h]{.25\textwidth}\centering
\includegraphics[width=\mytriwidth]{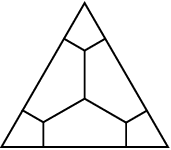}
    \caption{$\mathrm{IV}$}
%\label{fig:strebelIV}
  \end{subfigure}\hfill%
\begin{subfigure}[h]{.25\textwidth}\centering
\includegraphics[width=\mytriwidth]{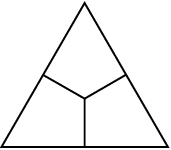}
    \caption{$\mathrm{V}$}
%\label{fig:strebelV}
  \end{subfigure}\hfill%

  \caption{Strebel's classification of combinatorial geodesic bigons
    and triangles.}
  \label{fig:strebel}
\end{figure}

Note that shapes $\mathrm{I}_2$ and $\mathrm{I}_3$ degenerate to
combinatorial geodesic bigons.

Each of these shapes represents an infinite family of combinatorial
geodesic bigons or triangles obtained by performing face combination at a
non-degenerate, distinguished vertex with a
shape $\mathrm{I}_1$ bigon arbitrarily many times. 
\fullref{fig:altV} shows alternate examples of each shape.

\begin{figure}[h]
\captionsetup[subfigure]{labelformat=empty}
  \centering\hfill%
  \begin{subfigure}[h]{.33\textwidth}
\centering
\includegraphics[width=\mytriwidth]{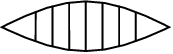}
    \caption{$\mathrm{I}_1$}
%\label{fig:strebelI1b}
  \end{subfigure}\hfill%
\begin{subfigure}[h]{.33\textwidth}\centering
\includegraphics[width=\mytriwidth]{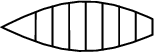}
    \caption{$\mathrm{I}_2$}
%\label{fig:strebelI2b}
  \end{subfigure}\hfill%
\begin{subfigure}[h]{.33\textwidth}\centering
\includegraphics[width=\mytriwidth]{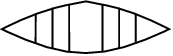}
    \caption{$\mathrm{I}_3$}
%\label{fig:strebelI3b}
  \end{subfigure}\hfill

\begin{subfigure}[h]{.25\textwidth}\centering
\includegraphics[width=\mytriwidth]{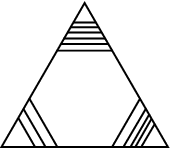}
    \caption{$\mathrm{II}$}
%\label{fig:strebelIIb}
  \end{subfigure}\hfill%
\begin{subfigure}[h]{.25\textwidth}\centering
\includegraphics[width=\mytriwidth]{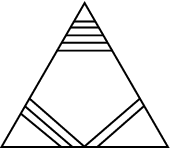}
    \caption{$\mathrm{III}_1$}
%\label{fig:strebelIII1b}
  \end{subfigure}\hfill%
\begin{subfigure}[h]{.25\textwidth}\centering
\includegraphics[width=\mytriwidth]{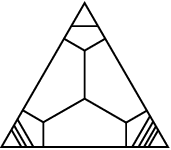}
    \caption{$\mathrm{IV}$}
%\label{fig:strebelIVb}
  \end{subfigure}\hfill%
\begin{subfigure}[h]{.25\textwidth}\centering
\includegraphics[width=\mytriwidth]{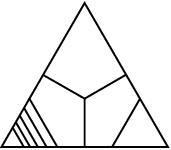}
    \caption{$\mathrm{V}$}
%\label{fig:strebelVb}
  \end{subfigure}\hfill%

  \caption{Alternate examples of each shape.}
  \label{fig:altV}
\end{figure}

\subsection{Special combinatorial geodesic quadrangles}
In this section diagram faces are labelled with their contribution to
the curvature sum (Lemma~\ref{lem:curvature}) if this contribution is non-zero.

\begin{defi}[Special]
A combinatorial geodesic $n$--gon, for $n>2$, is \emph{special} if it is
simple, non-degenerate, irreducible, and every non-distinguished vertex has valence 3.
\end{defi}

The only special combinatorial geodesic triangles are the
representatives of shapes $\mathrm{IV}$ and $\mathrm{V}$ pictured in 
\fullref{fig:strebel}.
In this section we classify special combinatorial
geodesic quadrangles.

Let $D$ be a special combinatorial geodesic polygon.
Simplicity and trivalence imply that $\Pi\cap \bdry D$ is a disjoint union
of arcs, for each face $\Pi$. 
Irreducibility implies $\Pi\cap \bdry D$ consists of at most one arc.
Non-degeneracy implies that every distinguished face has
exactly one distinguished vertex and either 2 or 3 interior arcs.

The curvature formula of \fullref{lem:curvature} can be simplified as follows.
\begin{lem}[Special curvature formula]
 Let $D$ be a special combinatorial geodesic polygon that is not a single face. Then:
\[6=\sum_{e(\Pi)=0}(6-i(\Pi))+\sum_{e(\Pi)=1}(4-i(\Pi))\]
 \end{lem}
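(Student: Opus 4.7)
\emph{Plan.} The strategy is to deduce the special formula from Strebel's general curvature formula by showing that the two terms missing from the special formula, namely $2\sum_v(3-d(v))$ and $\sum_{e(\Pi)\geqslant 2}(6-2e(\Pi)-i(\Pi))$, both vanish for a special combinatorial geodesic polygon $D$. I would handle the face sum using the structural observations recorded in the paragraph preceding the lemma, and the vertex sum by discarding the distinguished vertices so that Strebel's hypothesis (no valence $2$ vertices) is satisfied.

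For the $e(\Pi)\geqslant 2$ sum, it suffices to show $e(\Pi)\leqslant 1$ for every face $\Pi$ of $D$. This has essentially already been observed: simplicity together with the fact that every non-distinguished vertex has valence $3$ forces $\Pi\cap\bdry D$ to be a disjoint union of arcs for every face $\Pi$, and irreducibility then reduces this union to at most one arc. Hence the third face sum in Strebel's formula is empty.

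For the vertex contribution, I would pass to the diagram $\tilde D$ obtained by forgetting every distinguished vertex of $D$. Each distinguished vertex has valence $2$ and lies on two different sides $\gamma_i$ and $\gamma_{i+1}$, so both of its incident edges lie in $\bdry D$; in particular it is an interior vertex of an exterior arc rather than an arc endpoint, since arc endpoints have valence $\neq 2$. Forgetting such a vertex merely merges its two incident boundary edges into one and does not alter the arc decomposition of $\bdry D$, so the numbers $i(\Pi)$ and $e(\Pi)$ are unchanged for every face $\Pi$. The resulting $\tilde D$ is still a simple diagram, and because by the definition of ``special'' every non-distinguished vertex of $D$ already has valence $3$, every vertex of $\tilde D$ has valence exactly $3$.

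Applying Strebel's curvature formula (\fullref{lem:curvature}) to $\tilde D$ is then routine: the hypothesis ``without vertices of degree $2$'' is met, the vertex sum $2\sum_v(3-d(v))$ collapses to zero because $d(v)=3$ identically, the $e(\Pi)\geqslant 2$ sum is empty by the first step, and the remaining two face sums coincide with their counterparts in $D$. This yields the desired equality. The only delicate point is the bookkeeping in passing from $D$ to $\tilde D$ — in particular verifying that every distinguished vertex is genuinely arc-interior, so that forgetting it preserves both $i(\Pi)$ and $e(\Pi)$ — but this follows directly from the characterisation of arc endpoints as vertices of valence $\neq 2$.
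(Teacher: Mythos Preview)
Your proposal is correct and follows essentially the same approach as the paper: forget the valence--$2$ (distinguished) vertices so that Strebel's formula applies with vanishing vertex sum, and observe that the $e(\Pi)\geqslant 2$ sum is empty by the structural remarks preceding the lemma. The paper's version is terser---it simply notes that face degrees count arcs rather than edges, so removing valence--$2$ vertices leaves the face sums unchanged---whereas you spell out the bookkeeping more explicitly, but the argument is the same.
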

 
 \begin{proof}
 By definition, every non-distinguished vertex has valence 3. To apply \fullref{lem:curvature}, 
   we must address the possible existence of degree 2 vertices in the
   boundary: we may iteratively remove such vertices, always replacing
   the two adjacent edges by a single edge. Since $D$ is not a single
   face, this makes sense for every degree 2 vertex, and we thus
   remove all degree 2 vertices. Since the degree of a face counts
   \emph{arcs}, not edges, the operation does not alter the sum. The
   formula follows by applying
   \fullref{lem:curvature}, and noting, as a consequence of irreducibility, that every face has exterior degree at most 1.
 \end{proof}

For the remainder of this section, let $D$ be a special combinatorial
geodesic quadrangle.
Then $D$ has exactly four distinguished faces, each of which
contributes either 1 or 2 to the curvature sum, and every other face
makes a non-positive contribution.
Let $D_k$ refer to the set of distinguished faces of $D$ that contribute $k$ to the
curvature sum, ie, with $e(\Pi)=1$ and $i(\Pi)=4-k$.

An \emph{ordinary} face will refer to a non-distinguished face $\Pi$ with $e(\Pi)=1$ and $i(\Pi)=4$, which contributes 0 to the curvature sum.
An \emph{extraordinary} face will refer to a non-distinguished face
$\Pi$ with $e(\Pi)=0$ or with $e(\Pi)=1$ and $i(\Pi)>4$. Note that if $i(\Pi)>6$, we must have $e(\Pi)=0$.

\subsubsection{Zippers}
Ordinary faces can fit together to make arbitrarily long sequences of
subsequent faces we call \emph{zippers}, as in \fullref{fig:zipper}.
\begin{figure}[h]
  \centering
  \includegraphics[height=\myquadheight]{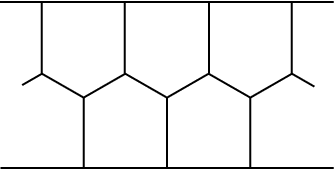}
  \caption{A zipper.}
  \label{fig:zipper}
\end{figure}

The ordinary faces in a zipper are called \emph{teeth}.
We define a \emph{zipper $Z$ with zero teeth} to be three consecutive
interior edges that separate the diagram into two parts, each of which
contains two distinguished faces.
For example, the bold edges of \fullref{fig:zippernoteeth} form a zipper
with zero teeth.
Since $D$ is special, the two interior
 edges incident to the interior vertices of $Z$ and not
 belonging to $Z$ must be contained in opposite complementary components
 of $Z$, otherwise $D$ would admit a face reduction.

 \begin{figure}[h]
   \centering
\labellist
\small
\pinlabel 1 at 29 57
\pinlabel 2 at 20 18
\pinlabel 1 at 74 22
\pinlabel 2 at 82 62
\endlabellist
   \includegraphics[height=\myquadheight]{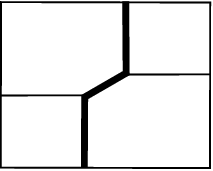}
   \caption{Zipper with zero teeth (in bold).}
   \label{fig:zippernoteeth}
 \end{figure}

Using a symmetry argument, we show that the portions of the diagram on opposite sides of a zipper each contribute 3 to the curvature sum: first, consider a zipper $Z$ of length $0$. Then the two interior edges incident at $Z$ are not on the same side of $Z$, for otherwise we would have a face with exterior degree at least 2 (and hence face-reducibility) or only one distinguished face on that side. Now assume that one of the two sides  $S$ contributes $k\neq 3$ to the curvature sum. Then we may rotate a copy of $S$ by 180 degrees and attach it to $S$ by identifying the respective copies of $Z$, thus obtaining a special combinatorial quadrangle for which the curvature formula amounts to $2k\neq 6$; a contradiction. The case of an arbitrary zipper $Z$ now follows similarly by attaching a rotated (or in the case that $Z$ has an odd number of faces reflected) copy of $S$ to $S\cup Z$.

We need to see how to terminate a zipper.
Let $\Pi$ be the face with two edges on the zipper. 
If $\Pi$ is ordinary then the zipper just gets longer, so assume not. 
One possibility is that $\Pi$ is distinguished, in which case it is a $D_1$ and there is only one other face, which is a $D_2$.
Otherwise, since the end of the zipper containing $\Pi$ must contribute 3 to the curvature sum, we must have $i(\Pi)=5$ and $e(\Pi)=1$, both distinguished faces must be $D_2$'s, and the other two faces sharing edges with $\Pi$ must be ordinary.

The two possibilities are shown in \fullref{fig:zipperends}.
In conclusion:

\begin{lem}\label{lem:enumeratezippered}
  There are six infinite families of configurations of special
  combinatorial geodesic quadrangles containing zippers, determined
  by the choice of two zipper ends from \fullref{fig:zipperends} and
  the parity of the number of teeth.
\end{lem}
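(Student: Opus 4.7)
The plan is a direct enumeration built on the two observations already established in the lead-up: each ``side'' of a zipper contributes exactly $3$ to the special curvature sum, and a zipper must terminate in one of two combinatorial configurations depicted in \fullref{fig:zipperends}.

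First, I would confirm the $3+3$ split of the special curvature across any zipper $Z$. The teeth of $Z$ are ordinary faces, each contributing $0$. Removing $Z$ from $D$ yields two subdiagrams $S_1$ and $S_2$, each containing two of the four distinguished faces of $D$. The rotation/reflection argument already sketched for the zero-teeth case generalizes: doubling $S_i$ along its copy of $Z$, using a reflection when the number of teeth is odd and a $180^\circ$ rotation when it is even, produces a new special combinatorial geodesic quadrangle whose curvature sum is $2k_i$, where $k_i$ is the contribution of $S_i$; the special curvature formula then forces $k_i=3$.

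Second, I would complete the termination analysis. Let $\Pi$ be a face at one end of $Z$ carrying two consecutive zipper edges. If $\Pi$ were ordinary, the zipper would extend through $\Pi$, violating maximality, so $\Pi$ is distinguished or extraordinary. Using $k_i=3$ together with the special curvature formula, a short case analysis on $(e(\Pi),i(\Pi))$ yields exactly the two terminations of \fullref{fig:zipperends}: either $\Pi\in D_1$ and the only other face on its side lies in $D_2$, or $\Pi$ is extraordinary with $e(\Pi)=1$ and $i(\Pi)=5$, both distinguished faces on that side lie in $D_2$, and the two non-zipper faces adjacent to $\Pi$ are ordinary. Finally, labeling the two termination types $E_a$ and $E_b$ gives three unordered choices $\{E_a,E_a\}$, $\{E_a,E_b\}$, $\{E_b,E_b\}$ for the pair of ends. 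The teeth of $Z$ alternate which side of its central spine they occupy, so the relative orientation of the two ends (whether their distinguished faces lie on the same or opposite sides of $Z$) is rigidly dictated by the parity of the tooth count. Multiplying $3$ end-pairs by $2$ parities yields the six infinite families.

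The main obstacle I anticipate is the termination enumeration: ruling out spurious configurations at a single end requires careful bookkeeping with the trivalence at non-distinguished vertices and the simplicity, non-degeneracy, and irreducibility of $D$, in order to exclude, e.g., two extraordinary faces clustered at one end, distinguished faces sitting in $D_1$ rather than $D_2$ adjacent to an extraordinary $\Pi$, or faces with $i(\Pi)\ge 6$ at an end. A secondary subtlety is to confirm that the parity of teeth is a well-defined invariant of the configuration rather than of a decomposition choice; this follows from the fact that a maximal zipper's spine is determined by $D$ once its two terminating faces are fixed.
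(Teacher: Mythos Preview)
Your proposal is correct and follows essentially the same approach as the paper: the paper carries out the $3+3$ curvature split via the rotation/reflection doubling argument and the two-case termination analysis in the paragraphs immediately preceding the lemma, then states the lemma as a summary (``In conclusion:''). Your write-up simply makes the final $3\times 2=6$ count explicit and is slightly more careful about the parity invariant, but the content is the same.
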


\begin{figure}[h]
  \centering
\hfill
  \begin{subfigure}[h]{.5\textwidth}
  \centering  
\labellist
\tiny
\pinlabel $\Pi$ at 40 50
\small
\pinlabel 1 at 30 57
\pinlabel 2 at 22 18
\endlabellist
\includegraphics[height=\myquadheight]{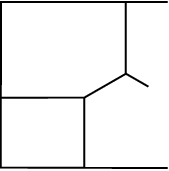}
    \caption{Zipper end 1.}
\label{fig:zipend1}
  \end{subfigure}\hfill%
\begin{subfigure}[h]{.5\textwidth}
  \centering  
\labellist
\tiny
\pinlabel $\Pi$ at 80 50
\small
\pinlabel $-1$ at 58 57
\pinlabel 2 at 14 12
\pinlabel 2 at 14 62
\endlabellist
\includegraphics[height=\myquadheight]{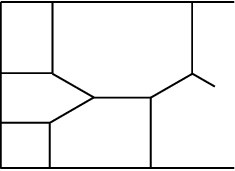}
    \caption{Zipper end 2.}
\label{fig:zipend2}
  \end{subfigure}\hfill
  \caption{Terminating a zipper}
  \label{fig:zipperends}
\end{figure}

\subsubsection{Extraordinary configurations}
\begin{lem}\label{lem:enumerateextraordinary}
  The six configurations shown in \fullref{fig:extraordinary} are the
  only special combinatorial geodesic quadrangles containing an
  extraordinary face and no zipper.
\end{lem}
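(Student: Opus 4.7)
The strategy is a curvature-counting argument followed by a case analysis constrained by the prohibitions on reductions and zippers. Let $D$ be a special combinatorial geodesic quadrangle containing at least one extraordinary face and no zipper. Write $d_1=|D_1|$ and $d_2=|D_2|$, so $d_1+d_2=4$. By the special curvature formula, the distinguished faces contribute $d_1+2d_2$, ordinary faces contribute $0$, and each extraordinary face contributes a negative amount: a face with $e(\Pi)=1$, $i(\Pi)\geqslant 5$ contributes $4-i(\Pi)\leqslant -1$, while one with $e(\Pi)=0$, $i(\Pi)\geqslant 7$ contributes $6-i(\Pi)\leqslant -1$. Since the total curvature is $6$ and $d_1+2d_2\in\{4,5,6,7,8\}$, the presence of an extraordinary face forces $(d_1,d_2)\in\{(1,3),(0,4)\}$, with total extraordinary deficit $-1$ in the first case and $-2$ in the second.

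The plan is then to enumerate the possibilities for the extraordinary faces and, for each, reconstruct the diagram. In the $(1,3)$ case there is exactly one extraordinary face, which either has $(e,i)=(1,5)$ or $(0,7)$. In the $(0,4)$ case the deficit $-2$ is realised either by a single face with $(e,i)\in\{(1,6),(0,8)\}$, or by two faces each contributing $-1$; the latter splits further into (1,5)+(1,5), (1,5)+(0,7), and (0,7)+(0,7) subcases. For each of these possibilities I would build the diagram starting from the extraordinary face(s), attach the four distinguished faces around the boundary with the prescribed interior degrees, and fill in ordinary faces where the interior degrees demand it.

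The main work, and the main obstacle, is ruling out or identifying the placements at this stage using the hypotheses that $D$ is simple, non-degenerate, trivalent, and admits no vertex, edge or face reduction, and has no zipper. Irreducibility implies each face meets $\bdry D$ in at most one arc, and rules out the configurations where two boundary edges of an extraordinary face would permit a face reduction. The no-zipper assumption, together with the definition of a zipper of length $0$ as three consecutive interior edges cutting off two distinguished faces on each side, rigidly restricts how ordinary faces may interpose between an extraordinary face and a distinguished one: in particular, any string of ordinary teeth between extraordinary faces, or between an extraordinary face and a pair of distinguished faces, would realise a zipper and is therefore forbidden. These constraints collapse each curvature-admissible profile to a unique combinatorial configuration, yielding the six pictures in \fullref{fig:extraordinary}.

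Once the enumeration is complete, I would verify in each of the six resulting pictures that the curvature formula is indeed balanced, that every non-distinguished vertex has valence $3$, and that no face, edge or vertex reduction is available, confirming speciality. The bookkeeping is tedious but mechanical; the conceptual content lies entirely in the interplay between the curvature budget forced by \fullref{lem:curvature} and the rigidity imposed by irreducibility together with the no-zipper hypothesis.
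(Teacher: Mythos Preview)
Your plan is essentially the paper's approach: curvature counting to bound the extraordinary contribution, then case analysis to reconstruct the diagram around the extraordinary face(s). The paper organises the cases by fixing one extraordinary face and building outward (first assuming no interior faces, giving $\mathrm{E}_5$ and $\mathrm{E}_6$, then treating interior 7-- and 8--gons), whereas you organise by curvature profile; these are reorderings of the same analysis.

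One substantive point is missing from your outline. Your enumeration produces \emph{seven} curvature profiles---two in the $(d_1,d_2)=(1,3)$ case and five in the $(0,4)$ case---but only six configurations exist, so one profile must be vacuous. It is the profile $(1,3)$ with a single exterior extraordinary face of type $(e,i)=(1,5)$. The reason is the key local step in the paper's argument: if $\Pi$ has $e(\Pi)=1$, $i(\Pi)=5$, look at the face $\Pi'$ across the \emph{middle} (third) interior edge of $\Pi$. If that edge were the first or second interior edge of $\Pi'$, the three edges around it would form a zipper with zero teeth (or force valence $\geqslant 4$). So the edge is at least the third interior edge of $\Pi'$, and by the symmetric argument $i(\Pi')\geqslant 5$, forcing a second extraordinary face. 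This pushes you to the $(0,4)$, $(1,5)+(1,5)$ profile, which is $\mathrm{E}_5$. This is exactly where the no-zipper hypothesis does real work, and it should be stated explicitly rather than folded into ``tedious bookkeeping.''

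A smaller imprecision: your description of how the no-zipper hypothesis operates (``any string of ordinary teeth between extraordinary faces\ldots would realise a zipper'') is not quite right. A zipper, by definition, separates the four distinguished faces two--and--two; ordinary faces between two extraordinary faces need not do this. The actual mechanism, as above, is that a short chain of interior edges near a $(1,5)$ face forces the zipper separation directly.
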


\begin{figure}[h]
\captionsetup[subfigure]{labelformat=empty}
  \centering
\hfill
\begin{subfigure}[h]{.33\textwidth}
  \centering  
  \labellist
  \small
\pinlabel 2 at 10 10
\pinlabel 2 at 10 70
\pinlabel 2 at 70 70
\pinlabel 2 at 70 10
\pinlabel $-1$ at 40 20
\pinlabel $-1$ at 40 60
  \endlabellist
  \includegraphics[height=\myquadheight]{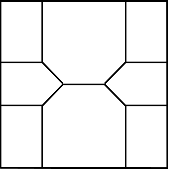} 
  \caption{$\mathrm{E}_{5}$}
  %\label{fig:E5}
\end{subfigure}\hfill%
\begin{subfigure}[h]{.33\textwidth}
  \centering  
  \labellist
  \small
\pinlabel 2 at 10 10
\pinlabel 2 at 10 70
\pinlabel 2 at 70 70
\pinlabel 2 at 70 10
\pinlabel $-2$ at 40 15
  \endlabellist
  \includegraphics[height=\myquadheight]{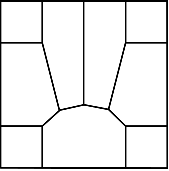} 
  \caption{$\mathrm{E}_{6}$}
  %\label{fig:E6}
\end{subfigure}\hfill%
\begin{subfigure}[h]{.33\textwidth}
  \centering  
  \labellist
  \small
  \pinlabel $-1$ at 40 40
  \pinlabel 2 at 11 71
  \pinlabel 2 at 11 11
\pinlabel 2 at 70 11
\pinlabel 1 at 62 63
  \endlabellist
  \includegraphics[height=\myquadheight]{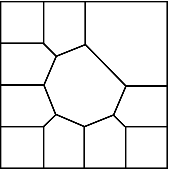} 
  \caption{$\mathrm{E}_{7a}$}
  %\label{fig:E7a}
\end{subfigure}
\hfill

\hfill
\begin{subfigure}[h]{.33\textwidth}
  \centering  
  \labellist
  \small
\pinlabel $-1$ at 40 40
\pinlabel $-1$ at 70 40
\pinlabel $2$ at 10 70
\pinlabel $2$ at 10 10
\pinlabel $2$ at 100 10
\pinlabel $2$ at 100 70
  \endlabellist
  \includegraphics[height=\myquadheight]{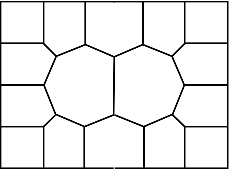} 
  \caption{$\mathrm{E}_{7b}$}
  %\label{fig:E7b}
\end{subfigure}\hfill%
\begin{subfigure}[h]{.33\textwidth}
  \centering  
  \labellist
  \small
\pinlabel $-1$ at 40 40
\pinlabel $-1$ at 75 40
\pinlabel $2$ at 10 70
\pinlabel $2$ at 10 10
\pinlabel $2$ at 80 10
\pinlabel $2$ at 80 70
  \endlabellist
  \includegraphics[height=\myquadheight]{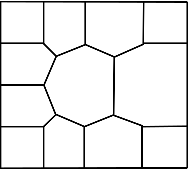} 
  \caption{$\mathrm{E}_{7c}$}
  %\label{fig:E7c}
\end{subfigure}\hfill%
\begin{subfigure}[h]{.33\textwidth}
  \centering
\labellist
\small
\pinlabel 2 at 11 10
\pinlabel 2 at 11 70
\pinlabel 2 at 70 10
\pinlabel 2 at 70 70
\pinlabel $-2$ at 40 40
\endlabellist
  \includegraphics[height=\myquadheight]{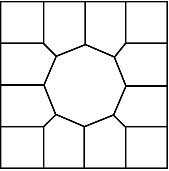}
  \caption{$\mathrm{E}_8$}
  %\label{fig:interior8}
\end{subfigure}
  \caption{Extraordinary special combinatorial geodesic quadrangles.}
  \label{fig:extraordinary}
\end{figure}

\begin{proof}
Let $D$ be a special combinatorial geodesic quadrangle without
zippers.
First, suppose that $D$ contains no interior faces.

\paragraph{Case $\mathrm{E}_5$: $D$ contains an extraordinary face
  $\Pi$ with $i(\Pi)=5$.} In this case $\Pi$ contributes $-1$ to the curvature sum.
Consider the third interior edge $e$ of $\Pi$. 
Let $\Pi'$ be the face on the opposite side of $e$.
If $e$ is the second interior edge of $\Pi'$ then we get a zipper, so it must be at least the third.
(It cannot be the first, for this would give a vertex of degree at least 4.)

By symmetry, we see that $i(\Pi')\geqslant 5$. As $\Pi'$ contributes at least $-1$ to the curvature sum and is not interior, we deduce $i(\Pi')=5$ and $e(\Pi')=1$, 
with $e$ the third interior edge of $\Pi'$.
This then implies that the distinguished faces are all $D_2$'s, and every other face is ordinary. There are two ordinary faces bordering both $\Pi$ and $\Pi'$. 
Since no face has more than one exterior edge, we must then fill in the four $D_2$'s on the corners.

\smallskip
\paragraph{Case $\mathrm{E}_6$: $D$ contains an extraordinary face
  $\Pi$ with $i(\Pi)=6$.} In this case $\Pi$ contributes $-2$ to the
curvature sum, so the distinguished faces are all $D_2$'s, and every
other face is ordinary.
Consider the third interior vertex of $\Pi$.
Let $e$ be the edge incident to this vertex that does not belong to
$\Pi$. 
If $e$ does not have a vertex on the boundary then there is a zipper
contained in the boundary of $\Pi\cup\Pi'$, where $\Pi'$ is either one
of the ordinary faces with side $e$.
Since we have assumed no zippers, $e$ does have a boundary
vertex, and there is a unique way to fill in the rest of $D$ with ordinary faces and $D_2$'s.

\smallskip
Now we move on to the interior face cases.
According to the curvature formula of \fullref{lem:curvature},
interior faces have either 7 or 8 sides.

\smallskip
\paragraph{Case $\mathrm{E}_8$: $D$ contains an extraordinary face
  $\Pi$ with $i(\Pi)=8$.}
In this case, $e(\Pi)=0$, and $\Pi$ contributes $-2$ to the curvature sum, so all four distinguished faces are $D_2$ and every other face is ordinary.
Since the interior sides of a $D_2$ have a vertex on the boundary,
they cannot share an edge with $\Pi$, so every face sharing an edge
with $\Pi$ is ordinary.
There is only one way to pack 8 ordinary faces around $\Pi$, up to
symmetry, and this determines the placement of the four $D_2$'s.

\smallskip
\paragraph{Case $\mathrm{E}_7$: $D$ contains an extraordinary face
  $\Pi$ with $i(\Pi)=7$.}
In this case, $e(\Pi)=0$, and $\Pi$ contributes $-1$ to the curvature sum. 
The distinguished faces are therefore either three $D_2$'s and one $D_1$ or four $D_2$'s.
If there are three $D_2$'s and one $D_1$ then every other face is ordinary. 
A $D_2$ cannot share an edge with an interior face, so $\Pi$ has at least 6 edges that are shared by ordinary faces.

If there are four $D_2$'s then none of them share a face with $\Pi$, and there is exactly one other face that is not ordinary. 

In either case, $\Pi$ has at least 6 edges that are shared by ordinary faces.
Let $\Pi_1,\dots,\Pi_7$ be the consecutive faces sharing an edge with
$\Pi$, and assume all except possibly $\Pi_7$ are ordinary. 
Let $e$ be the edge shared by $\Pi_1$ and $\Pi_7$. 
If $e$ has a vertex on the boundary then there is only one way to fit
6 ordinary faces around $\Pi$.
In this case, $\Pi_7$ is a $D_1$, and the configuration is shown in \fullref{fig:E7a1}.

If $e$ does not have a vertex on the boundary then, again, there is only one way to fit six ordinary faces around $\Pi$, shown in \fullref{fig:E7b1}.
We see that $i(\Pi_7)\geqslant 5$. Since $\Pi_7$ contributes at least $-1$ to the curvature sum, the two possibilities are $i(\Pi_7)=7$ and $e(\Pi_7)=0$ or $i(\Pi_7)=5$ and $e(\Pi_7)=1$. 
In both cases, the remaining distinguished faces are $D_2$'s, all
other faces are ordinary, and there is a unique way to complete the
4--gon.
These are types $\mathrm{E}_{7b}$ and $\mathrm{E}_{7c}$, respectively, of \fullref{fig:extraordinary}.\qedhere

\begin{figure}[h]
  \centering  
\hfill
\begin{subfigure}[h]{.45\textwidth}
  \centering  
  \labellist
  \small
  \pinlabel $-1$ at 40 40
  \pinlabel 2 at 11 71
  \pinlabel 2 at 11 11
\pinlabel 2 at 70 11
\pinlabel 1 at 62 63
\tiny
\pinlabel $\Pi_1$ at 33 65
\pinlabel $\Pi_2$ at 16 50
\pinlabel $\Pi_3$ at 16 32
\pinlabel $\Pi_4$ at 32 16
\pinlabel $\Pi_5$ at 52 16
\pinlabel $\Pi_6$ at 66 31
\pinlabel $\Pi_7$ at 57 53
\pinlabel $e$ at 46 70 
  \endlabellist
  \includegraphics[height=\myquadheight]{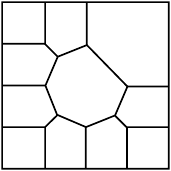} 
  \caption{Case $e$ does have boundary vertex.}
  \label{fig:E7a1}
\end{subfigure}
\hfill
\begin{subfigure}[h]{.45\textwidth}
  \centering  
  \labellist
  \small
  \pinlabel $-1$ at 40 40
  \pinlabel 2 at 11 71
  \pinlabel 2 at 11 11
\tiny
\pinlabel $\Pi_2$ at 33 65
\pinlabel $\Pi_3$ at 16 50
\pinlabel $\Pi_4$ at 16 32
\pinlabel $\Pi_5$ at 32 16
\pinlabel $\Pi_6$ at 52 16
\pinlabel $\Pi_7$ at 66 41
\pinlabel $\Pi_1$ at 54 65 
\pinlabel $e$ at 67 55
  \endlabellist
  \includegraphics[height=\myquadheight]{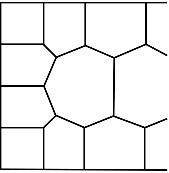} 
  \caption{Case $e$ does not have boundary vertex.}
  \label{fig:E7b1}
\end{subfigure}\hfill%
  \caption{Interior 7--gon.}
  \label{fig:E7}
\end{figure}
\end{proof}

\subsubsection{Classification of special combinatorial geodesic
  quadrangles}
\begin{thm}\label{thm:classificationofspecialquads}
  Every special combinatorial geodesic quadrangle is either one of
  the six extraordinary configurations of
  \fullref{lem:enumerateextraordinary} or belongs to one of the six
zippered families of \fullref{lem:enumeratezippered}.
\end{thm}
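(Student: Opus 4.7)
The plan is a case analysis showing that the two preceding lemmas already exhaust all special combinatorial geodesic quadrangles. Let $D$ be such a quadrangle. \fullref{lem:enumeratezippered} handles the case when $D$ contains a zipper, placing it in one of the six zippered families, and \fullref{lem:enumerateextraordinary} handles the case when $D$ contains an extraordinary face but no zipper, yielding one of the six extraordinary configurations. So it remains only to prove that the third, and only remaining, case---$D$ contains neither a zipper nor an extraordinary face---cannot occur.

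In this remaining case, every non-distinguished face of $D$ is ordinary and therefore contributes $0$ to the special curvature formula. Since $D$ has exactly four distinguished faces, each contributing either $1$ (if it lies in $D_1$) or $2$ (if it lies in $D_2$), the curvature identity becomes
\[
6 \;=\; |D_1| + 2\,|D_2|, \qquad |D_1| + |D_2| = 4,
\]
which forces $|D_1|=|D_2|=2$. So $D$ consists of exactly two $D_1$-faces, two $D_2$-faces, and some number $k\geqslant 0$ of ordinary faces.

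I next argue that such a configuration must in fact contain a zipper, contradicting the assumption of this case. The four distinguished vertices partition $\bdry D$ into four sides, each carrying one distinguished face at each endpoint; no extraordinary faces exist, so the faces meeting a side between its two distinguished corner-faces form a (possibly empty) consecutive run of ordinary faces $\Pi_1,\dots,\Pi_k$. Each $\Pi_j$ has $e(\Pi_j)=1$ and $i(\Pi_j)=4$, and the specialness hypothesis (trivalent non-distinguished vertices, plus irreducibility) forces consecutive $\Pi_j,\Pi_{j+1}$ to share a single interior arc and the remaining interior arcs to form the opposite band of a zipper. Matching the two ends of this band to the adjacent $D_1$- and $D_2$-faces, one checks that the local picture at each end is forced to coincide with one of the two configurations of \fullref{fig:zipperends}. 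When $k=0$ the argument degenerates to: three consecutive interior edges join the two distinguished faces across $D$, and the $|D_1|=|D_2|=2$ count together with irreducibility makes them separate $D$ into two halves each containing two distinguished faces, i.e.\ a zipper with zero teeth.

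The main obstacle is the last step, namely verifying that the local structure at the junction between the run of ordinary faces and the adjacent distinguished face is forced to look like one of the two zipper-end pictures of \fullref{fig:zipperends}, rather than some other configuration of a $D_1$ or $D_2$ face against an ordinary face. This is a finite combinatorial check analogous to the case analyses inside the proofs of \fullref{lem:enumeratezippered} and \fullref{lem:enumerateextraordinary}, using trivalence of interior vertices to rule out extra incident edges and using irreducibility to rule out face, edge, or vertex reductions at the junction. Once this is established, the existence of a zipper contradicts the hypothesis of the remaining case, which is therefore vacuous, and the three cases together exhaust the classification.
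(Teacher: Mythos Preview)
Your overall strategy matches the paper's exactly: the classification reduces to showing that a special quadrangle with no extraordinary face must contain a zipper, and your curvature count $|D_1|=|D_2|=2$ is the correct opening move. The paper isolates this step as a separate lemma (\fullref{lem:specialhaszipperorextraordinary}).

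However, the argument you sketch for that step is incomplete, as you yourself note, and the paper's completion is organised differently from what you propose. Rather than attempting to verify that the junction between a run of ordinary faces and an adjacent distinguished face matches one of the zipper-end pictures, the paper argues as follows. Pick a side, let $S$ be the union of faces along it, and let $A$ be the chain of interior edges separating $S$ from $D\setminus S$. Trivalence and the absence of extraordinary faces force the edges incident to interior vertices of $A$ to alternate between the $S$ side and the $D\setminus S$ side, so $A$ has length at least $3$. If $|A|=3$ then $A$ itself is a zipper with zero teeth and you are done. If $|A|\geqslant 4$, look at the first face $\Pi$ adjacent to $A$ on the $D\setminus S$ side: if $\Pi$ is distinguished you read off a zipper directly; if $\Pi$ is ordinary, a short case analysis on the next face $\Pi'$ forces either $i(\Pi')\geqslant 5$ (an extraordinary face, contradiction) or a face $\Pi''$ with two boundary arcs or two distinguished vertices (contradicting irreducibility or non-degeneracy).

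This is cleaner than your proposed route because it avoids checking the local zipper-end pictures entirely: the alternation property of $A$ is what does the work, and the contradiction for $|A|\geqslant 4$ comes from looking one layer \emph{beyond} the side-strip rather than from analysing how the corner faces fit against the ordinary run.
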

The theorem is proven by \fullref{lem:enumerateextraordinary},
\fullref{lem:enumeratezippered}, and the following:
\begin{lem}\label{lem:specialhaszipperorextraordinary}
  Every special combinatorial geodesic quadrangle contains an
  extraordinary face or a zipper.
\end{lem}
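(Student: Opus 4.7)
The plan is to argue by contradiction: suppose $D$ is a special combinatorial geodesic quadrangle containing neither an extraordinary face nor a zipper. First, since $D$ is simple, non-degenerate, and irreducible, every non-distinguished boundary face $\Pi$ must have $e(\Pi)=1$; indeed, if $e(\Pi)\geqslant 2$, one can locate two exterior edges of $\Pi$ whose removal, together with the interior of $\Pi$, separates $D$ into two components each containing a distinguished vertex, producing a face reduction contrary to irreducibility. By hypothesis no non-distinguished face is extraordinary, so every non-distinguished face is ordinary and contributes $0$ to the special curvature formula. The four distinguished faces thus carry the full curvature sum of $6$, which, since each $D_k$ contributes $k\in\{1,2\}$, forces exactly two $D_1$ faces and two $D_2$ faces.

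Next, I would construct a candidate zipper. Fix a $D_1$ face $\Pi_0$ and let $\Pi_1$ be the face on the other side of the ``middle'' interior arc of $\Pi_0$, namely the unique interior arc of $\Pi_0$ not incident to its distinguished vertex. Inductively define $\Pi_{i+1}$ as the face across the interior arc of $\Pi_i$ opposite the arc shared with $\Pi_{i-1}$. By absence of extraordinary faces together with finiteness and simplicity of $D$, this chain must terminate at some distinguished face $\Pi_n$, and all intermediate $\Pi_i$ are ordinary. Applying the curvature-symmetry argument used in the proof of \fullref{lem:enumeratezippered} (gluing a suitably rotated copy of one side of the chain along the chain itself produces a special quadrangle whose curvature sum is $6$), each of the two subdiagrams on either side of the chain must contribute exactly $3$ to the curvature sum. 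In our situation this forces each subdiagram to contain exactly one $D_1$ and one $D_2$; in particular $\Pi_n$ is the second $D_1$.

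It then remains to verify that the chain, together with the faces bordering it at the two ends, realizes a zipper in the sense of \fullref{lem:enumeratezippered}. Since $\Pi_0$ and $\Pi_n$ are both $D_1$ faces and no extraordinary faces exist, both ends must match zipper-end type 1 of Figure~\ref{fig:zipperends} (a $D_1$-plus-$D_2$ cap): zipper-end type 2 would require a face with $i=5$ and $e=1$, which is extraordinary. This exhibits the chain as a zipper, contradicting the standing assumption.

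The principal technical obstacle is verifying that the iteratively defined chain of ``opposite arcs'' behaves as claimed: that each opposite arc is well-defined under the valence-$3$ condition on non-distinguished vertices, that the chain remains interior and does not loop back on itself by simplicity, and that the two end-configurations fall only into zipper-end type 1 under our curvature count. Each of these is a short combinatorial verification using the $(3,7)$-condition and the structure of ordinary faces, rendered tractable by the counting step which pins down exactly which distinguished faces can appear.
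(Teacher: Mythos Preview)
Your approach differs from the paper's and has a genuine gap. The paper does not build a chain of faces. It fixes one side $\sigma$ of the quadrangle, lets $S$ be the union of faces along $\sigma$, and takes $A$ to be the path of interior edges separating $S$ from $D\setminus S$. Your curvature count (no extraordinary face forces exactly two $D_1$'s, two $D_2$'s, the rest ordinary) matches the paper's. The paper then shows directly from the face-degree constraints that the third edges at interior vertices of $A$ alternate between the $S$ side and the $D\setminus S$ side, so either $|A|=3$, whence $A$ is a zipper by definition, or a brief case analysis on the first two faces of $D\setminus S$ adjacent to $A$ gives a contradiction.

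The gap in your argument is twofold. First, for an ordinary face with four interior arcs there is no canonical ``opposite'' arc; the natural candidate is the other \emph{middle} arc (the second one whose endpoints are both interior), but you have not said this, and the alternative reading (the unique non-adjacent interior arc) gives a different chain with different behaviour. Second, and more seriously, a zipper is an \emph{edge path} with endpoints on $\partial D$, whereas your chain is a sequence of faces. Under the middle-arc reading the arcs $m_0,\ldots,m_{n-1}$ you cross do form a connected path, but both of its endpoints are interior vertices of $D$, so it does not separate $D$, and the symmetry argument from the zipper discussion does not apply to it. Your deduction that each ``side'' of the chain contributes curvature $3$, and hence that $\Pi_n$ is the second $D_1$, is therefore unjustified: the symmetry trick presupposes a separating boundary-to-boundary path with the alternating structure, which is exactly what you are trying to exhibit. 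One could try to extend $m_0\cdots m_{n-1}$ by one arc of $\partial\Pi_0$ and one of $\partial\Pi_n$ to reach $\partial D$ and then verify alternation and the $2$--$2$ split of distinguished faces directly, but that verification, together with showing the chain is simple and terminates at the other $D_1$, is nontrivial and is essentially where the paper's argument does its work.
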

\begin{proof}
Suppose $D$ is a special combinatorial geodesic quadrangle that
does not contain an extraordinary face.
Then no face makes a negative contribution to the curvature sum, so
 $D$ is composed of two $D_2$'s, two $D_1$'s, and some number of ordinary faces.
 
Pick a side of the quadrangle. 
Let $S$ be the union of faces along the side. 
Let $A$ be the union of interior edges separating $S$ from $D\setminus S$.
Since every vertex has valence 3, both $S$ and $D\setminus S$ are connected, and each contains two distinguished faces.

Consider the edges incident to interior vertices of $A$. 
Each one is contained either in $S$ or in $D\setminus S$. 
At least one is contained in $S$ and one in $D\setminus S$, since each side contains two distinguished vertices and no face separates $D$.
Two consecutive edges cannot point into $D\setminus S$, because the
face $\Pi\subset S$ containing the edge between them would either be
extraordinary, contradicting the hypothesis, or a distinguished face 
with interior degree at least four, contradicting non-degeneracy.

Two consecutive edges cannot point into $S$, because the face
$\Pi\subset S$ between
them would be non-distinguished with $i(\Pi)=3$.
Therefore the edges along $A$ alternate, and $A$ has length at least 3.

If $A$ consists of 3 edges then it is a zipper, and we are done, so suppose it consists
of at least 4 edges.

If the first face $\Pi$ adjacent to $A$ on the $D\setminus S$ side is
distinguished then we get a zipper, so suppose it is not.
Let $\Pi'$ be the next face along $A$ on the $D\setminus S$ side.
The two possibilities are shown in \fullref{fig:sidestrip1} and \fullref{fig:sidestrip2}.

\smallskip% need some space for the labels
\begin{figure}[h]
  \centering
\hfill
  \begin{subfigure}[h]{.33\textwidth}
\centering      
\labellist
\small
\pinlabel $S$ [b] at 12 80
\pinlabel $D\setminus S$ [b] at 50 79
\tiny
\pinlabel $\Pi$ at 30 72.5
\pinlabel $\Pi'$ at 30 50
\endlabellist
  \includegraphics[height=\myquadheight]{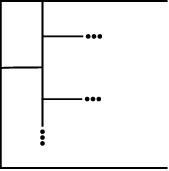}
\caption{Case 1}
\label{fig:sidestrip1}
  \end{subfigure}\hfill
  \begin{subfigure}[h]{.33\textwidth}
\centering
\labellist
\small
\pinlabel $S$ [b] at 12 80
\pinlabel $D\setminus S$ [b] at 50 79
\tiny
\pinlabel $\Pi$ at 30 64
\pinlabel $\Pi'$ at 30 33
\endlabellist
    \includegraphics[height=\myquadheight]{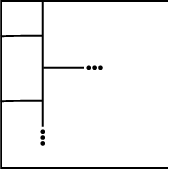}
\caption{Case 2}
\label{fig:sidestrip2}
  \end{subfigure}
\hfill
  \begin{subfigure}[h]{.33\textwidth}
\centering
\labellist
\small
\pinlabel $S$ [b] at 12 80
\pinlabel $D\setminus S$ [b] at 50 79
\tiny
\pinlabel $\Pi$ at 30 64
\pinlabel $\Pi'$ at 30 33
\pinlabel $\Pi''$ at 60 50
\endlabellist
    \includegraphics[height=\myquadheight]{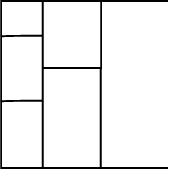}
\caption{Case 2 - next level}
\label{fig:sidestrip2f}
  \end{subfigure}
\hfill
  \caption{Quadrangle with no zipper or extraordinary face.}
  \label{fig:lastcase}
\end{figure}

Since $\Pi$ is not distinguished, the edge shared by $\Pi$ and $\Pi'$
does not contain a boundary vertex.
In the first case $i(\Pi')\geqslant 5$, using that $A$ has length at least 4, contrary to hypothesis.
In the second case, $i(\Pi)\geqslant 4$ and $i(\Pi')\geqslant 4$, so both are
ordinary, and we have the situation in \fullref{fig:sidestrip2f}.
Let $\Pi''$ be the face of $D\setminus S$ adjacent to $\Pi$ and
$\Pi'$.
Either $\Pi''$ contains more than one boundary arc, or $\Pi''$ is
distinguished and contains two distinguished vertices, but both of these are
contrary to hypothesis.
\end{proof}

\subsection{Quadrangle dichotomy} The following proposition says that a combinatorial geodesic
quadrangle must be either \emph{short}, conditions (\ref{item:separatingquad}) or
(\ref{item:zipper}), or \emph{thin}, (\ref{item:topsequence}).
\begin{prop}\label{prop:combinatorialngons}
Let $D$ be a simple combinatorial geodesic quadrangle with boundary path $\gamma_1\delta_1\gamma_2^{-1}\delta_2^{-1}$. Then one of the following holds:

\begin{enumerate}
 \item There exists a face $\Pi$ that intersects both $\gamma_1$ and $\gamma_2$ in edges 
with $2=e(\Pi)=i(\Pi)$.\label{item:separatingquad}
 \item There exist faces $\Pi$ intersecting $\gamma_1$ in edges, and $\Pi'$ and $\Pi''$, both intersecting $\gamma_2$ in edges, such that $1=e(\Pi)=e(\Pi')=e(\Pi'')$, $4=i(\Pi)=i(\Pi')=i(\Pi'')$, and any two of $\Pi$, $\Pi'$ and $\Pi''$ pairwise intersect in edges. Moreover, $\Pi'\cap \Pi''$ is an arc connecting $\gamma_2$ to $\Pi$, and $\Pi\cap (\Pi'\cup\Pi'')$ is connected.\label{item:zipper}
 \item  There exist $k\leqslant 6$ and faces $\Pi_1,\Pi_2,\dots,\Pi_k$, each intersecting $\gamma_1$ in edges, such that $\Pi_1\cap\delta_1\neq \emptyset$ and $\Pi_k\cap \delta_2\neq \emptyset$ and, for each $1\leqslant i<k$, we have $\Pi_i\cap \Pi_{i+1}\neq\emptyset$.\label{item:topsequence}
\end{enumerate}
\end{prop}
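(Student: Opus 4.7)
The plan is to reduce $D$ to a \emph{special} combinatorial geodesic quadrangle, apply the classification \fullref{thm:classificationofspecialquads}, and then verify one of the three alternatives in each of the twelve resulting cases.

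\emph{Reduction to the special case.} First I would apply vertex, edge, and face reductions iteratively to $D$. Each such reduction cuts $D$ either along a vertex on $\bdry D$ or along an interior edge whose endpoints lie on $\bdry D$, yielding two combinatorial geodesic sub-polygons whose sides are inherited as sub-arcs of $\gamma_1, \delta_1, \gamma_2, \delta_2$. Whenever a sub-polygon is a bigon or triangle, Strebel's classification (\fullref{thm:strebel}) immediately supplies either a face with $e = i = 2$ meeting both $\gamma_1$ and $\gamma_2$, or a chain of at most three faces along $\gamma_1$ from $\delta_1$ to $\delta_2$, and this transfers back to $D$. After forgetting non-distinguished valence-$2$ vertices, I may therefore assume that $D$ itself is a non-degenerate, irreducible, simple combinatorial geodesic quadrangle all of whose non-distinguished vertices have valence $3$, i.e., a special combinatorial geodesic quadrangle.

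\emph{Case analysis.} By \fullref{thm:classificationofspecialquads} the resulting diagram is one of the six extraordinary shapes of \fullref{fig:extraordinary} or a member of one of the six zippered families of \fullref{lem:enumeratezippered}. Each extraordinary shape has at most eleven faces, so by direct inspection, for any assignment of the four sides to $\gamma_1, \delta_1, \gamma_2, \delta_2$ (up to the involution $\gamma_1 \leftrightarrow \gamma_2$), there is a face realising (\ref{item:separatingquad}) or a chain of at most six faces along $\gamma_1$ realising (\ref{item:topsequence}). For the zippered families I would split according to whether the zipper spine runs between the two $\gamma$--sides or between the two $\delta$--sides. In the first case, only the faces belonging to the two zipper ends touch $\gamma_1$, and these (at most six in total) form the chain of (\ref{item:topsequence}). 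In the second case, the teeth alternate between $\gamma_1$ and $\gamma_2$; a zipper end of type $1$ (\fullref{fig:zipperends}) supplies the three faces of (\ref{item:zipper}) by letting $\Pi$ be the first tooth meeting $\gamma_1$ in an edge, $\Pi'$ the adjacent tooth meeting $\gamma_2$, and $\Pi''$ the $D_2$ face of the zipper end.

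\emph{Main obstacle.} The most delicate sub-case is a zippered quadrangle whose spine runs between $\delta_1$ and $\delta_2$ and whose two ends are both of type $2$: no zipper-end-$1$ face is available, so (\ref{item:zipper}) is not produced as above, while the teeth on $\gamma_1$ are not pairwise face-adjacent, so (\ref{item:topsequence}) fails naively. I would resolve this by exploiting the extraordinary face with $i = 5$ present in each type-$2$ end, which is face-adjacent to the two first teeth of the zipper, and by arguing via the curvature formula (\fullref{lem:curvature}) that its exterior arc together with the $D_2$'s forces a short chain along $\gamma_1$ reducing the situation to a previously handled case. Verifying along the way the connectivity assertions that $\Pi' \cap \Pi''$ is an arc connecting $\gamma_2$ to $\Pi$, and $\Pi \cap (\Pi' \cup \Pi'')$ is connected, is a further bookkeeping step that follows directly from tracing the edges around the zipper-end-$1$ configuration of \fullref{fig:zipperends}.
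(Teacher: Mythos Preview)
Your approach has the right overall shape—reduce to the special case and invoke the classification—but there are several genuine gaps that prevent it from going through as written.

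\textbf{Missing blow-up step.} The classification of \fullref{thm:classificationofspecialquads} applies only to \emph{special} quadrangles, which by definition have every non-distinguished vertex of valence exactly~$3$. Forgetting valence-$2$ vertices does not achieve this: a simple $(3,7)$--diagram may well have vertices of valence $\geqslant 4$. The paper handles this by blowing up high-valence vertices into interior edges, and then carefully checks that this operation neither destroys the minimal chain along $\gamma_1$ nor creates configurations (\ref{item:separatingquad}) or (\ref{item:zipper}) that were absent before. You have omitted this step entirely.

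\textbf{The reduction step does not ``transfer back'' as claimed.} When a face reduction splits $D$ into a triangle and a smaller quadrangle, a chain of $\leqslant 3$ faces in the triangle piece and a chain in the quadrangle piece need to be spliced together through the reduced face, and one must check the total stays $\leqslant 6$. The paper does this by fixing from the outset a \emph{minimal} chain $\Pi_1,\dots,\Pi_k$ along $\gamma_1$ (assuming (\ref{item:separatingquad}) and (\ref{item:zipper}) fail) and then tracking how each reduction affects this specific chain, case by case. Your sentence ``this transfers back to $D$'' hides exactly the bookkeeping that constitutes the proof.

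\textbf{Condition (\ref{item:zipper}) is misidentified.} You propose taking $\Pi''$ to be ``the $D_2$ face of the zipper end,'' but a $D_2$ face has $i(\Pi'')=2$, whereas (\ref{item:zipper}) requires $i(\Pi'')=4$. The correct witnesses for (\ref{item:zipper}) are three ordinary \emph{teeth}: one on $\gamma_1$ flanked by two on $\gamma_2$. The paper's logic runs in the contrapositive direction here: assuming (\ref{item:zipper}) fails forces at most two teeth on $\gamma_1$, whence $k\leqslant 4+2=6$. Your ``main obstacle'' (both zipper ends of type~2) is therefore not an obstacle at all under the correct reading—if it has $\geqslant 3$ teeth on $\gamma_1$ then (\ref{item:zipper}) holds, and otherwise the chain along $\gamma_1$ is short—so the proposed workaround via the $i=5$ face and the curvature formula is unnecessary and, as stated, does not lead anywhere concrete.
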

The three cases are pictured in Figures \ref{fig:tricquad},
\ref{fig:triczip}, and \ref{fig:tricthin}, respectively.

\fullref{corollary:faceseqsegment} implies that in case (\ref{item:topsequence}) of
\fullref{prop:combinatorialngons} the set $\bigcup_{i=1}^k\Pi_i\cap\gamma_1$
is a path subgraph of $\gamma_1$.

\begin{figure}[h]
  \centering
\hfill
  \begin{subfigure}[h]{.33\textwidth}
\centering      
\labellist
\tiny
\pinlabel $\Pi$ at 81 39
\pinlabel $\gamma_1$ [bl] at 28 94
\pinlabel $\gamma_2$ [t] at 20 1
\pinlabel $\delta_1$ [r] at 1 67
\pinlabel $\delta_2$ [l] at 161 67
\endlabellist
  \includegraphics[height=1.3\myquadheight]{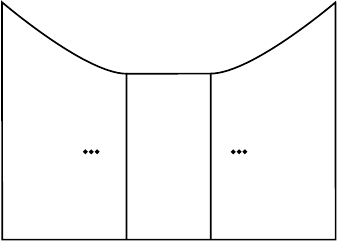}
\caption{Short (quadrangle)}
\label{fig:tricquad}
  \end{subfigure}\hfill
\begin{subfigure}[h]{.33\textwidth}
\centering      
\labellist
\tiny
\pinlabel $\Pi$ at 81 60
\pinlabel $\Pi'$ at 61 19
\pinlabel $\Pi''$ at 101 19
\pinlabel $\gamma_1$ [bl] at 28 94
\pinlabel $\gamma_2$ [t] at 20 1
\pinlabel $\delta_1$ [r] at 1 67
\pinlabel $\delta_2$ [l] at 161 67
\endlabellist
  \includegraphics[height=1.3\myquadheight]{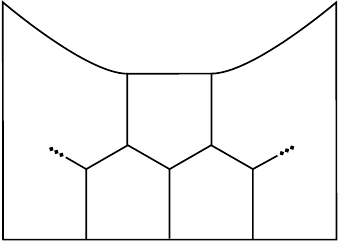}
\caption{Short (zipper)}
\label{fig:triczip}
  \end{subfigure}\hfill
\begin{subfigure}[h]{.33\textwidth}
\centering      
\labellist
\tiny
\pinlabel $\Pi_1$ at 20 67
\pinlabel $\Pi_2$ at 54 67
\pinlabel $\Pi_{k-1}$ at 109 67
\pinlabel $\Pi_k$ at 145 67
\pinlabel $\gamma_1$ [bl] at 28 94
\pinlabel $\gamma_2$ [t] at 20 1
\pinlabel $\delta_1$ [r] at 1 67
\pinlabel $\delta_2$ [l] at 161 67
\endlabellist
  \includegraphics[height=1.3\myquadheight]{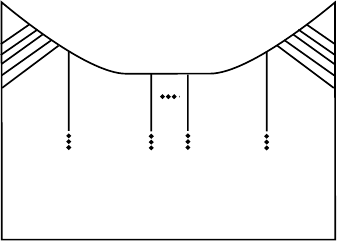}
\caption{Thin}
\label{fig:tricthin}
  \end{subfigure}\hfill
  \caption{Quadrangle dichotomy}
\end{figure}

\begin{proof}[Proof of \fullref{prop:combinatorialngons}]
If $D$ is a single face we are done, so suppose not.

  Suppose that the first two conditions do not hold. 
Let $k>0$ be minimal such that $\Pi_1,\dots,\Pi_k$ is a sequence of faces
satisfying condition (\ref{item:topsequence}).
Minimality is equivalent to requiring $\Pi_i\cap\delta_1=\emptyset$
and $\Pi_i\cap\delta_2=\emptyset$ for all $1<i<k$.

We may also assume $D$ is non-degenerate, since for triangles we have $k\leqslant 3$, by \fullref{thm:strebel}.

Since $D$ is a simple $(3,7)$--diagram it has no valence 1 vertices. 
If it has non-distinguished valence 2 vertices we can forget them.
We then arrange that non-distinguished vertices of $D$ have valence 3
as follows. 

Suppose $v$ is a vertex of valence greater than 3.
Since $D$ is simple, for any two faces $\Pi$ and $\Pi'$ that contain
$v$ in their boundaries and do not share an edge incident to $v$,
there is a unique way to blow up $v$ to an interior edge $e_v$ in such
a way that the images of $\Pi$ and $\Pi'$ are the faces with $e_v$ in their
boundaries.
Let $\beta$ denote the blow-up map.
Since $\Pi$ and $\Pi'$ do not share an edge incident to $v$, the two
vertices of $e_v$ each have valence at least 3 and strictly less than
that of $v$. 

The only effect of this blow-up on faces is to increase the interior
degrees of $\Pi$ and $\Pi'$ by 1 each, so $\beta(D)$ is
still a simple combinatorial geodesic quadrangle.
The faces $\beta(\Pi_1),\dots,\beta(\Pi_k)$ still satisfy condition
(\ref{item:topsequence}), since $\beta$ only introduces an interior
edge. 
We now check that $\beta$ does not produce a diagram
satisfying conditions (\ref{item:separatingquad}) or
(\ref{item:zipper}) and not condition (\ref{item:topsequence}).

We argue for the face $\Pi$. The same arguments apply for $\Pi'$.
Since $D$ is simple $\Pi$ cannot have $e(\Pi)=2$ and $i(\Pi)=1$, so 
$\beta(\Pi)$ does not satisfy condition (\ref{item:separatingquad}).

If $e(\Pi)>1$ then $e(\beta(\Pi))=e(\Pi)>1$, so $\beta(\Pi)$ cannot be one of the faces
satisfying condition (\ref{item:zipper}).
Suppose now that $e(\Pi)=1$.
The blow-up does not change the boundary of $D$, so $\Pi$ is
distinguished if and only if $\beta(\Pi)$ is distinguished.
If $\Pi$ is distinguished with $i(\Pi)\geqslant 3$ then
$\beta(\Pi)$ is distinguished with $i(\beta(\Pi))\geqslant 4$, so
$\beta(D)$ degenerates to a triangle, which implies $k\leqslant 3$, and we are done.
 Otherwise, if $\Pi$ is distinguished then $i(\beta(\Pi))<4$, so $\beta(\Pi)$ cannot be one of the faces
satisfying condition (\ref{item:zipper}).
Finally, if $\Pi$ is non-distinguished then the combinatorial
geodesic polygon condition requires $i(\Pi)\geqslant 4$, so
$i(\beta(\Pi))\geqslant 5$, and $\beta(\Pi)$ cannot be one of the
faces satisfying
condition (\ref{item:zipper}).

We conclude that $k\leqslant 3$, in which case we are done, or
$\beta(D)$ is a simple non-degenerate combinatorial geodesic
quadrangle not satisfying conditions (\ref{item:separatingquad}) or
(\ref{item:zipper}) and containing a sequence $\beta(\Pi_1),\dots,\beta(\Pi_k)$ of
faces satisfying the requirements of condition
(\ref{item:topsequence}).
Moreover, since $\beta$ only introduced an interior edge,
$\beta(\Pi_i)\cap\beta(\delta_j)=\emptyset$ for all $1<i<k$ and
$j\in\{1,2\}$, so  $\beta(\Pi_1),\dots,\beta(\Pi_k)$ is a minimal
length sequence satisfying condition
(\ref{item:topsequence}) in $\beta(D)$.

\medskip
We now repeat blowing up higher valence vertices until either the
quadrangle becomes degenerate, and we are done, or there are no higher
valence vertices left.
Thus, we may assume non-distinguished vertices of $D$ have valence 3.

If $D$ is irreducible then it is special.
If $D$ is special and has no zipper then $k\leqslant 5$, by considering the possibilities
given by \fullref{thm:classificationofspecialquads}.

If $D$ is special and has a zipper then, by considering the possible zippered
configurations of \fullref{lem:enumeratezippered}, $k$ is at most four plus the number of teeth of
the zipper adjacent to $\gamma_1$. 
Since $D$ does not satisfy condition (\ref{item:zipper}), there can be at most two teeth of
the zipper adjacent to $\gamma_1$, so $k\leqslant 6$. 
In fact, $k=6$ can
occur, so 6 is the best possible bound for $k$.

The remaining possibility is that $D$ is reducible.
It is not vertex reducible, since it is simple. 
If an edge reduction is possible then so is a face reduction, so
suppose $\Pi$ is a face with edges $e,\,e'\subset\bdry\Pi$ such that $D$ admits a face reduction at $(\Pi,e,e')$.
Since $D$ is non-degenerate there
are four distinguished faces.

Suppose the face reduction separates one distinguished
face from the other three, which occurs when one of $e$ or $e'$ is an
edge of one of the $\gamma_i$'s and the other is an edge of one of the $\delta_i$'s. 
Then $D$ is a union of a bigon and a quadrangle with fewer faces than
  $D$. 
By minimality of $k$, either $\Pi=\Pi_1$ or $\Pi=\Pi_k$ or
$\Pi\neq\Pi_i$.
Therefore $\Pi_1,\dots,\Pi_k$ corresponds to a sequence of faces in
the new quadrangle still satisfying condition
\ref{item:topsequence}.

Repeat the argument for the new quadrangle.
Since the number of faces in the quadrangle decreases, this process
stops after finitely many steps, so we may assume that $D$ is not
reducible into a bigon and a quadrangle.
Since $D$ is non-degenerate, this implies that every
distinguished face has one exterior arc and either  2 or 3 interior
arcs, and the distinguished faces are the only ones that intersect
both one of the $\delta_i$'s and one of the $\gamma_i$'s.

Suppose there is a face $\Pi$ containing edges $e\subset
\bdry\Pi\cap\delta_1$ and $e'\subset\bdry\Pi\cap\delta_2$.
Face reduction at $(\Pi,e,e')$ sends the $\Pi_i$'s to a minimal length sequence of
faces along one side of a combinatorial geodesic triangle connecting
the other two sides.
Thus, $k\leqslant 3$.

Next, suppose there is an edge $e$ that meets both $\gamma_1$ and
$\gamma_2$. 
Then $e$ separates $\delta_1$ from $\delta_2$, so there is
some $i<k$ such that $\Pi_1,\dots,\Pi_i$ are on one side of $e$ and
$\Pi_{i+1},\dots,\Pi_k$ are on the other. 
Edge reduction of $D$ at $e$ results in two combinatorial geodesic
triangles.
In one of these triangles there is a sequence of faces corresponding
to $\Pi_1,\dots,\Pi_i$ that run along the side corresponding to
$\gamma_1$ and connect the side corresponding to $\delta_1$ to the
opposite distinguished vertex. 
Since we were not in case (\ref{item:separatingquad}), $i(\Pi_i)>2$,
so the face corresponding to $\Pi_i$ in the triangle has more than one
interior arc.
It follows that $i\leqslant 3$. 
The same argument applied to $\Pi_{i+1}$ shows $k-i\leqslant 3$, so
$k\leqslant 6$. (The lower right diagram in
\fullref{fig:combinationreduction} shows $k=6$ can be achieved in this
case.)

Finally, suppose $D$ has a separating face $\Pi_i$ and no separating
edge.
 Face reduction of $D$ at $\Pi_i$ yields two combinatorial geodesic
 triangles.
Since $D$ has no separating edge, each of these triangles has a
distinguished face corresponding to $\Pi_i$ with interior degree
greater than one.
The same argument as in the previous case then tells us $i\leqslant 3$
and $k-i+1\leqslant 3$, so $k\leqslant 5$.\end{proof}

%%% Local Variables: 
%%% mode: latex
%%% TeX-master: "main"
%%% End: 

\section{Contraction in $Gr'(\ifrac{1}{6})$-groups}\label{sec:gsccontraction}
In this section, we study contraction in groups defined by $Gr'(\ifrac{1}{6})$--labelled graphs. 
We give a characterization of strongly contracting geodesics in the
Cayley graph in terms of their intersections with embedded components
of the defining graph. 
Throughout this section, $\Gamma$ is an arbitrary $Gr'(\ifrac{1}{6})$--labelled graph with a set of labels $\gens$, and $X:=\Cay(G(\Gamma),\gens)$.

  Recall that a geodesic $\alpha$ in $X$ is \emph{locally $(\rho_1,\rho_2)$--contracting} if for every embedded component $\Gamma_0$ of $\Gamma$ such that $\Gamma_0\cap\alpha\neq\emptyset$, closest point projection in $\Gamma_0$ of $\Gamma_0$ to $\Gamma_0\cap\alpha$ is $(\rho_1,\rho_2)$--contracting.

We will prove the following theorem:
\begin{thm}\label{theorem:strongcontraction}
A geodesic $\alpha$ in $X$ is locally $(\rho_1,\rho_2)$--contracting if and
only if there exist $\rho_1'$ and $\rho_2'$ such that $\alpha$ is
$(\rho_1',\rho_2')$--contracting. 

Moreover, $\rho_1'$ and $\rho_2'$ can be bounded in terms of $\rho_1$
and $\rho_2$, and when $\rho_1(r)\geqslant \ifrac{r}{2}$ we can take
$\rho_1'=\rho_1$ and $\rho_2'\asymp\rho_2$.

Consequently, $\alpha$ is strongly contracting if and only if it is
uniformly locally strongly contracting, $\alpha$ is semi-strongly
contracting if and only if it is locally uniformly semi-strongly
contracting, and $\alpha$ is sublinearly
contracting if and only if it is uniformly locally sublinearly contracting.
\end{thm}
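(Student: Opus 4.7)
The plan is to prove the equivalence via the quadrangle dichotomy of \fullref{prop:combinatorialngons}, with essentially all substantive work going into the direction that local contraction implies global contraction.

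For that direction, I would fix $x, x' \in X$ with $d(x,x')$ small compared to $d(x,\alpha)$, pick projections $p \in \pi(x)$ and $p' \in \pi(x')$, and form the geodesic quadrangle with sides $\gamma_1 \subseteq \alpha$ between $p$ and $p'$, the geodesic $\gamma_2$ from $x$ to $x'$, and geodesics $\delta_1, \delta_2$ from $x,x'$ to $p,p'$. By \fullref{prop:sctocombinatorial} this lifts to a combinatorial geodesic quadrangle to which I apply \fullref{prop:combinatorialngons}. In every case of the dichotomy the boundary of each relevant face reads an element of $\rels$, hence traces a simple cycle in an embedded component of $\Gamma$; so each relevant face $\Pi$ sits inside some embedded component $\Gamma_\Pi$, with its exterior arc on $\gamma_1$ lying inside $\Gamma_\Pi \cap \alpha$. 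The short cases (1) and (2) involve at most three such faces bridging $\gamma_1$ and $\gamma_2$, and the thin case (3) at most six faces in a chain whose exterior arcs cover $\gamma_1$ by \fullref{corollary:faceseqsegment}; in every case it suffices to bound $\ell_\Pi := |\bdry \Pi \cap \gamma_1|$ for each such face $\Pi$ by a constant depending only on $\rho_1$ and $\rho_2$.

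This bounding step is the main obstacle and is carried out exactly by the essential trick foreshadowed in the introduction: play graphical small cancellation against local contraction. On the one hand, the number of interior arcs of $\Pi$ is bounded via the classification \fullref{thm:classificationofspecialquads}, each interior arc is a piece of length strictly less than $|\bdry \Pi|/6$, and arcs of $\bdry \Pi$ on the short sides $\gamma_2, \delta_1, \delta_2$ contribute at most a controlled amount, so $\ell_\Pi$ is bounded below by a definite linear fraction of $|\bdry \Pi|$. On the other hand, a vertex $v$ on $\bdry \Pi$ antipodal to the exterior arc sits inside $\Gamma_\Pi$ at distance approximately $|\bdry \Pi|/2$ from $\Gamma_\Pi \cap \alpha$, and walking from $v$ around $\bdry \Pi$ in small increments and applying local $(\rho_1,\rho_2)$-contraction of $\Gamma_\Pi \cap \alpha$ in $\Gamma_\Pi$ at each step, the corresponding projections sweep across the entire exterior arc with total spread at most a bounded multiple of $\rho_2(|\bdry \Pi|/2)$. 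Combining a linear lower bound with a sublinear upper bound on $\ell_\Pi$ in terms of $|\bdry \Pi|$ forces $|\bdry \Pi|$, and hence $\ell_\Pi$, to be bounded by a constant depending only on $\rho_1$ and $\rho_2$; summing over the at most six faces of the quadrangle controls $d(p,p')$ and supplies global contraction rates $\rho_1', \rho_2'$. Careful bookkeeping of the constants yields the moreover clause, where the hypothesis $\rho_1(r) \geqslant r/2$ is what guarantees enough room on $\bdry \Pi$ for the contraction condition to apply at each step of the walk.

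For the converse direction I would use only that each embedded component $\Gamma_0$ is isometrically embedded and convex in $X$. Convexity implies that $\Gamma_0 \cap \alpha$ is a subsegment of $\alpha$ and that $\Gamma_0$-geodesics between points of $\Gamma_0$ coincide with $X$-geodesics. For $y \in \Gamma_0$ there are two cases: either some $q$ in the $X$-closest-point projection $\pi_X(y)$ lies inside $\Gamma_0 \cap \alpha$, in which case the $\Gamma_0$- and $X$-projections of $y$ differ by at most $\diam \pi_X(y)$, controlled by $\rho_2'$; or $\pi_X(y)$ lies entirely to one side of $\Gamma_0 \cap \alpha$ along $\alpha$, in which case $\pi_{\Gamma_0}(y)$ is forced into a bounded neighbourhood of the nearer endpoint of $\Gamma_0 \cap \alpha$. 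Either way one transfers global $(\rho_1',\rho_2')$-contraction to uniform local $(\rho_1,\rho_2)$-contraction, and the three named regimes (strong, semi-strong, sublinear) then follow by inspecting the shapes of the resulting rate functions.
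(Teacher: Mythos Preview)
Your high-level strategy---apply the quadrangle dichotomy and play small cancellation against local contraction on each relevant face---matches the paper's described heuristic, but the execution has a genuine gap in the local-to-global direction.

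You orient the dichotomy with $\gamma_1 \subseteq \alpha$ and try to bound $\ell_\Pi = |\bdry\Pi \cap \alpha|$ for each of the (at most six) faces $\Pi_i$ along $\alpha$ in case~(3). The linear lower bound you need on $\ell_\Pi$ requires that the interior degree of $\Pi$ be bounded and that the exterior arcs of $\Pi$ on $\gamma_2, \delta_1, \delta_2$ be short. Neither is justified: \fullref{prop:combinatorialngons} says nothing about $i(\Pi_i)$ in case~(3), and the classification \fullref{thm:classificationofspecialquads} applies only to the \emph{special} quadrangle obtained after blow-ups and reductions, whose faces need not coincide with the $\Pi_i$. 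Moreover $\gamma_2, \delta_1, \delta_2$ are not short---they have lengths comparable to $d(x,\alpha)$---so a face whose exterior arc lies partly on one of them need not have $\ell_\Pi$ linear in $|\bdry\Pi|$. Your walk-around estimate is also too coarse: the number of steps depends on $|\bdry\Pi|/\rho_1$, not a fixed constant, so the spread is not obviously a bounded multiple of $\rho_2(|\bdry\Pi|/2)$.

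The paper avoids these issues by orienting the dichotomy the other way ($\gamma_1$ is the geodesic from $x$ to $x'$, and $\gamma_2 = \alpha'$) and never trying to bound $|\bdry\Pi_i|$. It first shows, via a chain of lemmas about cycles built from pieces and an arc on $\alpha$ (\fullref{lem:corner_degree_2_projection} through \fullref{lem:interior_degree_1_projection}), that cases~(1) and~(2) force $\gamma$ into a bounded neighbourhood of $\alpha$; then in case~(3) it decomposes $\gamma$ into at most eight subpaths, each a piece or a geodesic segment inside a single embedded component, and bounds the projection diameter of each via the key \fullref{closestpointprojection}. That lemma---a piece or embedded component disjoint from $\alpha$ has projection diameter bounded sublinearly in its distance to $\alpha$---is where the essential trick is actually deployed, on carefully controlled cycles rather than on the raw faces $\Pi_i$.

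For the converse direction, convexity alone is not enough: you need that $d_X(y,\alpha) = d_{\Gamma_0}(y, \Gamma_0\cap\alpha)$ for $y \in \Gamma_0$, i.e., that $X$-projection to $\alpha$ already lands in $\Gamma_0\cap\alpha$. This is \fullref{lem:intersection_projection}, and its proof genuinely uses the diagrammatic structure (attaching a face along the $\Gamma_0$-path and applying Strebel's bigon classification), not merely convexity. Your case ``$\pi_X(y)$ lies entirely to one side of $\Gamma_0\cap\alpha$'' does not in fact occur.
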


\begin{remark} 
\fullref{theorem:strongcontraction} provides an analogy with the
geometry present in many of the preeminent examples of spaces with a mixture
of hyperbolic and non-hyperbolic behavior, such as Teichm\"uller
space, the Culler-Vogtmann Outer Space, or relatively hyperbolic
spaces.
The simplest situation is that geodesics that avoid spending a long
time in a non-hyperbolic region behave like hyperbolic geodesics. 
The analogy here is strongest with relatively hyperbolic spaces, with
the embedded components of the defining graph corresponding to the
peripheral regions of a relatively hyperbolic space. 
The peripheral regions are not necessarily
non-hyperbolic, so hyperbolic geodesics do not necessarily have to
avoid them completely. 
Rather, a geodesic is roughly as hyperbolic as its
intersections with peripheral regions.

Our original motivation for the present paper was to make this analogy
precise, and, in particular, to determine whether graphical small
cancellation groups contained strongly contracting elements, and
therefore fit into the scheme of groups with growth tight actions
introduced by Arzhantseva, Cashen, and Tao \cite{ArzCasTao15}, see \fullref{sec:scelements}.

In concrete terms, this analogy was first suggested by results of Gruber and Sisto, who
proved that the Cayley graph of a graphical small cancellation group
is weakly hyperbolic relative to the embedded components of the
defining graph \cite{GruSis14} in the sense of \cite{DGO11}, see also
\fullref{sec:conedoff}. 

Note, however, that in general the groups we consider in the present paper need not be non-trivially
relatively hyperbolic~\cite{BehDruMos09,GruSis14}.

\end{remark}

\subsection{Contraction and Morse quasi-geodesics}\label{sec:contraction}
We quote some of our technical results from \cite{ArzCasGrub}
that let us simplify and reformulate the contraction conditions.

\begin{lem}[{\cite[Lemma~6.3]{ArzCasGrub}}]\label{lem:strcontbddHdist}
Let $Y$ and $Y'$ be closed subspaces of $X$ at bounded Hausdorff distance from
one another. 
  Suppose $Y$ is $(r,\rho_2)$--contracting.
Then $Y'$ is $(r,\rho_2')$--contracting for some
$\rho_2'\asymp\rho_2$.
In particular, if $Y$ is strongly contracting then so is $Y'$.
\end{lem}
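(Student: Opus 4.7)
The plan is to set $H := d_{\mathrm{Haus}}(Y, Y')$ and translate the $(r, \rho_2)$--contraction of $Y$ into one for $Y'$ using a short chain of triangle inequalities. Two basic comparison facts drive everything: $|d(x, Y) - d(x, Y')| \leq H$ for every $x \in X$, and for any $y' \in \pi_{Y'}(x)$ one can choose $y \in Y$ with $d(y, y') \leq H$, giving $d(x, y) \leq d(x, Y') + H \leq d(x, Y) + 2H$, so $y$ is an almost-closest-point of $x$ in $Y$ with slack at most $2H$. Using these, I would first establish a bridge lemma: for any such $y'$ and any $\bar{y} \in \pi_Y(x)$, $d(\bar{y}, y') \leq \rho_2(d(x, Y)) + 3H$. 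To see this, let $q$ lie on a geodesic from $x$ to $y$ with $d(x, q) = d(x, Y)$; then $d(x, q) \leq d(x, Y)$, so the $(r, \rho_2)$--contraction of $Y$ produces $\bar{q} \in \pi_Y(q)$ with $d(\bar{y}, \bar{q}) \leq \rho_2(d(x, Y))$, while $d(\bar{q}, q) \leq d(q, y) \leq 2H$ and $d(y, y') \leq H$.

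Next, take $x, x' \in X$ with $d(x, x') \leq d(x, Y')$, and pick $y' \in \pi_{Y'}(x)$, $z' \in \pi_{Y'}(x')$, together with $\bar{y} \in \pi_Y(x)$ and $\bar{z} \in \pi_Y(x')$. The bridge lemma bounds $d(y', \bar{y})$ and $d(z', \bar{z})$ by $\rho_2$ of a distance comparable to $d(x, Y)$, resp.\ $d(x', Y)$, plus a constant depending only on $H$. It remains to bound $d(\bar{y}, \bar{z})$ using the contraction of $Y$ itself. If $d(x, x') \leq d(x, Y)$, the $(r, \rho_2)$--contraction of $Y$ applies directly to the pair $(x, x')$. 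Otherwise $d(x, x')$ exceeds $d(x, Y)$ by at most $H$, and I would interpolate: pick $x''$ on a geodesic from $x$ to $x'$ with $d(x, x'') = d(x, Y)$, apply contraction to $(x, x'')$, and then control the short leftover segment from $x''$ to $x'$ by splitting on whether $d(x'', Y) \leq H$ (in which case all relevant projections lie in an $O(H)$--neighbourhood of $x''$) or $d(x'', Y) > H$ (in which case a further application of the contraction of $Y$ applies, since $d(x'', x') \leq H < d(x'', Y)$).

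Summing these estimates yields $d(y', z')$ bounded above by a fixed linear combination of $\rho_2$ evaluated at distances comparable to $d(x, Y)$, plus a constant depending only on $H$. Using $d(x, Y) \leq d(x, Y') + H$, one obtains $\diam \bigl( \pi_{Y'}(x) \cup \pi_{Y'}(x') \bigr) \leq \rho_2'(d(x, Y'))$ for a suitable $\rho_2' \asymp \rho_2$, establishing $(r, \rho_2')$--contraction of $Y'$. The strong contraction statement then follows immediately, since a function $\asymp$ a bounded function is itself bounded. The main obstacle is the bookkeeping needed to guarantee that the additive and multiplicative constants introduced by $H$, compounded through the bridge lemma and the interpolation, combine into a single replacement function lying in the equivalence class of $\rho_2$; this step uses only that $\rho_2$ is non-decreasing.
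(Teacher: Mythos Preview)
The paper does not actually prove this lemma: it is quoted verbatim from the companion paper \cite{ArzCasGrub} (as Lemma~6.3 there) and used as a black box, so there is no in-paper argument to compare against.

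Your argument is essentially correct. The two basic comparison facts you state are exactly what is needed, and the interpolation in Case~2 (splitting off a short tail of length at most $H$ and handling it according to whether $d(x'',Y)\leqslant H$) is the right way to pass from the hypothesis $d(x,x')\leqslant d(x,Y')$ to the usable hypothesis $d(x,x'')\leqslant d(x,Y)$. One small bookkeeping slip: in your bridge lemma, from the chain $\bar y\to\bar q\to q\to y\to y'$ the pieces you exhibit give $\rho_2(d(x,Y))+2H+2H+H$, i.e.\ an additive $5H$ rather than the $3H$ you wrote; this is of course irrelevant to the $\asymp$ conclusion. You should also remark, for completeness, that the resulting $\rho_2'$ can be taken non-decreasing and that $\rho_2'\asymp\rho_2$ together with sublinearity of $\rho_2$ forces $\lim_{r\to\infty}\rho_2'(r)/r=0$, so the second clause in \fullref{def:contractingprojection} is satisfied.
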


\begin{theorem}[{\cite[Theorem~1.4, Theorem~7.1]{ArzCasGrub}}]\label{thm:morseequalscontracting} 
Let $Y$ be a closed subspace of $X$.
Then the following are equivalent:
\begin{enumerate}
\item $Y$ is Morse.\label{item:Morse}
\item $Y$ is sublinearly contracting.\label{item:sublinearcontraction}
\item There exist $\rho_1$ and $\rho_2$ such that $Y$ is
  $(\rho_1,\rho_2)$--contracting.\label{item:contraction}
\item There exist a constant $C\geqslant 0$ and a sublinear function $\rho$ such that if $\gamma$ is a
  geodesic segment then
  $d(\gamma,Y)\geqslant C$ implies
$\diam\pi(\gamma)\leqslant\rho(\max_{z\in \gamma}d(z,Y))$.\label{item:git}
\end{enumerate}
Moreover, for each implication the function of the conclusion depends
only on the function of the hypothesis, not on $Y$.
\end{theorem}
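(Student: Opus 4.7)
The plan is to prove the cyclic chain of implications $(2)\Rightarrow(3)\Rightarrow(4)\Rightarrow(1)\Rightarrow(2)$, tracking at each step how the output function depends only on the input function, which yields the \emph{moreover} clause automatically. The implication $(2)\Rightarrow(3)$ is immediate, being the specialization $\rho_1(r)=r$.

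For $(3)\Rightarrow(4)$, assume $Y$ is $(\rho_1,\rho_2)$--contracting and fix $C$ large enough that $\rho_2/\rho_1$ is small on $[C,\infty)$. Given a geodesic $\gamma$ with $d(\gamma,Y)\geqslant C$ and $D:=\max_{z\in\gamma}d(z,Y)$, I subdivide $\gamma$ into consecutive subsegments $\gamma_i$ of diameter at most $\rho_1$ of the local distance to $Y$, and apply the contraction condition between consecutive endpoints to bound each contribution by $\rho_2$ of the local distance. Summing these contributions and invoking $\rho_2(r)/\rho_1(r)\to 0$ yields a sublinear bound $\rho$ on $\diam\pi(\gamma)$ in terms of $D$, depending only on $\rho_1,\rho_2$. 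For $(4)\Rightarrow(1)$, given an $(L,A)$--quasi-geodesic $\gamma$ with endpoints on $Y$ and a point $z\in\gamma$ with $D:=d(z,Y)\geqslant C$, let $\gamma_0\ni z$ be a maximal subarc of $\gamma$ with $d(\cdot,Y)\geqslant C$. Hypothesis $(4)$ gives $\diam\pi(\gamma_0)\leqslant\rho(D)$, while the endpoints $p_1,p_2$ of $\gamma_0$ lie within $C$ of $Y$, so $d(p_1,p_2)\leqslant\rho(D)+2C$. The quasi-geodesic inequality then gives $|\gamma_0|\leqslant L(\rho(D)+2C)+A$; combined with the basic fact $D\leqslant|\gamma_0|/2+C$, sublinearity of $\rho$ forces $D$ to be bounded in terms of $L,A,C,\rho$.

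The key implication is $(1)\Rightarrow(2)$. Given $x,x'\in X$ with $d(x,x')\leqslant R:=d(x,Y)$, choose $y\in\pi(x)$ and $y'\in\pi(x')$ with $d(y,y')$ nearly realizing $\diam\pi(x)\cup\pi(x')$, and set $\epsilon:=d(y,y')/R$. The concatenation $p:=[y,x]\cdot[x,x']\cdot[x',y']$ has length at most $4R$ and connects $y$ to $y'$. A key observation is that for any $z_1\in[y,x]$ one has $y\in\pi(z_1)$ and $d(z_1,Y)=d(z_1,y)$, with the analogous statement for $z_2\in[x',y']$. Using this together with the separation $d(y,y')\geqslant\epsilon R$, I verify that $p$ is an $(L(\epsilon),A(\epsilon))$--quasi-geodesic with endpoints on $Y$, the parameters depending only on $\epsilon$. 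The Morse property then places $p$, and in particular $x$, in the $\mu(L(\epsilon),A(\epsilon))$--neighborhood of $Y$, giving $R\leqslant\mu(L(\epsilon),A(\epsilon))$. Inverting this constraint produces a function $\rho_2$ depending only on $\mu$ such that $d(y,y')\leqslant\rho_2(R)$, which is sublinear because $\mu$ is pointwise finite; the same argument applies when $y,y'$ both lie in $\pi(x)$ via the degenerate case $x'=x$, so the supremum over such pairs gives $\diam\pi(x)\cup\pi(x')\leqslant\rho_2(R)$.

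The main obstacle is the quasi-geodesic verification in the $(1)\Rightarrow(2)$ step: for $z_1\in[y,x]$ close to $y$ and $z_2\in[x',y']$ close to $y'$, the naive estimate $d(z_1,z_2)\geqslant d(y,y')-d(y,z_1)-d(z_2,y')$ becomes vacuous, so one performs a case analysis comparing $d(y,z_1)$ and $d(z_2,y')$ to a small fraction of $\epsilon R$ and dispatches the remaining cases by comparing $d(z_1,z_2)$ to $\max\{d(z_1,Y),d(z_2,Y)\}$ via the identities $d(z_i,Y)=d(z_i,y)$ and $d(z_i,Y)=d(z_i,y')$. Once the induced length between any two points of $p$ is bounded by a multiple of their distance (with the multiplicative constant depending only on $\epsilon$), the remaining inversion step to produce the sublinear $\rho_2$ from $\mu$ is routine.
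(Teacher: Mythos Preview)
This theorem is not proved in the present paper; it is quoted from the companion paper \cite{ArzCasGrub} (as the citation in the theorem header indicates), so there is no proof here against which to compare your proposal.

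That said, your sketch contains a genuine gap in the step $(4)\Rightarrow(1)$. You assert that ``Hypothesis $(4)$ gives $\diam\pi(\gamma_0)\leqslant\rho(D)$,'' but condition $(4)$ is stated only for \emph{geodesic} segments, whereas your $\gamma_0$ is a subarc of an $(L,A)$--quasi-geodesic and need not be geodesic. Nor can you simply replace $\gamma_0$ by the geodesic $[p_1,p_2]$: its endpoints lie at distance exactly $C$ from $Y$, so that geodesic may enter the $C$--neighbourhood of $Y$, and $(4)$ again fails to apply. A correct argument must either first upgrade $(4)$ to a statement about quasi-geodesics (which takes real work---naive subdivision of $\gamma_0$ into short geodesic pieces yields a sum with uncontrolled number of terms) or route the implication differently. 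Your outlines of $(3)\Rightarrow(4)$ and of the key step $(1)\Rightarrow(2)$ are along standard lines and look reasonable at the level of detail given.
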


We remark that in the case $\rho_1(r):=r$ the proof of
`(\ref{item:contraction}) implies (\ref{item:git})' yields $\rho\asymp\rho_2$.
When, in addition,
$\rho_2$ is bounded, which is the strongly contracting case, this recovers the well-known `Bounded Geodesic Image
Property', cf \cite{MasMin00, BesFuj09}.
\begin{corollary}\label{corollary:bgi}
  If $Y$ is strongly contracting then every geodesic segment that
  stays sufficiently far from $Y$ has uniformly bounded projection diameter.
\end{corollary}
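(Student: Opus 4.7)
The plan is to read off the corollary directly from \fullref{thm:morseequalscontracting} and the remark immediately following it. By hypothesis, $Y$ is strongly contracting, meaning $Y$ is $(\rho_1,\rho_2)$--contracting with $\rho_1(r)=r$ and $\rho_2(r)\equiv C_0$ for some constant $C_0\geqslant 0$. This is exactly condition (\ref{item:contraction}) of \fullref{thm:morseequalscontracting} in the special case $\rho_1(r)=r$.

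Applying the implication (\ref{item:contraction})$\Rightarrow$(\ref{item:git}) from \fullref{thm:morseequalscontracting}, I obtain a constant $C\geqslant 0$ and a sublinear function $\rho$ with the property that for every geodesic segment $\gamma$ satisfying $d(\gamma,Y)\geqslant C$,
\[
\diam\pi(\gamma)\leqslant \rho\bigl(\max_{z\in\gamma}d(z,Y)\bigr).
\]
By the remark stated just after \fullref{thm:morseequalscontracting}, when $\rho_1(r)=r$ the function $\rho$ produced by this implication satisfies $\rho\asymp\rho_2$. Here $\rho_2\equiv C_0$ is bounded, and since $\asymp$ preserves boundedness (as noted in \fullref{sec:preliminaries}), $\rho$ is bounded by some constant $M$.

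Therefore, whenever $\gamma$ is a geodesic segment with $d(\gamma,Y)\geqslant C$, we have $\diam\pi(\gamma)\leqslant M$, which is the desired uniform bound. There is no real obstacle here: the entire content of the corollary is the observation that when $\rho_2$ is bounded, the sublinear function $\rho$ appearing in (\ref{item:git}) is bounded as well, so the bounded geodesic image property falls out immediately from the already-established equivalence.
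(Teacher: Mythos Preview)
Your proposal is correct and follows exactly the approach the paper intends: the corollary is presented immediately after the remark that (\ref{item:contraction})$\Rightarrow$(\ref{item:git}) gives $\rho\asymp\rho_2$ when $\rho_1(r)=r$, and the paper explicitly notes that in the bounded-$\rho_2$ case this recovers the Bounded Geodesic Image Property, with no further proof given. Your write-up simply makes this remark explicit.
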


\subsection{Proof of \fullref{theorem:strongcontraction}} 

\begin{lem}\label{lem:piece_projection}
 Let $\alpha$ be a geodesic in $X$, and let $x$ be a vertex not in $\alpha$. Let $\gamma$ be a path from $x$ to a vertex of $\alpha$ such that $|\gamma|=d(x,\alpha)$. Then, if $p$ is a path from $x$ to a vertex of $\alpha$ such that $p$ is a piece, then $p=\gamma$. In particular, the closest point projection of $p$ to $\alpha$ is $p\cap\alpha$.
\end{lem}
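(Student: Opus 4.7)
The plan is to set up a closed loop from $p$, a subsegment of $\alpha$, and the reverse of $\gamma$, apply the graphical van Kampen lemma to extract a diagram whose interior arcs are pieces, and play the $Gr'(\ifrac{1}{6})$--condition against the assumption that $p$ is a piece in order to rule out $2$--cells. Concretely, let $y$ and $y_0$ denote the endpoints of $p$ and $\gamma$ on $\alpha$, let $\alpha_0$ be the subsegment of $\alpha$ from $y$ to $y_0$ (possibly trivial), and consider the loop $L := p\cdot\alpha_0\cdot\gamma^{-1}$ based at $x$. Since $L$ is trivial in $G(\Gamma)$, by \fullref{lem:graphical_diagrams} it bounds a disc diagram $D$ over $\rels$ in which every interior arc is a piece.

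Suppose $D$ has at least one $2$--cell. Then $D$ is a nontrivial $(3,7)$--diagram. If $D$ has at least two faces, Greendlinger's lemma (a direct consequence of Strebel's curvature formula \fullref{lem:curvature}, as noted in the excerpt) produces a face $\Pi$ with exterior degree $1$ and interior degree at most $3$; in particular its unique exterior arc $a$ satisfies $|a| > \frac{1}{2}|\bdry\Pi|$. The arc $a$ cannot lie on the geodesic sides $\gamma$ or $\alpha_0$, since the complementary arc of $\bdry\Pi$ would then furnish a strictly shorter path between the same endpoints. Hence $a \subset p$; but $a$ is then a subpath of the piece $p$, so it is itself a piece in the simple cycle of $\Gamma$ labelling $\bdry\Pi$, and the $Gr'(\ifrac{1}{6})$--condition forces $|a| < \frac{1}{6}|\bdry\Pi|$, a contradiction. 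The one--face case is ruled out by a direct arithmetic estimate: $|p| < \frac{1}{6}|\bdry\Pi|$ by the piece hypothesis, while the geodesic bounds $|\alpha_0| \leqslant |p|+|\gamma|$ and $|\gamma| \leqslant |p|+|\alpha_0|$ combined with $|\gamma| = d(x,\alpha) \leqslant |p|$ yield the incompatible chain $4|\gamma| < |\alpha_0| < \frac{3}{2}|\gamma|$.

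Consequently $D$ has no $2$--cells, so the word $p\alpha_0\gamma^{-1}$ freely reduces to the empty word. Combining this with $|\gamma| \leqslant |p|$ and observing that any nontrivial cancellation at the junction $y$ would force an initial edge of $\alpha_0$ to retrace the final edge of $p$, hence place an interior vertex of $p$ onto $\alpha$ (contrary to the implicit assumption that $p$ meets $\alpha$ only at its endpoint), forces $\alpha_0$ to be trivial, so $y = y_0$, and the labels of $p$ and $\gamma$ coincide; reducedness of the labelling on $X$ then gives $p = \gamma$. For the ``in particular'' statement, each vertex $v \in p = \gamma$ satisfies $d(v,\alpha) = |\gamma| - d(x,v) = d(v,y_0)$, so $y_0 \in \pi(v)$; any further closest point would produce a second shortest path from $x$ to $\alpha$, contradicting the uniqueness of $\gamma$ just established. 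Thus $\pi(p) = \{y_0\} = p\cap\alpha$.

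The main obstacle I anticipate is justifying that a subpath $a \subset p$ is itself a piece in the sense needed to invoke the $Gr'(\ifrac{1}{6})$--bound against $|\bdry\Pi|$. The two distinct label-preserving maps $\phi_1,\phi_2: p\to\Gamma$ witnessing the piece status of $p$ restrict to $a$; if these restrictions remain distinct and not related by a label-preserving automorphism of $\Gamma$ we are done, and otherwise one obtains the required pair of distinct maps by combining either restriction with the boundary map $a \hookrightarrow \bdry\Pi$ realizing $\bdry\Pi$ as a simple cycle of $\Gamma$. This step is standard for graphical small cancellation but deserves an explicit mention.
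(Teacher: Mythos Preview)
Your approach---fill the loop $L=p\cdot\alpha_0\cdot\gamma^{-1}$ with a diagram and exclude $2$--cells via Greendlinger---is different from the paper's, which instead extracts an innermost \emph{simple} sub-cycle $c$ (either $\gamma'p'^{-1}$ or $\gamma'\alpha'p'^{-1}$) and then runs a direct length count. The paper's terse line ``since $p'$ is a piece we have $|p'|<|c|/6$'' is really your one-face argument in disguise: once $c$ is simple and has a geodesic side and a piece side, the diagram filling $c$ is forced to be a single face, so $c$ is a relator cycle and the $Gr'(\ifrac{1}{6})$--bound applies literally. By passing to a simple sub-cycle first, the paper never needs the multi-face case at all.

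Your multi-face case has a genuine gap. Greendlinger gives a face $\Pi$ with $e(\Pi)=1$, $i(\Pi)\leqslant 3$, and exterior arc $a$ of length $>\tfrac{1}{2}|\bdry\Pi|$, but $a$ need not lie in a single side of the triangle with vertices $x,y,y_0$: it can straddle a corner, say $a=a_p\cdot a_{\alpha_0}$ with $a_p\subset p$ and $a_{\alpha_0}\subset\alpha_0$. Then $a_p$ is a piece and $a_{\alpha_0}$ is geodesic, giving only $|a_p|<\tfrac{1}{6}|\bdry\Pi|$ and $|a_{\alpha_0}|\leqslant\tfrac{1}{2}|\bdry\Pi|$, which together with $i(\Pi)\leqslant 3$ is consistent; no contradiction arises. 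You would need further case analysis (or the combinatorial geodesic bigon/triangle classification, \fullref{thm:strebel}) to close this. A second, smaller gap: $L$ need not be an embedded loop, so the diagram from \fullref{lem:graphical_diagrams} need not be a disc; you either have to pass to an innermost simple sub-cycle (as the paper does) or analyse the disc components separately. Finally, the obstacle you flag---that a subpath of a piece is a piece---is not an obstacle: since the labelling is reduced, any label-preserving map $p\to\Gamma$ is determined by the image of one vertex, so if $\phi_1|_a$ and $\phi_2|_a$ were related by an automorphism $\psi$, then $\psi\circ\phi_1$ and $\phi_2$ would agree on a vertex of $a$ and hence on all of $p$.
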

\begin{proof}
 If $p$ has the same terminal vertex as $\gamma$ and if $p\neq
 \gamma$, then there exist subpaths $\gamma'$ of $\gamma$ and $p'$
 of $p$, each of length at least 1, such that $c:=\gamma'p'^{-1}$ is a
 simple cycle. 
Since $\gamma'$ is a geodesic, we have $|\gamma'|\leqslant \ifrac{|c|}{2}$. Since $p'$ is a piece we have $|p'|<\ifrac{|c|}{6}$. This is a contradiction.
 
 If $p$ has a different terminal vertex, then there exist terminal
 subpaths $p'$ of $p$ and $\gamma'$ of $\gamma$, respectively, each
 having length at least 1, and a path $\alpha'$ contained in $\alpha$
 such that $c:=\gamma'\alpha'p'^{-1}$ is a simple cycle. 
By assumption on $\gamma$, we have $|\gamma'|\leqslant |p'|$ and $|\alpha'|\leqslant \ifrac{|c|}{2}$ , whence we conclude $|p'|\geqslant \ifrac{|c|}{4}$, contradicting the fact that $p'$ is a piece.  

The final claim follows from the fact that a subpath of a piece is a piece.
\end{proof}

\begin{lem}\label{lem:intersection_projection}
 Let $\Gamma_0$ be an embedded component of $\Gamma$ in $X$, and let $\alpha$ be a geodesic in $X$ such that $\Gamma_0$ intersects $\alpha$. 
Closest point projection of $\Gamma_0$ to $\alpha$ in $X$ agrees with closest point projection to $\Gamma_0\cap\alpha$ in $\Gamma_0$.
\end{lem}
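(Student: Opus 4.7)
The plan is to show that for any $x\in\Gamma_0$, the set of closest points to $x$ in $\alpha$ (measured in $X$) coincides with the set of closest points to $x$ in $\Gamma_0\cap\alpha$ (measured in $\Gamma_0$). This reduces to the single claim that every $X$--closest point in $\alpha$ actually lies in $\Gamma_0$, since the isometric embedding $\Gamma_0\hookrightarrow X$ converts $\Gamma_0$--distances to $X$--distances freely once the inclusion $\pi^\alpha_X(x)\subseteq\Gamma_0\cap\alpha$ is established.

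First I would observe that $\Gamma_0\cap\alpha$ is a (possibly singleton) connected subpath of $\alpha$: any two of its vertices are joined by a subpath of $\alpha$, which is an $X$--geodesic between points of $\Gamma_0$ and hence lies in $\Gamma_0$ by convexity. Given this, let $x\in\Gamma_0$ and suppose toward a contradiction that some $y\in\pi^\alpha_X(x)$ lies outside $\Gamma_0$. Convexity of $\Gamma_0$ forces the set of points on a geodesic $\gamma$ from $x$ to $y$ lying in $\Gamma_0$ to be an initial subpath; replacing $x$ with its last vertex $x'$ (which still satisfies $|\gamma'|=d_X(x',\alpha)$ for the remaining subpath $\gamma'$, by a quick triangle-inequality argument), we may assume $\gamma\cap\Gamma_0=\{x\}$ and $y\notin\Gamma_0$. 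Let $z$ be the endpoint of $\Gamma_0\cap\alpha$ on the side of $y$, let $\alpha'$ be the positive-length subpath of $\alpha$ from $y$ to $z$ (meeting $\Gamma_0$ only at $z$), and let $\gamma_0$ be a geodesic in $\Gamma_0$ from $x$ to $z$, which is also an $X$--geodesic by the isometric embedding.

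The closed path $\gamma\cdot\alpha'\cdot\gamma_0^{-1}$ is the concatenation of three $X$--geodesics. By \fullref{lem:graphical_diagrams} and \fullref{prop:sctocombinatorial}, it bounds a combinatorial geodesic triangle $D$ whose interior arcs are pieces, and by Strebel's classification (\fullref{thm:strebel}) $D$ takes one of the finitely many shapes $\mathrm{I}_2,\mathrm{I}_3,\mathrm{II},\mathrm{III}_1,\mathrm{IV},\mathrm{V}$.

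The hard part will be extracting a contradiction from each Strebel shape. My approach is to focus on the face $\Pi$ meeting the distinguished corner at $z$: its exterior arc along $\gamma_0$ is a labeled path embedding label-preservingly into both $\Gamma_0$ and the component of $\Gamma$ whose simple cycle labels $\bdry\Pi$. The $Gr'(\ifrac{1}{6})$--condition forces any such overlap of length at least $\ifrac{|\bdry\Pi|}{6}$ to be induced by an automorphism identifying the two components as embedded subgraphs, which means $\Pi$ itself lies inside $\Gamma_0$. Using the bound that interior arcs are pieces and the very restricted combinatorics of each Strebel shape, one sees that one or the other alternative always pushes $\Pi$ into $\Gamma_0$, and peeling $\Pi$ off reduces to a smaller combinatorial geodesic polygon with the same hypotheses. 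Iterating exhausts $D$, showing $\gamma\subseteq\Gamma_0$ and contradicting $\gamma\cap\Gamma_0=\{x\}$ together with $|\gamma|>0$. Since the claim holds, the proof of the lemma concludes by the reduction in the first paragraph.
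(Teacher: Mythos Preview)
Your overall strategy---reduce to showing that any $X$--closest point of $x\in\Gamma_0$ on $\alpha$ must already lie in $\Gamma_0$, and exploit that a face sharing a long enough arc with $\gamma_0\subset\Gamma_0$ is forced to lie in $\Gamma_0$---is correct and is essentially the mechanism behind the paper's proof. But the ``peeling off'' induction as you describe it has a genuine gap.

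There are two problems. First, the assertion that ``one or the other alternative always pushes $\Pi$ into $\Gamma_0$'' is not justified. For the corner face $\Pi$ at $z$ with $i(\Pi)=2$ you can indeed argue: the exterior arc has length $>\frac{2}{3}|\bdry\Pi|$, and since the $\alpha'$--portion is at most $\frac{1}{2}|\bdry\Pi|$ (being geodesic), the $\gamma_0$--portion exceeds $\frac{1}{6}|\bdry\Pi|$, hence is not a piece. But several Strebel shapes allow the corner face to have $i(\Pi)=3$, and then the same count only gives that the exterior arc exceeds $\frac{1}{2}|\bdry\Pi|$, which tells you nothing about the $\gamma_0$--portion individually. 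You have not used the constraint that $\gamma$ realises closest point projection, and without it this case does not close. Second, even when you do succeed in absorbing $\Pi$ into $\Gamma_0$, the residual diagram is \emph{not} a combinatorial geodesic triangle: the new ``$\gamma_0$--side'' (the old $\gamma_0$ with $\Pi\cap\gamma_0$ replaced by the interior arcs of $\Pi$) is no longer geodesic, and the neighbouring face whose arc with $\Pi$ has just become exterior may now have interior degree below $4$. So you cannot re-invoke Strebel, and the induction does not run.

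The paper packages the same absorption idea more robustly by a minimality argument. One considers all choices of $x$, of a closest point $v\in\alpha\setminus\Gamma_0$, of \emph{any} path $p$ in $\Gamma_0$ from $x$ to some $v'\in\alpha$, and of a diagram $D$ filling $\gamma[v,v']p^{-1}$ as in \fullref{lem:graphical_diagrams}, and chooses the instance minimising the number of edges of $D$. If any face of $D$ met $p$ in an arc that was not a piece, that face could be absorbed into $\Gamma_0$ to produce a legitimate new path $p'$ and a strictly smaller diagram---contradicting minimality. Hence every arc on $p$ is a piece, and \fullref{lem:piece_projection} shows $p$ is not contained in a single face. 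Now one attaches a single auxiliary face $\Pi'$ along all of $p$; the resulting diagram is a combinatorial geodesic \emph{bigon}, and the very rigid shape $\mathrm{I}_1$ from \fullref{thm:strebel} forces the $p$--side to lie in one face of $D$, a contradiction. The minimality trick is exactly your peeling, done once and for all; the reduction to a bigon avoids the Strebel triangle case analysis altogether.
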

\begin{proof}
Consider a vertex $x\in\Gamma_0\setminus\alpha$ such that there exists
a point $v\in\alpha$ with $d(x,\alpha)=d(x,v)$ and $v\notin
\Gamma_0$. Let $\gamma$ be a geodesic from $x$ to $v$, let $p$ be a
path in $\Gamma_0$ from $x$ to a vertex $v'$ of $\alpha$.
Let $D$ be a diagram over $\rels$ as in \fullref{lem:graphical_diagrams} filling the triangle
$\gamma[v,v']p^{-1}$, where $[v,v']$ is the reduced path in $\alpha$
from $v$ to $v'$.

Among all possible choices of $x$, $v$, $v'$, $\gamma$, $p$, and $D$
as above, make the choice for which $D$ has the minimal possible number of edges. Note that, by minimality, $D$ is a simple disc diagram. 
 
 Let $\Pi$ be a face of $D$ that intersects the side of $D$ corresponding to $p$ in an arc $a$. Then
 $a$ has a lift to $\Gamma$ via being a subpath of (a copy of) $p$, and one via
 being a subpath of $\partial \Pi$. If the lifts coincide (up to a
 label-preserving
 automorphism of $\Gamma$), then we can remove the edges of $a$ from $D$, thus obtaining a path $p'$ in $\Gamma_0$ as above, contradicting minimality. Hence, $a$ is a piece. Therefore, \fullref{lem:piece_projection} implies that the side of $D$ corresponding to $p$ is not contained in a single face of $D$.

We make a diagram $D'$ by 
attaching a new face $\Pi'$ to $D$ by identifying a proper subpath of
the boundary of $\Pi'$ with the side of $D$ corresponding to $p$. 
This operation is purely combinatorial, the boundary of $\Pi'$ is not labelled.
Note that if $\Pi$ is a face of $D$ with $e(\Pi)=1$ whose exterior arc
is contained in $p$ then, by the previous paragraph, that exterior arc
is a piece.
Since interior arcs of $D$ are pieces, $i(\Pi)\geqslant 6$ in $D$.
Thus, $\Pi$ becomes an interior face of $D'$ with $i(\Pi)\geqslant 7$.
It follows that $D'$  is a combinatorial geodesic bigon.

Apply \fullref{thm:strebel} to $D'$: it has at most two distinguished
faces, one is $\Pi'$ and the other, if it exists, is at the vertex
corresponding to $v$. 
Any other face of $D'$ came from $D$, and, in particular, the side of $D$ corresponding to $p$ is contained in a single face of $D$. 
This is a contradiction.
\end{proof}

\fullref{lem:intersection_projection} immediately implies the global-to-local direction of \fullref{theorem:strongcontraction}.

\begin{lem}\label{lem:2_pieces_projection}
 Let $\alpha$ be a geodesic in $X$. Let $p_1p_2$ be a simple path
 starting at a vertex $y$ in $\alpha$ and terminating at a vertex
 $x\in X$ such that $p_1p_2$ is contained in some embedded component
 $\Gamma_0$ and such that each $p_i$ is a piece that is not a single vertex. 
Then $d(x,\alpha)> \max\{|p_1|,|p_2|\}$. 
\end{lem}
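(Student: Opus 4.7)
The plan is to argue by contradiction. Suppose $d(x,\alpha)\leqslant\max\{|p_1|,|p_2|\}$; by the symmetry in $p_1$ and $p_2$ I may assume the maximum equals $|p_2|$, and I fix a geodesic $q$ from $x$ to a nearest vertex $v\in\alpha$, so $|q|\leqslant|p_2|$. First I would collect a few preliminary facts by the simple cycle / piece-bound device used in the proof of \fullref{lem:piece_projection}. Every piece is a geodesic, so each $p_i$ is a geodesic; the concatenation $p_1p_2$ is also a geodesic, since a hypothetical shorter alternative would produce a simple cycle $c$ in which a subpath of the alternative is geodesic (length $\leqslant|c|/2$) while a subpath of $p_1p_2$ consists of at most two consecutive pieces (total length $<|c|/3$), and $\ifrac{1}{2}+\ifrac{1}{3}<1$. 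In particular $d(y,x)=|p_1|+|p_2|$. The junction vertex $w$ cannot lie on $\alpha$, for otherwise $p_1$ and the $\alpha$-subpath from $y$ to $w$ would close a simple cycle in which the piece $p_1$ would exceed $\ifrac{1}{6}$ of the perimeter. Finally $v\neq y$, because $v=y$ would force $|q|=d(x,y)=|p_1|+|p_2|>|p_2|$.

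Let $\alpha'$ be the subpath of $\alpha$ from $v$ to $y$; geodesicity of $p_1p_2$ forces $|\alpha'|\geqslant|p_1|$. I would now run the innermost simple cycle construction of \fullref{lem:piece_projection}, applied to the geodesic $q$ and to the two-piece geodesic $(p_1p_2)^{-1}$ from $x$ to $y\in\alpha$. This yields a simple cycle $c=p''\gamma'\alpha''$ with $p''\subset p_1p_2$, $\gamma'\subset q$, and $\alpha''\subset\alpha$. The standard estimates, namely $|\gamma'|\leqslant|p''|$ from $q$ being shortest from $x$ to $\alpha$ and $|\alpha''|\leqslant|c|/2$ from $\alpha''$ being geodesic, imply $|p''|\geqslant|c|/4$. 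If $p''$ is contained in a single $p_i$, then it is a piece and $|p''|<|c|/6$, contradicting $|p''|\geqslant|c|/4$ just as in the one-piece lemma.

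The main obstacle is the remaining case in which $p''$ straddles the junction $w$: then $p''$ is a concatenation of two pieces, so the bare piece bound gives only $|p''|<|c|/3$, which fails to contradict $|p''|\geqslant|c|/4$. To push through I would exploit that $w$ is then an interior vertex of $c$, so that $c$ is naturally a simple combinatorial geodesic quadrangle whose four geodesic sides are $\gamma'$, $\alpha''$, the full path $p_1$, and a subpath $a$ of $p_2$. I would then apply \fullref{prop:combinatorialngons} to this quadrangle, choosing $\gamma_1$ to be one of the two piece-sides ($p_1$ or $a$) and $\gamma_2$ the opposite geodesic side ($\gamma'$ or $\alpha''$). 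In cases~(\ref{item:separatingquad}) and~(\ref{item:zipper}), every arc of the face provided by the proposition is either a piece or a subpath of a geodesic, and the piece-bound applied to the piece-arcs and interior arcs forces the remaining geodesic exterior arc to account for strictly more than half of the face boundary; the complementary arcs then provide a strictly shorter alternative within the relator, contradicting geodesicity. In the thin case~(\ref{item:topsequence}), the sequence of at most six faces $\Pi_1,\dots,\Pi_k$ covers the piece-side $\gamma_1$ with piece-arcs, yielding $|\gamma_1|\leqslant\sum_i|\Pi_i\cap\gamma_1|<\sum_i|\partial\Pi_i|/6$; doing this for each of the two piece-sides in turn and combining with $|\gamma'|\leqslant|q|\leqslant|p_2|$ and $|\alpha''|\leqslant|c|/2$ recovers exactly the numerical contradiction that would rule out a single-face van Kampen diagram filling $c$. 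The crucial point throughout is that the extra corner at $w$ supplies just enough combinatorial rigidity to overcome the fact that two pieces alone are too long for the direct one-piece estimate.
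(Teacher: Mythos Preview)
Your argument handles cases~(\ref{item:separatingquad}) and~(\ref{item:zipper}) of \fullref{prop:combinatorialngons} correctly: with $\gamma_1$ chosen to be a piece-side, any face with exterior arc on $\gamma_1$ has that arc a piece, so the face in case~(\ref{item:separatingquad}) would have three arcs of length $<|\partial\Pi|/6$ and one geodesic arc of length $>|\partial\Pi|/2$, a contradiction; and the face $\Pi$ in case~(\ref{item:zipper}) cannot exist since its five arcs would all be pieces.

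The gap is in case~(\ref{item:topsequence}). The inequality $|\gamma_1|<\sum_i|\partial\Pi_i|/6$ that you derive goes nowhere: you have no control over $\sum_i|\partial\Pi_i|$, which may be much larger than $|c|$. Applying the proposition again with the other piece-side as $\gamma_1$ produces a second such inequality for a possibly different family of faces, and there is no way to combine them into the promised ``numerical contradiction''. Even in the simplest instance $k=1$ (diagram a single face), your argument needs the $p_2$-subpath $a$ in $c$ to equal all of $p_2$; but your innermost-cycle construction allows $q$ and $p_2$ to share an initial segment from $x$, so one may have $|a|<|p_2|$, and then $|\gamma'|>|c|/6>|a|$ does not contradict $|\gamma'|\leqslant|q|\leqslant|p_2|$.

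The paper avoids the quadrangle case analysis entirely. After arranging $q\cap p_1=\emptyset$ and $q\cap p_2=\{x\}$, so that $p_1p_2q[y',y]$ is already a simple cycle, it fills with a diagram $D$ and attaches two auxiliary $2$-cells $\Pi_1,\Pi_2$ along $p_1$ and $p_2$. Since faces of $D$ with exterior arc on a piece-side have interior degree at least $6$, the result is a combinatorial geodesic \emph{triangle}; Strebel's classification forces shape~$\mathrm{III}_1$ with $\Pi_1,\Pi_2$ the two adjacent distinguished faces, and \fullref{lem:piece_projection} rules out any exterior-degree-$2$ face of $D$, so $D$ is a single face. Then $|p_1|,|p_2|<|c|/6$ and $|[y',y]|\leqslant|c|/2$ give $|q|>|c|/6>\max\{|p_1|,|p_2|\}$ directly. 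The trick you are missing is this reduction to a triangle, which replaces your open-ended analysis with Strebel's finite list.
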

\begin{proof}
 Let $q$ be a path starting at $x$ and terminating at a vertex $y'$ in $\alpha$ such that $|q|=d(x,\alpha)$. If $y'=y$, then the claim follows from the convexity of $\Gamma_0$, noting that any simple path that is a concatenation of two pieces must be a geodesic.
Similarly, if $q\cap p_1\neq\emptyset$ then \fullref{lem:piece_projection} implies $q=p_1p_2$.
Hence, assume that $q\cap p_1=\emptyset$.

Without loss of generality, we may assume $q\cap p_2=x$.
Consider a diagram $D$ over $\rels$ as in \fullref{lem:graphical_diagrams} filling the embedded geodesic quadrangle $p_1p_2q[y',y]$. 
We may stick onto the 2-complex $D$ two 2-cells $\Pi_1$ and $\Pi_2$ by identifying proper subpaths of their boundaries with $p_1$ and $p_2$, respectively. By construction, this yields a combinatorial geodesic triangle, and by \fullref{thm:strebel}, it has shape $\mathrm{III}_1$, where $\Pi_1$ and $\Pi_2$
are the distinguished faces intersecting each other only in a valence 4 vertex.
Using \fullref{lem:piece_projection}, there are no faces with exterior degree 2, so $D$ is a single face.
 
 This shows that there exists a simple cycle of the form $c:=p_1p_2q[y',y]$ in some embedded component $\Gamma_1$. 
Since $|p_1|,\,|p_2|<\ifrac{|c|}{6}$ and $|[y',y]|\leqslant \ifrac{|c|}{2}$, we have $|q|>\ifrac{|c|}{6}>\max\{|p_1|,|p_2|\}$.
\end{proof}

\begin{lem}\label{lem:corner_degree_2_projection} 
Let $\alpha$ be a geodesic in $X$.
Let $\Gamma_0$ be an embedded component of $\Gamma$ intersecting $\alpha$.
Suppose $\alpha\cap\Gamma_0$ is $(\rho_1,\rho_2)$--contracting in
$\Gamma_0$.
Let $c$ be a simple cycle in $\Gamma_0$ such that $c=p_1p_2qa$,
where each $p_i$ is a piece, $q$ realizes the distance of the terminal
vertex $x$ of $p_2$ to $\alpha$, and $a$ is a subpath of
$\alpha$. 
Then $|c|$ is bounded, with bound depending only on $\rho_1$ and $\rho_2$.
\end{lem}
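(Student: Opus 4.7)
The plan is to bound $|a|$, the length of the side of $c$ lying in $\alpha$, in terms of $|q|$ and the contraction data, and then combine this with the small cancellation inequalities $|p_1|,|p_2|<|c|/6$ to deduce a bound on $|c|$. Let $y$ and $y'$ denote the endpoints of $a$, with $y$ the initial vertex of $p_1$ and $y'$ the terminal vertex of $q$. By \fullref{lem:2_pieces_projection}, $|q|>\max\{|p_1|,|p_2|\}$, and the triangle inequality yields $|q|\leqslant|p_1|+|p_2|<2|q|$. Convexity of $\Gamma_0$ and \fullref{lem:intersection_projection} imply that closest point projection $\pi$ in $\Gamma_0$ onto $\Gamma_0\cap\alpha$ coincides with closest point projection in $X$ onto $\alpha$, so $\pi(y)=\{y\}$, $y'\in\pi(x)$, and $|a|=d(y,y')$. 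By condition~(\ref{item:git}) of \fullref{thm:morseequalscontracting}, the hypothesis provides a constant $C\geqslant 0$ and a sublinear function $\rho$, depending only on $\rho_1,\rho_2$, such that every geodesic $\gamma$ in $\Gamma_0$ with $d(\gamma,\Gamma_0\cap\alpha)\geqslant C$ satisfies $\diam\pi(\gamma)\leqslant\rho(\max_{v\in\gamma}d(v,\Gamma_0\cap\alpha))$.

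To exploit this inequality I will produce a geodesic in $\Gamma_0$ whose projection contains both $y$ (or a nearby point) and $y'$, and which stays at distance at least $C$ from $\alpha$. I split on the size of $|p_1|$. If $|p_1|\geqslant 2C$, let $\gamma$ be a geodesic in $\Gamma_0$ from the vertex $z$ between $p_1$ and $p_2$ to $x$; it has length $L\leqslant|p_2|<|q|$. By \fullref{lem:piece_projection} applied to the piece $p_1$, $\pi(z)=\{y\}$. For $v\in\gamma$ at distance $t$ from $z$, the triangle inequality yields $d(v,\alpha)\geqslant\max(|p_1|-t,|q|-L+t)\geqslant|p_1|/2\geqslant C$ and $d(v,\alpha)\leqslant(L+|p_1|+|q|)/2<2|q|$, so $|a|\leqslant\diam\pi(\gamma)\leqslant\rho(2|q|)$. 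If $|p_1|<2C$, then $d(x,y)\leqslant|p_1|+|p_2|<2C+|q|$; I take a geodesic $\gamma'$ in $\Gamma_0$ from $x$ to $y$ and let $\gamma''$ be its initial subsegment of length $|q|-C$ starting at $x$. Along $\gamma''$ one has $d(v,\alpha)\geqslant C$, the endpoint $v^{*}$ of $\gamma''$ satisfies $d(v^{*},y)\leqslant 3C$, and $\max_v d(v,\alpha)\leqslant C+|q|$. Since every element of $\pi(v^{*})$ lies within $6C$ of $y$, the triangle inequality gives $|a|\leqslant 6C+\rho(C+|q|)$.

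In either case $|a|\leqslant B+\rho(2|q|)$ for some constant $B$ depending only on $\rho_1,\rho_2$. The small cancellation condition gives $|p_1|+|p_2|<|c|/3$ and, together with $|q|\leqslant|p_1|+|p_2|$, yields $|c|=|p_1|+|p_2|+|q|+|a|<2|c|/3+|a|$, hence $|a|>|c|/3$. Combining, $|c|/3<B+\rho(2|c|/3)$, and sublinearity of $\rho$ forces $|c|$ to be bounded by a constant depending only on $\rho_1$ and $\rho_2$. The main obstacle is the case $|p_1|<2C$: the natural geodesic from $z$ to $x$ may pass too close to $\alpha$, and one must instead truncate a geodesic running from $x$ towards $y$, carefully balancing the need for the segment to stay at distance at least $C$ from $\alpha$ against the need for its far endpoint to have projection close to $y$.
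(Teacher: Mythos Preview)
Your argument is correct in outline and follows the same global strategy as the paper: split on the size of $|p_1|$, and in the large case use the sublinear projection bound of \fullref{thm:morseequalscontracting}~(\ref{item:git}) applied to a geodesic from $z$ to $x$ that stays $C$--far from $\alpha$, so that $|c|/3<|a|$ is bounded by a sublinear function of $|c|$. A few minor points: your upper bound $\max_v d(v,\alpha)\leqslant C+|q|$ in the second case should be $2C+|q|$ (the estimate $d(v,y)\leqslant d(x,y)<2C+|q|$ is what you actually have), and you implicitly assume $|q|\geqslant C$ when defining $\gamma''$; the subcase $|q|<C$ is trivial but should be noted.

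The one genuine difference from the paper is your handling of the case $|p_1|<2C$, which you call the main obstacle. The paper avoids your truncated--geodesic argument entirely: since $a$ is a subpath of the geodesic $\alpha$ and $p_1p_2q$ is another path with the same endpoints, one has $|a|\leqslant|c|/2$; combined with $|p_1|<C$, $|p_2|<|c|/6$, and $|q|\leqslant|p_1|+|p_2|$, this gives $|c|/2\leqslant|p_1|+|p_2|+|q|<2C+|c|/3$, hence $|c|<12C$ directly. So what you identify as the delicate case is in fact the easy one. The paper also sharpens the large--$|p_1|$ case slightly: rather than bounding $d(v,\alpha)$ from below via the triangle inequality at the endpoints (yielding $|p_1|/2$ and hence your threshold $2C$), it applies \fullref{lem:2_pieces_projection} along $p_2$ to every intermediate point, obtaining $d(v,\alpha)>|p_1|$ and thus threshold $C$.
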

\begin{proof}
Let $x:=p_2\cap q$, and let $y:=p_2\cap p_1$.
Since any subpath of a piece is a piece itself, we may assume without loss of generality that $x$ is the point of $p_2$ maximizing $\diam\pi(x)\cup\pi(y)$, where $\pi$ denotes closest point projection to $\alpha$ in $X$.
By \fullref{lem:piece_projection}, we have $|a|\leqslant \diam \pi(x)\cup\pi(y)$.

Since $p_1p_2$ is a path from $\alpha$ to $x$ and $q$ minimizes distance, $|q|\leqslant |p_1|+|p_2|<\ifrac{|c|}{3}$, which implies $|a|=|c|-|p_1|-|p_2|-|q|>\ifrac{|c|}{3}$.

Let $C\geqslant 0$ be as \fullref{thm:morseequalscontracting}~(\ref{item:git}). If $|p_1|<C$, then $|q|\leqslant |p_1|+|p_2|<C+\ifrac{|c|}{6}$, which implies $|c|<12C$.

Otherwise, \fullref{lem:2_pieces_projection} implies that for every point $z\in p_2$ we have $d(z,\alpha)\geqslant C$, so $p_2$ is a geodesic that stays outside the $C$--neighborhood of $\alpha$.
\fullref{thm:morseequalscontracting}~(\ref{item:git}) says $\diam\pi(p_2)$ is bounded by a sublinear function of $\max_{z\in p_2}d(z,\alpha)<\ifrac{|c|}{3}$.
Thus, $\ifrac{|c|}{3}<|a|\leqslant\diam\pi(p_2)$ is bounded above by a sublinear function of $|c|$, which implies $|c|$ is bounded.
\end{proof}

\begin{lemma}\label{lem:interior_degree_4_projection} 
Let $\alpha$ be a geodesic in $X$.
Let $\Gamma_0$ be an embedded component of $\Gamma$ intersecting $\alpha$.
Suppose $\alpha\cap\Gamma_0$ is $(\rho_1,\rho_2)$--contracting in
$\Gamma_0$.
Let $c$ be a simple cycle in $\Gamma_0$ of the form $c=p_1p_2p_3p_4a$, where each $p_i$ is a piece and $a$ is a subpath of $\alpha$. 
Then $|c|$ is bounded, with bound depending only on $\rho_1$ and $\rho_2$.
\end{lemma}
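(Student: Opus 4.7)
The plan is to adapt the proof of \fullref{lem:corner_degree_2_projection}, replacing the distance-realizing geodesic $q$ with the two additional pieces $p_3,p_4$. By the $Gr'(\ifrac{1}{6})$--condition each $|p_i|<|c|/6$, so $\sum_i|p_i|<2|c|/3$ and hence $|a|>|c|/3$. Label the vertices of $c$ by $y_0,y_1,y_2,y_3,y_4$ so that $p_i\from y_{i-1}\to y_i$ and $a$ runs from $y_4$ to $y_0$ along $\alpha$; since $a$ is a subpath of the geodesic $\alpha$, $|a|=d(y_0,y_4)$. \fullref{lem:piece_projection} applied to $p_1$ and $p_4$ gives $\pi(y_1)=\{y_0\}$ and $\pi(y_3)=\{y_4\}$, where $\pi$ denotes closest point projection to $\alpha$ in $X$; by \fullref{lem:intersection_projection} this coincides with projection to $\alpha\cap\Gamma_0$ inside $\Gamma_0$. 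Let $C\geqslant 0$ and sublinear $\rho$ be the constants from part~(\ref{item:git}) of \fullref{thm:morseequalscontracting} applied to the $(\rho_1,\rho_2)$--contracting subspace $\alpha\cap\Gamma_0$ of $\Gamma_0$.

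Suppose first that $d(y_2,\alpha)<C$. Then \fullref{lem:2_pieces_projection} applied to $p_1p_2$ and to $p_4^{-1}p_3^{-1}$ forces $|p_i|<C$ for every $i$, so $|a|\leqslant d(y_0,y_2)+d(y_2,y_4)\leqslant\sum_i|p_i|<4C$ and $|c|<8C$. Henceforth assume $d(y_2,\alpha)\geqslant C$. The goal is now to bound each of $d(y_0,\pi(y_2))$ and $d(\pi(y_2),y_4)$ by a sublinear function of $|c|$ plus a constant; combined with $|a|\leqslant d(y_0,\pi(y_2))+d(\pi(y_2),y_4)$ and $|a|>|c|/3$, sublinearity of $\rho$ will then bound $|c|$.

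To bound $d(y_0,\pi(y_2))$, we extract from $p_2$ a terminal sub-piece staying at distance $\geqslant C$ from $\alpha$: set $p_2':=p_2$ if $|p_1|\geqslant C$, and otherwise let $p_2'$ be the terminal sub-path of $p_2$ of length $\max(0,|p_2|-C)$. When $p_2'$ is non-trivial, \fullref{lem:2_pieces_projection} applied to $p_1$ concatenated with any initial sub-piece of $p_2$ of length at least $C$ shows $d(p_2',\alpha)\geqslant C$, so part~(\ref{item:git}) of \fullref{thm:morseequalscontracting} yields $\diam\pi(p_2')\leqslant\rho(|p_1|+|p_2|)\leqslant\rho(|c|/3)$. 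Denoting the initial vertex of $p_2'$ by $w_2$, we have $d(w_2,y_0)\leqslant|p_1|+C<2C$, hence $d(y_0,\pi(w_2))\leqslant 2d(w_2,y_0)<4C$ (using $y_0\in\alpha$), and so $d(y_0,\pi(y_2))<4C+\rho(|c|/3)$ since $\pi(w_2),\pi(y_2)\in\pi(p_2')$. In the degenerate case $p_2'=\emptyset$, i.e.\ $|p_1|,|p_2|<C$, we have $d(y_2,\alpha)\leqslant|p_1|+|p_2|<2C$ and the triangle inequality yields $d(y_0,\pi(y_2))\leqslant d(y_0,y_2)+d(y_2,\alpha)<4C$ directly. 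The symmetric argument applied to $p_3,p_4$ bounds $d(\pi(y_2),y_4)$ the same way. The main technical point is precisely this endpoint case analysis: when one of $|p_1|,|p_4|$ falls below $C$, the Morse condition cannot be applied to all of $p_2$ or of $p_3$, and a boundary strip of length $C$ must be trimmed off before invoking contraction.
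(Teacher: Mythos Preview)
Your approach is correct in outline but takes a genuinely different route from the paper. The paper drops a shortest path $q$ from the midpoint $x=y_2$ down to $\alpha$, which splits $c$ into two smaller simple cycles each of the form ``two pieces, a distance-realising geodesic, an arc of $\alpha$'', and then invokes \fullref{lem:corner_degree_2_projection} on each half. You instead keep $c$ intact and bound $\diam\pi(p_2)$ and $\diam\pi(p_3)$ directly via part~(\ref{item:git}) of \fullref{thm:morseequalscontracting}, using \fullref{lem:2_pieces_projection} to certify that (most of) these pieces stay outside the $C$--neighbourhood of $\alpha$. Your method avoids constructing the auxiliary geodesic $q$ and checking simplicity of the two new cycles; the paper's method is more modular in that it reduces cleanly to the already-established \fullref{lem:corner_degree_2_projection}.

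There is one point in your case analysis that needs tightening. When $|p_1|\geqslant C$ you set $p_2'=p_2$, so $w_2=y_1$; but then the inequality $d(w_2,y_0)\leqslant|p_1|+C<2C$ fails, since $|p_1|\geqslant C$. The conclusion $d(y_0,\pi(w_2))<4C$ is still true in this regime, but for a different reason: you already recorded $\pi(y_1)=\{y_0\}$ from \fullref{lem:piece_projection}, so in fact $d(y_0,\pi(w_2))=0$. Likewise, your justification that $d(p_2',\alpha)\geqslant C$ appeals to ``an initial sub-piece of $p_2$ of length at least $C$'', but when $|p_1|\geqslant C$ and $z\in p_2$ is close to $y_1$ the relevant lower bound from \fullref{lem:2_pieces_projection} comes from $|p_1|$, not from the second piece (and at $z=y_1$ itself one uses \fullref{lem:piece_projection}). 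The two regimes $|p_1|\geqslant C$ and $|p_1|<C$ really do need separate sentences; once that is done your argument goes through.
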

\begin{proof}
  Let $x:=p_2\cap p_3$. Let $y_1:=p_1\cap\alpha$. Let $y_2:=p_4\cap\alpha$. 
By \fullref{lem:intersection_projection}, there is a path $q\subset\Gamma_0$ such that $|q|=d(x,\alpha)$. Let $y':=q\cap\alpha$.

By symmetry, we may suppose $q\cap p_3=x$. Apply \fullref{lem:corner_degree_2_projection} to see that $|q|+|p_3|+|p_4|+d(y_2,y')$ is uniformly bounded.
If $q$ coincides with $p_1p_2$ we are done. 
Otherwise it must be that $q\cap p_1=\emptyset$, for otherwise we
would have a simple cycle composed of a geodesic and one or two
pieces, which is impossible.
Let $p_2':=\overline{p_2\setminus q}$, and let $q':=\overline{q\setminus p_2}$.
Apply \fullref{lem:corner_degree_2_projection} to see $|q'|+|p_2'|+|p_1|+d(y_1,y')$ is uniformly bounded. Thus, $|c|$ is uniformly bounded.
\end{proof}

\begin{lem}\label{lem:interior_degree_1_projection} 
Let $\alpha$ be a geodesic in $X$.
Let $\Gamma_0$ be an embedded component of $\Gamma$ intersecting $\alpha$.
Suppose $\alpha\cap\Gamma_0$ is $(\rho_1,\rho_2)$--contracting in
$\Gamma_0$.
Let $c$ be a simple cycle in $\Gamma_0$ such that $c=q_1pq_3a$,
where $p$ is a piece, $q_1$ and $q_3$ are geodesics realizing the
closest point projections of the endpoints of $p$ to $\alpha$, and $a$
is a subpath of $\alpha$. 
Then there is a sublinear function $\rho_2'$ depending only on
$\rho_1$ and $\rho_2$ such that
$\diam\pi(c)\leqslant \rho_2'(|c|)$.
If $\rho_1(r)\geqslant \ifrac{r}{2}$ then we can take  $\rho_2':=2\rho_2$. 
\end{lem}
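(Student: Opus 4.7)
I will set up notation: let $y_1,y_3$ be the endpoints of $a$ on $\alpha$ and $x_1,x_3$ the endpoints of $p$, so $c$ reads $q_1pq_3a$ with $|q_i|=d(x_i,\alpha)$ and $|a|=d(y_1,y_3)$; without loss of generality $|q_1|\leqslant|q_3|$. By \fullref{lem:intersection_projection}, for vertices of $\Gamma_0$ the projection $\pi$ to $\alpha$ coincides with projection in $\Gamma_0$ to $\alpha\cap\Gamma_0$, so the $(\rho_1,\rho_2)$--contracting hypothesis applies directly. My first observation will be that $\pi(q_i)$ is contained in the closed $\rho_2(|q_i|)$-ball around $y_i$: for $v\in q_i$, the subpath of $q_i$ from $v$ to $y_i$ realizes $d(v,\alpha)$, so $y_i\in\pi(v)$, and the contracting hypothesis applied to the trivial pair $(v,v)$ gives $\diam\pi(v)\leqslant\rho_2(d(v,\alpha))\leqslant\rho_2(|q_i|)$. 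It remains to bound $|a|$ and $\pi(p)$.

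The core will be a dichotomy on whether the contracting property directly covers the pair $(x_1,x_3)$. In Case A, $|p|\leqslant\rho_1(|q_3|)$: since $d(x_1,x_3)\leqslant|p|$, the contracting property with basepoint $x_3$ yields $\diam(\pi(x_1)\cup\pi(x_3))\leqslant\rho_2(|q_3|)$, whence $|a|\leqslant\rho_2(|q_3|)$, and the same inequality applied to each $v\in p$ puts $\pi(v)$ within $\rho_2(|q_3|)$ of $y_3$. Combining with the previous step bounds $\diam\pi(c)$ by a small multiple of $\rho_2(|c|)$.

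In Case B, $|p|>\rho_1(|q_3|)$. Since $a$ is a geodesic we have $|a|\leqslant|q_1|+|p|+|q_3|$, and together with the $Gr'(\ifrac16)$ bound $|p|<|c|/6$ this forces $|q_1|+|q_3|\geqslant|c|/3$ and hence $|q_3|\geqslant|c|/6$. Under the extra hypothesis $\rho_1(r)\geqslant r/2$, this yields $\rho_1(|q_1|)+\rho_1(|q_3|)\geqslant(|q_1|+|q_3|)/2\geqslant|c|/6>|p|$, so the two subintervals of $p$ along which the contracting property applies with basepoint $x_1$, respectively $x_3$, together cover all of $p$. At a vertex $v_0\in p$ in the overlap one has both $d(x_1,v_0)\leqslant\rho_1(|q_1|)$ and $d(x_3,v_0)\leqslant\rho_1(|q_3|)$, so $\pi(v_0)$ lies simultaneously within $\rho_2(|q_1|)$ of $y_1$ and within $\rho_2(|q_3|)$ of $y_3$, giving $|a|=d(y_1,y_3)\leqslant\rho_2(|q_1|)+\rho_2(|q_3|)\leqslant2\rho_2(|c|)$. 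Every other $v\in p$ projects into one of these two balls, and summing contributions yields the explicit bound $\rho_2'=2\rho_2$ (up to an overall constant) of the special case.

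The main obstacle will be the general case without the hypothesis $\rho_1(r)\geqslant r/2$: when $\rho_1$ grows too slowly, the covering argument of Case B fails, leaving a midsegment of $p$ unreached by pairwise contracting from either endpoint. I plan to handle this by subdividing $p$ into a chain $x_1=z_0,z_1,\ldots,z_N=x_3$ with consecutive spacings small enough to apply the contracting property to each pair, telescoping the resulting diameter bounds, and controlling the number $N$ of subdivisions sublinearly in $|c|$ via $|p|<|c|/6$ together with lower bounds on $d(z_i,\alpha)$ obtained from iterated applications of \fullref{lem:2_pieces_projection} to piece decompositions of $q_1$ and $q_3$.
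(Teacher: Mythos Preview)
Your treatment of the special case $\rho_1(r)\geqslant r/2$ is essentially correct and close in spirit to the paper's argument. The paper does not split into your Cases A and B; instead it first passes to a subpath $p'\subset p$ of maximal projection diameter and minimal length, replaces $c$ by the corresponding simple cycle $c'=q_1'p'q_3'a'$, and observes that $|q_1'|+|q_3'|>|c'|/3>2|p'|$. This immediately produces a single point $z'\in p'$ with $d(x',z')\leqslant|q_1'|/2$ and $d(z',y')\leqslant|q_3'|/2$, so one application of the contracting hypothesis at each endpoint suffices. Your overlap argument in Case B accomplishes the same thing. The paper also notes $\diam\pi(c)=\diam\pi(p)$ directly, which streamlines the bookkeeping you do for $\pi(q_i)$.

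The genuine gap is your plan for the general case. The subdivision-and-telescope strategy will not produce a sublinear bound: if $\rho_1$ grows slowly (say logarithmically), the number $N$ of subdivision points needed to make consecutive spacings at most $\rho_1(d(z_i,\alpha))$ is of order $|p|/\rho_1(\cdot)$, which is essentially linear in $|c|$, and the telescoped sum $\sum\rho_2(d(z_i,\alpha))$ is then not sublinear in $|c|$. Your proposed remedy, invoking \fullref{lem:2_pieces_projection} for ``piece decompositions of $q_1$ and $q_3$'', does not apply: $q_1$ and $q_3$ are arbitrary geodesics in $\Gamma_0$, not concatenations of pieces, so that lemma gives no lower bound on $d(z_i,\alpha)$.

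The paper sidesteps all of this by invoking \fullref{thm:morseequalscontracting}: any $(\rho_1,\rho_2)$--contracting set is $(r,\rho_2'')$--contracting for some sublinear $\rho_2''$ depending only on $\rho_1,\rho_2$. This instantly reduces the general case to the already-handled case $\rho_1(r)=r\geqslant r/2$, with $\rho_2'=2\rho_2''$. You should use this reduction rather than attempt a direct subdivision.
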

\begin{proof}
Among all path subgraphs of $p$, consider those with maximal projection diameter.
Among those, choose one, $p'$, with minimal length, and let $x'$ and
$y'$ be its endpoints.
Let $x''\in\pi(x')$ and
$y''\in\pi(y')$ be vertices such that
$d(x'',y'')=\diam\pi(x')\cup\pi(y')$. 
By \fullref{lem:intersection_projection} there are geodesics
$q_1'\subset \Gamma_0$ connecting $x'$ to $x''$ and
$q'_3\subset\Gamma_0$ connecting $y'$ to $y''$.
Let $a'$ be the subpath of $\alpha$ from $x''$ to $y''$.
Let $c':=q'_1p'q'_3a'$.
The maximality hypothesis on $p'$ implies $\pi(p')\subset a'$ and
$\diam\pi(p')=\diam\pi(p)$.
It is immediate from the definitions that $\diam \pi(c)=\diam \pi(p)$, so it suffices to bound $d(x'',y'')$.

Let $x$ and $y$ be the endpoints of $p$, and note that
 $|q_1'|+|q_3'|\leqslant |q_1|+d(x,x')+d(y,y')+|q_3|\leqslant |c|$.
If there exists a vertex $z\in q_1'\cap q_3'$ then both $x''$ and $y''$
are in $\pi(z)$, so $d(x'',y'')\leqslant \rho_2(d(z,\alpha))\leqslant
\rho_2(|q'_1|)\leqslant \rho_2(|c|)$, and we are done. 
Otherwise, minimality of $|p'|$ implies $c'$ is a simple cycle.

Since $p'$ is a piece and $a'$ is geodesic, we have
$|q'_1|+|q'_3|>\ifrac{|c'|}{3}>2|p'|$, so there exists a point $z'\in
p'$ such that $d(x',z')\leqslant \ifrac{|q_1'|}{2}$ and
$d(z',y')\leqslant \ifrac{|q_3'|}{2}$. 

If $\rho_1(r)\geqslant \ifrac{r}{2}$, then take $\rho'_1:=\rho_1$ and $\rho_2'':=\rho_2$.
Otherwise, by
\fullref{thm:morseequalscontracting}~(\ref{item:sublinearcontraction})
there exists a sublinear function $\rho_2''$ such that
$\alpha\cap\Gamma_0$ is $(\rho_1',\rho_2'')$--contracting in $\Gamma_0$
for $\rho_1'(r):=r$. Let $\rho_2':=2\rho_2''$.
Since $\pi(p')\subset a'$, we conclude:
\begin{align*}
d(x'',y'')&\leqslant \diam\pi(x')\cup\pi(z')+\diam\pi(y')\cup\pi(z')\\
&\leqslant \rho_2''(d(x',\alpha))+\rho_2''(d(y',\alpha))\\
&=\rho_2''(|q_1'|)+\rho_2''(|q_3'|)\leqslant 2\rho_2''(|c|)=\rho_2'(|c|)\qedhere
\end{align*}
\end{proof}

\begin{lem}\label{closestpointprojection} 
Let $\alpha$ be a locally $(\rho_1,\rho_2)$--contracting geodesic in $X$.
Let $Y\subset X$ be either an embedded component of $\Gamma$, a piece,
or a single vertex.
Then there is a sublinear function $\rho_2'$ depending only on
$\rho_1$ and $\rho_2$ such that if $Y$ is disjoint from $\alpha$ then 
 $\diam\pi(Y)\leqslant\rho_2'(d(Y,\alpha))$.

If $\rho_1(r)\geqslant\ifrac{r}{2}$ we can take  $\rho_2'\asymp\rho_2$.
\end{lem}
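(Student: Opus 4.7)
The plan is to reduce each of the three cases for $Y$ --- single vertex, piece, or embedded component --- to configurations already handled by the three preceding lemmas, via the quadrangle dichotomy \fullref{prop:combinatorialngons} (or Strebel's classification \fullref{thm:strebel} in the triangle case). First I would choose near-extremal data: vertices $y_1, y_2 \in Y$ and $v_1 \in \pi(y_1)$, $v_2 \in \pi(y_2)$ with $d(v_1, v_2)$ arbitrarily close to $\diam \pi(Y)$, so that it suffices to bound $d(v_1, v_2)$ by $\rho_2'(d(Y, \alpha))$. Take a geodesic $q_1$ from $v_1$ to $y_1$ and $q_3$ from $y_2$ to $v_2$ in $X$, the subpath $a \subset \alpha$ from $v_1$ to $v_2$, and a path $\beta$ from $y_1$ to $y_2$ inside $Y$: trivial in the single-vertex case, the subpath of the piece $Y$ (itself a piece) in the piece case, and a geodesic in $Y$ --- equivalently in $X$ by convexity of embedded components --- in the embedded-component case. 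After possibly passing to a simple subcycle, $C := q_1 \beta q_3 a^{-1}$ is a simple closed curve, to which \fullref{prop:sctocombinatorial} (combined with \fullref{lem:graphical_diagrams}) associates a combinatorial geodesic polygon $D$.

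When $Y$ is a single vertex, $D$ is a triangle and Strebel's classification \fullref{thm:strebel} enumerates six possible shapes; in each, \fullref{lem:piece_projection} forces the boundary arcs of the faces meeting $q_1$ or $q_3$ to be pieces, so small cancellation bounds $|a|$ in terms of piece sizes and hence of the common side length $d(y,\alpha) = d(Y,\alpha)$. When $Y$ is a piece or an embedded component, $D$ is a quadrangle and I would apply \fullref{prop:combinatorialngons} with $\gamma_1 := a$, $\gamma_2 := \beta$, $\delta_1 := q_1$, $\delta_2 := q_3$, orienting so that the thin-case faces lie along $a$ and therefore live in embedded components meeting $\alpha$, where the local contraction hypothesis applies. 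In the short cases (1) and (2), the separating face $\Pi$ or triplet $\Pi, \Pi', \Pi''$ carves out a subcycle of the form treated by \fullref{lem:corner_degree_2_projection} or \fullref{lem:interior_degree_4_projection}, yielding a uniform bound depending only on $\rho_1, \rho_2$. In the thin case (3), the chain $\Pi_1, \dots, \Pi_k$ with $k \leq 6$ runs along $a$, each $\Pi_i$ lying in an embedded component meeting $\alpha$ (since $\partial \Pi_i$ contains an edge of $a$); excising the exterior arc of $\Pi_i$ on $a$ together with a geodesic across $\partial \Pi_i$ produces a cycle of the shape treated by \fullref{lem:interior_degree_1_projection}, and summing at most six such bounds controls $|a|$.

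The main obstacle will be the sublinearity bookkeeping, particularly extracting the sharp conclusion $\rho_2' \asymp \rho_2$ when $\rho_1(r) \geq r/2$, and converting the bounds from inequalities phrased in terms of $d(y_i, \alpha)$ (or distances from points inside individual faces to $\alpha$) into an inequality in terms of $d(Y, \alpha)$, which can be considerably smaller when $Y$ is an embedded component reaching far from $\alpha$. To manage this I would combine the combinatorial boundedness $k \leq 6$ with \fullref{lem:2_pieces_projection} to cap how far any face in the chain can reach from $\alpha$; in the strong regime $\rho_1(r) \geq r/2$, \fullref{lem:interior_degree_1_projection} furnishes the factor $2\rho_2$ in place of an arbitrary sublinear function, and compounding at most six such bounds preserves $\rho_2' \asymp \rho_2$.
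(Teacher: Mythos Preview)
Your plan differs substantially from the paper's, and the thin case has a genuine gap.

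The paper never invokes the quadrangle dichotomy here. Instead, it fixes $y\in Y$ to realize $d(Y,\alpha)$ (so one side has length exactly $d(Y,\alpha)$, eliminating the conversion obstacle you flag), takes an arbitrary $x'\in\pi(Y)$ with preimage $x\in Y$, and forms a minimal diagram on the quadrangle $\alpha'\beta_2^{-1}p^{-1}\beta_1$. The key observation is that every arc of a face lying on the side $p\subset Y$ is itself a piece: for pieces this is immediate, and for embedded components it follows from the minimality argument as in \fullref{lem:intersection_projection}. This allows the paper to attach a single new face along the entire side $p$, collapsing the quadrangle to a combinatorial geodesic \emph{triangle}. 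Strebel's classification plus the closest-point-projection constraint on $\beta_1,\beta_2$ leaves only four shapes --- single face, $\mathrm{I}_2$, $\mathrm{IV}$, $\mathrm{V}$ --- each handled directly by one of Lemmas~\ref{lem:corner_degree_2_projection}--\ref{lem:interior_degree_1_projection}, with no chain of faces to sum over.

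Your case~(3) argument does not go through as written. The chain faces $\Pi_i$ along $a$ do not have boundaries of the form $q_1pq_3a'$ with a single piece $p$ and projection-realizing geodesics $q_j$, which is what \fullref{lem:interior_degree_1_projection} requires; their boundary off $a$ is a concatenation of several interior pieces (and possibly arcs on $q_1$, $q_3$, or $\beta$), so you would need an extension of \fullref{lem:interior_degree_4_projection} to a variable number of pieces together with a bound on $i(\Pi_i)$, neither of which you supply. Your proposed fix via \fullref{lem:2_pieces_projection} goes the wrong way: that lemma gives a \emph{lower} bound on $d(x,\alpha)$ in terms of piece lengths, not the upper bound you need to cap how far faces reach from $\alpha$. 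The paper's reduce-to-triangle trick bypasses all of this at once, and simultaneously resolves the $d(Y,\alpha)$ bookkeeping by anchoring one vertex at the closest point.
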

\begin{proof}
Suppose $Y$ is disjoint from $\alpha$ and choose a vertex $y\in Y$
such that $d(y,\alpha)=d(Y,\alpha)$.
Let $y'$ be a point in $\pi(y)$.
It suffices to show that there exists a sublinear function $\rho_2''$
such that for every $x'\in\pi(Y)$ we have $d(x',y')\leqslant \rho_2''(d(Y,\alpha))$. 
Given such a $\rho_2''$,  set $\rho_2':=2\rho_2''$, and the lemma
follows from the triangle inequality. 

If $\pi(Y)=\{y'\}$ we are done. Otherwise,
let $x'$ be an arbitrary point in
$\pi(Y)\setminus\{y'\}$.
Let $\alpha'$ be the subpath of $\alpha$ from $x'$ to $y'$.

Choose $x\in Y$ such that $x'\in\pi(x)$.
Choose a path $p$ from $x$ to $y$ in $Y$, and geodesics $\beta_1$ and
$\beta_2$ connecting $x$ to $x'$ and $y$ to $y'$, respectively.
Choose a diagram $D$ over $\rels$ as in
\fullref{lem:graphical_diagrams} filling
$\alpha'\beta_2^{-1}p^{-1}\beta_1$.
Assume that we have chosen  $x$, $p$, $\beta_1$, $\beta_2$, and $D$ so
that $D$ has the minimal number of edges among all possible choices. 

In the case that $\beta_1$ and $\beta_2$ intersect, let $D'$ be the disc component
of $D$ intersecting the side corresponding to $\alpha$.
Then $D'$ is a
combinatorial geodesic triangle.
Apply \fullref{thm:strebel} to $D'$. 
Since $\beta_1$ and $\beta_2$ are geodesics realizing
closest point projection, the only  possibilities are that  $D'$  is:
\begin{enumerate}
 \item a single face,\label{item:singleface}
 \item shape $\mathrm{I}_2$, where $\alpha'$ is the side joining two vertices in the same distinguished face,\label{item:i2}
 \item shape $\mathrm{IV}$ with exactly three faces incident to $\alpha'$, two corners with interior degree 2 each and one ordinary face, or\label{item:iv}
 \item shape $\mathrm{V}$ with exactly two faces incident to $\alpha'$, the two corners with interior degree 2 each.\label{item:v}
\end{enumerate}
In all cases, $d(x',y')$ is bounded by a sublinear function $\rho_2''$
of $d(Y,\alpha)$ that
depends only on $\rho_1$ and $\rho_2$: 
For case (\ref{item:singleface}) this follows from  the fact that
$\alpha$ is uniformly  locally contracting.
For case (\ref{item:iv}) this follows from \fullref{lem:corner_degree_2_projection} and
\fullref{lem:interior_degree_4_projection}, 
and for case (\ref{item:v}) this follows from \fullref{lem:corner_degree_2_projection}. 
In these two cases, the bounds are in fact constants depending only on
$\rho_1$ and $\rho_2$. 
Now consider case (\ref{item:i2}). 
Let $\Pi$ be the face of $D'$ containing $\alpha'$.
Let $c$ be the embedded  quadrangle in $X$ whose sides are $\alpha'$, a subpath $q_1$ of $\beta_1$, a piece $p'$, and a subpath $q_2$ of $\beta_2$.
Apply \fullref{lem:interior_degree_1_projection} to $c$, and 
observe that $||q_1|-|q_2||\leqslant
|p'|<\ifrac{|c|}{6}$, and $|q_1|+|q_2|> \ifrac{|c|}{3}$, whence
$d(Y,\alpha)=|\beta_2|\geqslant |q_2|>\ifrac{|c|}{6}$.

Now suppose that $\beta_1$ and $\beta_2$ do not intersect.
In this case minimality of $D$ and the fact that $y$ minimizes the
distance from $Y$  to $\alpha$ imply that $D$ is simple.
If $Y$ is an embedded component of
$\Gamma$, it follows as in the proof of
\fullref{lem:intersection_projection} that any arc in the side of $D$
corresponding to $p$ is a piece.
The same is true if $Y$ is a piece since
subpaths of pieces are pieces. 
Thus, we can stick  a new face onto $p$
to obtain a combinatorial geodesic triangle $D'$, and we make the same
argument as above, noting this time that case (\ref{item:singleface}) cannot hold, for it
would imply that $Y$ intersects $\alpha$.
\end{proof}

\begin{proof}[Proof of \fullref{theorem:strongcontraction}] 
Recall that the global-to-local direction of \fullref{theorem:strongcontraction} follows from \fullref{lem:intersection_projection}.

Suppose that $\alpha$ is locally $(\rho_1,\rho_2)$--contracting.

Let $x$ and $y$ be points of $X$ such that $d(x,y)\leqslant \rho_1(d(x,\alpha))$. 
Let $\gamma$ be a geodesic from $x$ to $y$.
Let $x'\in\pi(x)$ and $y'\in\pi(y)$ be points realizing $\diam\pi(x)\cup\pi(y)$.
Let $\delta_1$ be a geodesic from $x'$ to $x$, and let $\delta_2$ be a geodesic from $y'$ to $y$.
Let $\alpha'$ be the path subgraph of $\alpha$ from $x'$ to $y'$.

First, assume that $\alpha'$ does not enter the $C$-neighborhood of $\gamma$, where $C$ is the constant from \fullref{thm:morseequalscontracting}~(\ref{item:git}) associated to $(\rho_1,\rho_2)$.

If $\delta_1$ and $\delta_2$ intersect, \fullref{closestpointprojection} yields the claim, whence we will assume that they do not. 
Moreover, by removing initial and terminal subpaths of $\gamma$ that
do not increase the size of the closest point projection to $\alpha$,
we may assume that $\gamma$, $\delta_1$, $\delta_2$ and $\alpha'$ can
be concatenated to a simple closed path $c_0$. Let $D$ be a diagram as
in \fullref{lem:graphical_diagrams} for the label of $c_0$, and, by
identifying $\partial D$ with $c_0$, we can consider $\gamma$,
$\delta_1$, $\delta_2$ and $\alpha'$ as subpaths of $\partial D$. 
As the interior arcs of $D$ are pieces and its four sides are geodesics, we can apply \fullref{prop:combinatorialngons}.

The first possibility is that there is a face $\Pi$ with $e(\Pi)=2$ and $i(\Pi)=2$.
Its boundary is a cycle $c=p_1qp_3a$ in some embedded component such that the $p_i$ are pieces,
$a$ is a path subgraph of $\alpha$, and $q$ is a path subgraph of $\gamma$.
We have $\ifrac{|c|}{6}<|a|\leqslant \ifrac{|c|}{2}$. Therefore, $\max_{z\in c}d(z,\alpha)\leqslant \ifrac{5|c|}{12}$. 

We now choose $R_2\geqslant0$ so that for all $r\geqslant R_2$ we have $2\rho_2(r)<\rho_1(r)$. Suppose $\gamma$ does not enter the $R_2$--neighborhood of $\alpha$. 

By \fullref{thm:morseequalscontracting}~(\ref{item:git}) we have that $\ifrac{|c|}{6}<|a|$ is bounded by a sublinear function of $\max_{z\in c}d(z,\alpha)\leqslant \ifrac{5|c|}{12}$.
This implies $|c|$ is uniformly bounded, as are $|p_1|,\,|p_3|<\ifrac{|c|}{6}$.
Therefore, $\gamma$ enters a uniformly bounded neighborhood of $\alpha$.

The second possibility is that $D$ contains a zipper with two teeth on $\alpha$.
By \fullref{lem:interior_degree_4_projection}, the boundary lengths of the two teeth on $\alpha$ are uniformly bounded.
It follows that the boundary length of the upper tooth is also uniformly bounded, since its intersection with the bottom teeth accounts for more than $\ifrac{1}{6}$--th of its length.
 Therefore, $\gamma$ enters a uniformly bounded neighborhood of $\alpha$.

Let $C'$ be larger than $C$, $R_2$, and the bounds from the first two
cases.
Let $z$ be the first point of $\gamma$ such that $d(z,\alpha)\leqslant
C'$, if such a point exists. 
Otherwise let $z:=y$.
The projection diameter of $\gamma$ is at most the projection diameter
of the path subgraph of $\gamma$ from $x$ to $z$ plus the projection
diameter of the path subgraph from $z$ to $y$. 
The latter is at most $4C'$, since every point of this path is
within $2C'$ of $\alpha$, so it suffices to bound the former. 
Thus, we may assume that the geodesic from $x$ to $y$ does not enter
the $C'$ neighborhood of $\alpha$.

By our choice of $C'$, we conclude that $D$ must fall into the third case of \fullref{prop:combinatorialngons}. 
Therefore, there exist $k\leqslant 6$ and a path graph $p:=p_0p_1p_2\dots p_{k}p_{k+1}$ (recall \fullref{corollary:faceseqsegment}) from $\delta_1$ to $\delta_2$ such that:
\begin{itemize}
 \item If $1\leqslant i\leqslant k$, then $p_i$ is a path subgraph of  $\gamma\cap\Pi_i$, and
 \item $p_0$ is empty or a piece in $\Pi_1$, and $p_{k+1}$ is empty or a piece in $\Pi_{k}$.
\end{itemize}
The second claim follows since, in $D$, the corner that is separated
from the rest of $D$ by removing $\Pi_1$ is either empty, a face, or
has shape $\mathrm{I}_1$. 
The same observation holds for the corner at $\Pi_k$.

Notice that every point of $\gamma$ is within $d(x,\alpha)+\rho_1(d(x,\alpha))\leqslant2d(x,\alpha)$ of $\alpha$.

For $1\leqslant i\leqslant k$, the path graph $p_i$ is a geodesic subsegment of $\gamma$ that is outside the $R_2$--neighborhood of $\alpha$.
If $p_i$ is contained in an embedded component $\Gamma_0$ of $\Gamma$ disjoint from $\alpha$, then $\diam\pi(p_i)$ is bounded by a sublinear function of $d(\Gamma_0,\alpha)\leqslant d(p_i,\alpha)< 2d(x,\alpha)$, by \fullref{closestpointprojection}.
If $p_i$ is not contained in an embedded component disjoint from $\alpha$, then \fullref{thm:morseequalscontracting}~(\ref{item:git}) says $\diam\pi(p_i)$ is bounded by a sublinear function of $\max_{z\in p_i}d(z,\alpha)\leqslant 2d(x,\alpha)$.
When $i\in\{0,k+1\}$, the diameter of $\pi(p_i)$ is bounded by a sublinear function of $d(p_i,\alpha)$ by  \fullref{closestpointprojection}.

We have that
$\diam\pi(x)\cup\pi(y)\leqslant\sum_{i=0}^{k+1}\diam\pi(p_i)$, and each of the at most 8 terms is bounded by a sublinear function of $d(x,\alpha)$, so $\diam\pi(x)\cup\pi(y)$ is bounded by a sublinear function $\rho_2'$ of $d(x,\alpha)$.
Thus, $\alpha$ is $(r,\rho_2')$--contracting. 
Moreover, each of the constituent sublinear functions of $\rho_2'$ is
determined by $\rho_1$ and $\rho_2$ and, when $\rho_1(r)\geqslant \ifrac{r}{2}$, is either bounded or
asymptotic to $\rho_2$, so $\rho_2'$ depends only on $\rho_1$ and
$\rho_2$, and we can take $\rho_2'\asymp\rho_2$ when $\rho_1(r)\geqslant\ifrac{r}{2}$.
\end{proof}

\subsection{First applications of~\fullref{theorem:strongcontraction}}

\fullref{theorem:strongcontraction} and \fullref{thm:morseequalscontracting} show:
\begin{theorem}\label{corollary:locallysublinearimpliesmorse}
  A geodesic in $X$ is Morse if and only if it is uniformly
  locally contracting.
\end{theorem}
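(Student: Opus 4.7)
The plan is to observe that this theorem is an immediate combination of the two preceding results, Theorem \ref{theorem:strongcontraction} and Theorem \ref{thm:morseequalscontracting}. No new geometric work is required; this is a packaging statement that translates the local-to-global result into the language of the Morse property.

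For the forward direction, I would start with a Morse geodesic $\alpha$. By the equivalence (\ref{item:Morse}) $\Leftrightarrow$ (\ref{item:contraction}) of Theorem \ref{thm:morseequalscontracting}, there exist $\rho_1'$ and $\rho_2'$ such that $\alpha$ is $(\rho_1', \rho_2')$--contracting as a subspace of $X$. Then the global-to-local direction of Theorem \ref{theorem:strongcontraction}, which was established via \fullref{lem:intersection_projection}, produces $\rho_1$ and $\rho_2$ such that $\alpha$ is locally $(\rho_1, \rho_2)$--contracting, i.e., uniformly locally contracting.

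For the converse, suppose $\alpha$ is uniformly locally contracting, meaning it is locally $(\rho_1, \rho_2)$--contracting for some $\rho_1$ and $\rho_2$. The local-to-global direction of Theorem \ref{theorem:strongcontraction} then supplies functions $\rho_1'$ and $\rho_2'$ such that $\alpha$ is $(\rho_1', \rho_2')$--contracting in $X$. Invoking (\ref{item:contraction}) $\Rightarrow$ (\ref{item:Morse}) of Theorem \ref{thm:morseequalscontracting} yields that $\alpha$ is Morse, completing the equivalence.

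Since both ingredients are already proven, there is no real obstacle here; the only point worth checking is that the quantifiers match — namely, the Morse property as defined corresponds to existence of some $\mu$, which matches existence of some pair $(\rho_1, \rho_2)$ in the contraction statement, and uniform local contraction is by definition existence of some pair $(\rho_1, \rho_2)$ working simultaneously for all embedded components intersecting $\alpha$. Both of Theorem \ref{theorem:strongcontraction}'s directions already deliver exactly this uniform data, so the quantifier bookkeeping is automatic.
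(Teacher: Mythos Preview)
Your proposal is correct and matches the paper's own proof, which simply states that the result follows from \fullref{theorem:strongcontraction} and \fullref{thm:morseequalscontracting}. You have just unpacked the two directions and checked the quantifiers explicitly, which is exactly the intended argument.
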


In the classical small cancellation case we have more explicit
criteria:
\begin{corollary}\label{corollary:classicalmorse}
Let $\Gamma$ be a $Gr'(\ifrac{1}{6})$--labelled graph whose components are cycle graphs.
Let $\alpha$ be a geodesic in $X$.
Define $\rho(r):=\max_{|\Gamma_i|\leqslant r}|\Gamma_i\cap\alpha|$, where the $\Gamma_i$ range over embedded components of $\Gamma$.
Then $\alpha$ is Morse if and only if $\rho$ is sublinear, and $\alpha$ is strongly contracting if and only if $\rho$ is bounded.
\end{corollary}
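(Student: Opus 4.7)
The plan is to reduce the corollary, via \fullref{theorem:strongcontraction} and \fullref{thm:morseequalscontracting}, to a direct computation of closest-point projection inside a single cycle. Since $\rho_1(r):=r$ satisfies $\rho_1(r)\geqslant r/2$, the `moreover' clause of \fullref{theorem:strongcontraction} tells us that $\alpha$ is $(r,\rho_2')$--contracting for some sublinear $\rho_2'$ exactly when $\alpha$ is uniformly locally $(r,\rho_2)$--contracting for some sublinear $\rho_2$, and analogously with `bounded' in place of `sublinear'. Combined with \fullref{thm:morseequalscontracting}, which equates being Morse with being sublinearly contracting, this reduces the task to characterising in terms of $\rho$ when a uniform sublinear (respectively bounded) local contraction function exists.

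For each embedded component $\Gamma_i$ meeting $\alpha$, the intersection $A_i:=\Gamma_i\cap\alpha$ is a geodesic segment in the convex subgraph $\Gamma_i$, and hence is a subarc of the cycle of length $l_i:=|A_i|\leqslant L_i/2$, where $L_i:=|\Gamma_i|$ (since no geodesic in a cycle of length $L$ exceeds $L/2$). Let $B_i$ be the complementary arc, with shared endpoints $p,p'$. For any $x\in B_i$, the nearest points of $A_i$ are reached by travelling along $B_i$, so $\pi(x)\subseteq\{p,p'\}$, with $\pi(x)=\{p,p'\}$ at the midpoint of $B_i$ and $\pi(x)$ the single nearer endpoint elsewhere. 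Since $l_i\leqslant L_i/2$, the intrinsic distance from $p$ to $p'$ in $\Gamma_i$ equals $l_i$, so $\diam\pi(x)\cup\pi(x')\leqslant l_i$ whenever both of $x,x'$ lie in $B_i$.

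The key local estimate is a lower bound on $d(x,A_i)$ when $\diam\pi(x)\cup\pi(x')>0$: a short case analysis using positions along $B_i$ shows that if $x$ and $x'$ lie on opposite sides of the midpoint of $B_i$ and $d(x,x')\leqslant d(x,A_i)$, then $d(x,A_i)\geqslant (L_i-l_i)/4\geqslant L_i/8$, while cases with $x$ or $x'$ in $A_i$ force $x=x'$. Conversely, taking $x$ to be the midpoint of $B_i$ and $x':=x$ achieves $\diam\pi(x)=l_i$ at $d(x,A_i)=(L_i-l_i)/2\leqslant L_i/2$.

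Putting these together, a single $\rho_2$ makes each $A_i$ locally $(r,\rho_2)$--contracting in $\Gamma_i$ exactly when $\rho_2(r)\geqslant l_i$ whenever $L_i\leqslant 8r$; the choice $\rho_2(r):=\rho(8r)$ always suffices, while any valid $\rho_2$ must satisfy $\rho_2(r)\geqslant\rho(2r)$. Since sublinearity and boundedness are each invariant under rescaling the argument by a positive constant, a uniform sublinear (respectively bounded) $\rho_2$ exists if and only if $\rho$ is sublinear (respectively bounded), completing the proof. The only real content is the explicit cycle projection analysis; the rest is immediate from the cited theorems.
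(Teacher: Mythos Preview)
Your argument is correct and matches the approach implicit in the paper, which states the corollary without proof but carries out the same cycle-projection analysis in the proof of \fullref{thm:weakimpliesstrong}. Two minor imprecisions worth tightening: the phrase ``cases with $x$ or $x'$ in $A_i$ force $x=x'$'' is only true when $x\in A_i$ (the case $x'\in A_i$, $x\in B_i$ needs a one-line check, which still yields the same bound), and the ``exactly when $L_i\leqslant 8r$'' overstates slightly since the true threshold is $r\geqslant (L_i-l_i)/4$; however, your sufficient choice $\rho_2(r):=\rho(8r)$ and necessary bound $\rho_2(r)\geqslant\rho(2r)$ are both valid, so the conclusion is unaffected.
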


In hyperbolic spaces and in CAT(0) spaces Morse geodesics are known to
be strongly
contracting. 
In graphical small cancellation groups we build the first examples with a wide
range of degrees of contraction:
\begin{theorem}\label{thm:allcontractionrates}
  Let $\rho$ be a sublinear
  function.
There exists a group $G$ with finite generating set $\gens$ and a function
$\rho'\asymp\rho$ such that
there exists an $(r,\rho')$--contracting geodesic $\alpha$ in the Cayley graph $X$ of $G$ with
respect to $\gens$.

Furthermore, $\rho'$ is optimal, in the following sense: If $\alpha$ is 
$(r,\rho'')$--contracting for some $\rho''$ then 
$\limsup_{r\to\infty}\frac{\rho''(2r)}{\rho(r)}\geqslant 1$.
\end{theorem}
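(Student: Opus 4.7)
The plan is to apply \fullref{corollary:classicalmorse}, which reduces the construction to choosing a classical $C'(\ifrac{1}{6})$-presentation whose relators have a prescribed pattern of intersections with a distinguished geodesic. Take $\gens := \{a, b\}$ and let $\alpha$ denote the bi-infinite $a$-axis in $\Cay(G(\Gamma), \gens)$, i.e., the subgraph supported on $\fgen{a}$.

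I would first choose a sparse sequence $n_0 < n_1 < \cdots$ with $\rho(n_k) \to \infty$, with $\ifrac{\rho(n_k)}{n_k} < \ifrac{1}{6}$, and with $n_{k+1}$ comparable to $n_k$ (say $n_k \leqslant n_{k+1} \leqslant 2 n_k$ for large $k$); such a sequence exists because $\rho$ is sublinear. For each $k$ the aim is to produce a cyclically reduced relator
\[ r_k := a^{j_k} u_k, \]
with $j_k := \lceil \rho(n_k) \rceil$, $|r_k| = n_k$, and $u_k \in (\gens^{\pm})^*$ a word beginning and ending with a letter in $\{b, b^{-1}\}$. The cycle $\Gamma_k$ labelled by $r_k$ is the desired component of the defining graph.

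The combinatorial heart of the construction is to choose the $u_k$ so as to guarantee three conditions: (i) the disjoint union $\bigsqcup_k \Gamma_k$ satisfies the $Gr'(\ifrac{1}{6})$-condition; (ii) in $\Cay(G(\Gamma), \gens)$, no subarc of any $G(\Gamma)$-translate of $\Gamma_k$ lies on $\alpha$ other than the initial $a^{j_k}$-arc, which one arranges by forbidding prefixes of $u_k$ from representing powers of $a$ in $G(\Gamma)$ and by isolating any internal $a^{\pm 1}$-letters of $u_k$ by sufficiently long $b$-blocks; and (iii) $j_k < \ifrac{n_k}{6}$, which by the standard $C'(\ifrac{1}{6})$-argument forces $\alpha$ to remain a geodesic. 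Item (iii) is automatic for large $k$ by sublinearity of $\rho$, while (i) and (ii) can be arranged by drawing the $u_k$ from the pairwise-$Gr'(\ifrac{1}{6})$-compatible families of two-generator words that are standard in graphical small cancellation theory.

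Given this construction, \fullref{corollary:classicalmorse} identifies the optimal contraction function as $\rho^*(r) := \max_{|\Gamma_i| \leqslant r} |\Gamma_i \cap \alpha|$. Since $|\Gamma_k| = n_k$ and $|\Gamma_k \cap \alpha| = j_k = \lceil \rho(n_k) \rceil$ by item (ii), and since $n_{k+1} \leqslant 2 n_k$, the function $\rho^*$ is asymptotic to $\rho$, so one may set $\rho' := \rho^*$. For the optimality claim, suppose $\alpha$ is $(r, \rho'')$-contracting and take $x$ to be a vertex on the outside arc of $\Gamma_k$ at distance $D := d(x, \alpha)$; for well-chosen $x$ both endpoints of $\Gamma_k \cap \alpha$ lie in $\pi(x)$, forcing $\diam \pi(x) \geqslant j_k$, and the definition of contraction applied with $x' := x$ yields $\rho''(D) \geqslant j_k$. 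Taking $D$ comparable to $\ifrac{n_k}{2}$ gives $\rho''(n_k) \geqslant j_k \approx \rho(n_k)$, from which the limsup bound $\limsup_{r \to \infty} \ifrac{\rho''(2r)}{\rho(r)} \geqslant 1$ follows along the subsequence $r = \ifrac{n_k}{2}$.

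The main obstacle is the explicit verification of items (i) and (ii) for the words $u_k$: two generators give tight room to realize the simultaneous requirements of global $Gr'(\ifrac{1}{6})$-compatibility, control over accidental passes through $\fgen{a}$ in the eventual Cayley graph, and exact length $n_k - j_k$, uniformly for an arbitrary sublinear $\rho$. This will most likely require a careful Thue--Morse-style construction of the $u_k$ or an inductive combinatorial argument where the $u_k$ are selected one at a time to respect the piece bound against all previously chosen $u_j$.
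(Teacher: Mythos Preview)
Your approach is essentially the same as the paper's, and is correct in outline. The one substantive difference is that the paper uses three generators $\gens=\{a,b,c\}$ rather than two: the relators are $R_i:=a^{\rho(i)}w_i$ where $\{w_i\}$ is a $C'(\ifrac{1}{12})$--collection of words in $\langle b,c\rangle$ of length $4i$. This single extra generator eliminates precisely the obstacle you flag at the end. Because $w_i$ contains no $a^{\pm 1}$, the only $a$--run in any cyclic shift of $R_i$ is the initial $a^{\rho(i)}$, so your condition (ii) is automatic; and the piece estimate for condition (i) becomes a two-line computation combining the $C'(\ifrac{1}{12})$--bound on the $\langle b,c\rangle$--part with the length bound on the $a$--part.

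One point to correct: your phrasing of condition (ii), ``forbidding prefixes of $u_k$ from representing powers of $a$ in $G(\Gamma)$'', is both stronger than needed and stated as a group-theoretic condition that would be hard to verify. What you actually need is purely combinatorial. Any embedded copy of $\Gamma_k$ is convex, so its intersection with $\alpha$ is a connected subpath; that subpath is labelled by a power of $a$ (being in $\alpha$) and is simultaneously a subpath of the cycle $\Gamma_k$, hence labelled by a cyclic subword of $r_k^{\pm 1}$. Thus $|\Gamma_k\cap\alpha|$ is bounded by the maximal $a^{\pm}$--run in any cyclic conjugate of $r_k$, and isolating the $a$--letters in $u_k$ by $b$--blocks (your second clause) already suffices. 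With two generators this is doable but fiddly; with three it is free.

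The paper's optimality argument is the same as yours: the midpoint $x_i$ of $w_i$ has $d(x_i,\alpha)=2i$ and $\diam\pi(x_i)=\rho(i)$, so $\rho''(2i)\geqslant\rho(i)$ directly. Your version with $D=(n_k-j_k)/2$ works too, just with an extra monotonicity step.
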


If $\lambda>0$, a \emph{$C'(\lambda)$-collection of words} $W$ is a subset of $\langle \gens\rangle$ such that the disjoint union of cycle graphs labelled by the elements of $W$ satisfies the graphical $C'(\lambda)$-condition.

\begin{proof}
We may assume $\rho$ is unbounded and integer valued. 
Since $\rho$ is sublinear, there exists an $R$ such that
$5\rho(r)\leqslant 2r$ for all
$r\geqslant R$.
Let $\gens:=\{a,b,c\}$.
Let $I\subset\{z\in\mathbb{Z}\mid z\geqslant R\}$ be an infinite set such that there
exists a $C'(\ifrac{1}{12})$-collection $\{w_i\}_{i\in I}$ of words
  $w_i\in\langle b,c\rangle$ with $|w_i|=4i$.
For $i\in I$, define $R_i:=a^{\rho(i)}w_i$.

Let $\Gamma:=(\Gamma_i)_{i\in I}$ be a disjoint union of
  $\gens$--labelled cycle graphs, with $\Gamma_i$ labelled by
  $R_i$.
Let $G:=G(\Gamma)$ and $X:=\Cay(G,\gens)$.
There are no non-trivial label-preserving automorphisms of any component
$\Gamma_i$ because of the unique $a$--labelled path subgraph.
There are no non-trivial label-preserving automorphisms of $\Gamma$
that exchange components since $\{w_i\}_{i\in I}$ have distinct lengths.

If $p$ is a piece contained in $\Gamma_i$ and labelled by $l$ then $l$
can be written $l=l'+l''+l'''$ where $l'$ is a suffix of $w_i$, $l''$
is a subword of $a^{\rho(i)}$, and $l'''$ is a prefix of $w_i$.
If $l'$ or $l'''$ is non-empty then $l'l'''$ is a piece for
$\{w_i\}_{i\in I}$.
Therefore $|p|<\rho(i)+\ifrac{|w_i|}{12}$.
Since $5\rho(i)\leqslant 2i$ this implies $|p|<\ifrac{|R_i|}{6}$, so
$\Gamma$ is $C'(\ifrac{1}{6})$--labelled.

Let $\alpha$ be the geodesic with all edge labels $a$.
By construction, $\alpha$ is locally $(r,\rho)$--contracting, so
\fullref{theorem:strongcontraction} says there exists
$\rho'\asymp\rho$ such that $\alpha$ is $(r,\rho')$--contracting.

Conversely, if $\alpha$ is $(r,\rho'')$--contracting then it is
locally $(r,\rho'')$--contracting.
By construction, for $i\in I$ there exists a point $x_i$ such that
$d(x_i,\alpha)=2i$ and $\diam \pi(x_i)=\rho(i)$, so we must have
$\rho(i)\leqslant \rho''(2i)$.
\end{proof}

\fullref{thm:weakimpliesstrong} shows that in classical small cancellation groups the geometry of cycle graphs
dictates that only the output contraction function $\rho_2$ plays a role.
In \fullref{thm:nonstability} we construct a graphical example for which the input
contraction function $\rho_1$ also
carries non-trivial information.
\begin{theorem}\label{thm:weakimpliesstrong}
  Let $\Gamma$ be a $Gr'(\ifrac{1}{6})$--labelled graph whose components are
  cycle graphs.
A geodesic $\alpha$ in $X$ that is $(\rho_1,\rho_2)$--contracting is
$(r,\rho'_2)$--contracting for $\rho_2'\asymp\rho_2$.
\end{theorem}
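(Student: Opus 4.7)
The plan is to reduce to Corollary~\ref{corollary:classicalmorse} via a short local computation inside each cycle component.

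First, by Lemma~\ref{lem:intersection_projection}, for every embedded cycle component $\Gamma_i$ of $\Gamma$ meeting $\alpha$, the intersection $A_i:=\Gamma_i\cap\alpha$ is $(\rho_1,\rho_2)$--contracting in $\Gamma_i$. Since $\Gamma_i$ is convex and isometrically embedded and $\alpha$ is a geodesic, $A_i$ is a geodesic subarc of the cycle, so $|A_i|\leqslant|\Gamma_i|/2$. Take $x$ to be the midpoint of the arc complementary to $A_i$ in $\Gamma_i$ (possibly a midpoint of an edge, which is legitimate in the geodesic metric $1$-complex structure). Then $d(x,A_i)=(|\Gamma_i|-|A_i|)/2$ and $\pi(x)=\{a_1,a_2\}$ consists of the two endpoints of $A_i$, which are at $\Gamma_i$--distance $|A_i|$ since $|A_i|\leqslant|\Gamma_i|/2$. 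Applying the contraction condition with $x'=x$ (so that $d(x,x')=0\leqslant\rho_1(d(x,A_i))$ holds trivially) yields
\[ |A_i|=\diam\pi(x)\leqslant \rho_2(d(x,A_i))=\rho_2\bigl(\tfrac{|\Gamma_i|-|A_i|}{2}\bigr)\leqslant \rho_2(|\Gamma_i|/2). \]

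Second, the assumptions $\rho_1(r)\leqslant r$ and $\rho_2(r)/\rho_1(r)\to 0$ force $\rho_2$ to be sublinear, whence $\alpha$ is sublinearly contracting by Theorem~\ref{thm:morseequalscontracting}. Corollary~\ref{corollary:classicalmorse} then gives that $\alpha$ is $(r,\rho)$--contracting for $\rho(r):=\max_{|\Gamma_i|\leqslant r}|\Gamma_i\cap\alpha|$. The inequality above yields $\rho(r)\leqslant\rho_2(r/2)\leqslant\rho_2(2r)$ by monotonicity of $\rho_2$, so $\alpha$ is $(r,\rho_2(2r))$--contracting as required; note that $\rho_2(2r)/r\to 0$ since $\rho_2$ is sublinear, so the contraction condition is non-vacuous.

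The argument presents no real obstacle: it is a direct corollary of the local-to-global machinery already established together with the rigid geometry of cycles. The only minor technicality is the choice of $x$ as the midpoint of an edge when $|\Gamma_i|-|A_i|$ is odd, which is legitimate in the geodesic metric space framework of Definition~\ref{def:contractingprojection}.
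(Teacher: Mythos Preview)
Your argument is correct. It is close in spirit to the paper's proof but packaged differently, and in one respect is sharper.

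The paper works inside each embedded cycle $\Gamma_0$ and directly verifies that $\Gamma_0\cap\alpha$ is locally $(r,\rho_2(2r))$--contracting: it identifies the antipodal point $y$ as the unique point with non-singleton projection, then checks that the worst case for the $(r,\cdot)$--condition occurs at the point $x$ equidistant from $y$ and $\alpha$, where $d(y,\alpha)=2d(x,\alpha)$. The passage from this local statement to the global one is left implicit (via Theorem~\ref{theorem:strongcontraction}).

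You instead evaluate the contraction hypothesis only at the antipodal point $y$ itself to get the intersection-length bound $|A_i|\leqslant\rho_2(|\Gamma_i|/2)$, and then route through Corollary~\ref{corollary:classicalmorse} to obtain the global contraction function $\rho(r)=\max_{|\Gamma_i|\leqslant r}|A_i|$. This has the pleasant side effect of giving the sharper bound $\rho(r)\leqslant\rho_2(r/2)$, which you then relax to $\rho_2(2r)$. The cost is that your argument invokes the heavier machinery behind Corollary~\ref{corollary:classicalmorse} (namely the full local-to-global Theorem~\ref{theorem:strongcontraction}), whereas the paper's direct local computation is more self-contained at the level of a single cycle---though it too needs Theorem~\ref{theorem:strongcontraction} to finish. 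Both approaches ultimately rest on the same elementary observation about projections in a cycle.
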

\begin{proof}
Let $\Gamma_0$ be an embedded component of $\Gamma$.
Since $\Gamma_0$ is a cycle graph and $\Gamma_0\cap\alpha$ is
connected, there is a unique point $y$ for which the closest point
projection $\pi(y)$ in $\Gamma_0$ has positive diameter, and
$\diam\pi(y)=\diam\pi(\Gamma_0\cap\alpha)\leqslant \rho_2(d(y,\alpha))$.

Let $x$ be a point of $\Gamma_0$ equidistant from $y$ and $\alpha$.
Then $d(x,y)=d(x,\alpha)$ and:
\[\diam\pi(x)\cup\pi(y)=\diam\pi(y)\leqslant \rho_2(d(y,\alpha))=\rho_2(2d(x,\alpha))=\rho_2'(d(x,\alpha))\]
This is the worst case, since for $x'$ closer to $\alpha$ the ball of
radius $d(x',\alpha)$ about $x'$ does not include $y$.
We conclude that $\alpha$ is locally $(r,\rho_2(2r))$--contracting.
Now apply \fullref{theorem:strongcontraction} to see that $\alpha$ is
$(r,\rho_2')$--contracting for some $\rho_2'(r)\asymp\rho_2(2r)\asymp\rho_2(r)$.
\end{proof}
\begin{corollary}
Let $\Gamma$ be a $Gr'(\ifrac{1}{6})$--labelled graph whose components are
  cycle graphs.
A geodesic $\alpha$ in $X$ that is $(\rho_1,\rho_2)$--contracting with
$\rho_2$ bounded is strongly contracting.
\end{corollary}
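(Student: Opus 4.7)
The plan is to invoke the immediately preceding \fullref{thm:weakimpliesstrong} as a black box. That theorem, specialized to classical small cancellation (components are cycle graphs), upgrades any $(\rho_1,\rho_2)$--contracting geodesic $\alpha$ in $X$ to an $(r,\rho_2')$--contracting geodesic, where $\rho_2'(r):=\rho_2(2r)$. In particular, the input contraction function becomes linear (the identity), which is exactly what is demanded by \fullref{def:contractingprojection} in order to speak of strong contraction.

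First I would observe that the hypothesis $\rho_2$ is bounded means there exists a constant $C$ such that $\rho_2(r)\leqslant C$ for all $r\geqslant 0$. Applying \fullref{thm:weakimpliesstrong} to $\alpha$, I would then conclude that $\alpha$ is $(r,\rho_2')$--contracting with $\rho_2'(r)=\rho_2(2r)\leqslant C$, so $\rho_2'$ is also bounded by the same constant $C$. By the definition of strongly contracting (\fullref{def:contractingprojection}: $\rho_1(r)=r$ and $\rho_2$ constant), this exhibits $\alpha$ as a strongly contracting geodesic in $X$.

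There is no real obstacle: the work was done in \fullref{thm:weakimpliesstrong}, whose proof exploited the fact that in a cycle-graph component $\Gamma_0\cap\alpha$ is an arc of a cycle, so only a single extremal point $y$ of $\Gamma_0\setminus\alpha$ can carry non-trivial projection, and any $x$ that sees a significant projection through $y$ must lie at distance at least $d(y,\alpha)/2$ from $\alpha$. Once this lemma is granted, the corollary is formally a one-line consequence of the definitions and monotonicity of $\rho_2$.
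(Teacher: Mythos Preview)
Your proposal is correct and matches the paper's approach exactly: the corollary is stated without proof in the paper precisely because it is an immediate consequence of \fullref{thm:weakimpliesstrong}, just as you describe. The only thing to note is that the paper does not even spell out the one-line argument, so your write-up is already more detailed than what appears there.
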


Having uniformly bounded intersection with every embedded component of
$\Gamma$ is a sufficient condition for a geodesic $\alpha$ in the
Cayley graph of a $Gr'(\ifrac{1}{6})$--group
to be strongly contracting.
It is not a necessary condition.
Consider the following example:
\begin{example}\label{rem:unboundedproj}
We start with a simple example of an infinite classical $C'(\ifrac{1}{6})$--small cancellation presentation: $\fpres{a,b}{R_n:n\in\N}$ where, for each $n\geqslant 0$ we define:  \[R_n=ab^{20n+1}ab^{20n+2}\cdots ab^{20n+19}ab^{-(20n+20)}\]

The graphs $\Gamma_i$ for $i\geqslant 1$ are defined by taking the disjoint union of oriented cycles $R_n$ 
for $(i-1)(i)\leqslant 2n\leqslant -2+i(i+1)$ and identifying the unique subpath with label $b^{20k}$ in $R_{k-1}$ to the unique subpath in $R_k$ with label $b^{20k}$ which is preceded by $a$ and succeeded by $ba$.
This means we identify the paths labelled by the bold words $ab^{20(k-1)+1}ab^{20(k-1)+2}\dots a\mathbf{b^{-20k}}$ and $a\mathbf{b^{20k}}ba\dots ab^{-(20k+20)}$.

The $\{a,b\}$--labelled graph $\sqcup_{i\in\N}\Gamma_i$ satisfies the $Gr'(\ifrac{1}{6})$--condition and gives rise to the same Cayley graph as $\langle a,b \,|\, R_n:n\in\N\rangle$. 
Consider a path $\alpha$ in the Cayley graph labelled by the powers of $a$. For every $i$, there exists an embedded copy $\Gamma_i'$ of $\Gamma_i$ with $|\alpha\cap\Gamma_i'|=i$. Since paths in $\Gamma_i$ labelled by powers of $a$ are geodesic, this implies that $\alpha$ is a geodesic. It has intersection of length at most 1 with relators $R_n$, so it is strongly contracting, but we have $|\alpha\cap\Gamma_i'|=i$.
\end{example}

We also provide the first examples spaces $X$ and $\tilde X$ and geodesics $\gamma$ and $\tilde \gamma$ such that there exists a quasi-isometry $X\to \tilde X$ mapping $\gamma$ to $\tilde \gamma$ and such that $\gamma$ is not strongly contracting, but $\tilde\gamma$ is strongly contracting.

\begin{thm}[Non-stability of strong contraction]\label{thm:nonstability}
There exists a group $G$ with finite generating sets $\gens\subset
\tilde\gens$ and an infinite geodesic $\gamma$ in $X:=\Cay(G,\gens)$ labelled by the powers of a generator such that $\gamma$ is not strongly contracting, but its image $\tilde\gamma$ in $\tilde X:=\Cay(G,\tilde\gens)$ obtained from the inclusion $\gens\subset\tilde\gens$ is an infinite strongly contracting geodesic.
\end{thm}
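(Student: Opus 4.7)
The plan is to produce the group $G$ by graphical small cancellation, realizing both $\gens=\{a,b\}$-- and $\tilde\gens=\{a,b,c\}$--presentations simultaneously via a single Tietze substitution $c\mapsto v$ for a carefully chosen word $v\in\langle a,b\rangle$. Specifically, I would define a $Gr'(\ifrac{1}{6})$--labelled graph $\tilde\Gamma$ over $\tilde\gens$ whose components are cycle graphs: a distinguished component $\tilde\Gamma_0$ labelled $cv^{-1}$, which enforces $c=_Gv$ and hence makes $\gens$ a generating set of $G:=G(\tilde\Gamma)$, together with cycles $\tilde\Gamma_n$ for $n\geqslant 1$ labelled by words $R_n$. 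The $R_n$ are to be engineered so that (a) the cycle traced from $1$ in $\tilde X$ meets $\tilde\gamma$ in at most a uniformly bounded number of edges, but (b) after substituting $c\mapsto v$, the resulting cycle in $X$ meets $\gamma$ in at least $n$ edges. Granted such a construction, the conclusions follow immediately: by (a) and \fullref{theorem:strongcontraction}, $\tilde\gamma$ is uniformly locally strongly contracting in $\tilde X$, and hence strongly contracting; by (b) and \fullref{corollary:classicalmorse} applied to the substituted presentation of $G$ over $\gens$, $\gamma$ fails to be strongly contracting in $X$.

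Beyond the main construction, the plan has two auxiliary verifications. First, I must show that $\tilde\gamma$ and $\gamma$ are infinite geodesics. For $\gamma$ this follows from convexity of embedded components of the classical $Gr'(\ifrac{1}{6})$--labelled graph over $\gens$. For $\tilde\gamma$ one needs in addition that $c$ is not a power of $a$ in $G$ and that no $c$--involving word provides a shortcut between powers of $a$ in $\tilde X$; both claims reduce to Greendlinger's lemma applied to a minimal van Kampen diagram over $\tilde\Gamma$, ruling out any shortcut cycle via the $Gr'(\ifrac{1}{6})$ hypothesis. Second, the substituted collection $\{R_n[c\mapsto v]\}_{n\geqslant 1}$ must itself be $Gr'(\ifrac{1}{6})$--labelled over $\gens$, so that \fullref{corollary:classicalmorse} is applicable.

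The main obstacle is the joint small cancellation design. The word $v$ must be non-repetitive so that $\tilde\Gamma_0=cv^{-1}$ satisfies the $Gr'(\ifrac{1}{6})$-condition, and so that repeated copies of $v$ introduced by the substitution do not create large common subpaths between distinct substituted cycles. The words $R_n$ must be constructed so that each of their $c$--letters is positioned at a prefix whose trace in $\tilde X$ lies on $\tilde\gamma$; then the substitution turns each such $c$--letter into an $a$--aligned initial segment of $v$ on $\gamma$, producing the growth $|\gamma\cap\Gamma_n|\geqslant n$, while in $\tilde X$ only the initial edge of the cycle lies on $\tilde\gamma$. Modelled on the constructions of \fullref{rem:unboundedproj} and \fullref{thm:allcontractionrates}, I would take $v$ from a $C'(\ifrac{1}{12})$--collection of non-repetitive words in $\gens^*$ beginning with a single $a$, and build each $R_n$ from an independent $C'(\ifrac{1}{12})$--collection in $\{b,c\}^*$ interspersed with exactly $n$ appropriately placed $c$--letters. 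The independence of the two collections controls piece lengths both before and after substitution, and yields the desired $Gr'(\ifrac{1}{6})$--property on each side, after which the theorem follows from our local-to-global characterization.
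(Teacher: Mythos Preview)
Your plan has a genuine gap: the substitution $c\mapsto v$ for a single fixed word $v$ cannot produce the unbounded intersections required for (b). In the classical setting of \fullref{corollary:classicalmorse}, the quantity $|\Gamma_n\cap\gamma|$ for an embedded cycle $\Gamma_n$ and the $a$--axis $\gamma$ is the length of the longest $a$--power subword of the relator labelling $\Gamma_n$, because embedded components are convex and $\Gamma_n\cap\gamma$ is therefore a single connected arc. Your relators over $\gens$ are the words $R_n[c\mapsto v]$ with $R_n\in\{b,c\}^*$ and $v$ a fixed word in $a,b$; every maximal $a$--run in $R_n[c\mapsto v]$ either lies inside one copy of $v$ or straddles a junction between consecutive copies arising from a subword $c^m$ of $R_n$, and in either case its length is bounded by a constant depending only on $v$. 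Hence $\gamma$ would also be uniformly locally strongly contracting in $X$, and your construction fails to separate the two Cayley graphs. The claim that ``$n$ suitably placed $c$--letters give $|\gamma\cap\Gamma_n|\geqslant n$'' conflates the total number of $a$--edges in the substituted relator with the length of a single connected intersection arc.

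The paper avoids this obstruction by abandoning the classical framework on the $\gens$ side. Its defining graph $\Gamma$ over $\gens$ is not a disjoint union of cycles but a single connected ``staircase'' obtained by gluing quadrilaterals $\Gamma_i$ along matching sides labelled $\mu_{i+1}(w_1,w_2)$. No simple cycle in $\Gamma$ contains more than a bounded $y$--subword, yet the $y$--ray in $\Gamma$ is not strongly contracting: a second ray escaping to infinity in $\Gamma$ has unbounded closest-point projection to the $y$--ray, contradicting the Bounded Geodesic Image Property (\fullref{corollary:bgi}), and \fullref{theorem:strongcontraction} transfers this failure to $X$. Over $\tilde\gens$ the matching sides are rewritten as $\mu_{i+1}(x_1,x_2)$ via two new generators $x_1,x_2$ satisfying $x_i=_Gw_i$; these rewritten sides are short enough relative to the remaining sides $\nu_i$ that the disjoint union of the individual quadrilaterals is already $Gr'(\ifrac{1}{6})$, and since each cycle contains exactly one $y$--edge, \fullref{theorem:strongcontraction} gives strong contraction in $\tilde X$. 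The essential point you are missing is that within the classical framework a fixed Tietze substitution cannot alter whether the axis of a generator is strongly contracting; the extra generators must serve to cut a genuinely non-cycle component into cycles.
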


The idea is to turn \cite[Example~3.2]{ArzCasGrub} into a $Gr'(\ifrac{1}{6})$--labelled graph $\Gamma$. 
By construction, $\Gamma$ will contain a non-strongly contracting geodesic $\gamma$ that will be labelled by the powers of a generator $y$. 
\fullref{theorem:strongcontraction} then ensures that the image of $\gamma$ in the Cayley graph is not strongly contracting. 
By adding additional edges, corresponding to new generators, to $\Gamma$ and cutting the resulting graph apart into cycle graphs, we obtain a classical $C'(\ifrac{1}{6})$--presentation of the same group in which no relator contains more than one occurrence of the letter $y$. 
Thus, a geodesic labelled by the powers of $y$ will be strongly contracting in the Cayley graph with respect to the new generating set.

\begin{proof}[Proof of \fullref{thm:nonstability}] Assume the sets $\{y\}$, $\{a,b\}$, $\{x_1,x_2\}$, $\gens_1$, $\gens_2$ are pairwise disjoint sets, and $|\gens_1|\geqslant 2$ and $|\gens_2|\geqslant 2$. A \emph{classical piece} with respect to a set of words is the label of a piece in the disjoint union of cycle graphs labelled by the words. It is an exercise to explicitly construct words with the following properties:

Let $\omega:=\{w_1,w_2\}\subset \langle\gens_1\rangle$ be a $C'(\ifrac{1}{6})$-collection of words (as defined for the proof of \fullref{thm:allcontractionrates}) such that $|w_1|=|w_2|\geqslant 24$. 
Moreover, assume that the words $w_1w_2$, $w_1w_2^{-1}$, and $w_1^{-1}w_2$ are freely reduced.

Let $\mu(a,b):=\{\mu_i(a,b)\mid i\in\N\}\subset \langle\{a,b\}\rangle$ be a $C'(\ifrac{1}{6})$-collection of words such that there exists $C\in\N$, $C\geqslant 6$ with $|\mu_{i}(a,b)|=(\ifrac{C}{|w_1|})\cdot 2^i$. 
Note that by our assumptions on $\omega$, the set $\mu(w_1,w_2)$ also satisfies the classical $C'(\ifrac{1}{6})$--condition: 
Since no two $w_1^{\pm1}$ and $w_2^{\pm2}$ start with the same letter, any piece with respect to $\mu(w_1,w_2)$ comes from a piece with respect to $\mu(a,b)$.

Let $\nu:=\{\nu_i:i\in \N\}\subset \langle\gens_2\rangle$ be a $C'(\ifrac{1}{12})$-collection of words such that $|\nu_i|=C\cdot 2^i+1$.

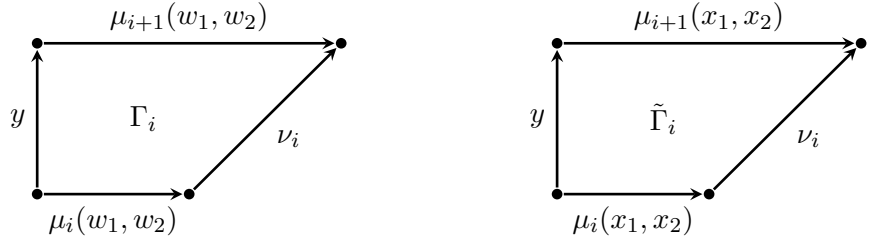
\begin{figure}
\begin{center}
\begin{tikzpicture}[line width=1pt, >=stealth,shorten >=2.5pt, shorten <=2.5pt,x=2cm,y=2cm]
\draw[->] (0,0) to node[above] {\small $\mu_{i+1}(w_1,w_2)$} (2,0);
\draw[->] (0,-1) to node[below] {\small $\mu_i(w_1,w_2)$} (1,-1);
\draw[->] (0,-1) to node[left]{\small $y$} (0,0);
\draw[->] (1,-1) to node[below right]{\small $\nu_i$} (2,0);
\fill (0,0) circle (2pt);
\fill (2,0) circle (2pt);
\fill (0,-1) circle (2pt);
\fill (1,-1) circle (2pt);

\node at (.7,-.5) {\small $\Gamma_i$};
\end{tikzpicture}
\hspace{2cm}
\begin{tikzpicture}[line width=1pt, >=stealth,shorten >=2.5pt, shorten <=2.5pt,x=2cm,y=2cm]
\draw[->] (0,0) to node[above] {\small $\mu_{i+1}(x_1,x_2)$} (2,0);
\draw[->] (0,-1) to node[below] {\small $\mu_i(x_1,x_2)$} (1,-1);
\draw[->] (0,-1) to node[left]{\small $y$} (0,0);
\draw[->] (1,-1) to node[below right]{\small $\nu_i$} (2,0);
\fill (0,0) circle (2pt);
\fill (2,0) circle (2pt);
\fill (0,-1) circle (2pt);
\fill (1,-1) circle (2pt);
\node at (.7,-.5) {\small $\tilde\Gamma_i$};
\end{tikzpicture}
\end{center}
\caption{The graphs $\Gamma_i$ and $\tilde\Gamma_i$.}\label{figure:nonqiexample}
\end{figure}
The graph $\Gamma$ labelled over $\gens:=\{y\}\cup \gens_1\cup \gens_2$ is obtained by taking the disjoint union of the $\Gamma_i$ as in \fullref{figure:nonqiexample} and, for each $i$, identifying the `top' of $\Gamma_i$ with the `bottom' of $\Gamma_{i+1}$. 
These both have the same label. 
Note that any simple closed path $\gamma$ in $\Gamma$ is a path going around a finite union $\Gamma_i\cup\Gamma_{i+1}\cup\dots\cup\Gamma_{j}$ for $i\leqslant j$, i.e., the label of $\gamma$ is, up to inversion and cyclic shift, of the form: \[\mu_i(w_1,w_2)\nu_i\nu_{i+1}\dots\nu_{j}\mu_{j+1}(w_1,w_2)^{-1}y^{i-j-1}\]

A piece that is a simple subpath of a cyclic shift of $\gamma$ has a label that is a subword of one of the following words. 

\begin{itemize}
\item A piece in some $\mu_k(w_1,w_2)$ or a product of two pieces in some $\nu_k$ or $\nu_k\nu_{k+1}$ (since, by gluing together words $\nu_k$ when constructing $\Gamma$, new pieces may have arisen, each labelled by a product of two pieces in $\nu$), or
\item $y^{i-j-1}$, or
\item a product of two words from the first bullet, or a product $py^{i-j-1}q$, where $p$ and $q$ are pieces in $\mu_{j+1}(w_1,w_2)$ and $\mu_{i}(w_1,w_2)$ respectively.
\end{itemize}
Here, a piece in $\mu_i(w_1,w_2)$ means a classical piece with respect to the collection of words $\mu(w_1,w_2)$, a piece in $\nu_i$ means a piece with respect to $\nu$, and so on. 
A piece may also be the empty word.

Note that any piece $p$ in the first bullet has length less than $\ifrac{1}{6}$ of its ambient word $\mu_k(x_1,x_2)$ or $\nu_k$ or $\nu_k\nu_{k+1}$, and that $6|i-j-1|\leqslant C2^{j}<|\nu_j|$. Therefore, $\Gamma$ satisfies the $Gr'(\ifrac{1}{6})$-condition.

Let $\alpha$ be the geodesic ray in $\Gamma$ labelled by positive powers of $y$.
Let $\beta$ be the geodesic ray in $\Gamma$ labelled by $\nu_1\nu_2\cdots$.
The ray $\beta$ leaves every bounded neighborhood of $\alpha$, but has unbounded image under the closest point projection to $\alpha$ in $\Gamma$.
By \fullref{corollary:bgi}, $\alpha$ is not strongly contracting in $\Gamma$.
Thus, by \fullref{theorem:strongcontraction}, the geodesic ray labelled by positive powers of $y$ is not strongly contracting in $\Cay(G(\Gamma),\gens)$.

\medskip

We now define a graph $\tilde \Gamma$ labelled over $\tilde\gens:=\{y\}\cup \{x_1,x_2\} \cup \gens_1 \cup \gens_2$ as follows: 
Let $c_1$ be a cycle graph labelled by $x_1w_1^{-1}$ and $c_2$ a cycle graph labelled by $x_2w_2^{-1}$. 
Set: 
\[\tilde\Gamma:=c_1 \sqcup c_2\sqcup\bigsqcup_{i\in\N}\tilde\Gamma_i\]

where the $\tilde \Gamma_i$ are given in \fullref{figure:nonqiexample}. Note that while, in this new graph, the paths labelled by $\mu_i(x_1,x_2)$ are pieces, no path labelled by $\mu_{i+1}^{-1}(x_1,x_2)y^{-1}$ or $y^{-1}\mu_i(x_1,x_2)$ is a piece.

By construction, $|\mu_i(x_1,x_2)|,|\mu_{i+1}(x_1,x_2)|<\ifrac{|\nu_i|}{12}$, and any piece in $\nu_i$ has length less than $\ifrac{|\nu_i|}{12}$. 
This, together with observations as above and the fact that any simple piece in $c_1$ or $c_2$ has length less than $\ifrac{|c_i|}{6}$ by construction, shows that $\tilde\Gamma$ satisfies the $Gr'(\ifrac{1}{6})$--condition. 
Since its components are cycle graphs, the geodesic ray labelled by the positive powers of $y$ is strongly contracting in $\Cay(G(\tilde\Gamma),\tilde \gens)$, by \fullref{theorem:strongcontraction}. 

The presentation for $G(\tilde\Gamma)$ coming from $\tilde\Gamma$ includes the generators and relations of the presentation of $G(\Gamma)$ coming from $G(\Gamma)$, as well as generators $x_1$ and $x_2$ and relations $x_1w_1^{-1}$ and $x_2w_2^{-1}$.
Rewriting the presentation by Tietze transformations, we see $G(\Gamma)\cong G(\tilde\Gamma)$.
In particular, the inclusion $\Cay(G(\Gamma),\gens)\hookrightarrow\Cay(G(\tilde\Gamma),\tilde \gens)$ is a quasi-isometry.
\end{proof}

We record the following consequence of \fullref{theorem:strongcontraction} and \fullref{lem:strcontbddHdist} for reference:

\begin{cor}
 Let $\alpha$ and $\alpha'$ be infinite geodesic rays in $\Cay(G(\Gamma),\gens)$, where $\Gamma$ is a $Gr'(\ifrac{1}{6})$--labelled graph labelled by $\gens$, such that $d_{\mathrm{Hausdorff}}(\alpha,\alpha')<\infty$. Then $\alpha$ is uniformly locally strongly contracting if and only if $\alpha'$ is.
\end{cor}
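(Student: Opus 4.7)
The plan is a two-step reduction that simply chains together two results already proved in the paper. First, I would invoke \fullref{theorem:strongcontraction}, whose final clause states explicitly that a geodesic is strongly contracting if and only if it is uniformly locally strongly contracting. Thus the property \emph{uniformly locally strongly contracting} for a geodesic in $\Cay(G(\Gamma),\gens)$ is equivalent to the global property of being strongly contracting (in the sense of \fullref{def:contractingprojection} with $\rho_1(r)=r$ and $\rho_2$ a constant).

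Next, I would apply \fullref{lem:strcontbddHdist}, which asserts that the property of being $(r,\rho_2)$-contracting with $\rho_2$ bounded is preserved when one passes to a subspace at finite Hausdorff distance: if $Y$ is strongly contracting and $d_{\mathrm{Hausdorff}}(Y,Y')<\infty$, then $Y'$ is strongly contracting as well. Since $\alpha$ and $\alpha'$ are closed subspaces of $\Cay(G(\Gamma),\gens)$ at finite Hausdorff distance, strong contraction of one implies strong contraction of the other. Combining the two equivalences yields:
\[
\alpha \text{ is uniformly locally strongly contracting} \iff \alpha \text{ is strongly contracting} \iff \alpha' \text{ is strongly contracting} \iff \alpha' \text{ is uniformly locally strongly contracting}.
\]

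There is essentially no obstacle: both ingredients are stated and established earlier in the paper, and the only thing to verify is that the notion of strong contraction used in \fullref{lem:strcontbddHdist} matches the notion appearing in the ``strongly contracting iff uniformly locally strongly contracting'' clause of \fullref{theorem:strongcontraction}. Since both refer to $(r,\rho_2)$-contraction with $\rho_2$ bounded (equivalently, $\rho_1(r)=r$ and $\rho_2$ a constant), these coincide, and the symmetry between $\alpha$ and $\alpha'$ gives both directions of the ``if and only if''.
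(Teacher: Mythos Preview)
Your proposal is correct and matches the paper's approach exactly: the paper records this corollary explicitly as a consequence of \fullref{theorem:strongcontraction} and \fullref{lem:strcontbddHdist}, which is precisely the chain of equivalences you give.
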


%%% Local Variables: 
%%% mode: latex
%%% TeX-master: "main"
%%% End: 

\section{Strongly contracting elements}\label{sec:scelements}
In this section, we show the existence of strongly contracting elements in graphical small cancellation groups.

\begin{thm}\label{thm:existenceofstronglycontractingelement}
Let $\Gamma$ be a $Gr'(\ifrac{1}{6})$--labelled graph whose components
are finite, labelled by a finite set $\gens$. Assume that $G(\Gamma)$ is infinite. Then there exists an infinite order element $g\in G(\Gamma)$ such that $\fgen{g}$ is strongly contracting in  $\Cay(G(\Gamma),\gens)$. 
\end{thm}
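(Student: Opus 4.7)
The plan is to produce $g$ via essentially the Gruber--Sisto construction \cite{GruSis14} of a WPD element, and then deduce strong contraction from \fullref{theorem:strongcontraction} and \fullref{lem:strcontbddHdist}. Set $X:=\Cay(G(\Gamma),\gens)$. By \fullref{theorem:strongcontraction}, it suffices to exhibit an infinite order element $g$ together with a bi-infinite geodesic $\alpha$ in $X$ preserved by $\fgen{g}$ such that $\alpha$ is uniformly locally strongly contracting. Since each embedded component $\Gamma_0$ is convex and finite, and $\alpha\cap\Gamma_0$ is a convex, hence connected, subpath of $\alpha$, closest point projection in $\Gamma_0$ onto $\alpha\cap\Gamma_0$ has image of diameter at most $|\alpha\cap\Gamma_0|$. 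Consequently it is enough to construct $g$ and $\alpha$ so that $|\alpha\cap\Gamma_0|$ is uniformly bounded across all embedded components of $\Gamma$: then $\alpha$ is strongly contracting in $X$, and since $\fgen{g}$ is at finite Hausdorff distance from $\alpha$, \fullref{lem:strcontbddHdist} implies $\fgen{g}$ is itself strongly contracting.

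To produce $g$ and $\alpha$, I would select a cyclically reduced word $w\in\gens^*$ of sufficient length satisfying the following genericity property: there exists a constant $L$ such that for every $n\geqslant 1$, no cyclic subword of $w^n$ of length exceeding $L$ is read along a path in any component of $\Gamma$. The existence of such $w$ is the content of the Gruber--Sisto construction: one uses the finiteness of $\gens$, the finiteness of each component, the infiniteness of $G(\Gamma)$, and the $Gr'(\ifrac{1}{6})$ condition (which bounds piece-lengths inside cycles of $\Gamma$) to diagonalize over components and extract a sufficiently long, sufficiently generic $w$. Take $g$ to be the element represented by $w$ and $\alpha$ to be the bi-infinite path in $X$ labelled by the powers of $w$ based at the identity.

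Now I would verify that $\alpha$ is a geodesic and that $|\alpha\cap\Gamma_0|\leqslant L$ for every embedded component $\Gamma_0$. Both assertions are contrapositively equivalent to the existence of a diagram, provided by \fullref{lem:graphical_diagrams}, whose boundary contains a long subword of a power of $w$ and either has strictly shorter opposite side (if $\alpha$ were not geodesic) or has one side in a component of length exceeding $L$ (if the intersection bound failed). Invoking \fullref{prop:sctocombinatorial} to replace this with a combinatorial geodesic bigon and then applying Strebel's classification \fullref{thm:strebel} would force the existence of a distinguished face whose exterior boundary on the $\alpha$--side is essentially a long subword of some $w^n$ and whose interior arcs lie in a single component; this contradicts the genericity of $w$. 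In particular $g$ has infinite order, $\alpha$ is a geodesic axis for $g$, and the intersection bound holds.

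The main obstacle is the construction of $w$ in the second step: it requires controlling, simultaneously across all embedded components (which may be infinitely many and of unbounded size, though each is individually finite), how much of any power $w^n$ can be absorbed into a single component. This is where one truly needs the combination of the $Gr'(\ifrac{1}{6})$ piece bound, the finiteness of $\gens$, and a counting/diagonalization argument enabled by $G(\Gamma)$ being infinite, essentially as carried out in \cite{GruSis14}. Once $w$ has been constructed, the rest of the argument is a direct application of \fullref{theorem:strongcontraction} and \fullref{lem:strcontbddHdist}.
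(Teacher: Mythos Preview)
Your proposal follows essentially the same overall strategy as the paper: produce an element $g$ whose geodesic axis has uniformly bounded intersection with all embedded components, then invoke \fullref{theorem:strongcontraction} and \fullref{lem:strcontbddHdist}. However, the paper organizes the argument differently and fills in two steps that you leave as sketches.

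First, the paper handles a case split you omit: if $\Gamma$ has only finitely many pairwise non-isomorphic components with non-trivial fundamental group, then $G(\Gamma)$ is hyperbolic and the conclusion is immediate. Only in the complementary case does one invoke \cite{GruSis14}. Second, rather than constructing $w$ by an ad hoc ``diagonalization'' and then verifying geodesicity and the intersection bound by a direct diagram argument, the paper routes through the coned-off space $Y$: it quotes \cite{GruSis14} for the existence of an element $g$ acting hyperbolically on $Y$ (\fullref{prop:existenceofhyperbolicelement}), uses \fullref{lem:periodicgeodesic} (which in turn rests on \cite{Gru15SQ}) to produce a periodic geodesic $\gamma$ at finite Hausdorff distance from $\fgen{g}$, and then proves (\fullref{lem:hyperbolic_iff_bounded_intersection}) that hyperbolicity on $Y$ is \emph{equivalent} to the uniform intersection bound for $\gamma$. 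Your description of the Gruber--Sisto input as a counting/diagonalization producing a generic word is not quite what \cite{GruSis14} actually does; what they supply is precisely hyperbolicity on $Y$, and the paper's \fullref{lem:hyperbolic_iff_bounded_intersection} is the bridge from that to the intersection bound you want.

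Your direct verification that $\alpha$ is geodesic via Strebel's classification is plausible in outline, but the paper avoids this by importing the periodic-geodesic result from \cite{Gru15SQ} as a black box. In short: your plan is correct in spirit and reaches the same endpoint, but the paper's packaging via the coned-off space and \fullref{lem:periodicgeodesic} replaces your two hand-waved steps (existence of $w$, geodesicity of $\alpha$) with clean citations.
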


The element $g$ is the WPD element for the action of $G(\Gamma)$ on the hyperbolic coned-off space of Gruber and Sisto \cite{GruSis14} (see also Section~\ref{sec:conedoff}).

A recent theorem of Arzhantseva, Cashen, and Tao \cite{ArzCasTao15}
says that if $G$ is a group acting cocompactly on a proper metric space $X$, and if $g\in G$ is an infinite order element such that closest point projection to an orbit of $\langle g\rangle$ is strongly contracting, then the action of $G$ on $X$ is \emph{growth tight}. 
This means that the rate of exponential growth of $G$ with respect to the pseudo-metric induced by the metric of $X$ is strictly greater than the growth rate of a quotient of $G$ by any infinite normal subgroup, with respect to the induced pseudo-metric on the quotient group. Thus, a corollary of \fullref{thm:existenceofstronglycontractingelement}, is:
\begin{thm}\label{thm:growthtight}
Let $\Gamma$ be a $Gr'(\ifrac{1}{6})$--labelled graph whose components
are finite, labelled by a finite set $\gens$. 
Then the action of $G(\Gamma)$ on $\Cay(G(\Gamma),\gens)$ is growth tight.
\end{thm}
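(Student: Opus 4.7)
The plan is to employ the WPD element $g \in G(\Gamma)$ produced by Gruber--Sisto \cite{GruSis14} for the action of $G(\Gamma)$ on the hyperbolic coned-off Cayley graph $\hat{X}$ obtained from $X:=\Cay(G(\Gamma),\gens)$ by adjoining, for each embedded component $\Gamma_0$ of $\Gamma$, a cone vertex connected by edges of length $\ifrac{1}{2}$ to the vertices of $\Gamma_0$. Existence of a loxodromic $g$ uses that $\hat{X}$ is hyperbolic, that the action of $G(\Gamma)$ on $\hat{X}$ is acylindrical, and that it is non-elementary---all provided by \cite{GruSis14} under the standing hypotheses, with non-elementariness following from $G(\Gamma)$ being infinite and $\gens$ finite. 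The goal is then to verify that $\langle g\rangle$ is strongly contracting in $X$ by invoking \fullref{theorem:strongcontraction}.

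The first step is to produce a bi-infinite $\langle g\rangle$-invariant geodesic $\alpha \subset X$ at finite Hausdorff distance from the orbit. By \fullref{thm:translationlengths}, the translation length $\tau(g)$ is rational, and after replacing $g$ by a suitable power we may assume $\tau(g)$ is attained by a geodesic word $w$ representing $g$. Since $|g^n|_\gens \geqslant n\tau(g) = |w^n|$ for every $n$, each concatenation $w^n$ is itself geodesic, so the bi-infinite concatenation $\alpha := \cdots w \cdot w \cdot w \cdots$ is a bi-infinite geodesic; it is $\langle g\rangle$-invariant by construction and contains the orbit $\langle g\rangle \cdot v$ of its basepoint $v$.

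The second step is to bound $|\alpha \cap \Gamma_0|$ uniformly over all embedded components $\Gamma_0$ of $\Gamma$. Since $g$ acts loxodromically on $\hat{X}$, the orbit map $n \mapsto g^n \cdot v$ is a $(K,C)$--quasi-isometric embedding $\Z \hookrightarrow \hat{X}$ for fixed constants $K,C$. Consequently $\alpha$, which is at bounded $X$--Hausdorff distance (hence bounded $\hat{X}$--Hausdorff distance) from the orbit, is a quasi-geodesic in $\hat{X}$ with constants controlled by $K$ and $C$. Any two vertices of a common embedded component $\Gamma_0$ sit at $\hat{X}$--distance at most $1$ via the cone vertex of $\Gamma_0$, so any maximal subsegment of $\alpha$ contained in $\Gamma_0$ has endpoints at $\hat{X}$--distance at most $1$; the quasi-geodesic property then forces the $X$--length of such a subsegment, and hence $|\alpha \cap \Gamma_0|$, to be bounded by a constant independent of $\Gamma_0$.

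Combining these two steps, $\alpha$ is tautologically uniformly locally strongly contracting: closest-point projection within each $\Gamma_0$ onto the uniformly bounded set $\Gamma_0 \cap \alpha$ has uniformly bounded image. \fullref{theorem:strongcontraction} then promotes this to $\alpha$ being strongly contracting in $X$, and \fullref{lem:strcontbddHdist} transfers strong contraction from $\alpha$ to the orbit $\langle g\rangle \cdot v$, completing the proof. The main obstacle is the second step: one must be sure that the loxodromic quasi-isometry constants yield a \emph{single} bound holding uniformly over the (possibly infinitely many) embedded components, which is precisely why a WPD/loxodromic element---rather than an ad hoc small cancellation construction on relator labels---is the natural candidate.
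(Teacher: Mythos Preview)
Your argument follows the same route as the paper --- produce the Gruber--Sisto WPD element, find a periodic geodesic near it, bound its intersection with embedded components via loxodromicity on the coned-off space, and conclude strong contraction from \fullref{theorem:strongcontraction}. Two gaps remain.

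First, you never actually prove the stated theorem. You establish that $\langle g\rangle$ is strongly contracting in $X$ and then declare the proof complete, but the conclusion you need is \emph{growth tightness}. The bridge is the theorem of Arzhantseva--Cashen--Tao \cite{ArzCasTao15}: a cocompact action on a proper space is growth tight as soon as some infinite-order element has a strongly contracting orbit. You must invoke this (and note that the action of $G(\Gamma)$ on $X$ is cocompact and $X$ is proper since $\gens$ is finite). Without it you have proved \fullref{thm:existenceofstronglycontractingelement}, not \fullref{thm:growthtight}.

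Second, your construction of the periodic geodesic $\alpha$ is not justified. Rationality of $\tau(g)$ does \emph{not} by itself imply that some power $g^k$ satisfies $|g^k|_\gens = \tau(g^k)$; the infimum $\tau(g)=\inf_n |g^n|/n$ need not be attained. In fact, in the paper the logic runs the other way: rationality (\fullref{thm:translationlengths}) is deduced \emph{from} the existence of a periodic geodesic (\fullref{lem:periodicgeodesic}), whose proof passes through an approximating hyperbolic quotient and Swenson's theorem to realise a conjugate of a power of $g$ by a word whose powers are geodesic. You should cite \fullref{lem:periodicgeodesic} directly rather than trying to reverse-engineer it from rationality. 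A related minor point: you also need to handle the case where $\Gamma$ has too few non-isomorphic components with nontrivial fundamental group for \cite{GruSis14} to supply a loxodromic element; the paper dispatches this by observing that $G(\Gamma)$ is then hyperbolic.
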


\subsection{Infinite cyclic subgroups are close to periodic geodesics}
In the previous section we deduced contraction results for geodesics
in a group defined by a $Gr'(\ifrac{1}{6})$-labelled graph. In order to show that cyclic subgroups are strongly contracting, we show that they are actually close to bi-infinite geodesics. As a by-product, we also obtain a result about translation lengths in graphical small cancellation groups. 

We glean from \cite{Gru15SQ} the following:

\begin{lemma}\label{lem:periodicgeodesic} Let $\Gamma$ be a
  $Gr'(\ifrac{1}{6})$-labelled graph whose components
are finite, labelled by a finite set $\gens$.
Every infinite cyclic subgroup of $G:=G(\Gamma)$ is at bounded Hausdorff distance from a periodic bi-infinite geodesic in $\Cay(G,\gens)$.
\end{lemma}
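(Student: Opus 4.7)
The plan is to produce, for a given infinite-order $g \in G$, an integer $n \geq 1$, an element $u \in G$, and a word $v \in (\gens^{\pm})^*$ representing $h := u g^n u^{-1}$, such that every positive power $v^k$ labels a geodesic in $X := \Cay(G, \gens)$. This is essentially a periodicity statement asserting that some conjugate of a power of $g$ admits a ``stable'' geodesic representative. Granted such data, the bi-infinite concatenation $\ldots v v v \ldots$ based at $1 \in X$ traces out a path $\beta$ every finite subarc of which is a translate of some $v^k$ (or its inverse), hence is a geodesic; so $\beta$ is a bi-infinite geodesic. By construction, $h$ shifts $\beta$ along itself by distance $|v|$, and left-translating by $u^{-1}$ yields a bi-infinite geodesic $\alpha := u^{-1}\beta$ on which $g^n$ acts as translation by $|v|$. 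Thus $\alpha$ is periodic.

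For the Hausdorff bound, the orbit $\langle g^n \rangle \cdot u^{-1}$ lies on $\alpha$ and is $|v|$-dense there, while simultaneously lying within distance $|u|$ of $\langle g^n\rangle \subset \langle g\rangle$. Conversely, since $\langle g^n\rangle$ has index $n$ in $\langle g\rangle$, the subgroup $\langle g\rangle$ is a finite union of cosets of $\langle g^n\rangle$, each at bounded distance from $\langle g^n\rangle$. Combining these estimates gives $d_{\mathrm{Hausdorff}}(\langle g\rangle, \alpha) < \infty$, reducing the lemma to the production of $(u,n,v)$.

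The construction of $(u, n, v)$ is the main obstacle and is where we appeal to the analysis in \cite{Gru15SQ}. The strategy is to pick $v$ to be a cyclically reduced geodesic word representing a conjugate $u g^n u^{-1}$ of some nontrivial power of $g$, with the choice made so as to minimize the ratio $|v|/n$. If $v^k$ failed to be geodesic for some $k \geq 2$, then there would exist a word $w$ with $|w| < |v^k|$ representing $h^k$, giving a van Kampen diagram $D$ over $\rels$ with boundary word $v^k w^{-1}$. By \fullref{lem:graphical_diagrams} we may arrange that every interior arc of $D$ is a piece, so that $D$ becomes a combinatorial geodesic bigon after forgetting interior valence-$2$ vertices. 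Applying \fullref{thm:strebel} to $D$ together with the $Gr'(\ifrac{1}{6})$-condition, one locates a face $\Pi$ whose boundary contains a long subarc of the $v^k$-side. The periodicity of the label along $v^k$ then permits one to slide such a piece to align with a full period of $v$, producing a strictly shorter cyclically reduced word representing some conjugate of a power of $g$ and contradicting the minimality of $|v|/n$. This periodicity-minimality dichotomy is exactly what is extracted from the diagrammatic analysis in \cite{Gru15SQ}, and completes the proof.
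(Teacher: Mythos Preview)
Your reduction to producing $(u,n,v)$ with all powers $v^k$ geodesic is correct; the gap is in constructing such a triple. The minimization argument is circular: the infimum of $|v|/n$ over cyclically reduced representatives of conjugates of powers of $g$ equals $\tau(g)$, and a pair $(n,v)$ achieves it if and only if every $v^k$ is geodesic. Indeed, if $(n,v)$ is a minimizer and $w$ is geodesic for $h^k$, cyclic reduction of $w$ yields a competitor with ratio at most $|w|/(nk)\leqslant |v|/n$, forcing $|h^k|=k|v|$; conversely if all $v^k$ are geodesic then $|v|/n=\tau(h)/n=\tau(g)$. So assuming a minimizer exists is exactly assuming the conclusion, and your Strebel argument only produces a contradiction to minimality, which presupposes the minimizer. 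Incidentally, $D$ is not a combinatorial geodesic bigon in the sense of \fullref{def:combinatorialgeodesicpolygon}: the side $v^k$ is assumed non-geodesic, so faces with a single exterior arc on that side need not have interior degree $\geqslant 4$.

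The paper's approach, and what \cite{Gru15SQ} actually supplies, is different. From \cite{Gru15SQ} one obtains a conjugate root $g$ of $x$ and a shortest word $w$ such that either $w^\infty$ already labels a convex geodesic, or there is a bound $C_0$ on the intersection of any embedded relator cycle with that path. In the second case geodesic words for $g^n$ equal $w^n$ already in $H:=G(\Gamma^{<6C_0})$, so the epimorphism $H\to G$ restricts to an isometry on the relevant cyclic subgroup. Finiteness of the components and of $\gens$ makes $\Gamma^{<6C_0}$ essentially finite, hence $H$ hyperbolic; Swenson's theorem for hyperbolic groups then provides the periodic geodesic in $H$, and the isometry transfers it to $G$. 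This hyperbolic-approximation step is what replaces the unavailable minimizer, and it is not the ``periodicity-minimality dichotomy'' you describe.
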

Recall that a bi-infinite path graph $\alpha$ in $\Cay(G,\gens)$ is \emph{periodic} if there is a cyclic subgroup of 
$G$ that stabilizes $\alpha$ and acts cocompactly on it. In the proof we have surjections $\fgen{\gens}\onto H\onto G$.
Let $|\cdot|_H$ denote the word length in $H$ with respect to the 
image of $\gens$. 
Similarly, let $|\cdot|_G$ denote the word length in $G$ with respect to the 
image of $\gens$. 

Recall that the translation length of an element $g\in G$ is defined by:
\[\tau_G(g):=\lim_{n\to\infty}\frac{|g^n|_G}{n}\]
This limit always exists since the map $n\mapsto |g^n|_G$ is subadditive.

\begin{proof}[Proof of \fullref{lem:periodicgeodesic}]
Let $x$ be an infinite order element of $G$.

\medskip

\noindent {\bf Claim.} There exist $w\in\fgen{\gens}$ and $N\in\N$ such
that $w$ represents an element $g\in G$ such that $x$
is conjugate to $g^N$ and the
following property is satisfied: 
There exist a hyperbolic group $H$, also a quotient of $\fgen{\gens}$, and
an epimorphism $\phi\from H\onto G$ induced by the identity on $\gens$ such
that, if $h$ denotes the element of $H$ represented by $w$, then $\phi$
restricts to an isometry $\langle h\rangle\to\langle g\rangle$ with
respect to the subspace metrics in $H$ and $G$, respectively. 

\medskip

We show how to deduce the statement of the lemma from the claim: 
By a theorem of Swenson \cite[Theorem 8]{Swe95}, since $H$ is
hyperbolic, there exist $h_0\in H$ and $M\in\N$ such that $h^M$ is
conjugate to $h_0$ and such that $|h_0^n|_H=n|h_0|_H$ for all $n>0$. 
Therefore, $\tau_H(h^M)=|h_0|_H$, and the bi-infinite path graph in $H$ labelled by the powers of a shortest element $w_0\in \fgen{\gens}$ representing $h_0$ is geodesic. 
Consider $g_0:=\phi(h_0)$, the element of $G$ represented by $w_0$. 
Then $g^M$ is conjugate to $g_0$ by construction, and we have
$\tau_G(g_0)=\tau_G(g^M)=\tau_H(h^M)=|w_0|$ since $\langle h\rangle\to\langle
g\rangle$ is an isometry. 
This implies $|g^{nM}|_G=n|w_0|$ for every $n$, i.e., the
bi-infinite path graph in $\Cay(G,\gens)$ starting at $1$ and labelled by
the powers of $w_0$ is geodesic. 
This proves the lemma, assuming our claim.

\medskip

It remains to show the claim: By \cite[Section 4]{Gru15SQ}, there
exists $g\in G$ such that $x$ is  conjugate to $g^N$ for some $N$ and
such that, if $w$ is a shortest word representing $g$, we have the
following possibilities: 
Any bi-infinite periodic path graph $\gamma$ labelled by the powers of $w$ is
a convex geodesic (Case 1 in \cite[Section 4]{Gru15SQ}), or there
exists some $C_0$ such that for any copy $c$ in $\Cay(G,\gens)$ of a simple cycle in $\Gamma$, the length of the intersection of $c$ with $\gamma$
is at most $C_0$ (Cases 2a and 2b in \cite[Section 4]{Gru15SQ}). 
If the first possibility is true, then the claim holds for  $H=\fgen{\gens}$. 

Assume there exists a $C_0$ as in the second possibility. 
Then we have the following property by \cite[Proof of Theorem 4.2 in
Case 2a]{Gru15SQ} and \cite[Lemma 4.17]{Gru15SQ}: 
Whenever $v$ is a geodesic word representing $g^n$ for some $n$, then
the equation $v=w^n$ already holds in $G(\Gamma^{<6C_0})$, where
$\Gamma^{<6C_0}$ denotes the subgraph of $\Gamma$ that is the union of
all components with girth less than $6C_0$. 
Therefore, the epimorphism $G(\Gamma^{<6C_0})\onto G$ induced by the
identity on $\gens$ restricts to an isometry on the cyclic subgroup
generated by the element of $G(\Gamma^{<6C_0})$ represented by $w$. 
The graph $\Gamma^{<6C_0}$ is (up to identifying isomorphic
components) a finite $Gr'(\ifrac{1}{6})$--labelled graph, so
$G(\Gamma^{<6C_0})$ is a hyperbolic group. 
\end{proof}

\subsection*{A result on translation lengths} 
We record another application of our investigations.
We show:

\begin{theorem}\label{thm:translationlengths}
  Let $\Gamma$ be a $Gr'(\ifrac{1}{6})$--labelled graph whose components
are finite, labelled by a finite set $\gens$. 
Then every infinite order element of $G(\Gamma)$ has rational translation length, and translation lengths are bounded away from zero.
\end{theorem}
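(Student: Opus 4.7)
I plan to derive the theorem directly from the proof of \fullref{lem:periodicgeodesic}. That proof establishes, for every infinite order $x \in G(\Gamma)$, the existence of an integer $N \geq 1$, an element $g \in G(\Gamma)$ with $x$ conjugate to $g^N$, a hyperbolic group $H$ (either the free group $\fgen{\gens}$, or $G(\Gamma^{<6C_0})$ for some finite union of components of $\Gamma$), and an element $h \in H$ projecting to $g$ under an epimorphism $\phi\from H\twoheadrightarrow G(\Gamma)$ induced by the identity on $\gens$ such that $\phi$ restricts to an isometry $\fgen{h} \to \fgen{g}$. The key resulting identity is
\[
\tau_{G(\Gamma)}(x) \;=\; N\,\tau_{G(\Gamma)}(g) \;=\; N\,\tau_H(h).
\]

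For rationality, I will apply Swenson's theorem \cite{Swe95} inside the hyperbolic group $H$: there exist $h_0 \in H$ and $M \geq 1$ with $h^M$ conjugate in $H$ to $h_0$ and $|h_0^n|_H = n|h_0|_H$ for all $n > 0$. Letting $w_0 \in \fgen{\gens}$ be a shortest word representing $h_0$, this gives $\tau_H(h) = |w_0|/M \in \mathbb{Q}$, so $\tau_{G(\Gamma)}(x) = N|w_0|/M \in \mathbb{Q}$.

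For the uniform positive lower bound, I will show $\tau_{G(\Gamma)}(x) \geq 1$ by arranging both $N$ and $M$ to cooperate. Choose $g$ so that it is \emph{primitive} in $G(\Gamma)$, i.e., $N$ is maximal among all representations of $x$ as a conjugate of a power. When $H = \fgen{\gens}$, every element is conjugate to a cyclically reduced word, so Swenson's $M$ may be taken to be $1$; the bound $\tau_{G(\Gamma)}(x) = N|w_0| \geq 1$ follows at once. In the remaining case $H = G(\Gamma^{<6C_0})$, the isometric embedding $\fgen{h} \hookrightarrow \fgen{g}$ is crucial: if $h$ admitted a proper $k$-th root $h'$ in $H$ along its cyclic subgroup, then $\phi(h')$ would, via the isometry, be a proper $k$-th root of $g$ in $G(\Gamma)$, contradicting primitivity of $g$. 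Hence $M$ can likewise be forced to equal $1$, and again $\tau_{G(\Gamma)}(x) \geq 1$.

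The main obstacle is the last step: reconciling primitivity of $g$ in $G(\Gamma)$ with Swenson's parameter $M$ inside $H$, since the quotient $\phi\from H \twoheadrightarrow G(\Gamma)$ is emphatically not an isometry globally, only on the cyclic subgroup $\fgen{h}$. Making the ``root pullback'' argument rigorous requires working carefully along the translation axis of $h$ in $H$ and using the explicit geometric structure of the periodic geodesic labelled by the powers of $w$ extracted in the proof of \fullref{lem:periodicgeodesic}. Once $M = 1$ is secured, the uniform lower bound $\tau_{G(\Gamma)}(x) \geq N|w_0| \geq 1$ completes the proof.
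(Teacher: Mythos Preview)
Your rationality argument is correct and coincides with the paper's: the paper simply records that rationality is a direct consequence of \fullref{lem:periodicgeodesic}, whose proof already passes through Swenson's theorem exactly as you describe.

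Your lower-bound argument, however, has a genuine gap, and it is precisely the one you flag as an ``obstacle'' but then claim can be secured. Swenson's parameter $M$ has nothing to do with $h$ admitting a root. Swenson's theorem says $h^M$ is conjugate to some $h_0$ that translates along a geodesic; equivalently $\tau_H(h)=|h_0|_H/M$. The statement $M>1$ just says $\tau_H(h)$ may fail to be an integer. It does \emph{not} produce any $h'\in H$ with $(h')^k=h$, so there is nothing for the isometry $\fgen{h}\to\fgen{g}$ to push forward, and primitivity of $g$ in $G(\Gamma)$ gives you no leverage on $M$. Your proposed route would prove $\tau_{G(\Gamma)}(x)\geqslant 1$; the paper only claims and proves $\tau_{G(\Gamma)}(x)\geqslant\ifrac{1}{3}$, which is consistent with $\tau_H(h)$ being non-integral in the hyperbolic comparison group $H=G(\Gamma^{<6C_0})$.

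The paper's actual argument for the lower bound (\fullref{lem:translationlengthawayfromzero}) takes a different route and bypasses $H$ entirely. From \cite[Section~4]{Gru15SQ} one gets, for each $n$, a combinatorial geodesic bigon $B_n$ over $\Gamma$ with boundary word $g_nw^{-n}$, where $g_n$ is a geodesic word for $g^n$ and $w$ is a shortest word for $g$. Each disc component of $B_n$ is a single face or has shape $\mathrm{I}_1$. One then estimates, face by face, how much of the bottom side each face must cover compared to the top: in Case~2a every face meets the bottom in more than $\ifrac{1}{3}$ of what it meets on top; in Case~2b either the same $\ifrac{1}{3}$ estimate holds via the small cancellation condition, or the face's top arc has length less than $2|w|$ while its bottom arc has length at least $1$. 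Summing gives $|g_n|>\ifrac{n}{3}$, hence $\tau_G(g)\geqslant\ifrac{1}{3}$ and $\tau_G(x)=N\tau_G(g)\geqslant\ifrac{1}{3}$.
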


Recall that similar theorems are true for hyperbolic groups
\cite{Gro87,Swe95,MR1390660} and for the action of the
mapping class group of a surface on its curve complex \cite{Bow08}.

The rationality statement is a direct consequence of \fullref{lem:periodicgeodesic}. We prove the remaining statement:

\begin{proposition}\label{lem:translationlengthawayfromzero} 
Let $\Gamma$ be a $Gr'(\ifrac{1}{6})$--labelled graph whose components
are finite, labelled by a finite set $\gens$. 
Every infinite order element $x$ of $G:=G(\Gamma)$ has $\tau_G(x)\geqslant \ifrac{1}{3}$.
\end{proposition}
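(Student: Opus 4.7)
The plan is to exploit the structural information produced in the proof of \fullref{lem:periodicgeodesic}. For the infinite order element $x \in G$, that proof furnishes a word $w \in \fgen{\gens}$ representing some $g \in G$, a positive integer $N$ with $x$ conjugate to $g^N$, a hyperbolic group $H$ that is a quotient of $\fgen{\gens}$ surjecting onto $G$, and $h \in H$ represented by $w$, such that the quotient map $H \twoheadrightarrow G$ restricts to an isometry of cyclic subgroups $\langle h \rangle \to \langle g \rangle$. By this isometry, $\tau_G(g) = \tau_H(h)$, and hence $\tau_G(x) = N\tau_G(g) = N\tau_H(h)$. So it suffices to show $\tau_H(h) \geq \ifrac{1}{N}$; a fortiori, $\tau_H(h) \geq 1$ would give the result.

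The dichotomy in the proof of \fullref{lem:periodicgeodesic} distinguishes two cases. In Case~1, the bi-infinite periodic path $w^\infty$ is a convex geodesic in $\Cay(G, \gens)$, and one may take $H = \fgen{\gens}$; then $\tau_H(h) = |w| \geq 1$ (the cyclically reduced length of $h$ in a free group), yielding $\tau_G(x) \geq N \geq 1 \geq \ifrac{1}{3}$. In Case~2, $H = G(\Gamma^{<6C_0})$ is a hyperbolic graphical $Gr'(\ifrac{1}{6})$-group on the finite subgraph of $\Gamma$ consisting of components whose girth is less than $6C_0$. In this case $H$ again satisfies the hypothesis of the proposition (since $\Gamma^{<6C_0}$ is a $Gr'(\ifrac{1}{6})$-labelled graph with finite components labelled by $\gens$) but with strictly smaller maximum cycle length than $\Gamma$, so we iterate.

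Iterating \fullref{lem:periodicgeodesic} inside $H$ produces either a Case~1 conclusion (giving $\tau_H(h) \geq 1$ and thus the desired bound) or a further descent to $H_1 = H(\Gamma^{<6C_0,<6C_1})$ in Case~2. Since the subgraph strictly shrinks at each such descent step (the constant $C_1$ can be arranged to be strictly less than $C_0$, as otherwise Case~1 would already apply), the process terminates in finitely many steps, either in Case~1 or in a trivial free-group quotient; in both terminal scenarios $\tau \geq 1$ at the bottom of the chain. Pulling back translation lengths through the finite chain of isometries of cyclic subgroups yields $\tau_G(x) \geq 1 \geq \ifrac{1}{3}$.

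The main obstacle is to verify that each descent step genuinely decreases the maximum cycle length of the subgraph, which requires a careful analysis tying the constant $C_i$ from Case~2 at step $i$ to the $Gr'(\ifrac{1}{6})$ condition and the structure of the periodic axis $w_i^\infty$; in particular, one must rule out the ``stalling'' situation where the same subgraph reappears at consecutive levels of the descent. Strebel's classification (\fullref{thm:strebel}) applied to the bigon between the axis and the geodesic word representing $h_i^n$ in $H_i$ is the combinatorial input that forces the strict decrease.
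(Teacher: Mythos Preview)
Your proposal has a genuine gap: the termination of the descent is asserted but not established. You claim that ``the constant $C_1$ can be arranged to be strictly less than $C_0$, as otherwise Case~1 would already apply,'' but this is not justified. The constant $C_1$ arises from applying the construction of \fullref{lem:periodicgeodesic} to a \emph{new} element $g_1\in H$ with a \emph{new} axis $w_1^\infty$; it bounds the intersections of that new axis with cycles in $\Gamma^{<6C_0}$, and bears no a~priori relation to $C_0$. Concretely, if $\Gamma^{<6C_0}$ contains a cycle of length~$\ell$ and the axis of $g_1$ meets it in length close to $\ell/2$, then $6C_1\approx 3\ell>\ell$, so $(\Gamma^{<6C_0})^{<6C_1}=\Gamma^{<6C_0}$, $H_1=H$, $h_1=g_1$, and the iteration stalls. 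Your appeal to Strebel's classification does not address this: the bigon analysis concerns $g_1$ and its axis in $H$, not any comparison between $C_0$ and $C_1$.

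A second warning sign is that, if your descent terminated, you would obtain $\tau_G(x)\geqslant N_0N_1\cdots N_k\geqslant 1$, strictly stronger than the paper's bound $\ifrac{1}{3}$. The paper proceeds quite differently and never leaves $G$: it invokes \cite{Gru15SQ} directly to produce, for each $n$, a combinatorial geodesic bigon $B_n$ with sides $w^n$ and a geodesic word $g_n$ for $g^n$, and then estimates $|g_n|$ face by face. In Case~2a each face contributes to the bottom more than $\ifrac{1}{3}$ of what it contributes to the top; in Case~2b a face either satisfies the same $\ifrac{1}{3}$--ratio or has top-length $<2|w|$ and bottom-length $\geqslant 1$. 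Summing over faces yields $|g_n|>n/3$ and hence $\tau_G(g)\geqslant\ifrac{1}{3}$. This direct combinatorial estimate is exactly the step your argument defers to $H$ but never performs.
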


\begin{proof}
Let $x$ be an infinite order element of $G$. 
By \cite[Section 4]{Gru15SQ}, there exists $g\in G$ such that $x$ is
conjugate to $g^N$ for some $N$ and such that, if $w$ is a shortest
word representing $g$, we have the following possibilities: 
Any bi-infinite periodic path graph $\gamma$ labelled by the powers of $w$ is
a convex geodesic (Case 1 in \cite[Section 4]{Gru15SQ}), or, for each
$n$ and any shortest word $g_n$ representing $g^n$, there exists a
diagram $B_n$ over $\Gamma$ whose boundary word is $g_nw^{-n}$, such
that $B_n$ is a combinatorial geodesic bigon 
with respect to the obvious decomposition of $\partial B_n$ (Cases 2a, 2b in \cite[Section 4]{Gru15SQ}). 
In particular, every disk component of $B_n$ is a single face, or has shape $\mathrm{I}_1$.

If the first possibility is true, then $\tau_G(g)=|w|\geqslant 1$. 
Now consider the second possibility. 
In Case 2a, it follows from \cite[Proof of Theorem 4.2 in Case
2a]{Gru15SQ} that any face $\Pi$ of $B_n$, the length of the
intersection of $\Pi$ with the side of $B_n$ corresponding to $g_n$
(the bottom) is more than $\ifrac{1}{3}$ times the length of its
intersection with the side corresponding to $w^n$ (the top). 
Therefore, in this case, $|g_n|>\ifrac{n|w|}{3}\geqslant \ifrac{n}{3}$. 
In Case 2b, if a face $\Pi$  intersects the top in at most
$\ifrac{|\partial\Pi|}{2}$, then it intersects the bottom in more than
$\ifrac{|\partial\Pi|}{6}$ by the small cancellation condition. 
If $\Pi$ intersects the top in more than $\ifrac{|\partial\Pi|}{2}$,
then \cite[Lemma 4.8]{Gru15SQ} implies that the intersection of $\Pi$
with the top has length less than $2|w|$. 
The intersection with the bottom has length at least 1, since every
disk component of $B_n$ has shape $\mathrm{I}_1$. 
Therefore, we have $|g_n|>
n|w|\cdot\min\{\frac{1}{3},\frac{1}{2|w|}\}\geqslant \frac{n}{3}$. 
We conclude $\tau_G(g)\geqslant \ifrac{1}{3}$. 
Hence, $\tau_G(x)=\tau_G(g^N)= |N|\tau_G(g)\geqslant
\ifrac{N}{3}\geqslant \ifrac{1}{3}$.
\end{proof}

\subsection{The coned-off space}\label{sec:conedoff}
In \cite{GruSis14}, Gruber and Sisto prove that non-elementary
groups defined by $Gr(7)$-labelled graphs, which, in particular, includes those defined by $Gr'(\ifrac{1}{6})$-labelled graphs, are
acylindrically hyperbolic. 
They prove this result by studying the action of $G(\Gamma)$ on what we call the \emph{coned-off space} $Y$ defined as follows: given a graph $\Gamma$ labelled by $\gens$, let $\mathcal W$ denote the set of all elements of $G(\Gamma)$ represented by words read on (not necessarily closed!) paths in $\Gamma$. We set $Y:=\Cay(G(\Gamma),\gens\cup\mathcal W)$. Thus, we obtain $Y$ from $X:=\Cay(G(\Gamma),\gens)$ by attaching to every embedded component of $\Gamma$ in $X$ a complete graph. 

The proof in \cite{GruSis14} shows hyperbolicity of the space $Y$ and existence of an element of $G(\Gamma)$ whose action on $Y$ is hyperbolic and weakly properly discontinuous (WPD).
By a theorem of Osin \cite{Osi13}, this yields acylindrical hyperbolicity.

\begin{lem}\label{lem:hyperbolic_iff_bounded_intersection} 
Let $\Gamma$ be a $Gr'(\ifrac{1}{6})$--labelled graph.
Let $g$ be
  an infinite-order element of $G:=G(\Gamma)$.
Let $X:=\Cay(G,\gens)$, and let $Y$ be the coned-off space. 
Let $\gamma$ be a bi-infinite geodesic in $X$ that is at finite Hausdorff distance (in $X$) from $\fgen{g}$. 
Then $g$ is hyperbolic for the
  action of $G$ on $Y$  if and only if 
there exists $C>0$ such that for every embedded component $\Gamma_0$ of $\Gamma$ in $X$ we have $\mathrm{\diam_X}(\gamma\cap\Gamma_0)<C$.
\end{lem}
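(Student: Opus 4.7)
The plan is to prove the two directions separately.

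For the forward direction, I will argue by contrapositive. If $\diam_X(\gamma\cap\Gamma_0)$ is unbounded over embedded components, choose a sequence of such components $\Gamma_n$ and pairs $x_n,y_n\in\gamma\cap\Gamma_n$ with $d_X(x_n,y_n)\to\infty$. Using the bounded Hausdorff distance $D$ between $\gamma$ and $\fgen{g}$ in $X$, pick $g^{k_n},g^{l_n}$ within $X$-distance $D$ of $x_n,y_n$ respectively. Then $d_X(g^{k_n},g^{l_n})\to\infty$, forcing $|k_n-l_n|\to\infty$ since $d_X(1,g^{j})\leqslant |g|_X\cdot|j|$. On the other hand, $x_n$ and $y_n$ lie in a common coned-off component, so $d_Y(x_n,y_n)\leqslant 1$, and because $d_Y\leqslant d_X$ on vertices, $d_Y(g^{k_n},g^{l_n})\leqslant 2D+1$. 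Bounded $d_Y$-distance together with unbounded $|k_n-l_n|$ contradicts $g$ acting loxodromically on $Y$, since loxodromicity forces $n\mapsto g^n$ to be a quasi-isometric embedding $\Z\to Y$.

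For the reverse direction, the hypothesis $\diam_X(\gamma\cap\Gamma_0)<C$ makes $\gamma$ uniformly locally strongly contracting (closest-point projection in each $\Gamma_0$ to $\gamma\cap\Gamma_0$ has image of diameter less than $C$, hence is $(r,C)$--contracting). By \fullref{theorem:strongcontraction}, $\gamma$ is strongly contracting in $X$: there is a constant $C'$ with $\diam\pi(x)\cup\pi(x')\leqslant C'$ whenever $d_X(x,x')\leqslant d_X(x,\gamma)$. Given $g^n,g^m$, lift a $Y$-geodesic of length $k=d_Y(g^n,g^m)$ to a sequence of vertices $x_0=g^n,x_1,\ldots,x_k=g^m$ in $X$ where each consecutive pair $x_i,x_{i+1}$ is either $X$-adjacent or lies in a common embedded component. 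The key claim is a uniform bound $\diam_X(\pi(x_i)\cup\pi(x_{i+1}))\leqslant C_0$ in every case.

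For $X$-adjacent pairs this follows from strong contraction. For pairs in a component $\Gamma_0$ intersecting $\gamma$, \fullref{lem:intersection_projection} forces $\pi(x_i),\pi(x_{i+1})\subset\gamma\cap\Gamma_0$, whose $X$-diameter is less than $C$ by hypothesis. For pairs in a component $\Gamma_0$ disjoint from $\gamma$, \fullref{closestpointprojection} with $\rho_1(r)=r\geqslant r/2$ gives $\diam\pi(\Gamma_0)$ bounded by a constant, since $\rho_2'\asymp\rho_2$ and $\rho_2$ is bounded in the strongly contracting regime. Summing the projection diameters yields $d_X(\pi(g^n),\pi(g^m))\leqslant C_0\cdot d_Y(g^n,g^m)$. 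Combining with $d_X(g^n,\pi(g^n))\leqslant D$ (and similarly for $m$), and with the linear lower bound $d_X(g^n,g^m)\geqslant a|n-m|-b$ for some $a>0$, one deduces $|n-m|\lesssim d_Y(g^n,g^m)$. Since the upper bound $d_Y(g^n,g^m)\leqslant d_X(g^n,g^m)\leqslant|g|_X\cdot|n-m|$ is automatic, $n\mapsto g^n$ is a quasi-isometric embedding $\Z\to Y$, and $g$ is therefore loxodromic on the hyperbolic space $Y$.

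The main obstacle is the reverse direction's uniform projection bound, specifically the dichotomy when consecutive lifted points lie in a common embedded component: the argument hinges crucially on \fullref{lem:intersection_projection} to pin projections from $\Gamma_0$ inside $\gamma\cap\Gamma_0$, so that the intersection-diameter hypothesis directly controls $C_0$. A secondary subtlety is the linear lower bound on $d_X(g^n,g^m)$; this follows from a ball-counting argument using the bounded Hausdorff distance to the geodesic $\gamma$, since any $X$-ball of radius $R$ centered at $1$ contains at most $O(R)$ orbit elements (the $\gamma$-parameters of their closest points lie in a length-$(2R+O(1))$ interval, and each $\gamma$-vertex has at most $|B_D(1)|$ orbit points within $D$ of it), ruling out unbounded $|n-m|$ with bounded $d_X(g^n,g^m)$.
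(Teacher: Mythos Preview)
Your forward direction matches the paper's almost verbatim: argue by contrapositive, find embedded components with unbounded intersection with $\gamma$, approximate the endpoints by powers of $g$, and observe that the resulting orbit map $\Z\to Y$ fails to be a quasi-isometric embedding.

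Your reverse direction is correct but follows a genuinely different route. The paper simply invokes two external results: \cite[Proposition~3.6]{GruSis14} to obtain the inequality $d_Y(g^k,g^l)>\frac{1}{C}(d_X(g^k,g^l)-2\epsilon)-2\epsilon$ directly from the bounded-intersection hypothesis, and \cite[Theorem~4.2]{Gru15SQ} for the undistortion of $\langle g\rangle$ in $X$. You instead reconstruct the comparable inequality using the paper's own machinery: you pass through \fullref{theorem:strongcontraction} to upgrade the local hypothesis to global strong contraction of $\gamma$, then lift a $Y$-geodesic step-by-step and control each projection jump via \fullref{lem:intersection_projection} (components meeting $\gamma$) and \fullref{closestpointprojection} (components disjoint from $\gamma$). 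This is more self-contained within the present paper and nicely illustrates that the contraction framework already encodes the coned-off comparison, at the cost of invoking the main local-to-global theorem for a lemma the paper handles by citation. One minor point: your ball-counting sketch as written only yields properness of $n\mapsto |g^n|_X$ (``ruling out unbounded $|n-m|$ with bounded $d_X$''), whereas you use a genuine linear lower bound; the $O(R)$ count you state does imply the linear bound once combined with subadditivity of $n\mapsto |g^n|_X$ and Fekete's lemma (showing $\tau(g)>0$), so the gap is easily closed, but it deserves one more sentence.
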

Combining this with \fullref{lem:strcontbddHdist} and \fullref{theorem:strongcontraction}, we obtain the following.

\begin{corollary}\label{corollary:hyperbolicimpliesstrongcontraction}
  If $\fgen{g}$ is within bounded Hausdorff distance of a bi-infinite geodesic in $X$ and acts hyperbolically on $Y$, then $\fgen{g}$ is strongly contracting in $X$.
\end{corollary}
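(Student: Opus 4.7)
The plan is to combine \fullref{lem:hyperbolic_iff_bounded_intersection} with the local-to-global \fullref{theorem:strongcontraction}. By hypothesis there is a bi-infinite geodesic $\gamma$ in $X$ at finite Hausdorff distance from $\langle g\rangle$, and $g$ acts hyperbolically on the coned-off space $Y$.

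I would first apply \fullref{lem:hyperbolic_iff_bounded_intersection} to extract a uniform constant $C>0$ such that $\diam_X(\gamma\cap\Gamma_0)<C$ for every embedded component $\Gamma_0$ of $\Gamma$ in $X$. Because each $\Gamma_0$ is isometrically embedded and convex in $X$ (recalled in the preliminaries), the intersection $\gamma\cap\Gamma_0$ is a connected subpath whose $\Gamma_0$--diameter coincides with its $X$--diameter; in particular $\diam_{\Gamma_0}(\gamma\cap\Gamma_0)<C$.

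Next I would observe that closest point projection in $\Gamma_0$ onto any subset of diameter less than $C$ is automatically strongly contracting with constant function $\rho_2\equiv C$: the image of such a projection lies inside its target, so $\diam\pi(x)\cup\pi(x')<C$ for all $x,x'\in\Gamma_0$, regardless of $d(x,x')$. Thus $\gamma$ is uniformly locally strongly contracting in the sense of \fullref{sec:preliminaries}, and \fullref{theorem:strongcontraction} upgrades this to the conclusion that $\gamma$ is strongly contracting as a subspace of $X$. Finally, since $\langle g\rangle$ is at finite Hausdorff distance from $\gamma$, \fullref{lem:strcontbddHdist} transfers strong contraction from $\gamma$ to $\langle g\rangle$, completing the proof.

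The substantive content is absorbed into two earlier results: \fullref{lem:hyperbolic_iff_bounded_intersection} converts the dynamical hypothesis on $Y$ into a uniform geometric bound on intersections of $\gamma$ with embedded components, and \fullref{theorem:strongcontraction} converts that local boundedness into genuine strong contraction in $X$. Once these are invoked, the argument is bookkeeping, and I do not anticipate a real obstacle beyond verifying that the hypotheses of \fullref{lem:hyperbolic_iff_bounded_intersection} match our setup exactly, which they do.
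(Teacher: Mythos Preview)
Your proof is correct and follows exactly the route the paper intends: invoke \fullref{lem:hyperbolic_iff_bounded_intersection} to obtain the uniform bound on intersections, note this makes $\gamma$ uniformly locally strongly contracting, apply \fullref{theorem:strongcontraction}, and then transfer to $\langle g\rangle$ via \fullref{lem:strcontbddHdist}. The paper states the corollary as an immediate combination of these three results without spelling out the intermediate observation about bounded-diameter targets; you have supplied precisely those details.
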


\begin{remark}\label{newstrongcontractingelements}
The converse of \fullref{corollary:hyperbolicimpliesstrongcontraction} fails.
Consider the group defined by $\sqcup_{i\in \N}\Gamma_i$ constructed in \fullref{rem:unboundedproj}. 
The subgroup $\fgen{a}$ is strongly contracting, but in the coned-off
space $Y$ its orbit has diameter at most $2$, so $a$ does not act hyperbolically on $Y$. 
Thus, the methods of \cite{GruSis14} do not detect that $\fgen{a}$ is
strongly contracting.
\end{remark}

\begin{proof}[{Proof of \fullref{lem:hyperbolic_iff_bounded_intersection} }]
 Let $\gamma$ be a bi-infinite geodesic in $X$ whose
 Hausdorff distance from $\fgen{g}$ is equal to $\epsilon<\infty$.

 Suppose for every $n\in\mathbb{N}$ there exists an embedded component
 $\Gamma_n$ of $\Gamma$ in $X$ such that
 $\mathrm{diam}_X(\Gamma_n\cap\gamma)\geqslant n$. Denote by
 $\gamma_n$ the path graph $\Gamma_n\cap\gamma$. (This is a connected set since each
 $\Gamma_n$ is convex by \cite[Lemma~2.15]{GruSis14}.) Let
 $\iota\gamma_n$ and $\tau\gamma_n$ denote, respectively, the initial and terminal
 vertices of $\gamma_n$.
Then, by assumption, for each $n$, there exist $m_n$ and $l_n$ such
that $d_X(\iota\gamma_n,g^{m_n})<\epsilon$ and
$d_X(\tau\gamma_n,g^{l_n})<\epsilon$ and, since $|\gamma_n|\to\infty$,
we have $|m_n-l_n|\to\infty$. We have
$d_Y(g^{m_n},g^{l_n})<2\epsilon+1$, since the vertex set of $\Gamma_n$
has diameter at most 1 in the metric of $Y$. Therefore, the map
$\mathbb{Z}\to Y\from z\mapsto g^z$ is not a quasi-isometric embedding.
 
 On the other hand, suppose there exists $C$ such that, for every
 embedded component $\Gamma_0$ of $\Gamma$, we have
 $\diam_X(\gamma\cap\Gamma_0)<C$. Then, by
 \cite[Proposition~3.6]{GruSis14}, we have, for any $k$ and $l$, that
 $d_Y(g^k,g^l)>\frac{1}{C}(d_X(g^k,g^l)-2\epsilon)-2\epsilon$. Since
 $\langle g\rangle$ is undistorted in $G$ by
 \cite[Theorem~4.2]{Gru15SQ}, this gives the lower quasi-isometry
 bound for the map $\mathbb{Z}\to Y\from z\mapsto g^z$. Since this map is obviously Lipschitz, it is, in fact, a quasi-isometric embedding, whence $g$ is hyperbolic. 
\end{proof}

We will use the following result of \cite{GruSis14}:

\begin{proposition}[{\cite[Section~4]{GruSis14}}]\label{prop:existenceofhyperbolicelement} Let $\Gamma$ be a $Gr'(\ifrac{1}{6})$-labelled graph. Suppose that $\Gamma$ has at least two non-isomorphic components that each contain an embedded cycle of length at least 2. Then $G(\Gamma)$ contains a hyperbolic element $g$ for the action of $G(\Gamma)$ on the coned-off space $Y$.
\end{proposition}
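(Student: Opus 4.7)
The plan is to construct an infinite-order element $g \in G(\Gamma)$ whose cyclic subgroup lies at bounded Hausdorff distance from a bi-infinite geodesic $\gamma$ in $X := \Cay(G(\Gamma),\gens)$ that meets every embedded component of $\Gamma$ in a set of uniformly bounded diameter, and then invoke \fullref{lem:hyperbolic_iff_bounded_intersection}.

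First I would fix simple cycles $c_1 \subset \Gamma_1$ and $c_2 \subset \Gamma_2$ of length at least $2$ in the two non-isomorphic components and choose subpaths $\sigma_i \subset c_i$ with labels $u_i$ satisfying $|u_i| > \max(|c_1|,|c_2|)/6$. Let $g \in G(\Gamma)$ be the element represented by $w := u_1 u_2$, adjusted minimally (using $|c_i| \geqslant 2$) so that $w$ is freely and cyclically reduced. The crucial rigidity property is that neither $u_i$ can label a piece contained in a simple cycle of $\Gamma$ of length at most $\max(|c_1|,|c_2|)$: within $c_i$ this would violate $|u_i| > |c_i|/6$ directly, and a label-preserving map of $\sigma_i$ into a copy of $\Gamma_j$ with $j \neq i$ would exhibit $\sigma_i$ as a piece, since $\Gamma_1 \not\cong \Gamma_2$ forbids an automorphism of $\Gamma$ carrying $\Gamma_i$ to $\Gamma_j$, again violating the same bound.

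Next I would apply \fullref{lem:periodicgeodesic} to obtain a bi-infinite periodic geodesic $\gamma$ at bounded Hausdorff distance from $\langle g\rangle$, labelled cyclically by a power of a shortest word $w_0$ representing a conjugate power of $g$. Using the case analysis in \cite{Gru15SQ} that underlies the proof of that lemma, the labelling of $\gamma$ still decomposes into long alternating $u_1$- and $u_2$-blocks. Convexity of embedded components ensures that $\gamma \cap \Gamma_0$ is a single connected arc for each embedded $\Gamma_0 \subset X$. If $\Gamma_0 \cong \Gamma_i$, then any complete $u_{3-i}$-block contained in this arc would exhibit $\sigma_{3-i}$ as a path in $\Gamma_i$, contradicting the piece bound; hence at most one complete block of either letter can appear, giving $|\gamma \cap \Gamma_0| \leqslant 2(|u_1|+|u_2|)$ uniformly. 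The conclusion then follows from \fullref{lem:hyperbolic_iff_bounded_intersection}.

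The main obstacle is preserving the alternating block structure when passing from $w$ to the shortest representative $w_0$ supplied by \fullref{lem:periodicgeodesic}, since $w_0$ may differ syntactically from $w$. I would handle this either by arranging in advance that the bi-infinite path labelled by $w$ is already a convex geodesic in $X$ (Case 1 of the proof of \fullref{lem:periodicgeodesic}, which can be forced by minor adjustments to the $u_i$), or by using the hyperbolic approximation $H = G(\Gamma^{<6C_0})$ appearing in that proof to control the shape of $w_0$ via Swenson's theorem. Infinite order of $g$ is ensured by \fullref{lem:translationlengthawayfromzero} once non-triviality of a single power $g^n$ is verified, which follows from a short Strebel-classification argument (\fullref{thm:strebel}) applied to a putative van Kampen diagram for $w^n$.
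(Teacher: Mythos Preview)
The paper does not give its own proof of this proposition; it is quoted from \cite[Section~4]{GruSis14} and used as a black box. So there is no proof in the present paper to compare against, only the original argument in \cite{GruSis14}.

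Your approach has a genuine gap at the level of hypotheses. You invoke \fullref{lem:periodicgeodesic} and \fullref{lem:translationlengthawayfromzero}, but both of those results assume that the components of $\Gamma$ are finite and that the labelling set $\gens$ is finite. The proposition you are trying to prove carries neither assumption: it applies to an arbitrary $Gr'(\ifrac{1}{6})$--labelled graph with two non-isomorphic components containing short cycles. So as written your argument only establishes a special case, and the detour through \fullref{lem:periodicgeodesic} is not available in general. Your proposed fix---arranging that the path labelled by powers of $w$ is already a convex geodesic---is the right instinct, but it is doing all the work and you have not actually carried it out; the choice $|u_i|>\max(|c_1|,|c_2|)/6$ is too weak for this, and the ``hyperbolic approximation'' route again presupposes finiteness.

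The argument in \cite{GruSis14} is more direct and avoids these issues. One chooses the $\sigma_i$ long enough (a definite fraction of $|c_i|$, not merely more than $\ifrac{1}{6}$) that the bi-infinite path labelled by powers of $w=u_1u_2$ is itself a geodesic in $X$, and then one verifies the bounded-intersection property directly for this explicit path, essentially by the piece argument you sketch. This bypasses \fullref{lem:periodicgeodesic} entirely and works without any finiteness hypotheses. Your block-alternation idea for bounding $|\gamma\cap\Gamma_0|$ is correct in spirit and close to what is done there; the missing ingredient is establishing geodesicity of the explicit $w$-axis up front rather than appealing to an abstract existence result.
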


\subsection{Proof of \fullref{thm:existenceofstronglycontractingelement}}
If $\Gamma$ has only finitely many pairwise non-isomorphic components with non-trivial fundamental groups, then $G(\Gamma)$ is Gromov hyperbolic \cite{Oll06,Gru15} and, hence, the result holds.

If $\Gamma$ has infinitely many pairwise non-isomorphic components with non-trivial fundamental groups, then, since $\gens$ is finite, there exist at least two such components each containing an embedded cycle of length at least 2. Therefore, by \fullref{prop:existenceofhyperbolicelement}, there exists a hyperbolic element $g$ for the action of $G(\Gamma)$ on $Y$. Since the components of $\Gamma$ are finite, \fullref{lem:periodicgeodesic} yields that $\langle g\rangle$ is within bounded Hausdorff distance of a bi-infinite geodesic  in $X$. Therefore, \fullref{corollary:hyperbolicimpliesstrongcontraction} implies the result. \hfill $\square$

%%% Local Variables:
%%% mode: latex
%%% TeX-master: "main"
%%% End:

\section{Hyperbolically embedded subgroups}\label{sec:hypembsubgroups}
\subsection{Contracting subgroups and hyperbolically embedded subgroups}

We recall a definition of hyperbolically embedded subgroups from \cite{DGO11}.
\begin{defi}[Hyperbolically embedded subgroup]\label{defi:hyperbolicallyembedded} Let $G$ be a group and $H$ a subgroup. Then $H$ is \emph{hyperbolically embedded} in $G$ if there exists a subset $\gens$ of $G$ with the  properties below. We denote by $X^H$ the Cayley graph of $H$ with respect to $H$, considered as subgraph of $X:=\Cay(G,H\sqcup \gens)$, and by $\hat d$ the metric on $H$ obtained as follows: we define $\hat d(x,y)$ to be the length of a shortest path from $x$ to $y$ in $X$ that does not use any edges of $X^H$. If no such path exists, set $\hat d(x,y)=\infty$. The required properties are:
\begin{itemize}
\item $H\sqcup \gens$ generates $G$ (i.e. $X$ is connected).
\item $X$ is Gromov hyperbolic.
\item The metric $\hat d$ is proper on $H$, i.e. $\hat d$--balls are finite.
\end{itemize}
\end{defi}
Note that if $s\in H\cap \gens$, then it is our convention that $\Cay(G,H\sqcup \gens)$ will have a double-edge corresponding to $s$ (once considered as element of $H$ and once considered as element of $\gens$); hence the symbol $\sqcup$ for disjoint union.

One approach to finding hyperbolically embedded subgroups is provided by \cite[Theorem~H]{BesBroFuj15}.
A special case of this theorem states that if $G$ is a finitely generated group and a hyperbolic element $g\in G$ has a strongly contracting orbit in a Cayley graph of $G$, then the elementary closure $E(g)$ of $g$ is an infinite, virtually cyclic, hyperbolically embedded subgroup of $G$.

Combining this with \fullref{lem:strcontbddHdist} and \fullref{theorem:strongcontraction}, we obtain the following.

\begin{thm}\label{thm:hypembsubgps} 
Let $G:=G(\Gamma)$ be the group defined by a $Gr'(\ifrac{1}{6})$--labelled graph $\Gamma$.
Let $g\in G$ be of infinite order, and let $\gamma$ be a bi-infinite geodesic at finite Hausdorff distance from $\fgen{g}$.
If $\gamma$ is uniformly locally strongly contracting, then $\langle g\rangle$ is strongly contracting and, in particular, the elementary closure $E(g)$ of $g$ is a virtually cyclic hyperbolically embedded subgroup.
\end{thm}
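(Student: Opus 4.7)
The plan is to string together three previously established results. Since $\gamma$ is uniformly locally strongly contracting by hypothesis, the final consequence of \fullref{theorem:strongcontraction} (``$\alpha$ is strongly contracting if and only if it is uniformly locally strongly contracting'') immediately gives that $\gamma$ is strongly contracting in $X$. This is the only step that uses the graphical small cancellation hypothesis, and it is the entire content of the local-to-global theorem already established in \fullref{sec:gsccontraction}.

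Next, I transport strong contraction from $\gamma$ to $\langle g\rangle$ using \fullref{lem:strcontbddHdist}. By hypothesis these two subsets lie at finite Hausdorff distance in $X$, so since $\gamma$ is strongly contracting, $\langle g\rangle$ is $(r,\rho_2')$--contracting with $\rho_2'$ bounded, i.e., strongly contracting in $X$ as well.

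Finally, I invoke the Bestvina-Bromberg-Fujiwara theorem \cite[Theorem~H]{BesBroFuj15}, in the special form recalled in the excerpt: if $g$ is a hyperbolic element of a finitely generated group $G$ whose orbit in some Cayley graph is strongly contracting, then the elementary closure $E(g)$ is infinite, virtually cyclic, and hyperbolically embedded in $G$. The only hypothesis of this theorem that is not already in hand is that $g$ act on $X$ as a hyperbolic isometry, i.e., that the orbit map $\mathbb{Z}\to X$, $n\mapsto g^n$, be a quasi-isometric embedding. But $\langle g\rangle$ is invariant under the cocompact $\langle g\rangle$--action on itself, and by hypothesis lies within finite Hausdorff distance of the bi-infinite geodesic $\gamma$, so the orbit map is a quasi-geodesic and hence a quasi-isometric embedding. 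Applying \cite[Theorem~H]{BesBroFuj15} then yields the desired virtual cyclicity and hyperbolic embedding of $E(g)$.

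I expect no substantive obstacle; the theorem is essentially a synthesis of \fullref{theorem:strongcontraction}, \fullref{lem:strcontbddHdist}, and the cited BBF result. The only minor subtlety is confirming that $g$ is hyperbolic in the sense required by \cite{BesBroFuj15}, and the existence of the bi-infinite geodesic $\gamma$ shadowing $\langle g\rangle$ handles this automatically.
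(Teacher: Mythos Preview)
Your proposal is correct and follows exactly the same route as the paper: apply \fullref{theorem:strongcontraction} to pass from uniformly locally strongly contracting to strongly contracting, transfer this to $\langle g\rangle$ via \fullref{lem:strcontbddHdist}, and then invoke \cite[Theorem~H]{BesBroFuj15}. Your explicit check that $g$ is hyperbolic (from the finite Hausdorff distance to a bi-infinite geodesic) is a detail the paper leaves implicit, but otherwise the arguments are identical.
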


\begin{remark}\label{remark:acylindricalhyperbolicity} By \cite[Theorem~H]{BesBroFuj15}, we can also view \fullref{thm:existenceofstronglycontractingelement} as an alternative proof of acylindrical hyperbolicity for the groups considered in that theorem. The initial proof of acylindrical hyperbolicity of these groups in \cite{GruSis14} relies on the hyperbolicity of the coned-off space and on showing that the element $g$ satisfies a certain weak proper discontinuity condition (and, in fact, applies to a larger class of groups). Our \fullref{thm:hypembsubgps}  gives an alternative proof that the element $g$ from \cite{GruSis14} gives rise to a hyperbolically embedded virtually cyclic subgroup.
\end{remark}

Every hyperbolically embedded subgroup of a finitely generated group is Morse \cite{SistoMorse}, and every element with a strongly contracting orbit has a virtually cyclic hyperbolically embedded elementary closure. 
In graphical small cancellation groups, Morse and strongly contracting elements have been classified in terms of the defining graph, by our results in \fullref{sec:gsccontraction}. 
It is natural to ask whether the collection of cyclic hyperbolically embedded subgroups can be classified in a similar way.
We give one negative result in this direction.

\begin{thm}\label{theorem:unbddrhocontnotHE} 
Let $\rho_2$ be an unbounded sublinear function. 
There exists a $Gr'(\ifrac{1}{6})$--labelled graph $\Gamma$ with set of labels $\gens:=\{a,b\}$ whose components are all cycles such that the group $G:=G(\Gamma)$ has the following properties: Any virtually cyclic subgroup $E$ of $G$ containing $\langle a\rangle$ is $(r,\rho'_2)$--contracting in the Cayley graph $X:=\Cay(G,\gens)$ for some $\rho'_2\asymp \rho_2$, but $E$ is not hyperbolically embedded in $G$.
\end{thm}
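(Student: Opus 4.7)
The plan is to construct a $Gr'(\ifrac{1}{6})$--labelled graph
$\Gamma$ over $\gens=\{a,b\}$, with cycle components, such that
(C) the geodesic $\alpha$ labelled by powers of $a$ in
$X=\Cay(G(\Gamma),\gens)$ satisfies
$|\Gamma_n\cap\alpha|\asymp\rho_2(|\Gamma_n|)$ for each embedded
component $\Gamma_n$ of $\Gamma$ in $X$, and (N) the group $G(\Gamma)$
contains an infinite family of elements $\{g_n\}$ in pairwise distinct
cosets of $\langle a\rangle$, each satisfying an identity
$g_n a^{k_n} g_n^{-1}=a^{l_n}$ with $k_n,l_n\neq 0$. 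Condition (C)
together with \fullref{corollary:classicalmorse} will imply that
$\alpha$ is $(r,\rho_2')$--contracting with $\rho_2'\asymp\rho_2$;
since every virtually cyclic $E\supseteq\langle a\rangle$ is at
bounded Hausdorff distance from $\langle a\rangle$,
\fullref{lem:strcontbddHdist} will transfer the contraction rate to $E$.

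For the non-HE conclusion, suppose $E\supseteq\langle a\rangle$ is
virtually cyclic and hyperbolically embedded. The finite-index
inclusion $\langle a\rangle\leqslant E$ implies that $E$ is a finite
union of cosets of $\langle a\rangle$, so by (N) some $g_n\notin E$.
Using $g_n a^{k_n} g_n^{-1}=a^{l_n}$ one has
\[
\langle a^{l_n}\rangle\;=\;g_n\langle a^{k_n}\rangle g_n^{-1}\;\subseteq\;g_n E g_n^{-1}\cap E,
\]
which is infinite. This contradicts the standard consequence of
hyperbolic embedding that $g E g^{-1}\cap E$ is finite whenever
$g\notin E$ (see \cite{DGO11}).

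The technical heart lies in realising (C) and (N) with a single graph.
Property (C) is arranged via relators of the form
$R_n=a^{\rho_2(n)}w_n$, where $w_n\in\{a,b\}^*$ is a generic padding
of length of order $n$ with only short $a$-runs, ensuring
$|\Gamma_n|\asymp n$ and that the only long arc of $\Gamma_n$ lying
on $\alpha$ is the explicit $a^{\rho_2(n)}$. Property (N), encoding
commensurations of $\langle a\rangle$, is the delicate part: the naive
Baumslag--Solitar relator $g a^k g^{-1} a^{-l}$ fails the
$C'(\ifrac{1}{6})$--condition because $g$ appears as a piece of length
$|g|$ comparable to the total length of the relator. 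I plan to
circumvent this by introducing an auxiliary family of relators of
shape $a^{\rho_2(n)} u_n a^{\rho_2(n)} u_n$, whose order-two
rotational symmetry prevents $u_n$ from itself being a piece and
yields an involution $t_n=a^{\rho_2(n)} u_n$, and coupling these with
further carefully chosen relators witnessing the required conjugation
identities. Choosing the words $u_n$ so that
(a) the resulting disjoint union is $Gr'(\ifrac{1}{6})$--labelled,
(b) the intersection estimate of (C) is preserved, and
(c) the cosets $g_n\langle a\rangle$ produced in (N) are pairwise
distinct, is the main combinatorial obstacle; it should follow by a
genericity/random-word argument of the same flavour as the
constructions in \fullref{thm:allcontractionrates} and
\fullref{thm:nonstability}.
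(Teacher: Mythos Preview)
Your contraction argument (C) is fine and matches the paper's strategy: build cycle relators so that the maximal $a$--run in a relator of length $\approx n$ has length $\approx\rho_2(n)$, then invoke \fullref{corollary:classicalmorse} and \fullref{lem:strcontbddHdist}. The paper does exactly this.

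The gap is in (N). You correctly observe that a Baumslag--Solitar relator $g a^k g^{-1} a^{-l}$ is incompatible with small cancellation unless $g$ is short, which then forces $|a^k|+|a^l|$ to exceed $\tfrac{2}{3}$ of the relator length and destroys (C). Your proposed fix via symmetric relators $a^{\rho_2(n)}u_n a^{\rho_2(n)}u_n$ does give an involution $t_n=a^{\rho_2(n)}u_n$, but this does \emph{not} yield a conjugation identity for $\langle a\rangle$: a short computation from $t_n^2=1$ gives only $t_n a^{m} t_n^{-1}=u_n^{-1}a^{m}u_n$, and nothing forces this to lie in $\langle a\rangle$. The ``further carefully chosen relators'' you allude to would have to encode $u_n^{-1}a^{k}u_n\in\langle a\rangle$, which is precisely the Baumslag--Solitar shape you already ruled out. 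So the plan, as stated, does not close.

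The paper sidesteps this tension by changing both the construction and the obstruction to hyperbolic embedding. The relators are proper powers
\[
R_r=(a^rba^rb^{-1})^{N_r},\qquad N_r\geqslant 6 \text{ chosen with } \rho_2\bigl(\tfrac12(1+r)N_r\bigr)\geqslant r.
\]
The $N_r$--fold rotational symmetry makes this a $Gr'(\ifrac16)$--labelling (not $C'$) with pieces of length at most $2r+1<\tfrac16|R_r|$, and every $a$--run has length exactly $r$, so (C) holds by the choice of $N_r$. For non-HE the paper does \emph{not} use almost malnormality. Instead it invokes \cite[Theorem~5.3]{DGO11}: if $E$ were hyperbolically embedded there would exist $r>0$ with the normal closure of $a^r$ free. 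But $a^r b a^r b^{-1}$ lies in that normal closure and has order $N_r$, a contradiction. This argument is much shorter and avoids the combinatorial obstacle you ran into.
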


\begin{proof} 
For every $r>0$, choose $N_r\geqslant 6$ such that $\rho_2\bigl(\frac{1}{2}(1+r)N_r\bigr)\geqslant r$. 
This is possible because $\rho_2$ is unbounded. For every $r>0$, let $R_r:=(a^rba^rb^{-1})^{N_r}$, and consider the graph $\Gamma$ that is the disjoint union of cycles $\gamma_r$ labelled by the $R_r$. Any reduced path that is a piece in $\gamma_r$ has length at most $2r+1$, which is less than $\ifrac{|\gamma_r|}{6}$. 
Therefore, $\Gamma$ satisfies the $Gr'(\ifrac{1}{6})$-condition. 
Denote by $\alpha$ a bi-infinite path in $X$ labelled by the powers of $a$. 
For every copy $\gamma$ of a $\gamma_r$ embedded in $X$, we have $\diam\alpha\cap\gamma<\ifrac{|\gamma|}{6}$. 
Therefore, it is readily seen from considering diagrams of shape $\mathrm{I}_1$ that $\alpha$ is a bi-infinite geodesic. 
Also note that $\gamma$ intersects $\alpha$ in a path of length at most $r$.

Suppose $\gamma$ is a relator intersecting $\alpha$, and $x$ is a point in $\gamma$ with $\delta:=d(x,\alpha)$. 
If $|\gamma|>4\delta$, then, for any point $y$ in $\gamma$ with $d(x,y)\leqslant \delta$, we have $\pi(y)=\pi(x)$, and $\pi(x)$ is a singleton, where $\pi$ denotes closest point projection to $\alpha$. 
Now assume $|\gamma|\leqslant 4\delta$, and $y\in \gamma$. 
Then $\diam\pi(x)\cup\pi(y)\leqslant \diam\gamma\cap\alpha\leqslant \rho_2(\delta)$, by construction, since $\rho_2$ is non-decreasing.
Therefore, $\alpha$ is locally $(r,\rho_2)$-contracting and, hence, by \fullref{theorem:strongcontraction} and \fullref{lem:strcontbddHdist} there exists $\rho_2'$ with $\rho_2'\asymp \rho_2$ such that any virtually cyclic subgroup $E$ of $G$ containing $\langle a\rangle$ is $(r,\rho_2')$-contracting.

If $E$ is a hyperbolically embedded subgroup then, by \cite[Theorem 5.3]{DGO11}, there exists $r>0$ such that the normal closure $N$ of $a^r$ is a free group. By construction, the element of $G$ represented by $a^rba^rb^{-1}$ is non-trivial and has finite order, whence $N$ is not torsion-free. 
Therefore, $E$ is not hyperbolically embedded.
\end{proof}

Another natural question is the following.

\begin{question} Let $G$ be a group generated by a finite set $\gens$, and suppose $g\in G$ has a $(\rho_1,\rho_2)$--contracting orbit in $X:=\Cay(G,\gens)$ where $\rho_2$ is bounded. 
Is the elementary closure $E(g)$ a hyperbolically embedded virtually cyclic subgroup of $G$?

Is the statement true if $G:=G(\Gamma)$ for a $Gr'(\ifrac{1}{6})$--labelled graph $\Gamma$ whose components are finite, with finite set of labels $\gens$? 
\end{question}

In the case of classical $C'(\ifrac{1}{6})$-groups, an affirmative answer follows from \fullref{thm:weakimpliesstrong}.

%%% Local Variables: 
%%% mode: latex
%%% TeX-master: "main"
%%% End: 

\subsection{Hyperbolically embedded cycles}\label{sec:hypembcycle}
Recall that a group has the \emph{hyperbolically embedded cycles property (HEC property)} if
the elementary closure $E(g)$ of every infinite order element $g$ is virtually cyclic and hyperbolically embedded.
Hyperbolic groups have this property: $E(g)$ is the stabilizer of a $g$-axis.
The HEC property in fact passes to any subgroup of a hyperbolic
group $G$ using the action of the subgroup on a Cayley graph of $G$.

It is therefore very natural to ask whether this classifies subgroups of hyperbolic groups. 
Torsion presents one complication:
A free product of infinite torsion groups, or, more generally, a group hyperbolic relative to infinite torsion subgroups, has the HEC property but cannot be a subgroup of a hyperbolic group.
We show that even among torsion-free groups there are many examples of groups with the HEC property that are not subgroups of any hyperbolic group.

\begin{thm}\label{thm:allmaxsubgrpsHE} There exist $2^{\aleph_0}$
  pairwise non-quasi-isometric finitely generated torsion-free groups
  in which every non-trivial cyclic subgroup is strongly contracting and which, therefore, have the HEC property.
\end{thm}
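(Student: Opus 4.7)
The plan is to use graphical $Gr'(\ifrac{1}{6})$ small cancellation theory to build, for each element of an uncountable parameter set, a finitely generated torsion-free group satisfying the HEC property, and then distinguish these groups by a quasi-isometry invariant depending on the parameter. The individual properties of each group will follow almost immediately from results already in the paper; the work lies in choosing the defining graphs so that (a) all relators are cyclically aperiodic, and (b) the graphs can be organized into an uncountable family of pairwise non-quasi-isometric groups.

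First I would fix a finite alphabet $\gens=\{a,b\}$ and an aperiodicity bound $K$, and then for each parameter $S$ (e.g.\ an infinite subset of $\mathbb{N}$, or a sequence in $\{0,1\}^\mathbb{N}$) construct a graph $\Gamma_S$ whose components are pairwise non-isomorphic cycles, labelled by words $(R_{i,S})_i$ in $\gens^*$ satisfying: (i) $\Gamma_S$ is $Gr'(\ifrac{1}{6})$--labelled; (ii) no $R_{i,S}$ is a proper power; and (iii) no cyclic conjugate of any $R_{i,S}$ contains a subword of length greater than $K$ that is periodic of period less than $|R_{i,S}|$. Concretely, one can take each $R_{i,S}$ to be a sufficiently long aperiodic word (e.g.\ built from a Sturmian-type sequence or from Thue--Morse blocks) with lengths chosen lacunarily to enforce $Gr'(\ifrac{1}{6})$, and record the parameter $S$ by including or omitting certain blocks of relators, or by varying the growth rate of the sequence $(|R_{i,S}|)_i$.

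Next I would verify that each $G(\Gamma_S)$ has all cyclic subgroups strongly contracting and is torsion-free. Torsion-freeness follows from (ii): by standard arguments for graphical $Gr'(\ifrac{1}{6})$ presentations whose relators are not proper powers, all torsion in $G(\Gamma_S)$ comes from proper powers appearing as relators, so (ii) forces $G(\Gamma_S)$ to be torsion-free. For strong contraction of a cyclic subgroup $\fgen{g}$ with $g$ of infinite order, \fullref{lem:periodicgeodesic} provides a periodic bi-infinite geodesic $\gamma$ at finite Hausdorff distance from $\fgen{g}$; condition (iii) together with the fact that $\gamma$ is periodic forces $|\gamma\cap\Gamma_0|\leqslant K'$ for some uniform $K'$ and every embedded component $\Gamma_0$, since a long intersection would produce a long periodic subword in some cyclic conjugate of a relator. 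Then \fullref{corollary:classicalmorse} (applied to the cycle-graph components) yields that $\gamma$ is strongly contracting, and \fullref{lem:strcontbddHdist} transfers this to $\fgen{g}$. Finally, \fullref{thm:hypembsubgps} supplies that $E(g)$ is virtually cyclic and hyperbolically embedded, which is the HEC property.

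The main obstacle is producing $2^{\aleph_0}$ groups that are pairwise \emph{non-quasi-isometric}, not merely pairwise non-isomorphic. My plan is to engineer a quasi-isometry invariant from the data of $S$. One clean approach is to arrange the lengths $|R_{i,S}|$ so that distinct parameters $S$ yield distinct asymptotic behaviors of the divergence function (exploiting the lacunary-hyperbolic machinery of \cite{GruSis14}, where divergence is controlled by the relator lengths); alternatively, one can distinguish groups via the set of isomorphism types of finite cycle graphs that embed isometrically in the Cayley graph, which by the graphical small cancellation condition is determined by the components of $\Gamma_S$ and is therefore a bi-Lipschitz invariant of the Cayley graph. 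A third route is to appeal to a general counting argument: since each $G(\Gamma_S)$ is finitely generated and torsion-free, and since the class of finitely generated groups quasi-isometric to a fixed group is, under mild hypotheses one can arrange here, at most countable, constructing $2^{\aleph_0}$ pairwise non-isomorphic such groups suffices. I would choose whichever of these routes integrates most naturally with the construction needed for (i)--(iii); the actual verification that the parameter is recoverable from the quasi-isometry class is the subtlest step and is where most of the care must be invested.
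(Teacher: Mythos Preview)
Your strategy coincides with the paper's: build classical $C'(\ifrac{1}{6})$ groups whose relators satisfy an aperiodicity condition, use \fullref{lem:periodicgeodesic} together with \fullref{corollary:classicalmorse} to show every infinite cyclic subgroup is strongly contracting, and then separate uncountably many quasi-isometry classes. Two steps, however, do not go through as written.

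Your condition (iii) is not realizable over a finite alphabet. Read literally it forbids every subword of length exceeding a fixed $K$ from having any period below $|R_i|$; but a proper subword $u$ already has the trivial period $|u|<|R_i|$, and even restricting to periods strictly below $|u|$ one is forced to have $R_i[j]\neq R_i[j']$ whenever $j'-j\geqslant K$, which pigeonhole rules out once $|R_i|>2K|\gens|$. The correct hypothesis, and the one the paper uses, is that the labelling of $\Gamma$ be \emph{non-repetitive}: no embedded path is labelled by a square $ww$. This is achievable (the first-difference of the Thue--Morse sequence is square-free over three letters, so your instinct to use Thue--Morse blocks was right), and the paper obtains a labelling that is simultaneously $C'(\ifrac{1}{6})$ and non-repetitive by taking the push-out of a small-cancellation labelling with a square-free one. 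With this condition the bound is $|\gamma\cap\Gamma_0|<2|v|$ where $\gamma$ is labelled by powers of $v$; note this depends on $g$, not on a universal $K'$, but a per-element bound is all \fullref{corollary:classicalmorse} requires. Finally, your third route to separating quasi-isometry classes is not valid: there is no general principle bounding the number of finitely generated groups in a quasi-isometry class by a countable cardinal, so $2^{\aleph_0}$ pairwise non-isomorphic examples do not suffice. The paper uses the Thomas--Velickovic asymptotic-cone argument instead: index the relator sets by subsets $I\subset\{2^{2^n}:n\in\N\}$ with pairwise infinite symmetric difference, and distinguish $G_I$ from $G_J$ by observing that a suitably scaled asymptotic cone of one is an $\R$--tree while the corresponding cone of the other contains a loop.
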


Since there are only countably many finitely generated subgroups of
finitely presented groups, most of the groups of \fullref{thm:allmaxsubgrpsHE} do not occur as
subgroups of hyperbolic groups.

We show in \fullref{corollary:HECmonster}
that there are even exotic examples of groups with the HEC
property such as the Gromov monster groups.

The theorem is proven by building small cancellation groups
in which no power of any element has long intersection with an embedded
component of the defining graph. 
In \fullref{thm:scandnonrep} we give a condition  on the labelling
that guarantees this property. 
In \fullref{thm:osajdaalon} we show that this condition can be
satisfied.
Then we construct specific examples satisfying our condition and apply
a version
of a construction of Thomas and Velickovic \cite{ThoVel00}, proven in
\fullref{prop:lotsofgroups}, to show that we get uncountably many
quasi-isometry classes of groups.

A labelling of the edges of an undirected graph is said to be \emph{non-repetitive}
if there does not exist a non-trivial embedded path graph that is labelled by a word
of the form $ww$. (Here, the label of a path is just the concatenation of the labels of the edges in the free monoid on the labelling set.)
The \emph{Thue number} of a graph is the minimal cardinality of a
labelling set for which the graph admits a non-repetitive labelling. 

We define a labelling of a directed graph to be \emph{non-repetitive}
if there does not exist a non-trivial embedded path graph
$\gamma=e_1,\dots,e_{2n}$ such that for all $1\leqslant i\leqslant n$
the label of the directed edge $e_i$ is equal to the label of the
directed edge $e_{i+n}$. 
 Note that, given an undirected graph with a non-repetitive labelling, any choice of orientation gives rise to a non-repetitive labelling of the resulting oriented graph.

\begin{theorem}\label{thm:scandnonrep}
  Let $\gens$ be a finite set and let $\Gamma$ be a graph with finite
  components and a labelling by  $\gens$ that is both $Gr'(\ifrac{1}{6})$ and
  non-repetitive.
Let $G:=G(\Gamma)$.
Every infinite cyclic subgroup $H$ of $G$ is strongly
contracting in $X:=\Cay(G,\gens)$.
Thus, $G$ has the HEC property.
\end{theorem}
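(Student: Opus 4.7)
The plan is to reduce the statement about cyclic subgroups to one about periodic bi-infinite geodesics, then use \fullref{theorem:strongcontraction} to reduce further to a local statement about intersections with embedded components, and finally exploit the non-repetitive labelling to bound those intersections uniformly. Specifically, I would take an infinite cyclic subgroup $H=\fgen{x}\leqslant G$ and apply \fullref{lem:periodicgeodesic} to produce a periodic bi-infinite geodesic $\gamma\subset X$ at finite Hausdorff distance from $H$, labelled by the bi-infinite concatenation of a non-trivial word $w\in\fgen{\gens}$; set $L:=|w|$. By \fullref{lem:strcontbddHdist}, to conclude that $H$ is strongly contracting it suffices to show that $\gamma$ is strongly contracting.

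The crux of the argument is the following claim: for every embedded component $\Gamma_0$ of $\Gamma$ in $X$ with $\Gamma_0\cap\gamma\neq\emptyset$, one has $\diam(\Gamma_0\cap\gamma)<2L$. Convexity of $\Gamma_0$ in $X$ forces $\Gamma_0\cap\gamma$ to be an embedded geodesic subpath of $\gamma$. If its length were at least $2L$, it would contain a subpath of length exactly $2L$; since the bi-infinite labelling of $\gamma$ has period $L$, the label of that subpath would have the form $vv$ for some cyclic conjugate $v$ of $w$ with $|v|=L$. Transporting this subpath via the label-preserving isomorphism $\Gamma_0\cong\Gamma_i$ would produce an embedded path in $\Gamma$ whose directed-edge labels read as $vv$, contradicting non-repetitivity. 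The claim implies that closest point projection in $\Gamma_0$ onto $\Gamma_0\cap\gamma$ has diameter bounded by $2L$, uniformly in $\Gamma_0$, so $\gamma$ is uniformly locally strongly contracting, and \fullref{theorem:strongcontraction} yields that $\gamma$ itself is strongly contracting in $X$.

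The HEC conclusion is then immediate: finite cyclic subgroups of $G$ are bounded and therefore strongly contracting trivially, and for any infinite order $g\in G$ the strong contraction of $\fgen{g}$ combined with \fullref{thm:hypembsubgps} shows that $E(g)$ is virtually cyclic and hyperbolically embedded. The one subtlety I would watch carefully is the correct interpretation of non-repetitivity in terms of directed edge-labels: one must verify that a subpath of $\gamma$ of length $2L$ really does yield a $vv$-factorisation of the sequence of directed labels read along it, and that the label-preserving embedding $\Gamma_i\to X$ therefore lifts this subpath to an embedded $vv$-labelled path back in $\Gamma$. Once this is checked, everything else is a clean application of the machinery already assembled in the paper.
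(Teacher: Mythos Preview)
Your proposal is correct and follows essentially the same line as the paper's proof: reduce to a periodic geodesic via \fullref{lem:periodicgeodesic}, use \fullref{lem:strcontbddHdist} to reduce to showing the geodesic is strongly contracting, then bound $|\Gamma_0\cap\gamma|<2L$ using non-repetitivity exactly as you do. The only noteworthy difference is that the paper concludes strong contraction by invoking \fullref{corollary:classicalmorse}, which is stated for graphs whose components are cycles, whereas you go through uniform local strong contraction and \fullref{theorem:strongcontraction} directly; your route is slightly more robust since the statement of the theorem does not assume cycle components, and the bounded-intersection claim immediately gives uniform local strong contraction regardless of the shape of $\Gamma_0$.
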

\begin{proof} 
By \fullref{lem:periodicgeodesic}, every infinite cyclic subgroup
is bounded Hausdorff distance from a periodic geodesic $\alpha$.
By \fullref{lem:strcontbddHdist}, it is enough to show $\alpha$ is
strongly contracting, so we may assume that $H=\fgen{h}$ acts
cocompactly on $\alpha$, and that $h$ is represented by a cyclically reduced word $v$ whose powers label $\alpha$.

Suppose $\alpha$ intersects some embedded component $\Gamma_0$ in more
than a single vertex.
Since $\alpha$ is geodesic, $\Gamma_0\cap\alpha$ is an embedded
path graph.
Since the labelling of $\Gamma$ is non-repetitive, the label of this
path does not contain a subword of the form $ww$.
However, the labelling of $\alpha$ is repetitive --- it is $\cdots vv\cdots$.
Thus, $|\Gamma_0\cap\alpha|<2|v|$.
We conclude
 $\alpha$ is strongly contracting by \fullref{theorem:strongcontraction}.

By \cite[Theorem~H]{BesBroFuj15}, the elementary closure of $H$ is a virtually cyclic hyperbolically embedded subgroup.
\end{proof}

Suppose that $\Gamma$ is a directed graph with two labellings $L_1\from
\mathcal{E}\Gamma\to\gens_1$ and $L_2\from \mathcal{E}\Gamma\to\gens_2$.
We define the \emph{push-out labelling} $L\from
\mathcal{E}\Gamma\to\gens$, where $\gens=\gens_1\times\gens_2$, by
$L(e):=(L_1(e),L_2(e))$.
We will write $\gens_1$ alphabetically and $\gens_2$ as numerical index, i.e.\
$\gens^{\pm}=\{a^\epsilon_n\mid \epsilon\in\pm 1,\, a\in\gens_1,\,n\in\gens_2\}$.

If $L_2\from \mathcal{E}\Gamma\to\gens_2$ is a non-repetitive
labelling of $\Gamma$, then for any labelling $L_1\from
\mathcal{E}\Gamma\to\gens_1$ the push-out labelling is a
non-repetitive labelling of $\Gamma$.
Similarly, if $L_1\from
\mathcal{E}\Gamma\to\gens_1$ satisfies  
the $C'(\ifrac{1}{6})$-condition, then so does the
push-out labelling. 

\begin{theorem}\label{thm:osajdaalon}
  Let $\Gamma=(\Gamma_i)_{i\in\N}$ be a sequence of finite, connected graphs satisfying
  the following conditions:
  \begin{itemize}
  \item $\Gamma$ has bounded valence.
\item $(\mathrm{girth}(\Gamma_i))_{i\in\N}$ is an unbounded sequence.
\item The ratios $\frac{\mathrm{girth}(\Gamma_i)}{\diam(\Gamma_i)}$ are
  bounded, uniformly over $i$, away from 0. 
  \end{itemize}
Then there exist an infinite subsequence $(\Gamma_{i_j})_{j\in\N}$ of
graphs and a finite set $\gens$ such that $(\Gamma_{i_j})_{j\in\N}$
admits an labelling by $\gens$ that is both $C'(\ifrac{1}{6})$ and non-repetitive.
\end{theorem}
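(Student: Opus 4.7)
The plan is to produce, on some common infinite subsequence, both a $C'(\ifrac{1}{6})$--labelling and a non-repetitive labelling, each by a finite alphabet, and then to combine them via the push-out construction over the product alphabet. The two ingredients are essentially independent, which is what makes the push-out observation stated just before the theorem so useful: one just needs to find each labelling separately and multiply the alphabets.

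For the $C'(\ifrac{1}{6})$--labelling I would invoke a random-labelling argument in the spirit of Arzhantseva--Delzant and Osajda. Because the valence is uniformly bounded and the girth is linear in the diameter, the number of embedded paths in $\Gamma_i$ of any given length $\ell \geqslant \ifrac{1}{6}\mathrm{girth}(\Gamma_i)$ grows only polynomially in $\ell$, while the probability that a fixed such path is duplicated elsewhere in $\Gamma_i$ (i.e. is a piece) under a uniformly random labelling from a sufficiently large but fixed finite alphabet $\gens_1$ decays exponentially in $\ell$. A union bound over all candidate piece lengths, combined with a Borel--Cantelli / diagonal selection argument, yields an infinite subsequence $(\Gamma_{i_j})_j$ carrying a $C'(\ifrac{1}{6})$--labelling $L_1$ over $\gens_1$. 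For the non-repetitive labelling I would invoke the theorem of Alon, Grytczuk, Ha{\l}uszczak, and Riordan: every graph of maximum degree at most $\Delta$ admits a non-repetitive edge labelling using an alphabet of size bounded in terms of $\Delta$ alone. Since our valence bound is uniform in $i$, a single finite $\gens_2$ works simultaneously for every $\Gamma_{i_j}$, and the remark preceding the theorem promotes this to a non-repetitive labelling $L_2$ of the directed graph after an arbitrary choice of orientation.

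Finally, form the push-out $L(e) := (L_1(e), L_2(e))$ over $\gens := \gens_1 \times \gens_2$. The observations stated immediately before the theorem guarantee that $L$ inherits the $C'(\ifrac{1}{6})$--property from $L_1$ (any piece in $L$ projects to a piece in $L_1$) and the non-repetitiveness from $L_2$ (any $ww$-labelled path in $L$ projects to one in $L_2$). The main obstacle is the first step: the existence of a $C'(\ifrac{1}{6})$--labelling on a subsequence. It is precisely here that all three quantitative hypotheses on $(\Gamma_i)$ are consumed, to control simultaneously the number of potential pieces (bounded valence plus girth linear in diameter) and the exponential decay of the probability that any one of them is realized (unbounded girth). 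The non-repetitive step and the push-out step are comparatively mechanical once a suitable $\gens_1$--labelling has been secured.
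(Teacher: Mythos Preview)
Your proposal follows exactly the same route as the paper: invoke Osajda's theorem for the $C'(\ifrac{1}{6})$--labelling on an infinite subsequence, invoke the Alon--Grytczuk--Ha{\l}uszczak--Riordan bound on the Thue number (using only the uniform valence bound) for the non-repetitive labelling, and combine the two via the push-out over the product alphabet. The paper treats Osajda's result as a black box and does not attempt the heuristic you give.

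One correction to that heuristic: the number of embedded paths of length $\ell$ in a bounded-valence graph is exponential in $\ell$, not polynomial. The counting argument still goes through because the number of candidate pieces of length $\ell$ is at most roughly $|V(\Gamma_i)|^2\Delta^{2\ell}$, and the girth-to-diameter hypothesis bounds $|V(\Gamma_i)|$ by an exponential in $\ell$ (for $\ell\geqslant\mathrm{girth}(\Gamma_i)/6$) with base depending only on $\Delta$ and the girth/diameter constant; choosing $|\gens_1|$ large enough relative to these constants makes the decay rate $|\gens_1|^{-\ell}$ dominate. (Osajda in fact uses the Lov\'asz Local Lemma rather than a bare union bound, as the paper remarks later.)
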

\begin{proof}
  A theorem of Osajda \cite{Osa} says that, given the hypotheses on $\Gamma$, there exists an infinite subsequence $(\Gamma_{i_j})_{j\in\N}$ that admits a choice of orientation and a labelling by a finite set $\gens_1$ satisfying the $C'(\ifrac{1}{6})$-condition. 
Alon, et al.\ \cite{AloGryHau02} show that the Thue number of a bounded valence undirected graph is bounded by a polynomial function of the valence bound.
Since $\Gamma$ has bounded valence, there exists a finite set of labels $\gens_2$ such that $\Gamma$ admits a non-repetitive $\gens_2$--labelling (as undirected graph and, hence, also as directed graph).
The push-out of these two labellings satisfies the theorem.
\end{proof}

Combining this with \fullref{thm:scandnonrep}, we have:
\begin{corollary}
  If $\Gamma$ is as in \fullref{thm:osajdaalon} then $G(\Gamma)$ has
  the HEC property.
\end{corollary}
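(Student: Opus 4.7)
The plan is to deduce the corollary directly by chaining Theorems \ref{thm:osajdaalon} and \ref{thm:scandnonrep}, which together do essentially all of the work. Under the stated hypotheses on the input sequence of graphs, Theorem \ref{thm:osajdaalon} furnishes an infinite subsequence together with a finite alphabet $\gens$ and a $\gens$-labelling of that subsequence that is simultaneously $C'(\ifrac{1}{6})$ and non-repetitive. Adopting the convention that ``$\Gamma$ is as in Theorem \ref{thm:osajdaalon}'' refers to this subsequence equipped with the labelling produced by the theorem, the task reduces to verifying that the hypotheses of Theorem \ref{thm:scandnonrep} are satisfied.

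Each hypothesis holds for free: $\gens$ is finite by the conclusion of Theorem \ref{thm:osajdaalon}; the components of $\Gamma$ are finite because every $\Gamma_i$ in the input sequence is finite by assumption; the labelling is non-repetitive by construction; and, by the definition of the $C'(\lambda)$-condition in \fullref{sec:smallcancellation}, any $C'(\ifrac{1}{6})$-labelling is in particular a $Gr'(\ifrac{1}{6})$-labelling (the $C'$ condition differs only in the additional automorphism-rigidity requirement, which is not needed here). Therefore Theorem \ref{thm:scandnonrep} applies and yields precisely that every infinite cyclic subgroup of $G(\Gamma)$ is strongly contracting in $\Cay(G(\Gamma),\gens)$, and consequently that the elementary closure $E(g)$ of every infinite order element $g\in G(\Gamma)$ is virtually cyclic and hyperbolically embedded. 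This is, by \fullref{defi:hec}, exactly the HEC property.

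There is no real obstacle: the corollary is a purely formal assembly of the two preceding theorems. The one step worth flagging explicitly is the implication $C'(\ifrac{1}{6})\Rightarrow Gr'(\ifrac{1}{6})$, which is immediate from the definitions. All other content, including the passage from ``strongly contracting cyclic subgroup'' to ``virtually cyclic hyperbolically embedded elementary closure'', is already packaged inside the statement and proof of Theorem \ref{thm:scandnonrep} (via its appeal to \cite[Theorem~H]{BesBroFuj15}).
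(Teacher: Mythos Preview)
Your proposal is correct and matches the paper's own argument, which amounts to the single sentence ``Combining this with \fullref{thm:scandnonrep}, we have\ldots''. You have simply made explicit the verification that the hypotheses of \fullref{thm:scandnonrep} are met, including the routine observation that $C'(\ifrac{1}{6})$ implies $Gr'(\ifrac{1}{6})$.
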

\begin{corollary}\label{corollary:HECmonster}
  There exist Gromov monster groups with the HEC property.
\end{corollary}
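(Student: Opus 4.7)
The plan is to apply \fullref{thm:osajdaalon} to a sequence of expander graphs chosen so as to satisfy its three geometric hypotheses. Concretely, I would take $(\Gamma_i)_{i\in\N}$ to be a family of Lubotzky--Phillips--Sarnak Ramanujan graphs: they are $d$--regular (hence of bounded valence), have girth tending to infinity, and satisfy $\mathrm{girth}(\Gamma_i)\geqslant \tfrac{1}{3}\diam(\Gamma_i)$ uniformly in $i$. Crucially, being Ramanujan they form an expander family with uniformly bounded Cheeger constant.

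Next, I invoke \fullref{thm:osajdaalon} to extract an infinite subsequence $(\Gamma_{i_j})_{j\in\N}$ together with an orientation and a labelling by a finite set $\gens$ that is simultaneously $C'(\ifrac{1}{6})$ and non-repetitive. Let $\Gamma$ denote the disjoint union of these labelled components and set $G:=G(\Gamma)$. By the preceding corollary, $G$ has the HEC property.

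It remains to verify that $G$ is a Gromov monster group. Since the labelling is $Gr'(\ifrac{1}{6})$, \cite[Lemma~2.15]{GruSis14} (quoted in \fullref{sec:preliminaries}) implies that each component $\Gamma_{i_j}$ embeds isometrically into $X:=\Cay(G,\gens)$. An infinite subsequence of an expander family is itself an expander family---the individual graphs, and in particular their Cheeger constants, are unchanged by restriction---so $(\Gamma_{i_j})_{j\in\N}$ is a sequence of expanders isometrically embedded into $X$. This is precisely the condition in the definition of a Gromov monster group recalled in the introduction.

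The one step requiring input beyond the machinery developed in the paper is the existence of a bounded-valence expander sequence satisfying the bullet conditions of \fullref{thm:osajdaalon}. Since the classical LPS Ramanujan graphs (or, alternatively, random regular graphs with sufficiently large girth) satisfy all three conditions simultaneously, this is not a genuine obstacle; the essential content of the corollary resides in \fullref{thm:scandnonrep} combined with \fullref{thm:osajdaalon}.
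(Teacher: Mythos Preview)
Your proposal is correct and follows precisely the argument the paper leaves implicit: take an expander family satisfying the geometric hypotheses of \fullref{thm:osajdaalon} (the LPS Ramanujan graphs are the natural choice), apply that theorem together with the preceding corollary to obtain the HEC property, and then observe that the isometric embedding of the labelled components into the Cayley graph, combined with the fact that a subsequence of an expander family is still an expander family, makes the resulting group a Gromov monster. The paper gives no explicit proof of this corollary, so you have simply filled in the intended details.
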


More generally, if $\Gamma$ is a bounded valence graph with a
labelling satisfying some property $\mathcal{P}$ such that
$\mathcal{P}$ is preserved upon passing to a refinement of the
labelling, then $\Gamma$ admits a labelling that is both $\mathcal{P}$
and non-repetitive. 
For instance, Arzhantseva and
Osajda \cite{ArzOsa14}
introduced a `lacunary walling condition' to produce first examples of
non-coarsely amenable groups\footnote{These are groups $G$ whose
  reduced $C^*$--algebra $C^*_{red}(G)$ is not exact.} with
the Haagerup property.
This condition is preserved upon passing to refinements of the
labelling, so the same argument as in \fullref{thm:osajdaalon}
yields:
\begin{corollary}
  There exist finitely generated, non-coarsely amenable groups with the Haagerup property
  and the HEC property.
\end{corollary}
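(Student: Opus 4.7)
The plan is to adapt the recipe already used for Gromov monsters (\fullref{corollary:HECmonster}) to the Arzhantseva--Osajda construction. The starting point is Arzhantseva and Osajda's result~\cite{ArzOsa14}: there exists a sequence $\Gamma=(\Gamma_i)_{i\in\N}$ of finite, bounded-valence, connected graphs, together with a labelling $L_1\from E(\Gamma)\to\gens_1$ over a finite set $\gens_1$, satisfying both the $Gr'(\ifrac{1}{6})$-condition and the `lacunary walling condition', such that $G(\Gamma)$ (defined via $L_1$) is finitely generated, non-coarsely amenable, and has the Haagerup property. The key structural feature of their condition which I will exploit is the one highlighted in the paragraph just before the corollary: the lacunary walling condition is inherited by every refinement of the labelling.

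Next I would invoke the Alon et al.\ result~\cite{AloGryHau02} used in the proof of \fullref{thm:osajdaalon}: since $\Gamma$ has bounded valence, there is a finite set $\gens_2$ and a non-repetitive labelling $L_2\from E(\Gamma)\to\gens_2$ (first as an undirected graph, then transported to the chosen orientation). Form the push-out labelling $L\from E(\Gamma)\to\gens:=\gens_1\times\gens_2$, $L(e):=(L_1(e),L_2(e))$. As observed in the paragraph preceding \fullref{thm:osajdaalon}, the $Gr'(\ifrac{1}{6})$-condition and non-repetitiveness are each preserved when refining a labelling, so $L$ is simultaneously $Gr'(\ifrac{1}{6})$ and non-repetitive. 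By the stability of the lacunary walling condition under refinement, $L$ still satisfies the lacunary walling condition, so the Arzhantseva--Osajda arguments apply verbatim to the group $G(\Gamma)$ defined via $L$: this group is finitely generated, non-coarsely amenable, and has the Haagerup property.

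Finally, I would apply \fullref{thm:scandnonrep} to the labelled graph $(\Gamma,L)$: its components are finite, and $L$ is both $Gr'(\ifrac{1}{6})$ and non-repetitive, so every infinite cyclic subgroup of $G(\Gamma)$ is strongly contracting in the Cayley graph with respect to $\gens$, hence $G(\Gamma)$ has the HEC property. Combining the two conclusions gives a finitely generated, non-coarsely amenable group with the Haagerup property and the HEC property.

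The only real obstacle is the unstated claim that the lacunary walling condition genuinely survives refinement of the labelling and that the walling-based construction of walls and of exact/non-exact invariants goes through unchanged for the refined group $G(\Gamma)$ (which is a priori a different group from the one produced by $L_1$ alone, since its generating set and relator set change). This is asserted in the paragraph introducing the corollary, and amounts to checking that the walls defined by Arzhantseva--Osajda depend only on the combinatorics of $\Gamma$ rather than on the particular alphabet used to label its edges; once this is granted, both the Haagerup property and the failure of coarse amenability transfer automatically, and the remainder of the argument is bookkeeping.
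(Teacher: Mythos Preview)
Your approach is essentially the paper's: take the Arzhantseva--Osajda labelling, push it out with a non-repetitive labelling from Alon et al., observe that all three relevant properties survive, and apply \fullref{thm:scandnonrep}. The overall strategy is correct.

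However, there is one genuine slip. You assert that ``the $Gr'(\ifrac{1}{6})$-condition \dots [is] preserved when refining a labelling.'' The paper explicitly says the opposite in the paragraph immediately following this corollary: $Gr'(\lambda)$ is \emph{not} in general preserved under refinement, because a label-preserving automorphism $\psi$ of $\Gamma$ may fail to be label-preserving for the refined labelling, turning a path that was not a piece into one that is. What \emph{is} preserved under refinement is the stronger $C'(\ifrac{1}{6})$-condition (as noted just before \fullref{thm:osajdaalon}), precisely because under $C'(\lambda)$ there are no nontrivial label-preserving automorphisms on components with nontrivial fundamental group to break. The Arzhantseva--Osajda construction, like Osajda's, produces a $C'(\ifrac{1}{6})$-labelling, not merely $Gr'(\ifrac{1}{6})$; so your argument goes through once you replace $Gr'(\ifrac{1}{6})$ by $C'(\ifrac{1}{6})$ throughout. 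Since $C'(\ifrac{1}{6})$ implies $Gr'(\ifrac{1}{6})$, the hypothesis of \fullref{thm:scandnonrep} is still satisfied.
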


Not every interesting property of labellings is preserved upon passing
to refinements. 
For example, the $Gr'(\lambda)$ condition is not preserved, because
in the original labelling there may be a long labelled path $p$ with distinct
label-preserving maps $\phi_1,\,\phi_2\from p\to\Gamma$ and a
label-preserving automorphism $\psi$ of $\Gamma$ such that
$\phi_2=\psi\circ\phi_1$. 
If $\psi$ fails to be a label-preserving automorphism of $\Gamma$ with
the refined labelling then $p$ may be too long a piece.

For another example, passing to a refinement can yield a group with
non-trivial free factors.
In particular, no property of a labelling
that implies the resulting group has Kazhdan's Property (T) is
preserved by passing to refinements.

Both of the factor labellings in the proof of \fullref{thm:osajdaalon}
are produced probabilistically using the Lov\'asz Local Lemma.
For the purpose of proving \fullref{thm:allmaxsubgrpsHE} we 
construct \emph{explicit} examples of labelled graphs satisfying
\fullref{thm:scandnonrep}, so we get concrete examples of groups
satisfying \fullref{thm:allmaxsubgrpsHE}.
To construct such examples we use the fact, first observed by Alon, et
al.\ \cite{AloGryHau02}, that a cycle graph has Thue number at most 4,
as follows:

\begin{defi}[Thue-Morse sequence]\label{defi:Thue-Morse} Define
  $\sigma\from \{0,1\}^*\to \{0,1\}^*$ by $\sigma(0):=01$, $\sigma(1):=10$.
The sequence $(x_i)_{i\in\N}:=\sigma^\infty(0)$ is called the \emph{Thue-Morse
  sequence}\footnote{Sequence \href{http://oeis.org/A010060}{A010060} of \cite{OEIS}.}. 
\end{defi}

The Thue-Morse sequence famously does not contain any subword of the
form $www$. 
The `first difference' sequence\footnote{Sequence \href{http://oeis.org/A029883}{A029883} of \cite{OEIS}.}
$(y_i)_{i\in\N}\in\{-1,0,1\}^\N$ defined by $y_i:=x_{i+1}-x_i$, where
$(x_i)_{i\in\N}$ is the Thue-Morse sequence, does not contain any
subword of the form $ww$, which means that it gives a non-repetitive
labelling of the ray graph.
Both of these facts are due to Thue \cite{zbMATH02624830}.

A cycle graph of length $n$ with edges $e_0,e_1,\dots,e_{n-1}$ admits a non-repetitive labelling by
$\{-1,0,1,\infty\}$ by labelling edge $e_0$ with $\infty$ and
labelling edge $e_i$ for $i>0$ with term $y_i$ of the
first difference of the Thue-Morse sequence.

Define $\Gamma:=(\Gamma_n)_{n\in\N}$ to be a disjoint union of cycle
graphs such that $|\Gamma_n|=11(44n-19)$.
Give each of these cycles the non-repetitive
$\{-1,0,1,\infty\}$--labelling defined above.
Also give $\Gamma_n$ the $\{a,b\}$--labelling $R_n:=\prod_{i=22n-21}^{22n}ab^i$.
This is a $C'(\ifrac{1}{6})$--labelling.
There are no label-preserving automorphisms of $\Gamma$ since the
components are cycles of different lengths and are labelled by
positive words that are not proper powers.
If $p$ is a piece
contained in $\Gamma_n$ then, since the gaps between $a$'s in the
$R_i$ are all
different, $p$ contains at most one edge labelled $a$.
The longest subword of $R_n$ containing at most one $a$ is $b^{22n-1}ab^{22n}$ of length $44n$.
For $n\in\mathbb{N}$, the ratio $\frac{44n}{11(44n-19)}$ takes maximum
value $\ifrac{4}{25}<\ifrac{1}{6}$ at $n=1$.

The push-out of these two labellings is an $\gens$--labelling of
$\Gamma$ that is both non-repetitive and $C'(\ifrac{1}{6})$, with $\gens:=\{a_{-1},a_0,a_1,a_\infty,b_{-1},b_0,b_1,b_\infty\}$.

For each subset $I\subset\N$ define $\Gamma_I:=(\Gamma_n)_{n\in I}$.
All of these graphs have non-repetitive, $C'(\ifrac{1}{6})$--labellings inherited from $\Gamma$.
By \fullref{thm:scandnonrep}, every non-trivial cyclic subgroup of the group $G_I$ defined by $\Gamma_I$
is strongly contracting.
The proof of \fullref{thm:allmaxsubgrpsHE} is completed by the
following proposition and the fact that groups defined by $C'(\ifrac{1}{6})$-labelled graphs are torsion-free \cite{Gru15}.

\begin{prop}[{\cite{ThoVel00}}]\label{prop:lotsofgroups}
There is a subset $\mathcal{I}\subseteq 2^{\N}$ of cardinality
$2^{\aleph_0}$ such that given $I,J\in\mathcal{I}$, the groups $G_I$
and $G_J$ are quasi-isometric if and only if $I=J$.
\end{prop}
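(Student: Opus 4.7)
The plan is to adapt the argument of Thomas and Velickovic~\cite{ThoVel00} to our graphical small cancellation setting. The proof proceeds via a rigidity statement followed by a cardinality argument.

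For rigidity I would use that each embedded component $\Gamma_n$ of $\Gamma_I$ in $\Cay(G_I,\gens)$ is a convex embedded cycle of length $\ell_n := 11(44n-19)$, and that by the $C'(\ifrac{1}{6})$ condition together with Greendlinger's Lemma, any reduced van Kampen diagram with boundary word $R_n$ consists of a single face bounded by $R_n^{\pm 1}$. Consequently the embedded $\Gamma_n$'s are precisely the ``long isometrically embedded cycles of minimal filling area one'' in $\Cay(G_I,\gens)$, a property detectable up to bounded error under quasi-isometries. The key step would be to show that, given an $(L,A)$--quasi-isometry $\phi : \Cay(G_I,\gens) \to \Cay(G_J,\gens)$, the image $\phi(\Gamma_n)$ lies at bounded Hausdorff distance from an embedded component $\Gamma_m \subset \Cay(G_J,\gens)$ with $\ell_m$ comparable to $\ell_n$ within constants depending only on $(L,A)$. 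The classification of geodesic polygons from \fullref{sec:polygons} (in particular \fullref{thm:classificationofspecialquads} and \fullref{thm:strebel}) is the natural tool to convert this combinatorial rigidity into a coarse geometric statement.

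Since the lengths $\ell_n$ grow linearly, for each fixed $(L,A)$ only boundedly many $m$ can be matched to each $n$ by such a quasi-isometry. To turn bounded-matching into an equality outside a finite set, one restricts from the outset to subsets of a sparse subsequence $\mathcal{A} = \{a_k\}_{k \in \N}$ growing rapidly enough that $\ell_{a_{k+1}}/\ell_{a_k} \to \infty$. Rigidity then reads: if $I,J \subseteq \mathcal{A}$ and $G_I$ is quasi-isometric to $G_J$, then the symmetric difference $I \triangle J$ is finite. Each quasi-isometry class among $\{G_I : I \subseteq \mathcal{A}\}$ therefore contains only countably many groups, and since $|2^\mathcal{A}| = 2^{\aleph_0}$ there are $2^{\aleph_0}$ classes. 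Choosing one representative $I$ from each class yields the desired $\mathcal{I}$. Torsion-freeness of every $G_I$ follows from the standard fact that groups defined by $C'(\ifrac{1}{6})$-labelled graphs are torsion-free~\cite{Gru15}.

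The main obstacle is the rigidity step: upgrading the purely combinatorial fact that embedded cycles are minimal fillings of their own boundaries to a coarse statement that any quasi-isometry matches embedded components of $\Gamma_I$ to embedded components of $\Gamma_J$ of comparable length. Controlling how a quasi-isometry can distort a long convex cycle is where the quadrangle dichotomy (\fullref{prop:combinatorialngons}) and the polygon classifications play an essential role, and these tools developed earlier in the paper should make the adaptation of Thomas and Velickovic's argument go through.
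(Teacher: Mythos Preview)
Your proposal takes a genuinely different route from the paper, and the route you outline has a substantial gap.

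The paper's proof is the classical Thomas--Velickovic asymptotic-cone argument, not a cycle-matching rigidity argument. One chooses $\mathcal{I}$ to consist of infinite subsets of $\{2^{2^n}\mid n\in\N\}$ with pairwise infinite symmetric difference. Given distinct $I,J\in\mathcal{I}$ with $I\setminus J$ infinite, pick an ultrafilter $\mu$ concentrated on $I\setminus J$ and use the scaling sequence $(|R_n|)_{n\in I}$. The asymptotic cone of $G_I$ over $\mu$ then contains a circle of length $1$ (coming from the relators $R_n$ with $n\in I$), whereas the asymptotic cone of $G_J$ over $\mu$ is an $\R$--tree, because the doubly-exponential spacing guarantees that no relator of $G_J$ survives at that scale. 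Since quasi-isometric groups have bi-Lipschitz asymptotic cones, $G_I$ and $G_J$ are not quasi-isometric. That is the entire argument; nothing from \fullref{sec:polygons} is needed.

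Your proposal, by contrast, asserts that a quasi-isometry must carry each embedded component $\Gamma_n$ to within bounded Hausdorff distance of some embedded component $\Gamma_m$ of comparable size. This is a strong peripheral-rigidity statement of the sort one proves for relatively hyperbolic groups, and it does not follow from anything you cite. The characterisation of embedded relator cycles as ``minimal-area fillings of their own boundary'' is not a quasi-isometry invariant in any usable sense: a quasi-isometry sends an embedded cycle to a quasi-loop, not to a cycle, and there is no mechanism in your outline for promoting that quasi-loop back to a single embedded component rather than, say, a concatenation of arcs from several components. The quadrangle dichotomy of \fullref{prop:combinatorialngons} controls geodesic quadrangles with sides in $X$, not coarse images of loops under arbitrary quasi-isometries, so invoking it here is a non-sequitur. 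You yourself flag this rigidity step as ``the main obstacle''; in the paper it is simply bypassed by passing to asymptotic cones, where the presence or absence of an embedded circle is an honest bi-Lipschitz invariant.
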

\begin{proof} Choose $\mathcal{I}$ to be a collection of infinite subsets of
  $\{2^{2^n}\mid n\in\N\}$ with infinite pairwise symmetric difference.

Let $I,J\in\mathcal{I}$ be distinct.
Without loss of generality, assume $I\setminus J$ is infinite, and let $\mu$ be a non-atomic ultrafilter on $\N$ with $\mu(I)=1$ and $\mu(J)=0$.

The asymptotic cone of $G_J$ over $\mu$ with scaling sequence
$(|R_n|)_{n\in I}$ is an $\R$-tree, while the asymptotic cone of $G_I$ over $\mu$ with the same scaling contains a loop of length $1$.

If $G_I$ and $G_J$ are quasi-isometric then their asymptotic cones are
bi-Lipschitz equivalent. 
Thus these groups are not quasi-isometric.
\end{proof}

%%% Local Variables:
%%% mode: latex
%%% TeX-master: "main"
%%% End:

%---------------------------------------
% End of main body of article
%---------------------------------------

%---------------------------------------
% Start of backmatter
%---------------------------------------

\bibliographystyle{hypershort}
\bibliography{scsc}
%---------------------------------------
% End of backmatter
%---------------------------------------

\end{document}